\documentclass[10pt]{amsart}

\usepackage{amsmath, amsfonts, amsthm, amstext, amssymb,bbm,enumerate,fullpage,mathrsfs, stix2 , tikz-cd}

\usepackage{array}

\usepackage[all]{xy}

\usepackage[hidelinks,backref]{hyperref}

\usepackage[alphabetic]{amsrefs}

\newcommand{\longto}{\longrightarrow}

\theoremstyle{plain}
\newtheorem{theorem}{Theorem}[subsection]
\newtheorem{lemma}[theorem]{Lemma}
\newtheorem{proposition}[theorem]{Proposition}
\newtheorem{corollary}[theorem]{Corollary}
\theoremstyle{definition}
\newtheorem{definition}[theorem]{Definition}
\newtheorem{notation}[theorem]{Notation}

\newtheorem{example}[theorem]{Example}
\newtheorem{question}[theorem]{Question}

\newtheorem{rmk}[theorem]{Remark}
\newtheorem{construction}[theorem]{Construction}
\newtheorem{porism}[theorem]{Porism}

\DeclareMathOperator{\cofib}{cofib}
\DeclareMathOperator*{\colim}{colim}


\DeclareMathOperator{\Aut}{Aut}
\DeclareMathOperator{\Mod}{Mod}

\DeclareMathOperator{\Shv}{Shv}

\DeclareMathOperator{\QCoh}{QCoh}
\DeclareMathOperator{\KO}{KO}
\DeclareMathOperator{\End}{End}
\DeclareMathOperator{\Grp}{Grp}
\DeclareMathOperator{\Sing}{Sing}
\DeclareMathOperator{\Sp}{Sp}
\DeclareMathOperator{\Fun}{Fun}
\DeclareMathOperator{\Th}{Th}
\DeclareMathOperator{\Pic}{Pic}

\DeclareMathOperator{\CAlg}{CAlg}
\DeclareMathOperator{\GL}{GL}

\DeclareMathOperator{\Alg}{Alg}

\DeclareMathOperator{\Br}{Br}
\DeclareMathOperator{\Haunt}{Haunt}

\DeclareMathOperator{\PPr}{\mathbf{Pr}}
\DeclareMathOperator{\TTwSp}{\mathbf{TwSp}}

\def\on{\operatorname}

\DeclareMathOperator{\Hom}{Hom}
\DeclareMathOperator{\Map}{Map}

\DeclareMathOperator{\map}{map}

\DeclareMathOperator{\KU}{KU}

\DeclareMathOperator{\Open}{Open}
\DeclareMathOperator{\Cat}{Cat}
\DeclareMathOperator{\BU}{BU}
\DeclareMathOperator{\MU}{MU}

\DeclareMathOperator{\SWF}{SWF}
\DeclareMathOperator{\Spec}{Spec}

\newcommand{\et}{\mathrm{\acute{e}t}}

\newcommand{\St}{\mathrm{St}}
\newcommand{\id}{\mathrm{id}}
\newcommand{\oop}{\mathrm{op}}



\newcommand{\bC}{\mathbb{C}}
\newcommand{\bE}{\mathbb{E}}

\newcommand{\bR}{\mathbb{R}}
\newcommand{\shvcat}{\Shv_{\mathrm{Cat}_{R, \kappa}}(B)}
\newcommand{\bS}{\mathbb{S}}

\newcommand{\bZ}{\mathbb{Z}}

\newcommand{\cC}{\mathcal{C}}
\newcommand{\calD}{\mathcal{D}}
\newcommand{\cd}{\mathcal{D}}

\newcommand{\cH}{\mathcal{H}}

\newcommand{\cO}{\mathcal{O}}

\newcommand{\TwSp}{\mathrm{TwSp}}

\newcommand{\cA}{\mathcal{A}}

\newcommand{\cS}{\mathcal{S}}

\newcommand{\och}{\quad \text{and} \quad}

\author{Alice Hedenlund}
\address{Uppsala University, Sweden}
\email{alice.hedenlund@math.uu.se}

\author{Tasos Moulinos}
\address{Institute for Advanced Study, USA}
\email{tmoulinos@gmail.com}

\title{Twisted Spectra Revisited}
\date{\today}

\begin{document}

\begin{abstract}
  We recapture Douglas' framework for twisted parametrized stable homotopy theory in the language of $\infty$-categories.
  A twisted spectrum is essentially a section of a bundle of presentable stable $\infty$-categories whose fiber is the $\infty$-category of spectra, a perspective we refine in this work.
  We recover some of Douglas' results on classifications of such bundles, as well as further make precise the identification of categories of twisted spectra with module categories over Thom spectra in the pointed connected case. Furthermore, we examine subtle aspects of the functoriality which arise by virtue of being fibered over the Brauer space of the sphere spectrum.
  Extending beyond the scope of Douglas's work, we introduce a total category of twisted spectra over a fixed space, where we allow the twists to vary, and show that these total categories themselves satisfy the essential features of a 6-functor formalism.
  We also introduce an $(\infty,2)$-category of twisted spectra, and use this framework to discuss duality theory for these type of objects.
\end{abstract}

\maketitle

\setcounter{tocdepth}{1}
\tableofcontents

\section*{Introduction}

\subsection*{Background and Aim}

The motivation behind this project is grounded in what is contemporaneously known as \emph{Floer homotopy theory}.
Without going into details, let us illustrate one way to think about the Floer homotopy theory program and how twisted spectra fits into this\footnote{Considering that both of the authors of this paper identify as homotopy theorists, rather than Floer theorists, and that the paper is written with an implicit assumption that the reader is familiar with the language of $\infty$-categories, we will also assume in our explanation of Floer homotopy theory that the reader is comfortable with the \emph{homotopy theory} part, but maybe not so much with \emph{Floer} part. We apologize in advance to any Floer theorists that we will inevitably offend by this assumption.}.
One slogan for Floer theory is that it is ``a version of Morse theory for infinite-dimensional manifolds in which only a relative index can be defined''.
A rough flowchart of how Floer theory works is shown in Figure ~\ref{fig:flow_chart_Floer_htpy_thy} which we now explain in words.

\begin{figure}[h!]
  \includegraphics[scale=.65]{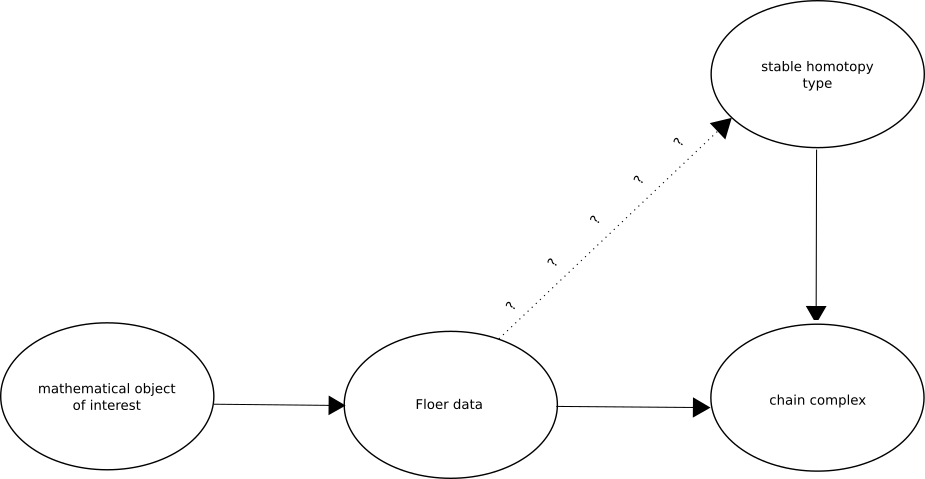}
  \caption{Possible lift of Floer homology to Floer homotopy.}
  \label{fig:flow_chart_Floer_htpy_thy}
\end{figure}
We start with some mathematical object of interest; depending on the flavor of Floer theory that we are dealing with, this could be a low-dimensional manifold, a symplectic manifold, or something else.
This is ultimately the mathematical object that we are trying to understand.
In order to understand this object, one associates to it an infinite-dimensional manifold together with some generalization of a Morse function.
Let us call this pair the \emph{Floer data}.
One then associates to the Floer data a chain complex whose homology we then refer to as \emph{Floer homology}, of whatever flavor we are working with.
For a specific example, if one is dealing with symplectic Floer theory, the mathematical object of interest would be a symplectic manifold equipped with some non-degenerate Hamiltonian and the Floer data would consist of some version of the free loop space on the manifold together with the so-called action functional~\cite{MS17}*{Chapter 11}.
From the point of view of homotopical algebra, thinking of stable homotopy theory as a generalization of homological algebra, it is now natural to ask the following question.

\begin{question}
  Rather than a chain complex, is it possible to associate a stable homotopy type to the Floer data in such a way that the homology of this stable homotopy type recovers what is classically known as Floer homology?
\end{question}

This question was first posed by Cohen--Jones--Segal~\cite{CJS95}.
For some motivation, let us just mention a few reasons for why one would be interested in this:
\begin{description}
  \item[Computational Reason] Having access to such a \emph{Floer homotopy type} would allow us define Floer invariants for other types of generalized cohomology theories, like various types of Floer K-theory and Floer bordism.
  This could give a finer understanding of the mathematical objects of interest.
  \item[Structural Reason] It is still unclear what the homotopy theoretical underpinning of Floer theory is.
  One can compare this to Morse theory of compact manifolds, where it is geometrically clear how the homotopy type of the manifold is build up using the critical points of the Morse function, and where one can compute the homology in many other ways, without even making use of the auxiliary data of a Morse function.
  In Floer theory, it is much harder to separate out what depends on the underlying homotopy types and what does not, and we do not yet know whether it is possible to define Floer homology in other ways, that do not make use of a Morse--Floer function.
  It is possible that studying Floer homotopy types might give more insight into these structural questions.
\end{description}

In the aforementioned paper, Cohen--Jones--Segal also sketch how to construct a (pro)spectrum from Floer data via flow categories in the situation that our infinite-dimensional manifold is ``trivially polarized''.
Roughly, the reader should think of a trivialization of the manifold as a decompositions of its tangent space into two parts, one part containing the positive eigenspace of the Hessian of the ``Morse function'' and another part containing the negative eigenspace, allowing for some finite-dimensional ambiguity on where to place the nullspace~\cite{CJS95,Dou05,PS86}.
Depending on the flavor of polarization, this corresponds to a class in either real or complex topological K-theory in cohomological degree $1$.
The manifold is trivially polarized when this class is zero.
In particular, this implies that the flow category is framed, from which we can define a (pro)spectrum.

\begin{question}
  What sort of objects are these ``stable homotopy types'' associated to Floer data where the polarization is \emph{non-trivial}?
\end{question}

That some twisted version of stable homotopy theory is needed to deal with non-trivial polarizations was first pointed out by Furuta in the unfinished draft~\cite{furuta-preprint}.
In his 2005 PhD thesis, Douglas proposed a more rigorous framework for these ``twisted spectra''~\cite{Dou05}.
It is these twisted spectra that this paper is concerned with.

The aim of this paper is to recast Douglas' work in the modern framework of $\infty$-categories, and to utilize this framework to prove structural results about these sort of objects.
In particular, we are interested in functoriality properties for twisted spectra, akin to various 6-functor formalisms that have appeared in the literature in other contexts.
We also want to get a better understanding of the duality theory of these types of objects.

\begin{rmk}
  We note that flow categories not being frameable is not the only issue when defining a prospectrum from Floer data.
  There is also the separate issue of ``disc bubbling'' which complicates the picture, arguably even more than the issue of frameability.
  Roughly, disc bubbling leads to issues of compactness of our flow categories.
  In most flavors of Floer theory in which disc bubbling occur, there is typically a natural way to compactify, but the precise nature of how a flow category relates to its compactification is a bit mysterious~\cite{CJS95}.
  Two common ways of dealing with bubbling issues in Floer theory today are either the theory of Kuranishi structures~\cite{FOOO20} or the theory of polyfolds~\cite{HWZ21}.
  In this paper, \emph{we will not consider bubbling phenomena at all} and strictly focus on the issue of frameability, interpreted as the need to use twisted spectra rather than just ordinary spectra.
  A true model for Floer homotopy types would be some modified version of the twisted spectra, which also takes into account bubbling issues.
\end{rmk}

\subsection*{Main Results}

Roughly, the reader should think of a twisted spectrum on a space $B$ as ``a section of a bundle of $\infty$-categories over $B$ whose fiber is the $\infty$-category of spectra''.
This is an obvious generalization of parametrized spectra: if we look at the trivial bundle $\pi_B : \Sp \times B \to B$, then sections of this is precisely functors $B \to \Sp$, that is, parametrized spectra.
To rigorously make sense of twisted spectra, we will take two perspectives: invertible sheaves of categories or local systems of categories.
For the first perspective one considers the constant sheaf of categories with value $\Sp$ on the space $B$:
\[
\cO_{B} : \Open(B)^{\oop} \longto \mathrm{Pr}^{L}_{\St} \,, \quad U \mapsto \Fun(\Sing(U),\Sp) \,.
\]
The $\infty$-category of sheaves of presentable stable $\infty$-categories on $B$ is symmetric monoidal and the above sheaf is precisely the unit of this monoidal structure.
A \emph{haunt} over a space $B$ is a locally free rank 1 module over $\cO_B$ and a \emph{twisted spectrum} over $B$ is a section of a haunt.
When our space $B$ is locally contractible, then any haunt is a locally constant sheaf, and we can use the equivalence between locally constant sheaves and local systems to conclude the following theorem, which may be compared to~\cite{Dou05}*{Proposition 2.8}.

\begin{theorem}
  Let $B$ be a locally contractible space. Then the space of haunts on $B$ is equivalent to the space of map from $\Sing(B)$ to the classifying space of the Picard space of the sphere spectrum:
  \[
  \Haunt_B \simeq \Map(\Sing(B),B\Pic(\bS)) \,.
  \]
\end{theorem}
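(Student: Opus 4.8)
The plan is to reduce the statement to the classical monodromy equivalence between locally constant sheaves and local systems, and then to compute a single automorphism space. First I would record that, since $\cO_B$ is the monoidal unit of $\Shv_{\mathrm{Pr}^{L}_{\St}}(B)$, the forgetful functor identifies $\Mod_{\cO_B}\!\big(\Shv_{\mathrm{Pr}^{L}_{\St}}(B)\big)$ with $\Shv_{\mathrm{Pr}^{L}_{\St}}(B)$ itself; consequently a haunt is nothing more than a sheaf $\cH$ of presentable stable $\infty$-categories on $B$ that becomes equivalent to $\cO_B$ on the members of some open cover, and an equivalence of haunts is just an equivalence of the underlying sheaves.

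Next I would use that $B$ is locally contractible. Such a $B$ admits a basis of weakly contractible opens $V$, on which $\Sing(V)\simeq\ast$ and hence $\cO_B(V)=\Fun(\Sing(V),\Sp)\simeq\Sp$; thus $\cO_B$ is a locally constant sheaf with stalk $\Sp$. A haunt, being locally equivalent to $\cO_B$, is therefore locally constant with every stalk equivalent to $\Sp$; conversely, a locally constant sheaf of presentable stable $\infty$-categories with stalk $\Sp$ restricts on each basic contractible open to the constant sheaf at $\Sp$, which there agrees with $\cO_B$, so (again using that every object is automatically an $\cO_B$-module) it is a haunt. Hence $\Haunt_B$ is the core of the $\infty$-category of locally constant sheaves on $B$ valued in the connected component of $\Sp$ inside $\mathrm{Pr}^{L}_{\St}$. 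Invoking the equivalence between locally constant sheaves and local systems for the locally contractible space $B$ — applied here one universe up, as $\mathrm{Pr}^{L}_{\St}$ is large — gives $\Shv^{\mathrm{lc}}_{\mathrm{Pr}^{L}_{\St}}(B)\simeq\Fun(\Sing(B),\mathrm{Pr}^{L}_{\St})$. Since $\Sing(B)$ is an $\infty$-groupoid, any functor out of it lands in the core of the target, so taking cores and restricting to the component of $\Sp$ identifies $\Haunt_B$ with $\Map\big(\Sing(B),B\Aut_{\mathrm{Pr}^{L}_{\St}}(\Sp)\big)$, where $B\Aut_{\mathrm{Pr}^{L}_{\St}}(\Sp)$ denotes that connected component.

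It then remains to identify $\Aut_{\mathrm{Pr}^{L}_{\St}}(\Sp)$. Because $\Sp$ is the unit, the self-Morita theory of the sphere (Eilenberg--Watts / Schwede--Shipley, with $\bS$ a compact generator) gives a monoidal equivalence $\Fun^{L}(\Sp,\Sp)\simeq\Sp$ carrying composition of endofunctors to the smash product; hence the invertible colimit-preserving endofunctors of $\Sp$ are exactly the $\otimes$-invertible spectra, i.e. $\Aut_{\mathrm{Pr}^{L}_{\St}}(\Sp)\simeq\Pic(\bS)$. Substituting, $\Haunt_B\simeq\Map(\Sing(B),B\Pic(\bS))$, as claimed. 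The step I expect to require the most care is the monodromy reduction: one must check that local contractibility is precisely what licenses the locally-constant-sheaves/local-systems comparison for sheaves valued in the large $\infty$-category $\mathrm{Pr}^{L}_{\St}$, and — more delicately — that \emph{locally free of rank $1$ over $\cO_B$} translates into exactly \emph{locally constant with stalk $\Sp$}, neither more nor less; the observation that modules over the unit carry no additional data is what makes that translation clean. By contrast, the identification $\Aut_{\mathrm{Pr}^{L}_{\St}}(\Sp)\simeq\Pic(\bS)$, though the conceptual heart of the statement, is a routine consequence of the identification $\Fun^{L}(\Sp,\Sp)\simeq\Sp$.
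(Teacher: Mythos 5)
Your proposal takes essentially the same route as the paper's proof: use local contractibility to see that every haunt is a locally constant sheaf with stalk $\Sp$, invoke the equivalence between locally constant sheaves and local systems (the paper's Theorem~\ref{thm:sheaves_vs_local_systems}), and observe that the resulting local system factors through $B\Aut(\Sp) \simeq B\Pic(\bS)$. You additionally spell out two points the paper leaves implicit — that modules over the unit $\cO_B$ are simply sheaves, and that $\Aut_{\mathrm{Pr}^L_{\St}}(\Sp)\simeq\Pic(\bS)$ follows from $\Fun^L(\Sp,\Sp)\simeq\Sp$ — but this does not change the shape of the argument.
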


This implies that we could have alternatively defined haunts and twisted spectra using local systems of categories. Indeed, from this point of view a haunt is simply a map\footnote{In the setting on Floer homotopy theory, these maps always seem to factor through the J-homomorphism.
Indeed, a polarization corresponds to a class in complex or real topological $K$-theory, and such classes correspond to maps $B \to U$ and $B \to U/O$, respectively. The relevant haunt is then obtained by using Bott periodicity and the J-homomorphism. A polarization being trivial (or the flow corresponding flow category being frameable) is equivalent to the map $B \to B\Pic(\bS)$ being nullhomotopic.} $\sigma : B \to B\Pic(\bS)$ and the $\infty$-category of twisted spectra for this particular haunt is
    \[
    \TwSp(B,\sigma) = \lim_{B} \left( B \overset{\sigma}\longto B\Pic(\bS) \longto \mathrm{Pr}^{L}_{\St} \right) \,.
    \]
We summarize some of our results about these categories.

\begin{theorem}
    There exists a symmetric monoidal functor
    \[
    \TwSp: \cS_{/B \Pic(\bS)} \to \on{Pr}^{L}_{\St} \,.
    \]
The functor above sends a map $f: (A, \tau) \to (B, \sigma)$ in $\cS_{/ B \Pic(\bS)}$ to the functor $f_!: \TwSp(A, \tau) \to \TwSp(B, \sigma)$. Its right adjoint $f^*$ is itself colimit-preserving and thus further admits a right adjoint $f_*$.
In particular, since the functor is symmetric monoidal, we have a natural equivalence
\[
\TwSp(A, \tau) \otimes^{L} \TwSp(B, \sigma) \simeq \TwSp(A \times B, \tau \boxtimes \sigma)
\]
which further implies an equivalence
\[
\Fun^{L}(\TwSp(A, \tau), \TwSp(B, \sigma)) \simeq \TwSp(A \times B, - \tau \boxtimes \sigma) \,.
\]
\end{theorem}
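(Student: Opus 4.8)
The plan is to realise $\TwSp$ through a universal property, after which everything except one duality statement follows formally. Since $B\Pic(\bS)$ is a space, straightening identifies $\cS_{/B\Pic(\bS)}$ with $\Fun(B\Pic(\bS),\cS)$, and the symmetric monoidal structure relevant here — the one with $(A,\tau)\otimes(B,\sigma)=(A\times B,\tau\boxtimes\sigma)$, induced by the grouplike $E_\infty$-structure of $B\Pic(\bS)$ and with unit $(\ast\xrightarrow{e}B\Pic(\bS))$ — corresponds to Day convolution. Hence $\cS_{/B\Pic(\bS)}$ is the free presentable symmetric monoidal $\infty$-category on $B\Pic(\bS)$, so that a symmetric monoidal colimit-preserving functor $\cS_{/B\Pic(\bS)}\to\on{Pr}^{L}_{\St}$ is the same datum as a map of $E_\infty$-groups $B\Pic(\bS)\to\PPic(\on{Pr}^{L}_{\St})$. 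One such map is the tautological one: tensoring with an invertible spectrum is an autoequivalence of $\Sp$ and every autoequivalence arises this way, giving $\Pic(\bS)\simeq\Aut_{\on{Pr}^{L}}(\Sp)$, whose delooping realises $B\Pic(\bS)$ as the component of $\Sp$ in $\PPic(\on{Pr}^{L}_{\St})$; postcomposing with $\PPic(\on{Pr}^{L}_{\St})\to\on{Pr}^{L}_{\St}$ yields $\TwSp$. On objects this produces $\TwSp(A,\tau)=\colim_{A}(\mathcal{L}\circ\tau)$, where $\mathcal{L}\colon B\Pic(\bS)\to\on{Pr}^{L}_{\St}$ is the functor appearing in the limit formula of the statement; this agrees with that formula because a colimit of a diagram valued in equivalences agrees with the corresponding limit in $\on{Pr}^{L}$ — the passage to right adjoints replaces the transition equivalences by their inverses, and the inversion automorphism of $B\Pic(\bS)$ undoes this.

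Since $\TwSp$ is colimit-preserving with presentable target, $\TwSp(f)=:f_!$ has a right adjoint $f^{*}$, and one identifies $f^{*}$ with the restriction-of-sections functor $\lim_{B}(\mathcal{L}\sigma)\to\lim_{A}(\mathcal{L}\sigma f)$ by comparing both functors (each right adjoint to $f_!$) on the colimit-generators $(\ast\xrightarrow{a}A)$, on which $f_!$ is — up to the twist autoequivalence — the tautological functor $\Sp\to\TwSp(B,\sigma)$ left adjoint to evaluation at $f(a)$. That $f^{*}$ preserves colimits is then the observation that colimits in a limit of presentable $\infty$-categories along colimit-preserving transitions are detected pointwise: the evaluations $\mathrm{ev}_{a}\colon\TwSp(A,\tau)\to\Sp$ jointly detect colimits and $\mathrm{ev}_{a}\circ f^{*}\simeq\mathrm{ev}_{f(a)}$ preserves them; being a colimit-preserving functor of presentable $\infty$-categories, $f^{*}$ then has a right adjoint $f_{*}$ by the adjoint functor theorem. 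The equivalence $\TwSp(A,\tau)\otimes^{L}\TwSp(B,\sigma)\simeq\TwSp(A\times B,\tau\boxtimes\sigma)$ is immediate from symmetric monoidality of $\TwSp$ together with $(A,\tau)\otimes(B,\sigma)=(A\times B,\tau\boxtimes\sigma)$ and $\TwSp(\ast)\simeq\Sp$.

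For the last equivalence I would first prove that $\TwSp(A,\tau)$ is dualizable in $\on{Pr}^{L}_{\St}$ with dual $\TwSp(A,-\tau)$, where $-\tau$ is $\tau$ postcomposed with the inversion of $B\Pic(\bS)$; then $\Fun^{L}(\TwSp(A,\tau),\TwSp(B,\sigma))\simeq\TwSp(A,\tau)^{\vee}\otimes^{L}\TwSp(B,\sigma)\simeq\TwSp(A,-\tau)\otimes^{L}\TwSp(B,\sigma)\simeq\TwSp(A\times B,-\tau\boxtimes\sigma)$ by the standard description of the internal hom of $\on{Pr}^{L}_{\St}$ on dualizable objects followed by the Künneth equivalence. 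For the dualizability I would take coevaluation and evaluation assembled from the diagonal $\Delta\colon A\to A\times A$ and the projection $p\colon A\to\ast$: since $\tau\cdot(-\tau)$ is canonically nullhomotopic, $\Delta$ refines to a map $(A,\mathrm{triv})\to(A\times A,\tau\boxtimes(-\tau))$ in $\cS_{/B\Pic(\bS)}$, so one sets $\mathrm{coev}=\Delta_{!}\circ p^{*}$ and $\mathrm{ev}=p_{!}\circ\Delta^{*}$ (both colimit-preserving by the previous paragraph). The main obstacle is the verification of the two zig-zag identities; I expect it to reduce — via the Beck--Chevalley base change $f^{*}g_{!}\simeq(g')_{!}(f')^{*}$ for cartesian squares in $\cS_{/B\Pic(\bS)}$ together with the co-Yoneda identity $\colim_{a\in A}\bigl(\Sigma^{\infty}_{+}\Map_{A}(x,a)\otimes(-)\bigr)\simeq\mathrm{ev}_{x}$ — to precisely the computation exhibiting $\Fun(A,\Sp)$ as self-dual, the twists contributing nothing extra because they cancel along the diagonal. (Alternatively, reduce to $A$ pointed and connected, where $\TwSp(A,\tau)\simeq\Mod_{R}$ for a Thom spectrum $R$, use that $\Mod_{R}$ is dualizable with dual $\Mod_{R^{\mathrm{op}}}$, and identify $R^{\mathrm{op}}$ with the Thom spectrum attached to $-\tau$.)
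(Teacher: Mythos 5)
Your overall architecture matches the paper's — symmetric monoidality of the assignment, adjoints from ambidexterity, dualizability plus K\"unneth for the $\Fun^L$-identification — but the specific mechanisms differ in each step, and it is worth flagging which of your substitutions are improvements and which leave a gap.

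For the construction of the functor, the paper factors $\TwSp_!$ as $\cS_{/B\Pic(\bS)} \to \cS_{/\Pr^L_{\St}} \xrightarrow{\colim} \Pr^L_{\St}$ and observes that each factor is symmetric monoidal (Lemma~\ref{lem:colim_symm_monoid}); you instead invoke the universal property of $\cS_{/B\Pic(\bS)}$ as the free presentably symmetric monoidal $\infty$-category on the grouplike $E_\infty$-space $B\Pic(\bS)$ (the $\cS[-]\dashv\Pic$ adjunction, which the paper does cite later). Both routes are valid and produce the same left Kan extension $(A,\tau)\mapsto\colim_A(\mathcal{L}\circ\tau)$; yours is arguably more conceptual, though it requires a cardinal-cutoff modification since $\Pr^L_{\St}$ is not literally presentable.

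For the adjoints, the paper uses Proposition~\ref{prop:left_right_adjoints_ambidexterity}: $f^*$ is constructed \emph{directly} as a limit-functoriality map, hence lies in $\Pr^L_{\St}$ and automatically has a further right adjoint; ambidexterity then identifies it with the right adjoint of $f_!$. You instead obtain $f^*$ as the right adjoint of $f_!$ and then argue it is cocontinuous by checking that the evaluations jointly detect colimits. This works (the transition maps being equivalences makes the evaluations both conservative and cocontinuous), but it re-proves a special case of what ambidexterity gives uniformly and is a bit more delicate: in particular your phrase ``by comparing both functors on the colimit-generators'' requires an argument — the natural comparison is that both are right adjoint to $f_!$, from which the identification is immediate without checking on generators.

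The real divergence is in the dualizability. The paper computes $\Fun^L(\TwSp(A,\tau),\Sp)\simeq\TwSp(A,-\tau)$ by pushing $\Fun^L(-,\Sp)$ through the colimit defining $\TwSp(A,\tau)$; it then \emph{asserts} that $\TwSp(A,\tau)$ is dualizable with this dual, which is a small gap since identifying the would-be dual does not by itself give the evaluation/coevaluation data. The clean repair is to note, via ambidexterity once more, that $\Fun^L(\TwSp(A,\tau),\cD)\simeq\lim_A(-\tau\otimes\cD)\simeq\colim_A(-\tau\otimes\cD)\simeq\TwSp(A,-\tau)\otimes\cD$ \emph{naturally in $\cD$}, which is exactly the dualizability criterion. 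Your alternative of writing down $\mathrm{coev}=\Delta_!p^*$ and $\mathrm{ev}=p_!\Delta^*$ and verifying the triangle identities via Beck--Chevalley is the classical Costenoble--Waner route and should go through, but you leave the verification as a sketch, so as written it is no more complete than the paper's assertion. Your parenthetical fallback (reduce to $A$ pointed connected, $\TwSp(A,\tau)\simeq\Mod_{\Th(\Omega\tau)}$, use that module categories over an $E_1$-ring are dualizable with dual modules over the opposite ring, identify $\Th(\Omega\tau)^{\mathrm{op}}\simeq\Th(\Omega(-\tau))$) is in fact the most elementary way to close the gap, and I would promote it from parenthetical to primary if you want a self-contained argument.
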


\begin{rmk}
    A key vehicle powering the theory forward at this stage, and for what comes next is the notion of ambidexterity~\cite{HL13} for diagrams in $\on{Pr}^L_{\St}$ indexed by Kan complexes.
    This more explicitly allows for one to control the homotopy coherences needed establish base-change and functoriality results.
    For more, see Section ~\ref{sec:parametrized_cats}.
\end{rmk}

  Under the assumption that $B$ is connected, these categories can be identified with module categories, a fact that was established in~\cite{Dou05}*{Proposition 3.13}, and later rediscovered in a different context in~\cite{CCRY23}*{Theorem D}.
    We reprove this fact, establishing some extra naturality properties of identification.

    \begin{theorem}
      Let $B$ be a connected space and consider a haunt $\sigma : B \to B\Pic(\bS)$.
      Then, there is a natural equivalence
      \[
      \TwSp(B,\sigma) \simeq \Mod_{\Th(\Omega \sigma)}
      \]
      of $\infty$-categories. Here, $\Th(\Omega \sigma)$ denotes the Thom spectrum of the map $\Omega \sigma : \Omega B \to \Pic(\bS)$.
    \end{theorem}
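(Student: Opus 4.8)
The plan is to reduce the statement to a computation with the loop group $G := \Omega B$ and then to recognize $\TwSp(B,\sigma)$ as a module category via monadicity. Fix a basepoint $b \in B$ at which $\sigma$ is based. Since $B$ is connected, the bar construction gives an equivalence $\mathbf{B}(-) \colon \Grp(\cS) \simeq \cS_{*}^{\geq 1}$ between grouplike $E_1$-spaces and pointed connected spaces; it identifies $B$ with $\mathbf{B}G$ and, via $\Map_{*}(-, \mathbf{B}\Pic(\bS)) \simeq \Map_{\Grp(\cS)}(-, \Pic(\bS))$, identifies the haunt $\sigma$ with the morphism of grouplike $E_1$-spaces $\phi := \Omega\sigma \colon G \to \Pic(\bS)$. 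In particular the Thom spectrum $\Th(\phi) = \colim_{g \in G} \phi(g)$ acquires the structure of an $E_1$-ring, so that $\Mod_{\Th(\phi)} = \mathrm{LMod}_{\Th(\phi)}(\Sp)$ is defined. Unwinding the definition of $\TwSp$ together with the description of $\mathbf{B}\Pic(\bS) \to \mathrm{Pr}^{L}_{\St}$ as the functor classifying the tensoring action of $\Pic(\bS)$ on $\Sp$, one obtains a natural equivalence $\TwSp(B,\sigma) \simeq \lim_{\mathbf{B}G} \rho$, where $\rho \colon \mathbf{B}G \to \mathrm{Pr}^{L}_{\St}$ is the $G$-action on $\Sp$ in which $g$ acts by $(-) \otimes \phi(g)$; equivalently $\TwSp(B,\sigma) \simeq \Sp^{hG}$.

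Next I would study the functor $b^{*} \colon \TwSp(B,\sigma) \to \TwSp(*,0) = \Sp$ induced by the pointed map $b \colon * \to B$ over $\mathbf{B}\Pic(\bS)$; concretely $b^{*}$ is evaluation at the basepoint, i.e.\ it forgets the $G$-equivariant structure. By the functoriality theorem above --- whose proof uses the ambidexterity input recalled in the remark (see Section~\ref{sec:parametrized_cats}) --- the functor $b^{*}$ preserves all colimits and admits a left adjoint $b_{!}$ (and a right adjoint $b_{*}$). Two observations then do the work. First, $b^{*}$ is \emph{conservative}: this is exactly where connectedness of $B$ is used, since a section of a haunt over a connected space whose stalk at one point vanishes is zero (equivalently, an object of $\Sp^{hG}$ with vanishing underlying spectrum vanishes). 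Second, the defining left Kan extension identifies the underlying spectrum of $b_{!}(Y) \in \TwSp(B,\sigma) \simeq \Sp^{hG}$ with $\colim_{g \in G}(Y \otimes \phi(g)) \simeq Y \otimes \Th(\phi)$, equipped with its tautological equivariant structure; for trivial $\sigma$ this recovers the familiar formula $b_{!}(Y) \simeq Y \otimes \Sigma^{\infty}_{+} G$ and the twist by $\phi$ replaces $\Sigma^{\infty}_{+}G$ by its Thom spectrum. In particular $b^{*}b_{!}(\bS) \simeq \Th(\phi)$, and chasing the construction shows that the monad $b^{*}b_{!}$ on $\Sp$ is the free-module monad $(-) \otimes \Th(\phi)$, the multiplication being the Thom-spectrum ring structure (this reflects the compatibility of the left Kan extension formula with composition, i.e.\ with the convolution product on $G$-indexed diagrams).

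Granting these observations, the Barr--Beck--Lurie theorem applies to the colimit-preserving conservative functor $b^{*}$ and gives $\TwSp(B,\sigma) \simeq \mathrm{LMod}_{b^{*}b_{!}}(\Sp) \simeq \Mod_{\Th(\phi)} = \Mod_{\Th(\Omega\sigma)}$. (One can also bypass monadicity: $b_{!}(\bS)$ is a compact generator of $\TwSp(B,\sigma)$ --- compact because $b^{*}$ preserves colimits and hence $b_{!}$ preserves compact objects, a generator because $b^{*}$ is conservative --- with endomorphism $E_1$-ring $\Th(\phi)$, so the Schwede--Shipley-style recognition of presentable stable module categories yields the same conclusion, up to the usual left/right and $(-)^{\mathrm{op}}$ bookkeeping.) Naturality of the equivalence follows because each ingredient --- the identification $B \simeq \mathbf{B}G$, the loop map $\phi = \Omega\sigma$, the functors $b^{*}$ and $b_{!}$, and the Thom spectrum with its $E_1$-structure --- is functorial in the pointed space $B$ over $\mathbf{B}\Pic(\bS)$, so the equivalences assemble into a natural transformation of functors. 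I expect the crux to be the monad identification --- verifying that $b^{*}b_{!}(\bS)$ is $\Th(\Omega\sigma)$ \emph{with its Thom-spectrum multiplication} and assembling this coherently with naturality; the conservativity of $b^{*}$ and the abstract monadicity are comparatively routine once the functoriality package of the preceding theorems is available.
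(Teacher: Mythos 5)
Your skeleton for the underlying equivalence --- $b^*$ is conservative and colimit-preserving, so Barr--Beck--Lurie (or Schwede--Shipley via the compact generator $b_!(\bS)$) yields $\TwSp(B,\sigma) \simeq \Mod_{\End(b_!\bS)}$, and then one identifies $\End(b_!\bS) \simeq \Th(\Omega\sigma)$ --- is sound, and it is essentially what the paper imports from \cite{CCRY23}*{Theorem 7.13} rather than reproving. The paper's own proof is, however, almost entirely about the \emph{naturality} of the equivalence, and that is where your proposal has a genuine gap.

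You dispose of naturality with the remark that it ``follows because each ingredient is functorial,'' but that is not an argument: the non-trivial content is that $\Th(-)$ and $\End((B\iota_{(-)})_!\bS)$ are \emph{naturally} equivalent as functors $\Grp_{/\Pic(\bS)} \to \Alg$ valued in $\bE_1$-rings, and agreeing objectwise while each being functorial does not on its own produce a coherent family of $\bE_1$-ring equivalences. Proposition~\ref{prop naturality of shit} is devoted to exactly this: it factors both functors through $\Alg(\cS[\Pic(\bS)])$, establishes naturality of each factorization in the grouplike $\bE_\infty$-space replacing $\Pic(\bS)$, and then invokes a rigidity lemma --- any natural transformation $\cC_{/(-)} \to f$ sending identities to terminal objects is determined up to contractible choice --- to identify the two. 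You correctly flag ``assembling this coherently with naturality'' as the crux, but flagging it is not resolving it, and since the theorem's corollaries (identifying $f^*$, $f_!$, $f_*$ with restriction, extension, and coextension of scalars) rest essentially on the naturality clause, the theorem as worded remains unproved. A smaller caveat: your Beck--Chevalley computation of $b^*b_!$ is glossing over the same kind of coherence. The pullback $(\ast,0)\times_{(B,\sigma)}(\ast,0)$ in $\cS_{/B\Pic(\bS)}$ has underlying space $\Omega B$ with \emph{trivial} twist, and the two legs have the same underlying space map to $\ast$; a naive base-change would output $Y \otimes \Sigma^\infty_+\Omega B$, and the twist by $\Omega\sigma$ enters only through the differing homotopy data of the two legs in the slice category. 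Controlling this explicitly is exactly what is needed to see the Thom multiplication on the monad, so ``chasing the construction'' is doing more work here than the phrase admits.
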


  Note that the categories $\TwSp(A,\tau)$ are not themselves monoidal; the map $\Omega \sigma$ is \emph{a priori} only $\bE_1$.
  However, one can define a total category of twisted spectra over a fixed space, where we allow the twist to vary, and this total stable $\infty$-category satisfies a 6-functor formalism.
  The following theorem is a summary of a number of different result found in Section~\ref{sec:total_category}, namely:
  Proposition~\ref{prop:total_pullback_has_right_adjoint}, Proposition~\ref{prop:total_pullback_no_left_adjoint},
  Proposition~\ref{prop:BC_total_cat},
  Proposition~\ref{prop:total_cat_symm_mon},
  Proposition~\ref{prop:pullback_strong_monoidal},
  Proposition~\ref{prop:proj_iso_total},
  and Proposition~\ref{prop:closed_total_cat}.

\begin{theorem}
  We define the total $\infty$-category of twisted spectra over a space $A$ as
  \[
  \TwSp(A) = \colim\left( \TwSp(A,-) : \Map(A,B\Pic(\bS)) \longto \mathrm{Pr}^{L}_{\St}\right) \,.
  \]
  These categories satisfy a 6-functor formalism where $f^! = f^*$. In particular:
  \begin{itemize}
    \item The categories $\TwSp(A)$ are closed symmetric monoidal.
    \item For every map $f : A \to B$ of spaces there is an induced pullback functor
    \[
    f^* : \TwSp(B) \longto \TwSp(A)
    \]
    which is symmetric monoidal. This functor admits a right adjoint
      \[
      f_* : \TwSp(A) \longto \TwSp(B) \,.
      \]
      \item Given a map of spaces $f : A \to B$ such that the induced map $\Map(B,B\Pic(\bS)) \to \Map(A,B\Pic(\bS))$ is an equivalence, there is a functor
      \[
      f_! : \TwSp(A) \longto \TwSp(B) \,.
      \]
      This functor satisfies base change and  projection formula isomorphisms.
      Moreover, the functor admits a right adjoint which coincides with $f^*$.
  \end{itemize}
\end{theorem}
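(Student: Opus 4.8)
This theorem is stated as a summary of Propositions~\ref{prop:total_pullback_has_right_adjoint}, \ref{prop:total_pullback_no_left_adjoint}, \ref{prop:BC_total_cat}, \ref{prop:total_cat_symm_mon}, \ref{prop:pullback_strong_monoidal}, \ref{prop:proj_iso_total} and \ref{prop:closed_total_cat}, so what follows is really a plan for establishing those. The uniform strategy is to \emph{integrate the fiberwise structure over the twists}: everything is to be bootstrapped from the symmetric monoidal functor $\TwSp\colon\cS_{/B\Pic(\bS)}\to\mathrm{Pr}^L_{\St}$ and the fiberwise adjunctions $f_!\dashv f^*\dashv f_*$ already in hand. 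Two facts will be used throughout. First, writing $M_A:=\Map(A,B\Pic(\bS))$, each $M_A$ is a grouplike $\bE_\infty$-space, since $B\Pic(\bS)$ is the classifying space of the grouplike $\bE_\infty$-space $\Pic(\bS)$ and mapping spaces into such are again such. Second, since the tensor product on $\mathrm{Pr}^L_{\St}$ preserves colimits separately in each variable, the functor $\colim_{M_A}\colon\Fun(M_A,\mathrm{Pr}^L_{\St})\to\mathrm{Pr}^L_{\St}$ is symmetric monoidal for the Day convolution structure on its source; in particular it sends lax symmetric monoidal functors $M_A\to\mathrm{Pr}^L_{\St}$ to symmetric monoidal $\infty$-categories and monoidal transformations to symmetric monoidal functors.

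For the closed symmetric monoidal structure, I would first upgrade $\TwSp(A,-)$ to a lax symmetric monoidal functor $M_A\to\mathrm{Pr}^L_{\St}$: the structure maps are the composites $\TwSp(A,\tau)\otimes\TwSp(A,\sigma)\simeq\TwSp(A\times A,\tau\boxtimes\sigma)\xrightarrow{\Delta^*}\TwSp(A,\tau+\sigma)$, using the box-product equivalence and pullback along the diagonal $\Delta\colon A\to A\times A$ (note $\Delta^*(\tau\boxtimes\sigma)\simeq\tau+\sigma$ in $M_A$), the coherence coming from the symmetric monoidality of $\TwSp$ together with the fact that the fiberwise right adjoints $g^*$ are themselves colimit preserving. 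Applying $\colim_{M_A}$ then makes $\TwSp(A)=\colim_{M_A}\TwSp(A,-)$ a symmetric monoidal $\infty$-category; it is presentable as a colimit in $\mathrm{Pr}^L_{\St}$ and its tensor product preserves colimits separately in each variable, so it is closed by the adjoint functor theorem.

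For a map $f\colon A\to B$, I would construct $f^*$ as a composite of two symmetric monoidal functors: precomposition $f^*\colon M_B\to M_A$ is a map of $\bE_\infty$-spaces, inducing a symmetric monoidal reindexing functor on colimits, and the fiberwise pullbacks $(f_\sigma)^*\colon\TwSp(B,\sigma)\to\TwSp(A,f^*\sigma)$ organize, by naturality of the box product together with the identity $\Delta_B\circ f=(f\times f)\circ\Delta_A$, into a symmetric monoidal transformation $\TwSp(B,-)\Rightarrow\TwSp(A,f^*-)$ whose colimit is symmetric monoidal; the composite is $f^*\colon\TwSp(B)\to\TwSp(A)$. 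It is built from colimit-preserving functors of presentable $\infty$-categories, hence admits a right adjoint $f_*$. The reindexing step has a left adjoint exactly when $f^*\colon M_B\to M_A$ is an equivalence; in that case $f^*$ is just $\colim_{M_B}$ of the fiberwise $(f_\sigma)^*$, each of which, being a colimit-preserving right adjoint, has a left adjoint $(f_\sigma)_!$, so $f_!:=\colim_{M_B}(f_\sigma)_!$ is left adjoint to $f^*$; this is also why $f^*$ has no left adjoint in general. The identity $f^!=f^*$ of the resulting formalism is then tautological, $f^!$ being by definition the right adjoint of $f_!$, while the projection formula $f_!(X\otimes f^*Y)\simeq f_!X\otimes Y$ and the base-change isomorphisms follow by assembling, over $M_B$, the corresponding statements for the individual parametrized fibers, using that $\colim_{M_B}$ is symmetric monoidal and preserves the relevant (co)limits.

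The hard part is not any single one of these constructions but the bookkeeping of homotopy coherences: the lax monoidal structure on $\TwSp(A,-)$, the compatibility of the fiberwise $(f_\sigma)_!$ with tensor products, and the fiberwise base-change squares all have to be produced as coherent natural transformations of $\mathrm{Pr}^L_{\St}$-valued diagrams indexed by the $M_A$, compatibly as $A$ varies. This is precisely what the ambidexterity of $\mathrm{Pr}^L_{\St}$-valued diagrams indexed by Kan complexes, developed in Section~\ref{sec:parametrized_cats}, is for: it produces the norm maps, and hence the base-change data, at the level of coherent functors rather than merely on homotopy categories, and so it is the technical engine behind the propositions of Section~\ref{sec:total_category}.
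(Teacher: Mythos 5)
Your proposal is correct and follows essentially the same route as the paper: make $\TwSp(A,-)$ a lax symmetric monoidal $M_A$-indexed diagram, apply the symmetric monoidal colimit functor, factor $f^*$ through the auxiliary category $\TwSp(A,f^*)$ via the reindexing along $B\Pic(f)$, and obtain adjoints and base-change from fiberwise adjunctions using ambidexterity. The one small overclaim is the "exactly when" in your characterization of when the reindexing has a left adjoint — the paper proves only the "if" direction (Proposition~\ref{prop:total_pullback_no_left_adjoint}), which is all the theorem needs.
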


The naturality of the identification between categories of twisted spectra and module categories over Thom spectra guarantee that the pullback functor and its left and right adjoints on the level of categories of twisted spectra correspond to the standard restriction, extension, and coextension of scalars on the level of module categories, in the case that we restrict ourselves to connected spaces.

By using the natural equivalence
\[
\Fun^L(\TwSp(A,\tau),\TwSp(B,\sigma)) \simeq \TwSp(A \times B , - \tau \boxtimes \sigma)
\]
already mentioned above, we can define and study dualizability of twisted spectra.
In particular, we say that a $(B,\sigma)$-twisted spectrum $X$ is \emph{Costenoble--Waner dualizable} if its corresponding colimit-preserving functor
\[
\Phi_X : \Sp \longto \TwSp(B,\sigma)
\]
is such that its right adjoint is itself colimit-preserving.
This is a direct generalization of the notion of Costenoble--Waner dualizability of parametrized spectra.
In the pointed connected case, Under the natural equivalence between categories of twisted spectra and categories of modules over Thom spectra, we note that this type of dualizability corresponds to dualizablity of bimodules.

\begin{theorem}
  Let $\alpha : G \to \Pic(\bS)$ and $\beta : H \to \Pic(\bS)$ be maps of groups over the Picard space of the sphere spectrum and suppose that we are given a $\Th(\alpha) - \Th(\beta)$-module $M$. Suppose that  $X$ is the twisted spectra corresponding to $M$ under the equivalence
  \[
  \TwSp(BG \times BH , -B\alpha \boxtimes B\beta) \simeq {}_{\Th(\alpha)} \mathrm{BiMod}_{\Th(\beta)} \,.
  \]
  Then the diagram
  \[
  \begin{tikzcd}
  \TwSp(BG, B\alpha) \arrow[r,"\Phi_X"] \arrow[d,"\simeq"] & \TwSp(BH,B\beta) \arrow[d,"\simeq"] \\
  \Mod_{\Th(\alpha)}  \arrow[r,"F_M"] & \Mod_{\Th(\beta)}
  \end{tikzcd}
  \]
  commutes where
  \[
  F_M : \Mod_{\Th(\alpha)} \longto \Mod_{\Th(\beta)} \,, \quad N \mapsto N \otimes_{\Th(\alpha)} M \,.
  \]
\end{theorem}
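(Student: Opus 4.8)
The plan is to reduce the statement to the naturality of the equivalence $\TwSp(B,\sigma) \simeq \Mod_{\Th(\Omega\sigma)}$ established in the theorem preceding it, combined with the symmetric monoidal structure on $\TwSp$. The key observation is that the functor $\Phi_X : \TwSp(BG,B\alpha) \to \TwSp(BH,B\beta)$ is, by definition of Costenoble--Waner duality and the internal-hom computation $\Fun^L(\TwSp(A,\tau),\TwSp(B,\sigma)) \simeq \TwSp(A\times B,-\tau\boxtimes\sigma)$, the colimit-preserving functor classified by $X$ viewed as an object of $\TwSp(BG\times BH,-B\alpha\boxtimes B\beta)$. So the content is really to identify, under the module-category equivalence, the operation ``pair with $X$'' on the twisted-spectra side with the operation ``relative tensor with $M$'' on the module side.

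First I would invoke the equivalence $\TwSp(BG\times BH,-B\alpha\boxtimes B\beta)\simeq {}_{\Th(\alpha)}\mathrm{BiMod}_{\Th(\beta)}$, which itself should be deduced from the connected-case identification theorem applied to $BG\times BH$ together with the monoidal formula $\TwSp(A,\tau)\otimes^L\TwSp(B,\sigma)\simeq\TwSp(A\times B,\tau\boxtimes\sigma)$: indeed $\Th(\Omega(-B\alpha\boxtimes B\beta))$ is $\Th(-\alpha)\otimes\Th(\beta)\simeq \Th(\alpha)^{\oop}\otimes\Th(\beta)$ (using that negation in $\Pic(\bS)$ corresponds to passing to the opposite $\bE_1$-algebra), whose modules are exactly $\Th(\alpha)$--$\Th(\beta)$-bimodules. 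Under this chain of equivalences, $X$ corresponds to $M$. Then the functor $\Phi_X$ on $\Fun^L$ corresponds to the ``evaluation/composition'' functor on bimodules, which sends a left $\Th(\alpha)$-module $N$ to $N\otimes_{\Th(\alpha)}M$; this is a standard fact about the symmetric monoidal structure on presentable stable categories — the internal hom $\Fun^L(\Mod_R,\Mod_S)\simeq {}_R\mathrm{BiMod}_S$ sends a bimodule to relative tensor. I would cite this, e.g. from Lurie's \emph{Higher Algebra}, rather than reprove it.

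The main obstacle I anticipate is the \emph{compatibility} of the two naturality statements: I need the square to commute, which requires that the module-category identification $\TwSp(BG,B\alpha)\simeq\Mod_{\Th(\alpha)}$ is natural with respect to the action of the internal-hom object, i.e. that the equivalence $\TwSp(BG\times BH,-B\alpha\boxtimes B\beta)\simeq{}_{\Th(\alpha)}\mathrm{BiMod}_{\Th(\beta)}$ is compatible — as a module over the monoidal category of endofunctors — with the equivalences on the two factors. This is precisely the ``extra naturality properties'' that the earlier theorem promises, so the real work is to check that the comparison functor intertwines the respective $\Fun^L$-actions. Concretely this amounts to chasing that the unit and counit of the Costenoble--Waner adjunction match the bimodule bar resolutions, which should follow formally once the monoidal equivalence is set up, since both sides are determined by where they send the unit twisted spectrum / the rank-one free module.

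Finally I would record that, since $\Phi_X$ preserves colimits by construction and $F_M$ preserves colimits (relative tensor with a fixed bimodule is a left adjoint), the commuting square is a square of colimit-preserving functors between presentable stable categories, so it suffices to check commutativity after evaluating on the compact generator $\Th(\alpha)$ of $\Mod_{\Th(\alpha)}$ (equivalently the image of the unit under $\Phi_{\text{unit}}$), where both composites return $M$ with its evident $\Th(\beta)$-module structure; this last verification is where the identification ``$X\leftrightarrow M$'' is used, and it closes the argument.
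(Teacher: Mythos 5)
Your high-level plan is sound — identify $\Fun^L(\TwSp(BG,B\alpha),\TwSp(BH,B\beta))$ and $\Fun^L(\Mod_{\Th(\alpha)},\Mod_{\Th(\beta)})$ via the naturality of the Thom-spectrum identification, and then match ``pair with $X$'' to ``relative tensor with $M$.''  But your concluding step differs from the paper's and is the weak point.  The paper's proof is a short formal argument: by Theorem~\ref{thm:TwSp_Mod_natural} the two adjunctions $(-\otimes\cC)\dashv\Fun^L(\cC,-)$ on $\Pr^L_{\St}$ for $\cC=\TwSp(BG,B\alpha)$ and $\cC=\Mod_{\Th(\alpha)}$ are \emph{the same} adjunction; naturality of its counit
\[
\mathrm{ev}_{\cD}:\Fun^L(\cC,\cD)\otimes\cC\longto\cD
\]
in $\cD$ (applied to the equivalence $\TwSp(BH,B\beta)\simeq\Mod_{\Th(\beta)}$) already produces a commuting square, and one just observes that $\Phi_X$ and $F_M$ are $\mathrm{ev}(X\otimes-)$ and $\mathrm{ev}(M\otimes-)$ under the respective identifications.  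There is nothing left to check after that.

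Your proposal instead isolates the ``compatibility'' issue (your step three) and then tries to close the argument by checking on the compact generator (your step four).  Two problems.  First, step three is the actual content, and ``this should follow formally once the monoidal equivalence is set up'' is a deferral, not an argument; the paper supplies exactly the missing formal step — it \emph{is} the naturality of the counit.  Second, your compact-generator reduction is imprecise as stated: agreement of two colimit-preserving functors $\Mod_{\Th(\alpha)}\to\Mod_{\Th(\beta)}$ on the object $\Th(\alpha)$ is not enough; you need agreement of $G_i(\Th(\alpha))$ as $\Th(\alpha)$--$\Th(\beta)$-\emph{bimodules} (via $\Fun^L(\Mod_R,\Mod_S)\simeq{}_{R}\mathrm{BiMod}_S$), and you only say ``both composites return $M$ with its evident $\Th(\beta)$-module structure,'' silently dropping the $\Th(\alpha)$-action that is exactly what is needed.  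Making that precise pushes you right back to the naturality question you deferred.  So the proposal is not wrong in direction, but the counit/evaluation argument is both shorter and closes the gap you flag; I would replace steps three and four by it.
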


We conclude in Section ~\ref{derivedstory} by reinterpreting several of the notions of this paper in the context of spectral algebraic geometry.
In particular, we identify twists as derived Azumaya algebras on the constant (or Betti) stack associated to a topological space.
We then study this setup with a view towards the classical problem of Grothendieck regarding realizability of \'{e}tale cohomology classes by Azumaya algebras.

\subsection*{Terminology and Notation}

Let us collect some common terminology and notation that we will use in the paper:

\begin{itemize}
  \item As we will use $(\infty,1)$-categories extensively throughout the paper (which we will also refer to simply as $\infty$-categories), we will follow common notation and terminology for these, following~\cite{HTT, HA}.  In particular, all categorical notions should be interpreted in the $\infty$-categorical sense. For example, colimits and limits will be what would classically had been referred to as homotopy colimit and homotopy limits.
  \item We will not differentiate between a 1-category and its corresponding $\infty$-category obtained via the nerve construction.
  \item We denote by $\cS$ and $\Sp$ the $\infty$-categories of spaces\footnote{Or: ``$\infty$-groupoids'', ``Kan complexes'', ``anima'', depending on your preferred terminology.} and spectra.
  \item  Given an $\infty$-category $\cC$, the mapping space between two objects will be denoted $\Map_{\cC}(x,y)$, or just $\Map(x,y)$ if $\cC$ is implicit. If $\cC$ is stable, we will denote the mapping spectra by $\map_{\cC}(x,y)$. Recall that $\Omega^\infty\map(x,y) \simeq \Map(x,y)$. If the $\infty$-category $\cC$ has internal mapping objects, we will denote them by $\hom(x,y)$.
  \item To distinguish between $(\infty,1)$-categories and $(\infty,2)$-categories we will typically use bold font for the latter.
  For example, $\Pr^{L}_{\St}$ is the $(\infty,1)$-category of presentable stable $\infty$-categories and left adjoints, while $\PPr_{\St}^L$ is the $(\infty,2)$-categorical version.
  \item Module categories are implicitly categories of right modules unless otherwise stated.

\end{itemize}

\subsection*{Acknowledgements}

We would like to thank Ben Antieau, Stefan Behrens, Bastiaan Cnossen, Chris Douglas, David Gepner, Peter Haine, Rune Haugseng, Thomas Kragh, Achim Krause, Ciprian Manolescu, Thomas Nikolaus, Maxime Ramzi, Bertrand Toën, and Lior Yanovski, for various comments, questions, and help during the writing of this paper. Special thanks are due to Maxime Ramzi for the idea of the proof of Proposition ~\ref{prop naturality of shit}. TM would like to acknowledge the support of the Giorgio and Elena Petronio Fellowship at the Institute for Advanced Study.
We would also like to thank an anonymous referee for helpful comments on an earlier version of this document.

\section{Preliminaries}

In this section, we collect a number of well-known structures that we will work with extensively in the paper.
We recall aspects of the theory of (symmetric monoidal) stable presentable $\infty$-categories, including Picard and Brauer spaces.
We then discuss sheaves of presentable stable $\infty$-categories, and review the connection between those sheaves that are locally constant and local systems of presentable stable $\infty$-categories.

\subsection{Stable presentable $\infty$-categories}

Let us start with some preliminaries on presentable $\infty$-categories.
We refer the reader in want of more details to~\cite{HTT}*{Section 5.5} and~\cite{HA}*{Section 4.8}.
Let $\Pr^{L}$ denote the $\infty$-category of presentable $\infty$-categories and left adjoint functors.
This admits a closed symmetric monoidal structure with tensor product denoted as $\otimes = \otimes^L$ and internal mapping objects denoted $\Fun^L(-,-)$.
The latter are just colimit-preserving functors from one presentable $\infty$-category to another.
We will typically restrict our attention to those presentable $\infty$-categories that are stable, and let $\Pr^L_{\St}$ denote the full subcategory of $\Pr^L$ consisting of those.
This inherits a closed symmetric monoidal structure which we denote by the same notation as before.
The tensor product is sometimes referred to as the \emph{Lurie tensor product}.
The unit for the symmetric monoidal structure is $\Sp$, the stable $\infty$-category of spectra.
Commutative algebra objects in $\Pr^{L}_{\St}$ are precisely symmetric monoidal presentable stable $\infty$-categories whose tensor product preserves small colimits separately in each variable, also called \emph{presentably symmetric monoidal $\infty$-categories}.

\begin{theorem}[Universal Property of the Lurie Tensor Product] \label{thm:uni_prop_tensor_PrLSt}
If $\cC$ and $\cd$ are presentable $\infty$-categories, then the tensor product $\cC \otimes \cd$ is the universal presentable $\infty$-category receiving a functor $\cC \times \cd \to \cC \otimes \cd$ which preserves small colimits separately in each variable.
\end{theorem}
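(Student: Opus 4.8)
The plan is to recast the statement as a corepresentability assertion: for every presentable $\infty$-category $\cE$, precomposition with the canonical functor $\cC\times\cd\to\cC\otimes\cd$ should induce an equivalence
\[
\Fun^{L}(\cC\otimes\cd,\cE)\;\overset{\sim}{\longto}\;\Fun^{L,L}(\cC\times\cd,\cE)\,,
\]
natural in $\cE$, where $\Fun^{L,L}(\cC\times\cd,\cE)\subseteq\Fun(\cC\times\cd,\cE)$ denotes the full subcategory of functors preserving small colimits separately in each variable. I would prove this first when $\cC$ and $\cd$ are presheaf $\infty$-categories and then bootstrap to the general case, using that every presentable $\infty$-category is an accessible localization of a presheaf category.

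One ingredient used throughout, and which is not circular, is the exponential law $\Fun^{L,L}(\cC\times\cd,\cE)\simeq\Fun^{L}(\cC,\Fun^{L}(\cd,\cE))$: it holds because colimits of colimit-preserving functors are computed pointwise (colimits commute with colimits), so $\Fun^{L}(\cd,\cE)$ is closed under colimits in $\Fun(\cd,\cE)$, and a functor into it is cocontinuous iff the adjoint functor $\cC\times\cd\to\cE$ is cocontinuous in the first variable. For presheaf categories $\cC=\mathcal{P}(\mathcal{A})$, $\cd=\mathcal{P}(\mathcal{B})$ with $\mathcal{A},\mathcal{B}$ small, the universal property of presheaves $\Fun^{L}(\mathcal{P}(\mathcal{A}),\cE)\simeq\Fun(\mathcal{A},\cE)$ then gives $\Fun^{L,L}(\mathcal{P}(\mathcal{A})\times\mathcal{P}(\mathcal{B}),\cE)\simeq\Fun(\mathcal{A}\times\mathcal{B},\cE)\simeq\Fun^{L}(\mathcal{P}(\mathcal{A}\times\mathcal{B}),\cE)$, compatibly with restriction along $\mathcal{A}\times\mathcal{B}\to\mathcal{P}(\mathcal{A})\times\mathcal{P}(\mathcal{B})$; the point is that a separately-cocontinuous functor out of the product is freely generated, by left Kan extension in each variable, from its values on representables. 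This identifies $\mathcal{P}(\mathcal{A})\otimes\mathcal{P}(\mathcal{B})\simeq\mathcal{P}(\mathcal{A}\times\mathcal{B})$ with the asserted universal property.

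For general $\cC,\cd$ I would choose accessible localizations $L_{\cC}\colon\mathcal{P}(\mathcal{A})\to\cC$ and $L_{\cd}\colon\mathcal{P}(\mathcal{B})\to\cd$ at small sets of morphisms $S$, $T$. By the construction of the symmetric monoidal structure on $\Pr^{L}_{\St}$ (see~\cite{HA}), $\cC\otimes\cd$ is the localization of $\mathcal{P}(\mathcal{A})\otimes\mathcal{P}(\mathcal{B})\simeq\mathcal{P}(\mathcal{A}\times\mathcal{B})$ at the morphisms $s\boxtimes\id_{Y}$ and $\id_{X}\boxtimes t$ for $s\in S$, $t\in T$, $X\in\mathcal{A}$, $Y\in\mathcal{B}$. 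A colimit-preserving functor $\mathcal{P}(\mathcal{A}\times\mathcal{B})\to\cE$ factors uniquely through this localization iff it inverts these morphisms; transported across the previous paragraph, a separately-cocontinuous $F\colon\mathcal{P}(\mathcal{A})\times\mathcal{P}(\mathcal{B})\to\cE$ inverts all the $s\boxtimes\id$ and $\id\boxtimes t$ iff $F$ descends along $L_{\cC}\times L_{\cd}$. To get the latter I would argue that if $F$ is cocontinuous in the first variable and inverts $s\boxtimes\id_{Y}$ for all $s\in S$ and all $Y$, then each $F(-,Y)$ inverts the full strongly saturated class generated by $S$ and hence factors through $L_{\cC}$; running the symmetric argument in the second variable produces a separately-cocontinuous factorization $\cC\times\cd\to\cE$. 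This establishes the universal property in general.

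The hard part is exactly this last matching step: one must check that the morphisms one is forced to invert in $\mathcal{P}(\mathcal{A}\times\mathcal{B})$ in order to land in $\cC\otimes\cd$ are, after transport, generated under colimits-in-each-variable by the box products of the generators of $S$ and $T$, which is where one genuinely uses the explicit presentation of the Lurie tensor product as such a localization rather than any a priori universal property. A slicker alternative, once one grants that the internal mapping object of the closed symmetric monoidal structure on $\Pr^{L}_{\St}$ is $\Fun^{L}(-,-)$, is to combine the tensor--hom adjunction $\Fun^{L}(\cC\otimes\cd,\cE)\simeq\Fun^{L}(\cC,\Fun^{L}(\cd,\cE))$ with the exponential law above; or, entirely formally, to produce $-\otimes\cd$ as the left adjoint of $\Fun^{L}(\cd,-)\colon\Pr^{L}_{\St}\to\Pr^{L}_{\St}$ via the adjoint functor theorem in the very large setting, after checking that this functor preserves limits and is accessible. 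In any of these routes one should keep track of the set-theoretic bookkeeping, since $\Fun^{L,L}(\cC\times\cd,-)$ only takes values in very large $\infty$-categories.
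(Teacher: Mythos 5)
The paper does not prove this theorem: it is stated in the Preliminaries as a recall, with the reader directed to \cite{HA}*{Section 4.8} (the relevant result there being Proposition 4.8.1.15 together with the discussion around Remark 4.8.1.13 and Lemma 4.8.1.16). So there is no in-paper proof to compare yours against, and I'll assess your sketch on its own.

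Your overall plan is the standard one, and the first two paragraphs are correct. Rewriting the statement as a corepresentability assertion, using the pointwise computation of colimits in $\Fun^{L}(\cd,\cE)$ to get the exponential law $\Fun^{L,L}(\cC\times\cd,\cE)\simeq\Fun^{L}(\cC,\Fun^{L}(\cd,\cE))$, and then chaining this with the universal property of presheaf categories to settle the presheaf case $\mathcal{P}(\mathcal{A})\otimes\mathcal{P}(\mathcal{B})\simeq\mathcal{P}(\mathcal{A}\times\mathcal{B})$ is exactly how one should proceed. The bootstrapping step via accessible localization is also the right move, and you correctly flag that the genuine content lies in identifying the class of morphisms being inverted in $\mathcal{P}(\mathcal{A}\times\mathcal{B})$ with the strongly saturated class generated, in each variable separately, by $S$ and $T$; this is where one leans on the explicit construction of the tensor product as a localization (HA 4.8.1.16) rather than on anything formal.

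Two cautionary remarks. First, there is a mild circularity risk in the way your third paragraph is phrased: you appeal to ``the construction of the symmetric monoidal structure on $\Pr^{L}$'' to say that $\cC\otimes\cd$ is the localization of $\mathcal{P}(\mathcal{A}\times\mathcal{B})$ at the box-products of $S$ and $T$. In Lurie's treatment this localization description is proved \emph{from} the universal property rather than taken as a definition, so you should either take the localization formula as your definition of $\cC\otimes\cd$ from the outset (in which case the statement becomes the theorem you are proving, which is fine), or be explicit that you are re-deriving one characterization from the other. Relatedly, the ``slicker alternative'' via the tensor-hom adjunction $\Fun^{L}(\cC\otimes\cd,\cE)\simeq\Fun^{L}(\cC,\Fun^{L}(\cd,\cE))$ is outright circular unless you have independently established that $\Fun^{L}(-,-)$ is the internal hom for the monoidal structure you are trying to characterize; the adjoint-functor-theorem route you mention afterward is the non-circular way to make that precise. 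Second, a notational slip: the theorem as stated in the paper concerns $\Pr^{L}$, not $\Pr^{L}_{\St}$; your argument is agnostic to stability, so this is harmless, but the wording should match. With those caveats, your sketch is essentially the correct argument and correctly isolates the non-formal step.
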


If $R$ is an $\bE_\infty$-ring, then $\Mod_R$, being a presentably symmetric monoidal stable $\infty$-category, is a commutative algebra object in $\Pr^{L}_{\St}$.
Hence, we can make sense of modules over $\Mod_R$ using the Lurie tensor product.

\begin{definition}
  Let $R$ be an $\bE_\infty$-ring. The $\infty$-category of \emph{$R$-linear categories} is defined as
  \[
  \Cat_R = \Mod_{\Mod_R}\left(\mathrm{Pr}^{L}_{\St}\right) \,.
  \]
\end{definition}

Note that $\Cat_R$ is itself a closed symmetric monoidal $\infty$-category with unit $\Mod_R$. By slight abuse of notation, we will denote the tensor product in $\Cat_R$ by $\otimes_R$.
Note that $\Cat_{\bS}$ is canonically identified with $\Pr^{L}_{\St}$ itself.
Before we move on, here is a assortment of various remarks that we will use later on.

\begin{rmk}
  If $R$ is an $\bE_n$-ring, then $\Mod_R$ is an $\bE_{n-1}$-algebra object in $\Pr^{L}_{\St}$, and $\Cat_R$ is an $\bE_{n-2}$-monoidal category.
  Hence, if we want $\Cat_R$ to be a monoidal category, then we need to assume that $R$ is at least an $\bE_3$-ring.
  All of the examples of $R$ that we have in mind in this paper will be $\bE_\infty$-rings though, so that $\Mod_R$ and $\Cat_R$ are themselves symmetric monoidal $\infty$-categories.
\end{rmk}

\begin{rmk}
 Recall that a presentable $\infty$-category is $\kappa$-compactly generated for some infinite regular cardinal~$\kappa$, by definition.
 For technical reasons, we will sometimes fix a sufficiently large uncountable regular cardinal ~$\kappa$, and work with presentable categories which are $\kappa$-compactly generated.
 One can form an $\infty$-category~$\Pr^{L}_{\kappa}$ of such categories.
 The point here is that while $\Pr^{L}_{\St}$ is not itself presentable, the $\infty$-category $\Pr^{L}_{\St,\kappa}$ is \cite{HA}*{Lemma 5.3.2.9.}.
 Since~$\Pr^L_{\St}$ is a filtered colimit of the presentable $\infty$-categories $\Pr_{\St,\kappa}^{L}$, this means that a lot of set-theoretical difficulties can be dealt with by using minor modifications.
 If we work over an $\bE_\infty$-ring $R$, as opposed to the sphere spectrum, we will use the notation
\[
\Cat_{R,\kappa} = \Mod_{\Mod_R}(\mathrm{Pr}^{L}_{\St,\kappa}) \,.
\]
We denote by $\Cat_{R, \omega}$ the $\infty$-category of \emph{compactly generated $R$-linear categories}.
If $R= \bS$, we retain the use of the notation $\Pr^{L}_{\St, \omega}$.
The $\infty$-category $\Cat_{R, \omega}$ inherits a symmetric monoidal structure from $\Cat_R$.
\end{rmk}

\begin{rmk}
  There is also an $\infty$-category $\Pr^R$ which consists of presentable $\infty$-category and right adjoint functors.
  There is an equivalence
  \[
  (\mathrm{Pr}^L)^{\oop} \simeq \mathrm{Pr}^{R}
  \]
  of $\infty$-categories which is the identity on objects and sends a left adjoint to its right adjoint on the level of morphisms.
  We will write $\Pr^{R}_{\St}$ for the full subcategory of $\Pr^R$ consisting of those presentable $\infty$-categories that are stable.
  We will also consider the $\infty$-category $\Pr^{L,R}$ consisting of presentable $\infty$-categories and functors that are \emph{both} left and right adjoints.
  The full subcategory that consists of those presentable $\infty$-categories that are stable will again be denoted $\Pr^{L,R}_{\St}$.
\end{rmk}

\subsection{Picard and Brauer Spaces}

For a symmetric monoidal $\infty$-category $\cC$, we let $\Pic(\cC)$ denote the $\infty$-groupoid of invertible objects in $\cC$ and the equivalences between them.
We refer to this as the \emph{Picard space} of $\cC$.
Note that the Picard space is closed under the symmetric monoidal structure, so that inherits the structure of an $\bE_\infty$-space.
Moreover, this $\bE_\infty$-structure is grouplike.

\begin{definition}
If $R$ is an $\bE_\infty$-ring, we will write
\[
\Pic(R) = \Pic(\Mod_{R})
\]
and refer to this as the \emph{Picard space} of $R$.
\end{definition}

We note that $\Pic(R)$ is essentially a delooping of the space of units $\GL_1(R)$.
In particular, there is an equivalence
\[
\Pic(R) \simeq \pi_0 \Pic(R) \times B\mathrm{GL}_1(R)
\]
as spaces.
As we remarked in the previous section, if $R$ is an $\bE_\infty$-ring, then the $\infty$-category of compactly generated $R$-linear categories is also symmetric monoidal, so we can make the following definition.

\begin{definition} \label{def:Brauer_space}
If $R$ is an $\bE_\infty$-ring, we will write
\[
\Br(R) = \Pic(\Cat_{R,\omega})
\]
and refer to this as the \emph{Brauer space} of $R$.
\end{definition}

The following result can be found in the literature; see for example~\cite{GL21}*{Proposition 5.7}.

\begin{proposition}
  The inclusion of the identity component gives us a map $B\Pic(R) \to \Br(R)$ which induces an equivalence
  \[
  \Pic(R) \simeq \Omega B \Pic(R) \overset{\simeq}\longto \Omega \Br(R)
  \]
  on applying the loop space functor.
\end{proposition}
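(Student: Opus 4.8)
The plan is to deduce this from the general delooping machinery for Picard spaces of symmetric monoidal $\infty$-categories. The key input is that $\Cat_{R,\omega}$ is a symmetric monoidal $\infty$-category whose unit is $\Mod_R$, and that $\Mod_R$ itself, regarded as an object of $\Cat_{R,\omega}$, has automorphism space $\GL_1(R)$; this is the $R$-linear analogue of the computation $\Aut_{\Mod_R}(R) \simeq \GL_1(R)$, together with the fact that an $R$-linear colimit-preserving self-equivalence of $\Mod_R$ is determined by where it sends $R$. Granting this, $B\Pic(R)$, which by definition is the delooping of the grouplike $\bE_\infty$-space $\Pic(R)$, maps into $\Br(R) = \Pic(\Cat_{R,\omega})$ by sending the basepoint to the unit $\Mod_R$ and using that an invertible object of $\Cat_{R,\omega}$ which is equivalent to the unit has automorphism space $\Pic(R)$; this is precisely the inclusion of the identity (i.e.\ the $\Mod_R$-) component, and it is a map of grouplike $\bE_\infty$-spaces (equivalently, of connective spectra in their deloopings).

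First I would recall that for any grouplike $\bE_\infty$-space $X$ and any pointed space $Y$, a pointed map $BX \to Y$ with the property that it restricts to an equivalence onto the path component of the basepoint of $Y$ and induces an isomorphism on $\Omega$ is automatically an equivalence onto the basepoint component of $Y$; more to the point here, the statement only asks for the loop-space equivalence, so it suffices to analyze $\Omega$ of the map $B\Pic(R) \to \Br(R)$. Since looping $B\Pic(R)$ returns $\Pic(R)$ (as $\Pic(R)$ is grouplike, the counit $\Pic(R) \to \Omega B \Pic(R)$ is an equivalence), the content is to identify $\Omega \Br(R) = \Omega \Pic(\Cat_{R,\omega})$ at the basepoint $\Mod_R$ with $\Pic(R)$, compatibly with the map. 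But $\Omega_{\Mod_R}\Pic(\Cat_{R,\omega})$ is the space of self-equivalences of the unit object $\Mod_R$ in $\Cat_{R,\omega}$, and by the automorphism computation above this is $\GL_1(R)$-torsorial — more precisely it is $B\GL_1(R) \times \pi_0$-trivial in the relevant sense — and matches $\Pic(R)$ on the nose. The second named map in the displayed chain, $\Pic(R) \simeq \Omega B\Pic(R)$, is just the grouplikeness of $\Pic(R)$, so the composite is the identification just described.

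Concretely, the steps in order: (1) observe $\Pic(R)$ is grouplike, hence $\Pic(R) \xrightarrow{\simeq} \Omega B \Pic(R)$; (2) construct the map $B\Pic(R) \to \Br(R)$ as the canonical map from the delooping of $\Aut_{\Cat_{R,\omega}}(\Mod_R)$ — note $\Pic(R) \simeq \Aut_{\Cat_{R,\omega}}(\Mod_R)$ — picking out the component of the unit (this is where one invokes, e.g., \cite{GL21}*{Proposition 5.7} or the cited literature for the honest construction and its $\bE_\infty$-structure); (3) apply $\Omega$ based at $\Mod_R$ and identify $\Omega_{\Mod_R}\Br(R) \simeq \Aut_{\Cat_{R,\omega}}(\Mod_R) \simeq \Pic(R)$, checking the triangle with the maps from (1) commutes. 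The main obstacle is step (2)–(3): rigorously producing the map of $\bE_\infty$-spaces (not merely spaces) $B\Pic(R) \to \Br(R)$ and verifying that $\Omega$ of it is the identity of $\Pic(R)$ under the stated identifications, which at bottom rests on the $R$-linear Eilenberg–Watts–type statement that $\Fun^L_R(\Mod_R, \Mod_R) \simeq \Mod_R$ as $\bE_\infty$-algebras and hence $\Aut(\Mod_R) \simeq \Pic(R)$. Since the proposition is quoted from the literature, I would present the argument as a recollection, citing \cite{GL21} for the full details rather than reproving the automorphism computation from scratch.
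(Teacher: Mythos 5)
The paper gives no proof of this proposition; it simply points to \cite{GL21}*{Proposition 5.7}. Your argument is the standard one that the cited reference carries out: identify $\Omega_{\Mod_R}\Br(R)$ with the space of $R$-linear colimit-preserving self-equivalences of the unit $\Mod_R$, and then use the Morita/Eilenberg--Watts computation (such an equivalence is determined by the image of $R$, which must be an invertible $R$-module) to see that this is $\Pic(R)$; since the inclusion of the identity component is always an equivalence after applying $\Omega$, the rest follows. So your route is essentially the paper's (deferred) route.

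One internal slip worth flagging: in your first paragraph you assert that $\Mod_R$, as an object of $\Cat_{R,\omega}$, has automorphism space $\GL_1(R)$. That is off by one loop --- $\Aut_{\Cat_{R,\omega}}(\Mod_R) \simeq \Pic(R)$, while $\GL_1(R) \simeq \Aut_{\Mod_R}(R) \simeq \Omega \Pic(R)$. You do state the correct identification $\Pic(R) \simeq \Aut_{\Cat_{R,\omega}}(\Mod_R)$ in step (2), and the phrase ``determined by where it sends $R$'' implies the correct conclusion, so the argument as a whole goes through; but the opening claim as literally written is inconsistent with the rest and should be corrected.
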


\begin{rmk}
  For $R=\bS$, the inclusion $B\Pic(\bS) \to \Br(\bS)$ is already an equivalence.
\end{rmk}

The homotopy groups of the Brauer space $\Br(R)$ agree with the shifted homotopy groups of $R$ in high enough dimensions and reduces to {\'e}tale cohomology invariants in low dimensions.

\begin{proposition}\cite{AG14}*{Corollary 7.13}
Let $R$ be a connective $\bE_\infty$-ring. Then we have
\[
\pi_{n} \Br(R) = \begin{cases}
H^1_{\et}(\Spec \pi_0 R , \bZ) \times H^2_{\et}(\Spec \pi_0 R, \mathbb{G}_m) & n=0 \\
H^0_{\et}(\Spec \pi_0 R , \bZ) \times H^1_{\et}(\Spec \pi_0 R, \mathbb{G}_m) & n=1 \\
(\pi_0 R)^\times & n=2 \\
\pi_{n-2} R & n \geq 3 \,.
\end{cases}
\]
In particular, note that if $R$ is a classical discrete commutative ring, then the Brauer space has at most three non-trivial homotopy groups.
\end{proposition}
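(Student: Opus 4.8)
The plan is to isolate the formal part of the computation from the one genuinely arithmetic input. First I would exploit the delooping relation $\Omega\Br(R) \simeq \Pic(R)$ established just above, which gives $\pi_n\Br(R) \cong \pi_{n-1}\Pic(R)$ for every $n \geq 1$. Combining this with the splitting $\Pic(R) \simeq \pi_0\Pic(R) \times B\GL_1(R)$ recorded above and the standard identification $\pi_m\GL_1(R) \cong \pi_m R$ for $m \geq 1$, $\pi_0\GL_1(R) \cong (\pi_0 R)^\times$, one immediately gets the last two cases: for $n \geq 3$,
$\pi_n\Br(R) \cong \pi_{n-1}\Pic(R) \cong \pi_{n-1}\GL_1(R) \cong \pi_{n-2}R$, and for $n = 2$, $\pi_2\Br(R) \cong \pi_1\Pic(R) \cong \pi_0\GL_1(R) \cong (\pi_0 R)^\times$. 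So everything reduces to identifying $\pi_1\Br(R) \cong \pi_0\Pic(R)$ and $\pi_0\Br(R)$, i.e.\ the Picard and Brauer groups of $R$.

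For $\pi_0\Pic(R)$, the key point is that invertibility is an fppf-local condition and that $R$ is connective. I would argue that an invertible $R$-module $M$ is fppf-locally on $\Spec\pi_0 R$ equivalent to a shift $\Sigma^k R$, that the integer $k$ assembles into a locally constant function on the space underlying $\Spec\pi_0 R$ (yielding the factor $H^0_{\et}(\Spec\pi_0 R,\bZ)$), and that after twisting away this shift one is left with connective invertible modules of full support, which are necessarily flat and projective of rank one, hence classified by the ordinary Picard group $\mathrm{Pic}(\pi_0 R) = H^1_{\et}(\Spec\pi_0 R,\mathbb{G}_m)$. Equivalently: the base-change map $\pi_0\Pic(R) \to \pi_0\Pic(\pi_0 R)$ is an isomorphism, and the target decomposes as stated. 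This also pins down the $n = 1$ row.

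The substantive case is $\pi_0\Br(R)$, which I would handle in two steps. The first is a dévissage along the Postnikov tower $R \simeq \lim_n \tau_{\leq n}R$: one shows the fiber of $\Br(\tau_{\leq n+1}R) \to \Br(\tau_{\leq n}R)$ is governed by a topological André--Quillen (stable) cohomology group with coefficients in $\pi_{n+1}R$ placed in high degree, hence is sufficiently connected that passing to the limit changes neither $\pi_0$ nor $\pi_1$; this reduces the computation of $\pi_{\leq 1}\Br(R)$ to that of $\pi_{\leq 1}\Br(\pi_0 R)$ for the discrete ring $\pi_0 R$. The second step is Toën's theorem that every derived Brauer class over a discrete commutative ring $A = \pi_0 R$ is represented by an Azumaya algebra (resting on the existence of compact generators for twisted derived categories), which gives $\pi_0\Br(A) \cong H^1_{\et}(\Spec A,\bZ) \times H^2_{\et}(\Spec A,\mathbb{G}_m)$: the second factor is the cohomological Brauer group in the classical sense, the first the contribution of locally constant $\bZ$-gradings. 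The ``in particular'' is then immediate, since for discrete $R = A$ one has $\pi_{n-2}A = 0$ for $n \geq 3$, leaving only $\pi_0\Br(A)$, $\pi_1\Br(A)$, and $\pi_2\Br(A) = A^\times$.

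I expect the main obstacle to be entirely in the last step: the connectivity estimate for the fibers in the Postnikov dévissage, which requires a precise identification of the deformation-theoretic (cotangent) cohomology of $\Br$, together with Toën's representability result in the discrete case. By contrast the delooping, the splitting of $\Pic$, and the reduction of $\Pic(R)$ to $\Pic(\pi_0 R)$ are all formal.
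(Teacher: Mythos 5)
The paper does not prove this proposition; it is stated as a citation to Antieau--Gepner~\cite{AG14}*{Corollary 7.13}, so the right comparison is with that source. Your handling of the $n \geq 2$ cases (deloop $\Br$ to $\Pic$, split $\Pic(R) \simeq \pi_0\Pic(R) \times B\GL_1(R)$, read off $\pi_*\GL_1(R)$) is the standard formal reduction and matches theirs. Your treatment of $\pi_0\Pic(R)$ --- that the base-change map $\pi_0\Pic(R)\to\pi_0\Pic(\pi_0 R)$ is an isomorphism for connective $R$, and that the derived Picard group of a discrete ring $A$ decomposes as $H^0_{\et}(\Spec A,\bZ)\times H^1_{\et}(\Spec A,\mathbb{G}_m)$ (a locally constant shift plus an ordinary line bundle) --- is also correct and is essentially what appears in~\cite{AG14}.

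Where you diverge genuinely is the computation of $\pi_0\Br(R)$, and here there is a real gap. Antieau--Gepner do not dévisse along the Postnikov tower of $R$: they show the Brauer stack $\mathbf{Br}$ is an étale sheaf that is étale-locally split, identify its homotopy sheaves ($0$, $\bZ$, $\mathbb{G}_m$, then quasi-coherent $\pi_{n-2}$ for $n\geq 3$), and read off $\pi_*\Br(R)$ from the descent spectral sequence, which degenerates on an affine because étale cohomology of quasi-coherent sheaves vanishes in positive degrees. Your alternative --- pass to $\lim_n\Br(\tau_{\leq n}R)$ and invoke To\"en's representability theorem for discrete rings --- is plausible in outline, but two of its steps are not the ``precise identification'' you characterize them as: (i) it is not formal that $\Br$ commutes with the Postnikov limit $R\simeq\lim_n\tau_{\leq n}R$; nilcompleteness of the Brauer functor is a substantive theorem (it is part of Antieau--Gepner's proof that $\mathbf{Br}$ is a geometric stack), and without it the reduction to the discrete case does not even get off the ground; and (ii) the connectivity estimate for the fiber of $\Br(\tau_{\leq n+1}R)\to\Br(\tau_{\leq n}R)$ amounts to computing the cotangent complex of $\mathbf{Br}$, which is again one of the hard inputs of~\cite{AG14}, not a corollary of it. So the ``formal vs.\ arithmetic'' split you propose is miscalibrated: on your route the bulk of the difficulty is hidden in the Postnikov convergence, which the étale-descent argument of~\cite{AG14} is designed precisely to avoid.
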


For reference, we list some low dimensional homotopy groups for various common $\bE_\infty$-rings in Table~\ref{table:homotopy}. We assembled this table from various sources, including~\cite{Dou05}*{Table 3},~\cite{AG14}*{Theorem 7.16}, and~\cite{DK70}.

\begin{table}[h!]
\begin{center}
  \begin{tabular}{ c | c c c c c c c}
    $\pi_n \Br(R)$ & $0$ & 1 & 2 & 3 & 4 & 5 & 6 \\
    \hline
    $\bS $ & 0 & $\bZ$ & $\bZ/2$ & $\bZ/2$ & $\bZ/2$ & $\bZ/24$ & 0 \\
    $H\bZ$ & 0 & $\bZ$ & $\bZ/2$ & 0 & 0 & 0 & 0 \\
    $Hk$ & 0 & $\bZ$ & $k^{\times}$ & 0 & 0 & 0 & 0 \\
    $\MU$ & 0 & $\bZ$ & $\bZ/2$ & 0 & $\bZ$ & 0 & $\bZ$ \\
    $\KU$ & ?? & $\bZ/2$ & $\bZ/2$ & 0 & $\bZ$ & 0 & $\bZ$ \\
    $\KO$ & ?? & $\bZ/8$ & $\bZ/2$ & $\bZ/2$ & $\bZ/2$ & 0 & $\bZ$
  \end{tabular}
  \caption{Table of homotopy groups $\pi_n \Br(R)$ for various common $\bE_\infty$-rings $R$. Here, $k$ is an arbitrary field.}
  \label{table:homotopy}
\end{center}
\end{table}

\subsection{The $(\infty, 2)$-category of presentable stable $\infty$-categories}

The $(\infty,1)$-category of stable presentable $\infty$-categories is simply a shadow of higher structure.
Indeed, note that given any two presentable $\infty$-categories~$\cC$ and $\calD$, there exists an internal mapping object $\Fun^{L}(\cC, \calD)$, the $\infty$-category of left adjoint functors between them.
This suggests that there is a natural enrichment of $\Pr^{L}_{\St}$ in $\infty$-categories, giving it the structure of an $(\infty, 2)$-category~\cite{GH15}.
A detailed description of the current status of the theory of $(\infty,2)$-categories would take us very far afield, and is not strictly necessary for results of this paper.
There are several models for $(\infty,2)$-categories at this point, but we will typically think of these as enriched over $\infty$-categories, the same way as $\infty$-categories are enriched over $\infty$-groupoids.

\begin{definition}
By~\cite{GR17}*{Chapter I.1}, there exists a symmetric monoidal $(\infty,2)$-category consisting of stable cocomplete $\infty$-categories. We let $\mathbf{Pr}^{L}_{\St}$ denote the full sub-($\infty,2)$-category of the above, spanned by  the presentable stable $\infty$-categories.
\end{definition}

By the discussion in~\cite{HSS17}*{Section 4.4}, for any $(\infty,2)$-category $\mathbf{C}$ and any $\bE_{\infty}$-algebra $R$ in the core $\infty$-category $\iota_1 \mathbf{C}$, there exists an $(\infty,2)$-category $\mathbf{Mod}_{R}$ for which
\[
\iota_1 \mathbf{Mod}_{R} \simeq \Mod_R(\iota_1 \mathbf{C}) \,.
\]
Applying this  to  $\mathbf{C} = \mathbf{Pr}^{L}_{\St}$, we obtain an $(\infty,2)$-category organizing the totality of $R$-linear categories, which we denote $\mathbf{Cat}_{R}$ again using bold notation as.

\begin{definition}
As $\Cat_{R, \omega}$ is an full subcategory of $\Cat_{R}$, and since the latter has an $(\infty,2)$-categorical enhancement, we conclude that compactly generated $R$-linear categories form an $(\infty,2)$-category as well, which we denote by $\mathbf{Cat}_{R, \omega}$.
\end{definition}

\subsection{Sheaves of stable presentable $\infty$-categories}

Some of the main objects of this paper will arise via sheaves on a fixed topological space valued in presentable stable $\infty$-categories.
In this section, we go over what this means.
First, we recall what a sheaf on an arbitrary (small) $\infty$-category $\cC$ is.
We refer to~\cite{HTT}*{Chapter 5} and~\cite{SAG}*{Section 1.3.1.} for details.

\begin{definition}
    Let $\cC$ be a $\kappa$-small $\infty$-category. Then we define the $\infty$-category of \emph{presheaves on $\cC$} to be the functor category
    \[
    \mathrm{PSh}(\cC) = \Fun(\cC^{\oop}, \cS).
    \]
\end{definition}

Suppose that $\cC$ comes equipped with a Grothendieck topology.
In this case, one might want to impose a descent condition with respect to the chosen topology.
The language of localizations of $\infty$-categories is well-suited for describing this passage.

\begin{definition}
    We define the $\infty$-category of \emph{sheaves on $\cC$} as the left-exact accessible localization
\[
\Shv(\cC) \lhook\joinrel\longrightarrow \mathrm{PSh}(\cC) \,.
\]
This is the full subcategory of functors which satisfy descent with respect to covers given by the Grothendieck topology on $\cC$.
\end{definition}

\begin{rmk}
As  $\mathrm{PSh}(\cC)$ and $\Shv(\cC)$ are presentable $\infty$-categories, we may stabilize by tensoring along
$$
- \otimes \Sp: \mathrm{Pr}^{L} \longto \mathrm{Pr}^{L}_{\St}
$$
to obtain $\infty$-categories of presheaves and sheaves on $\cC$ valued in the $\infty$-category of spectra.
We use the notation  $\mathrm{PSh}_{\Sp}(\cC)$ and $\Shv_{\Sp}(\cC)$ for these respective stable $\infty$-categories.
\end{rmk}

Now we specialize, essentially once and for all, on the case $\cC = \Open(B)$, the category of open subsets of a fixed topological space $B$, with morphisms given by inclusions of open sets.
By identifying this with its nerve, we view this as an $\infty$-site, with Grothendieck topology determined by the usual topology on $B$.
Recall that a continuous map $f : B \to A$ of topological spaces gives rise to a restriction morphism $\Open(A) \to \Open(B)$ of sites determined by the assignment $V \mapsto f^{-1}(V)$.
We now introduce the $\infty$-category of sheaves of $\kappa$-compactly generated $R$-linear categories on a fixed topological space $B$.

\begin{definition}
    \leavevmode
    \begin{itemize}
      \item We denote the $\infty$-category of $\mathrm{Cat}_{R, \kappa}$-valued presheaves by
      \[
   \mathrm{PSh}_{\mathrm{Cat}_{R, \kappa}}(B) = \Fun(\Open(B)^{\oop},\mathrm{Cat}_{R, \kappa} ) \,.
      \]
      \item We denote the full subcategory of $\Cat_{R,\kappa}$-valued sheaves with respect to the aforementioned Grothendieck topology  by
       \[
       \Shv_{\Cat_{R, \kappa}}(B) = \Fun^{\tau }( \Open(B)^{\oop},\mathrm{Cat}_{R, \kappa}) \,.
       \]
       The subscript $\tau$ on notation on the right denotes the accessible localization of the $\infty$-category of presheaves corresponding to imposing descent along covers in the topology of $B$.
    \end{itemize}
\end{definition}

We shall refer to objects of $\Shv_{\mathrm{Cat}_{R, \kappa}}(B)$ as \emph{$R$-linear categorical sheaves on $B$}.
Recall that the fully faithful inclusion of sheaves into presheaves admits a left adjoint
\[
L : \mathrm{Psh}_{\mathrm{Cat}_{R, \kappa}}(B)  \longto \Shv_{\mathrm{Cat}_{R, \kappa}}(B)
\]
that is referred to as \emph{sheafification}.
This exhibits $\Shv_{\mathrm{Cat}_{R, \kappa}}(B)$  as an accessible localization.

\begin{rmk} \label{alternatedescriptionofsheaves}
Let $\mathcal{X}$ denote the $\infty$-topos of sheaves of spaces on $B$.
Then by ~\cite[Proposition 4.8.1.17]{HA}, one has the description
$$
\Shv_{\mathrm{Cat}_{R, \kappa}}(B) \simeq \mathcal{X} \otimes \mathrm{Cat}_{R, \kappa} \simeq \Fun^{R}( \mathcal{X}^{\oop}, \mathrm{Cat}_{R, \kappa})
$$
where the middle and left terms denote the Lurie tensor product of $\mathcal{X}$ and $\mathrm{Cat}_{R, \kappa}$  and the presentable $\infty$-category of right adjoint functors between $\mathcal{X}$ and $\mathrm{Cat}_{R, \kappa}$.
This uses presentability of $\mathrm{Cat}_{R, \kappa}$.
\end{rmk}

The category $\mathrm{PSh}_{\mathrm{Cat}_{R, \kappa}}(B)$ comes equipped with the pointwise symmetric monoidal structure.
Note that the unit of this symmetric monoidal structure is the constant functor with value $\Mod_R$.
The sheafification $L$ is compatible with the symmetric monoidal structure, so that $\Shv_{\mathrm{Cat}_{R, \kappa}}(B)$ itself inherits a symmetric monoidal structure, simply given as the sheafification of the pointwise tensor product~\cite[Proposition 1.3.4.6]{SAG}.
The sheafification functor is then a symmetric monoidal functor.
We would like to understand the unit of this symmetric monoidal structure.

\begin{definition}
  The \emph{structure sheaf of parametrized $R$-modules} is the sheaf
  \[
  \cO_{B,R} : \Open(B)^{\oop} \longto \Cat_{R} \,, \quad U \mapsto \Fun(\mathrm{Sing}(U),\Mod_R) \,.
  \]
\end{definition}

We remark that the structure sheaf is manifestly a sheaf because if $ \mathcal{U}_\bullet \to U$ is some open cover of $U$, then we have the equivalence
\[
\cO_{B,R}(U) = \Fun(U , \Mod_R)  \simeq \Fun (\colim_i \mathcal{U}_i, \Mod_R) \simeq \lim_i \Fun( \mathcal{U}_i, \Mod_R)  = \lim_i \cO(U_i) \,.
\]
We claim that this sheaf is equivalent to the sheafification of the constant functor.

\begin{proposition}
There is an equivalence $L(\mathrm{const}_{\Mod_R}) \simeq  \cO_{B,R}$. Hence $\cO_{B,R}$ is the unit for the symmetric monoidal structure on $\Shv_{\mathrm{Cat}_{R, \kappa}}(B)$.
\end{proposition}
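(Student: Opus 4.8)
The plan is to deduce the second assertion formally from the first, and to establish the first by computing the sheafification of the constant presheaf explicitly.

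For the second sentence, recall that the sheafification functor $L : \mathrm{PSh}_{\Cat_{R,\kappa}}(B) \to \Shv_{\Cat_{R,\kappa}}(B)$ is symmetric monoidal (following \cite[Proposition 1.3.4.6]{SAG}, as recalled above) and that $\mathrm{const}_{\Mod_R}$ is the unit of the pointwise symmetric monoidal structure on $\mathrm{PSh}_{\Cat_{R,\kappa}}(B)$. A symmetric monoidal functor carries units to units, so $L(\mathrm{const}_{\Mod_R})$ is the unit of $\Shv_{\Cat_{R,\kappa}}(B)$; granting the first sentence, this identifies $\cO_{B,R}$ with that unit. Hence the real content is the equivalence $L(\mathrm{const}_{\Mod_R}) \simeq \cO_{B,R}$.

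To prove it, I would first produce the comparison map: for each open $U$ the collapse $\Sing(U) \to \ast$ induces $\Mod_R = \Fun(\ast, \Mod_R) \to \Fun(\Sing(U), \Mod_R) = \cO_{B,R}(U)$, and these are compatible with restriction along inclusions of opens since $\Sing$ is functorial, so they assemble into a map of presheaves $\mathrm{const}_{\Mod_R} \to \cO_{B,R}$. As $\cO_{B,R}$ is already a sheaf (recorded just above, using that $\Sing$ carries an open cover to the colimit of its \v{C}ech nerve and that $\Fun(-,\Mod_R)$ sends colimits of spaces to limits of categories), this map factors uniquely as $\mathrm{const}_{\Mod_R} \to L(\mathrm{const}_{\Mod_R}) \to \cO_{B,R}$, and it remains to show the second map is an equivalence, equivalently that $\mathrm{const}_{\Mod_R} \to \cO_{B,R}$ is a local equivalence.

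I would verify this using the plus construction $\mathcal{P}^{+}(U) \simeq \colim_{\mathcal{U}} \lim_{[n]\in\Delta} \mathcal{P}(\check C(\mathcal{U})_{n})$, the colimit over the filtered poset of open covers $\mathcal{U}$ of $U$. For $\mathcal{P} = \mathrm{const}_{\Mod_R}$ the $n$-th \v{C}ech term contributes $\Fun(N(\mathcal{U})_{n}, \Mod_R)$ for $N(\mathcal{U})$ the nerve of the cover, so, again because $\Fun(-,\Mod_R)$ converts the colimit $|N(\mathcal{U})| \simeq \colim_{[n]\in\Delta^{\oop}} N(\mathcal{U})_{n}$ into a limit, the totalization over $\Delta$ is $\Fun(|N(\mathcal{U})|, \Mod_R)$, and one is left to identify $\colim_{\mathcal{U}} \Fun(|N(\mathcal{U})|, \Mod_R)$ with $\Fun(\Sing(U),\Mod_R)$; granting this, $(\mathrm{const}_{\Mod_R})^{+}$ already equals the sheaf $\cO_{B,R}$ and we conclude. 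The ambidexterity of $\mathrm{Pr}^{L}_{\St}$-valued (equivalently $\Cat_{R,\kappa}$-valued) diagrams indexed by spaces \cite{HL13} is what makes this last step feasible: it lets one rewrite $\Fun(S,\Mod_R)$ as $\colim_{S}\Mod_R$ and commute the colimit over $\mathcal{U}$ past $\Fun(-,\Mod_R)$. When $B$ is locally contractible there is also the shorter route of checking the comparison map on a basis of contractible opens $W$, where $\cO_{B,R}(W) = \Mod_R$ and $L(\mathrm{const}_{\Mod_R})(W) = \Mod_R$ as well, and invoking that a map of sheaves which is an equivalence on a basis is an equivalence.

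The main obstacle is exactly this identification of $\colim_{\mathcal{U}} \Fun(|N(\mathcal{U})|,\Mod_R)$ with $\Fun(\Sing(U),\Mod_R)$: it is where control of the local homotopy type of $B$ (its being locally contractible, as used elsewhere in the paper) enters, and where ambidexterity is decisive. Indeed, with genuine-space coefficients the cotensor $\Map(-,K)$ is not simultaneously a tensor and the constant sheaf need not be $\Map(\Sing(-),K)$, so it is precisely the ambidextrous behaviour of $\Fun(-,\Mod_R)$ on $\Cat_{R,\kappa}$ that makes the categorical statement go through.
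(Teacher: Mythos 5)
Your overall plan matches the paper's: construct the comparison map $\mathrm{const}_{\Mod_R} \to \cO_{B,R}$, observe that $\cO_{B,R}$ is already a sheaf so the map factors through $L(\mathrm{const}_{\Mod_R})$, and deduce the unit statement from monoidality of $L$ once the first sentence is known. Where you diverge is in verifying that $L(\mathrm{const}_{\Mod_R}) \to \cO_{B,R}$ is an equivalence. The paper does this by checking on stalks: the stalk of $L(\mathrm{const}_{\Mod_R})$ at any point agrees with that of the underlying presheaf, hence is $\Mod_R$; the stalk of $\cO_{B,R}$ is likewise $\Mod_R$; and the stalk functors are jointly conservative on $\Shv_{\Cat_{R,\kappa}}(B)$. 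That is the whole argument.

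Your primary route via the plus construction has a genuine gap, which you yourself flag: the identification $\colim_{\mathcal{U}} \Fun(|N(\mathcal{U})|,\Mod_R) \simeq \Fun(\Sing(U),\Mod_R)$ requires a nerve theorem asserting $\colim_{\mathcal{U}} |N(\mathcal{U})| \simeq \Sing(U)$, and this is an additional input about the space $U$ that ambidexterity alone does not supply. Separately, a single application of $(-)^{+}$ does not in general compute $\infty$-categorical sheafification (one needs a transfinite iteration absent further hypotheses), so quoting $L \simeq (-)^{+}$ needs justification. Your fallback route via a basis of contractible opens is closer in spirit to the paper's, but as written it presupposes $L(\mathrm{const}_{\Mod_R})(W)\simeq\Mod_R$ for contractible $W$, which is very near the content of the proposition and is not automatic: the sheafified constant presheaf evaluated on a given open need not equal the constant value (this is what the shape of $\Shv(W)$ measures, and what the comparison with $\cO_{B,R}$ is meant to establish). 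Replacing the basis by stalks resolves both issues at once: the agreement of stalks of $\mathcal{F}$ and $L\mathcal{F}$ is formal (sheafification is the localization at stalkwise equivalences), and one is left with only the single filtered colimit $\colim_{U\ni b}\Fun(\Sing(U),\Mod_R)$, which collapses to $\Mod_R$ on passing to a cofinal family of contractible $U$.
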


\begin{proof}
We first claim that there exists a map of presheaves $\mathrm{const}_{\Mod_R} \to  \cO_{B,R}$, which for each open set $U$ corresponds to a functor $\Mod_R \to \Fun(\Sing(U), \Mod_R)$
Upon taking adjoints, this gives rise to a map $L(\mathrm{const}_{\Mod_R}) \to  \cO_{B,R}$
in $\Shv_{\Cat_{R,\kappa}}(B)$ which we would like to show is an equivalence.
As usual, it is enough to check this on stalks.
Of course, the stalk of $L(\mathrm{const}_{\Mod_R}) $ is exactly $\Mod_R$, as it is determined by the stalks of the  underlying presheaf.
It is easy to see that the stalk of $\cO_{B,R}$ at some point is $\Mod_R$, as well.
Thus the induced map $L(\mathrm{const}_{\Mod_R}) \to  \cO_{B,R}$ is an equivalence on stalks, and since taking stalks for all points is a conservative functor, this shows that the map is itself an equivalence.
\end{proof}

\subsection{Parametrized categories and ambidexterity}
\label{sec:parametrized_cats}

Before specializing to the situation at hand, we start with a basic treatment on parametrized stable $\infty$-categories in general.
In contrast to the last section, note that $B$ will be an object in the $\infty$-category of spaces, rather than a topological space, in what follows.

\begin{definition}
Let $B$ be a space and let $R$ denote a fixed $\bE_\infty$-ring.
We define a \emph{parametrized $R$-linear category} to be a functor $F: B \to  \Cat_{R}$.
\end{definition}

Such gadgets naturally assemble to form an $\infty$-category $\Fun(B, \Cat_{R})$ which we refer to as the $\infty$-category of parametrized $R$-linear categories on $B$.
We remark that this inherits a pointwise symmetric symmetric monoidal structure from the symmetric monoidal structure on $ \Cat_{R}$.
Note that if $F: B \to \Cat_{R}$ is a parametrized $R$-linear category then, as $\Cat_R$ is both complete and cocomplete, we may form the limit and colimit of $F$.
In fact, these two will agree, due to the phenomenon known as ambidexterity.
This is not a new result: it appears for example in~\cite{HL13}*{Example 4.3.11.}.
However, for ease of reference, we state it below with a proof.

\begin{proposition}[Ambidexterity] \label{ambidexterity in Prl}
Given a parametrized $R$-linear category $F : B \to \Cat_R$ there is a natural equivalence
\[
\colim_B F \simeq \lim_B F \,.
\]
\end{proposition}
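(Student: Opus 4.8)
The plan is to reduce the statement to the case $R = \bS$ (i.e. to $\Pr^L_{\St}$) and then invoke the $\infty$-categorical ambidexterity machinery of Hopkins–Lurie. The key observation is that the family $F : B \to \Cat_R$ is indexed by a \emph{space} $B$, i.e. an $\infty$-groupoid, so every morphism in the indexing diagram is an equivalence; this is exactly the setting in which one expects limits and colimits of diagrams valued in a ``pointed'' (here: stable, hence semiadditive in a strong sense) target to coincide. More precisely, I would argue as follows.

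\medskip

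\textbf{Step 1: Reduce to finite spaces / the inductive setup.} Write $B$ as a filtered colimit of its finite subcomplexes, or more usefully, use that $B$ is built from the point by (homotopy) pushouts and filtered colimits, so it suffices to treat: (a) $B = \ast$, where the statement is trivial; (b) $B$ a finite set, where $\colim = \bigoplus$ and $\lim = \prod$ agree because $\Cat_R$ is additive (its objects are stable, finite products and coproducts of presentable stable categories agree); and then bootstrap. Actually the cleanest route is to cite directly: the relevant statement is that $\Pr^L_{\St}$ — equivalently $\Cat_R$ for any $\bE_\infty$-ring $R$ — is \emph{$\infty$-semiadditive} (``$\ast$-ambidextrous'' for all $\pi$-finite, and in fact all, spaces), which is \cite{HL13}*{Example 4.3.11}. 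Since $\Cat_R = \Mod_{\Mod_R}(\Pr^L_{\St})$ is modules in $\Pr^L_{\St}$ over a commutative algebra, and both the forgetful functor $\Cat_R \to \Pr^L_{\St}$ and its adjoints preserve and detect the relevant (co)limits, ambidexterity for $B$-indexed diagrams descends from $\Pr^L_{\St}$ to $\Cat_R$.

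\medskip

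\textbf{Step 2: Produce the comparison map.} For a diagram $F : B \to \cC$ with $\cC$ admitting both, there is always a canonical ``norm'' or assembly map $\colim_B F \to \lim_B F$ whenever $\cC$ is pointed and $B$ is connected (built from the unit/counit of the $p_!\dashv p^*\dashv p_*$ adjunctions along $p: B \to \ast$, using that $p^*$ is fully faithful on a point-indexed diagram... more precisely one builds it via the ambidexterity norm of \cite{HL13}). One should check this map is the one induced functorially and then show it is an equivalence by the inductive/cell-by-cell argument: it holds on $\ast$, is stable under the pushouts and filtered colimits that build $B$ (using that in $\Pr^L_{\St}$, and hence in $\Cat_R$, both $\lim$ and $\colim$ over such diagrams interact well — filtered colimits and finite limits, pushouts), and the two-out-of-three property propagates the equivalence.

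\medskip

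\textbf{Step 3: Naturality.} The word ``natural'' in the statement should be addressed: the equivalence is natural in $F \in \Fun(B, \Cat_R)$ because the norm map is a natural transformation of functors $\Fun(B,\Cat_R) \to \Cat_R$, being assembled entirely from unit/counit data of adjunctions. So one remarks that $\colim_B(-)$ and $\lim_B(-)$ are both functors $\Fun(B,\Cat_R)\to\Cat_R$ and the norm is a natural transformation between them which is objectwise an equivalence, hence an equivalence of functors.

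\medskip

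\textbf{The main obstacle} is Step 1/Step 2 done honestly: producing the canonical comparison map with enough coherence, and running the ``build $B$ from cells'' induction while controlling that the ambidexterity equivalence is stable under the pushouts used. The slick way out — and what I expect the authors do — is to simply cite \cite{HL13}*{Example 4.3.11} for the assertion that $\Pr^L_{\St}$ is $\infty$-semiadditive (all spaces are ambidextrous there), then note $\Cat_R = \Mod_{\Mod_R}(\Pr^L_{\St})$ inherits $\infty$-semiadditivity since ambidexterity passes to module categories over a fixed algebra object (the forgetful functor is conservative and preserves limits and colimits, and creates the norm map), and conclude. I would present the proof in that concise form, with the cell-induction only sketched as the justification for the cited fact.
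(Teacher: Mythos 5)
Your proposal is correct in conceptual outline but takes a genuinely different route from the paper, and the route you flesh out has a gap. The paper's proof avoids the Hopkins--Lurie norm map entirely: after reducing to $R = \bS$, it observes that both inclusions $\Pr^L_{\St} \hookrightarrow \Cat_\infty$ and $\Pr^R_{\St} \hookrightarrow \Cat_\infty$ preserve small limits, so that $\lim_B F$ and $\colim_B F$ (the latter computed as $\lim_{B^{\oop}} F^{\oop}$ in $\Pr^R_{\St} \simeq (\Pr^L_{\St})^{\oop}$) may both be computed in $\Cat_\infty$; since $B$ is a Kan complex there is an equivalence $B \simeq B^{\oop}$, and since $F$ necessarily factors through the core groupoid $(\Pr^L_{\St})^{\simeq}$, the two underlying diagrams into $\Cat_\infty$ are canonically identified, so their limits agree on the nose. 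This buys a short, self-contained argument with no coherence to manage and no induction over $B$.

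What you propose instead---construct a norm map $\colim_B F \to \lim_B F$ and verify it is an equivalence by building $B$ from cells---is where the gap lies. In the Hopkins--Lurie framework the norm map is only available after a weak-ambidexterity hypothesis has been established inductively, and your sketch does not actually verify that the norm equivalence propagates along the homotopy pushouts and filtered colimits building $B$; that propagation is exactly the nontrivial content. Moreover, $\Pr^L_{\St}$ is neither presentable nor stable, so the off-the-shelf $\infty$-semiadditivity theory of~\cite{HL13} does not apply to it verbatim; what you are implicitly invoking is precisely the content of Example~4.3.11 there, which you would have to either reproduce or cite wholesale as a black box (the paper cites it as a reference for the result's provenance but then supplies its own independent proof). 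Your reduction from $\Cat_R$ to $\Pr^L_{\St}$ via the conservative, limit- and colimit-preserving forgetful functor is correct and matches the paper's tacit ``without loss of generality''.
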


\begin{proof}
Without loss of generality, let us assume that $R = \bS$.
The limit of the parametrized $R$-linear category can be computed in $\Cat_\infty$ since the inclusion $\Pr^{L}_{\St} \to \Cat_{\infty}$ preserves all small limits~\cite{HTT}*{Proposition 5.5.3.13.}.
Now, the colimit of the diagram $F$ can be computed as the limit along the opposite diagram
\[
F^{\oop}: B^{\oop} \to (\mathrm{Pr}^{L}_{\St})^{\oop} \simeq \mathrm{Pr}^{R}_{\St} \,.
\]
Again in this case, the inclusion $\Pr^{R}_{\St} \to \Cat_{\infty}$ preserves all small limits~\cite{HTT}*{Theorem 5.5.3.18}, so in order to compute the colimit of $F$, we find ourselves computing the limit of the diagram $B^{\oop} \to \Pr^{R}_{\St} \to \Cat_\infty$.
Since $B$ is a Kan complex, there is an equivalence $\alpha : B \simeq B^{\oop}$, identifying each path $\iota^{\oop} : x_1 \to x_0$ in $B^{\oop}$ with $\iota^{-1}: x_1 \to x_0$ in $B$.
Since the functor $F$ factors through the core groupoid of $\mathrm{Pr}^{L}_{\St}$, and the natural diagram
\[
\begin{tikzcd}
  B \arrow[r] \arrow[d,"\simeq"] & (\mathrm{Pr}^L_{\St})^{\simeq} \arrow[r] \arrow[d,"\simeq"] & \mathrm{Pr}^{L}_{\St} \arrow[r] & \Cat_{\infty} \\
  B^{\oop} \arrow[r] & ((\mathrm{Pr}^{L}_{\St})^{\simeq})^{\oop} \arrow[r] & \mathrm{Pr}^R_{\St} \arrow[ur]
\end{tikzcd}
\]
commutes, we are able to identify the diagrams $F$ and $F^{\oop}$ in $\Cat_{\infty}$.
Putting all this together, we obtain the equivalence as wanted.
\end{proof}

As a consequence of Proposition \ref{ambidexterity in Prl}, we will unambiguously refer to the colimit and the limit of a parametrized $R$-linear category as the \emph{global sections}.
If we have no reason to worry about whether we are referring to the colimit or the limit, we will denote the global sections functor by
\[
\Gamma : \Fun(B,\Cat_R) \longto \Cat_R \,.
\]
As an important consequence of the above ambidexterity result, we may identify the following adjoint pairs on global sections.
For ease of notation, we will abusively denote objects in the $\infty$-category $\Gamma F$ by $(x_b)_{b \in B}$ where $x_b \in F b$.

\begin{lemma} \label{lem:parametrized_cats_mapping_space}
  Let $F : B \to \Pr^{L}_{\St}$ be a parametrized category. The mapping space of $\Gamma F$ is given by
  \[
  \Map_{\Gamma F}((x_b)_{b \in B},(y_b)_{b \in B}) \simeq \lim_{b \in B} \Map_{Fb}(x_b,y_b) \,.
  \]
  \begin{proof}
    The mapping space in the category $\Gamma F$ is described via the pullback diagram
    \[
    \begin{tikzcd}
      \Map_{\Gamma F}((x_b)_{b \in B},(y_b)_{b \in B}) \arrow[r] \arrow[d] & \Fun(\Delta^1 , \Gamma F) \arrow[d,"{(d_1,d_0)}"] \\
      \{\ast\} \arrow[r,"{((x_b)_{b \in B},(y_b)_{b \in B})}"] & (\Gamma F)^{\oop} \times \Gamma F \,.
    \end{tikzcd}
    \]
    By using the description of $\Gamma$ as a limit and the appropriate commutativity of limits with pullbacks and products, we obtain the result.
  \end{proof}
\end{lemma}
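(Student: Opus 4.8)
The plan is to express the pullback square that defines $\Map_{\Gamma F}((x_b)_b,(y_b)_b)$ as a limit, indexed by $b\in B$, of the analogous squares computing each $\Map_{Fb}(x_b,y_b)$, and then to interchange the two limits.

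First I would recall, as in the proof of Proposition~\ref{ambidexterity in Prl}, that $\Gamma F\simeq\lim_{b\in B}Fb$ may be computed in $\Cat_\infty$, since $\Pr^{L}_{\St}\hookrightarrow\Cat_\infty$ preserves small limits. Next I would observe that each corner of the defining square
\[
\begin{tikzcd}
\Map_{\Gamma F}((x_b)_b,(y_b)_b) \arrow[r] \arrow[d] & \Fun(\Delta^1,\Gamma F) \arrow[d,"{(d_1,d_0)}"] \\
\{\ast\} \arrow[r] & (\Gamma F)^{\oop}\times\Gamma F
\end{tikzcd}
\]
is itself a $B$-indexed limit: the endofunctor $\Fun(\Delta^1,-)$ of $\Cat_\infty$ is a right adjoint (to $-\times\Delta^1$), hence preserves limits, so $\Fun(\Delta^1,\Gamma F)\simeq\lim_b\Fun(\Delta^1,Fb)$; the involution $(-)^{\oop}$ is an equivalence and products commute with limits, so $(\Gamma F)^{\oop}\times\Gamma F\simeq\lim_b\big((Fb)^{\oop}\times Fb\big)$; and the constant functor $b\mapsto\{\ast\}$ over the space $B$ has limit $\Fun(B,\{\ast\})=\{\ast\}$. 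I would then note that these identifications are natural in the two structure maps $(d_1,d_0)$ and $\{\ast\}\to(\Gamma F)^{\oop}\times\Gamma F$, because $\Fun(\Delta^1,-)$, $(-)^{\oop}$, $-\times-$ and the endpoint-evaluation functors $d_0,d_1$ are all part of the cartesian closed structure of $\Cat_\infty$ and so commute with limits coherently. The one point I would verify with care is that, under the equivalence $(\Gamma F)^{\oop}\times\Gamma F\simeq\lim_b\big((Fb)^{\oop}\times Fb\big)$, the chosen basepoint $((x_b)_b,(y_b)_b)$ corresponds to the compatible family $\big((x_b,y_b)\big)_{b\in B}$; but this is exactly the unwinding of what it means for $(x_b)_b$ and $(y_b)_b$ to be objects of $\Gamma F$, i.e. sections of the family $F$.

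Granting this, the square above is identified with the limit over $b\in B$ of the pullback squares
\[
\begin{tikzcd}
\Map_{Fb}(x_b,y_b) \arrow[r] \arrow[d] & \Fun(\Delta^1,Fb) \arrow[d,"{(d_1,d_0)}"] \\
\{\ast\} \arrow[r] & (Fb)^{\oop}\times Fb \,,
\end{tikzcd}
\]
each of which exhibits $\Map_{Fb}(x_b,y_b)$ as a pullback. Since limits commute with limits — a pullback being a limit over a cospan — the pullback of the total square is the limit over $b$ of the pointwise pullbacks, which yields $\Map_{\Gamma F}((x_b)_b,(y_b)_b)\simeq\lim_{b\in B}\Map_{Fb}(x_b,y_b)$, as claimed.

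The main obstacle here is bookkeeping rather than conceptual: one must package the $B$-indexed family of pullback squares into a single diagram $B\to\Fun(\Lambda^2_2,\Cat_\infty)$ (where $\Lambda^2_2$ is the walking cospan $\bullet\to\bullet\leftarrow\bullet$) whose limit recovers the original square, and this is precisely what the naturality remarks above are designed to supply. As a conceptual alternative and sanity check, one can bypass the square entirely and appeal to the general principle that for a limit of $\infty$-categories $\mathcal{E}=\lim_{i\in I}\mathcal{E}_i$ the mapping space is an end, $\Map_{\mathcal{E}}(x,y)\simeq\lim_{\TwAr(I)}\Map_{\mathcal{E}_i}(x_i,y_i)$; when $I=B$ is a Kan complex the source-evaluation map $\TwAr(B)\to B$ is a trivial fibration, so $\TwAr(B)\simeq B$ and the formula collapses to the stated one.
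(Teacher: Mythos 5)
Your proof is correct and follows the same strategy as the paper's: both compute the mapping space via the standard pullback square, identify each corner as a $B$-indexed limit using that $\Gamma F \simeq \lim_B F$ can be computed in $\Cat_\infty$ (where $\Fun(\Delta^1,-)$, $(-)^{\oop}$, and $-\times-$ all commute with limits), and then commute the $B$-limit past the pullback. You simply fill in the details the paper leaves implicit, and the closing remark via $\TwAr(B)\simeq B$ for a Kan complex $B$ is a pleasant independent sanity check rather than a divergence in method.
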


\begin{proposition} \label{prop:colim_preserves_adjoints}
  Let $B$ be a Kan complex and consider the functor
  \[
  \Gamma : \Fun(B,\mathrm{Pr}^{L}_{\St}) \longto \mathrm{Pr}^L_{\St}
  \]
  and suppose we are given natural transformations $\lambda : F \to G$ and $\rho : G \to F$ of functors $F,G : B \to \mathrm{Pr}^{L}_{\St}$ such that $\lambda_b : Fb \to Gb$ is left adjoint to $\rho_b : Gb \to Fb$ for all $b \in B$. Then
  \[
  \Gamma \lambda : \Gamma F \longto \Gamma G \quad \dashv \quad \Gamma \rho : \Gamma G \longto \Gamma F \,.
  \]
  \begin{proof}
    Let us think of and denote objects in the categories $\Gamma F$ and $\Gamma G$ as families $(x_b)$ where $x_b \in Fb$ and $(y_b)$ where $y_b \in Gb$. Using this notation, we have that
    \[
    \Gamma \lambda : \Gamma F \longto \Gamma G \,, \quad (x_b)_{b \in B} \mapsto (\lambda_b(x_b))_{b \in B}
    \]
    Using the description of mapping spaces in the categories $\Gamma F$ and $\Gamma G$, as well as the adjunction properties we obtain
    \begin{align*}
      \Map_{\Gamma G}((\Gamma \lambda)((x_b)_{b \in B}),(y_b)_{b \in B}) &\simeq \lim_{B} \Map_{Gb}(\lambda_b(x_b),y_b) \\ &\simeq \lim_B \Map_{Fb}(x_b,\rho_b(y_b)) \\ &\simeq  \Map_{\Gamma F}((x_b)_{b \in B},(\Gamma \rho)((y_b)_{b \in B}))
    \end{align*}
    which finishes the proof.
    \end{proof}
\end{proposition}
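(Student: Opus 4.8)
The plan is to establish the adjunction $\Gamma\lambda \dashv \Gamma\rho$ by producing a natural equivalence of the relevant mapping spaces, using the explicit formula for mapping spaces in a global sections category supplied by Lemma~\ref{lem:parametrized_cats_mapping_space}. By the ambidexterity of Proposition~\ref{ambidexterity in Prl} it is harmless to take $\Gamma$ to be the limit functor, which is the description appearing in that lemma.

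First I would record that $\Gamma\lambda$ and $\Gamma\rho$ are obtained by applying $\Gamma$ to the natural transformations $\lambda$ and $\rho$, and that, since the evaluation-at-$b$ functors are the limit projections, $\Gamma\lambda\big((x_b)_{b\in B}\big) = (\lambda_b(x_b))_{b\in B}$ and $\Gamma\rho\big((y_b)_{b\in B}\big) = (\rho_b(y_b))_{b\in B}$. Then, for sections $(x_b)_{b\in B}\in\Gamma F$ and $(y_b)_{b\in B}\in\Gamma G$, Lemma~\ref{lem:parametrized_cats_mapping_space} gives
\[
\Map_{\Gamma G}\big(\Gamma\lambda((x_b)_{b\in B}),(y_b)_{b\in B}\big)\simeq\lim_{b\in B}\Map_{Gb}\big(\lambda_b(x_b),y_b\big)
\]
and likewise $\Map_{\Gamma F}\big((x_b)_{b\in B},\Gamma\rho((y_b)_{b\in B})\big)\simeq\lim_{b\in B}\Map_{Fb}\big(x_b,\rho_b(y_b)\big)$. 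For each $b$ the fiberwise adjunction $\lambda_b\dashv\rho_b$ identifies $\Map_{Gb}(\lambda_b(x_b),y_b)\simeq\Map_{Fb}(x_b,\rho_b(y_b))$; once one checks these are the components of an equivalence of $B$-indexed diagrams of spaces, they descend to an equivalence of the two limits, and a diagram chase shows the resulting equivalence is natural in both arguments, which is precisely the claimed adjunction.

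The step I expect to be the main obstacle is exactly the coherent naturality in $b\in B$: one needs the pointwise units $\id_{Fb}\to\rho_b\lambda_b$ and counits $\lambda_b\rho_b\to\id_{Gb}$ to organize into natural transformations $\id_F\Rightarrow\rho\circ\lambda$ and $\lambda\circ\rho\Rightarrow\id_G$ in $\Fun(B,\mathrm{Pr}^{L}_{\St})$ obeying the triangle identities there. I would handle this with the device from the proof of Proposition~\ref{ambidexterity in Prl}: as $B$ is a Kan complex, $F$ and $G$ factor through the core groupoid $(\mathrm{Pr}^{L}_{\St})^{\simeq}$ and every structure morphism $F\gamma$, $G\gamma$ is an equivalence, so passing to fiberwise right adjoints is unobstructed and the pointwise data $(\lambda_b\dashv\rho_b)_{b\in B}$ assembles to an honest adjunction $\lambda\dashv\rho$ in the functor $\infty$-category; equivalently, one cites that adjunctions in $\Fun(B,\cC)$ can be detected and built pointwise. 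Granting this, an alternative to the mapping-space bookkeeping is to apply $\Gamma$ — which preserves all limits and colimits — to the unit and counit of $\lambda\dashv\rho$, obtaining a unit and counit for $\Gamma\lambda\dashv\Gamma\rho$ whose triangle identities are inherited fiberwise; I would nonetheless still present the mapping-space computation, since Lemma~\ref{lem:parametrized_cats_mapping_space} makes it essentially immediate.
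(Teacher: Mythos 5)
Your proof follows essentially the same route as the paper's: the mapping-space formula of Lemma~\ref{lem:parametrized_cats_mapping_space}, then the fiberwise adjunction equivalences, then passage to the limit over $B$. The one substantive difference is that you explicitly flag the coherence point that the pointwise equivalences $\Map_{Gb}(\lambda_b(x_b),y_b)\simeq\Map_{Fb}(x_b,\rho_b(y_b))$ must assemble into a $B$-indexed natural equivalence of diagrams before the limits can be identified, and you supply a resolution (pointwise detection of adjunctions in $\Fun(B,\cC)$, or the core-groupoid reduction used in the ambidexterity proof), whereas the paper's proof passes over this in silence; your version is accordingly the more complete argument.
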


\begin{proposition}
Let $X$ be a Kan complex and consider the functor
\[
\colim_X \simeq \lim_X : \Fun(X,\mathrm{Pr}^{R}_{\St}) \longto \mathrm{Pr}^R_{\St} \,,
\]
and suppose we are given natural transformations $\lambda : F \to G$ and $\rho : G \to F$ of functors $F,G : X \to \Pr^{R}_{\St}$ such that $\lambda_x : F x \to Gx$ is left adjoint to $\rho_x : Gx \to Fx$ for all $x \in X$. Then
\[
\lim_X \lambda : \lim_X F \longto \lim_X G \quad \dashv \quad \lim_X \rho : \lim_X G \longto \lim_X F \,.
\]
\begin{proof}
  Similar to before.
\end{proof}
\end{proposition}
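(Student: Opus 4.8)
The plan is to reproduce the proof of Proposition~\ref{prop:colim_preserves_adjoints} essentially verbatim, with $\Pr^{L}_{\St}$ replaced by $\Pr^{R}_{\St}$ throughout. The one ingredient that must first be transported is the mapping-space description of Lemma~\ref{lem:parametrized_cats_mapping_space}. For a functor $F\colon X\to\Pr^{R}_{\St}$ the limit $\lim_X F$ may be computed in $\Cat_\infty$, since the inclusion $\Pr^{R}_{\St}\hookrightarrow\Cat_\infty$ preserves small limits~\cite{HTT}*{Theorem 5.5.3.18}; hence the pullback-square computation of Lemma~\ref{lem:parametrized_cats_mapping_space} applies unchanged and gives
\[
\Map_{\lim_X F}\bigl((x_b)_{b\in X},(y_b)_{b\in X}\bigr)\simeq\lim_{b\in X}\Map_{Fb}(x_b,y_b),
\]
where objects of $\lim_X F$ are written as coherent families $(x_b)_{b\in X}$ with $x_b\in Fb$ (the coherence data being invertible since $X$ is a Kan complex). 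The induced functor $\lim_X\lambda$ is then computed levelwise, $(x_b)_b\mapsto(\lambda_b x_b)_b$, and similarly for $\lim_X\rho$.

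Granting this, one runs the same chain of natural equivalences as in Proposition~\ref{prop:colim_preserves_adjoints}:
\begin{align*}
\Map_{\lim_X G}\bigl((\lim_X\lambda)((x_b)_b),(y_b)_b\bigr)
&\simeq \lim_{b\in X}\Map_{Gb}(\lambda_b x_b,y_b)\\
&\simeq \lim_{b\in X}\Map_{Fb}(x_b,\rho_b y_b)\\
&\simeq \Map_{\lim_X F}\bigl((x_b)_b,(\lim_X\rho)((y_b)_b)\bigr),
\end{align*}
the middle equivalence being induced by the pointwise adjunctions $\lambda_b\dashv\rho_b$. Since a functor admitting such a natural equivalence of mapping spaces is precisely a left adjoint with the displayed right adjoint, this exhibits $\lim_X\lambda\dashv\lim_X\rho$.

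I do not anticipate a genuine obstacle: this is the formal mirror of Proposition~\ref{prop:colim_preserves_adjoints}, and the only new input is the $\Pr^{R}_{\St}$-analogue of the mapping-space formula, immediate from the cited preservation of limits. As in Proposition~\ref{prop:colim_preserves_adjoints}, the point deserving a word is that the middle equivalence is natural in $b$ — so that forming $\lim_{b\in X}$ produces an equivalence natural in both variables, which is what makes it an adjunction; this coherence follows from $\lambda$ and $\rho$ being genuine natural transformations, so that the pointwise adjunction data vary coherently over the Kan complex $X$. Alternatively, one may dispense with the computation by passing to opposite categories: the equivalences $\Pr^{R}_{\St}\simeq(\Pr^{L}_{\St})^{\oop}$ and $X\simeq X^{\oop}$ turn $\lim_X$ on $\Fun(X,\Pr^{R}_{\St})$ into $\colim_X$ on $\Fun(X,\Pr^{L}_{\St})$ and reduce the statement to Proposition~\ref{prop:colim_preserves_adjoints} applied to the associated left-adjoint functors — the only thing to watch there being that each $\lambda_b$ and $\rho_b$ is simultaneously a left and a right adjoint, so that one must track which of its two adjunctions crosses to the opposite side.
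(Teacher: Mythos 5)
Your proposal is correct and takes essentially the same approach as the paper, whose proof is simply ``Similar to before,'' i.e.\ mirror the argument of Proposition~\ref{prop:colim_preserves_adjoints} with $\Pr^{R}_{\St}$ in place of $\Pr^{L}_{\St}$. You have supplied exactly the missing details — the $\Pr^{R}_{\St}$-analogue of the mapping-space formula via limit-preservation of $\Pr^{R}_{\St}\hookrightarrow\Cat_\infty$, the levelwise adjunction argument, and the correct caution in the opposite-category route that each $\lambda_b,\rho_b$ carries two distinct adjunctions — so this matches the paper's intent faithfully.
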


As corollaries to ambidexterity, we also have the following results, that we will make heavy use of in the latter sections of this paper.

\begin{proposition} \label{prop:left_right_adjoints_ambidexterity}
    Let $f : A \to B $ be a map of spaces and let $F:   B \to \Pr^{L}_{\St}$ be a parametrized category. The following holds:
    \begin{enumerate}
      \item There is a functor
      \[
      f^*: \lim_{B}F \longto \lim_{A}f^* F
      \]
      which is a left adjoint.
      \item There is a functor
      \[
      f_!: \colim_{A} f^* F \longto \colim_{B} F
      \]
      which is a left adjoint.
      Moreover, under ambidexterity its right adjoint can be identified with the functor $f^*$ above.
    \end{enumerate}
    \begin{proof}
      \leavevmode
      \begin{enumerate}
        \item Given a limiting cocone $B^{\triangleleft} \to \Pr^{L}_{\St}$ extending $F$, there is canonically an object of ${\Pr^{L}_{\St}}_{/f^* F}$, so that we get a map $B^{\triangleleft} \to A^{\triangleleft} \to \Pr^{L}_{\St}$ by the universal property of $A^{\triangleleft} \to \Pr^{L}_{\St}$ as the terminal object in ${\Pr^{L}_{\St}}_{/f^* F}$.
        Hence we get a map
        \[
        f^* : \lim_B F \longto \lim_A f^* F
        \]
        in $\Pr^{L}_{\St}$, which is hence a left adjoint.
        \item The construction of $f_!$ is essentially dual to the construction of $f^*$ above, using functoriality of the colimit construction. What is left to show is that $f_!$ is the left adjoint to $f^*$ constructed above, under ambidexterity. Since $f_!$ is a map in $\Pr^{L}_{\St}$ it must have a right adjoint. This right adjoint is obtained by passing to the opposite category $(\Pr^{L}_{\St})^{\oop} \simeq \Pr^{R}_{\St}$ where we obtain a map
        \[
        \lim_B F \longto \lim_A f^* F \,.
        \]
        This map satisfies the same universal property as the map $f^*$ constructed above, hence they must be naturally equivalent.
      \end{enumerate}
    \end{proof}
\end{proposition}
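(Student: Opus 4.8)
The plan is to produce both functors $f^*$ and $f_!$ by the same ``universal property of cones'' mechanism, then identify them as an adjoint pair using ambidexterity together with the equivalence $(\Pr^L_{\St})^{\oop} \simeq \Pr^R_{\St}$ already recalled in the excerpt. Concretely, for part (1): a limiting cone $\bar F : B^{\triangleleft} \to \Pr^L_{\St}$ restricts along $A^{\triangleleft} \to B^{\triangleleft}$ (induced by $f$ on underlying spaces, and sending the cone point to the cone point) to a cone on $f^*F$ with vertex $\lim_B F$; the universal property of the limiting cone $\overline{f^*F} : A^{\triangleleft} \to \Pr^L_{\St}$ then gives a canonical map $f^* : \lim_B F \to \lim_A f^*F$ in $\Pr^L_{\St}$. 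Being a morphism in $\Pr^L_{\St}$, it is by definition a left adjoint (this is where presentability is used). For part (2): dually, a colimiting cocone $\underline F : B^{\triangleright} \to \Pr^L_{\St}$, restricted to $A^{\triangleright}$, is a cocone on $f^*F$, and the universal property of the colimiting cocone on $f^*F$ produces $f_! : \colim_A f^*F \to \colim_B F$ in $\Pr^L_{\St}$, again automatically a left adjoint.

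The substantive point — and the one I expect to be the main obstacle — is the last clause: that under the ambidexterity equivalence $\colim \simeq \lim$ of Proposition \ref{ambidexterity in Prl}, the right adjoint of $f_!$ is identified with $f^*$. The strategy here is to compute the right adjoint of $f_!$ abstractly and match universal properties. Since $f_!$ lives in $\Pr^L_{\St}$ it has a right adjoint, and passing to opposite categories via $(\Pr^L_{\St})^{\oop} \simeq \Pr^R_{\St}$ turns $f_!$ into a morphism going the \emph{other} way between the \emph{same} underlying objects (here one uses ambidexterity to rewrite $\colim_A f^*F \simeq \lim_A f^*F$ and $\colim_B F \simeq \lim_B F$), namely a map $\lim_B F \to \lim_A f^*F$. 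One then observes that this map is constructed by exactly the same cone-restriction universal property that defined $f^*$ in part (1): restricting a limiting cone in $\Pr^R_{\St}$ along $A^{\triangleleft}\to B^{\triangleleft}$ is the opposite of extending a colimiting cocone along $A^{\triangleright}\to B^{\triangleright}$ in $\Pr^L_{\St}$, because the equivalence $B \simeq B^{\oop}$ (valid since $B$ is a Kan complex, as in the proof of Proposition \ref{ambidexterity in Prl}) intertwines the two constructions. Hence the right adjoint of $f_!$ and the functor $f^*$ solve the same mapping problem in $\Pr^R_{\St}$, so they agree.

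A cleaner way to organize the verification, which I would adopt if the cone-chasing above becomes unwieldy, is to check the adjunction on mapping spaces using Lemma \ref{lem:parametrized_cats_mapping_space}. Writing objects of $\lim_B F$ as families $(x_b)_{b\in B}$ and of $\lim_A f^*F$ as families $(y_a)_{a\in A}$, the functor $f^*$ sends $(x_b)_{b\in B}$ to $(x_{f(a)})_{a\in A}$, and $f_!$ (under ambidexterity) is the left Kan extension along $f$ fiberwise; then
\[
\Map_{\lim_B F}\bigl(f_!(y_a)_{a\in A},(x_b)_{b\in B}\bigr) \simeq \lim_{b\in B}\Map_{Fb}\bigl((f_!y)_b, x_b\bigr)
\]
while
\[
\Map_{\lim_A f^*F}\bigl((y_a)_{a\in A}, f^*(x_b)_{b\in B}\bigr) \simeq \lim_{a\in A}\Map_{F f(a)}\bigl(y_a, x_{f(a)}\bigr),
\]
and the two agree by the fiberwise adjunction between $f_!$-along-fibers and restriction, combined with the Fubini-type identity $\lim_{b\in B}\lim_{a\in A_b}(-) \simeq \lim_{a\in A}(-)$ for the fibration $A\to B$. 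The only genuinely delicate issue in either approach is keeping track of coherence: that the cone identifications, the ambidexterity equivalence, and the $B\simeq B^{\oop}$ identification are all compatible, i.e.\ that no homotopy-coherence obstruction intervenes. Since $F$ factors through the core groupoid $(\Pr^L_{\St})^{\simeq}$ — exactly as exploited in the proof of Proposition \ref{ambidexterity in Prl} — this compatibility is formal, and I would point to that proof rather than re-derive it.
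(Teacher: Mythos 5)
Your proposal is correct and takes essentially the same route as the paper: restrict a limiting cone (resp.\ colimiting cocone) along $A^{\triangleleft}\to B^{\triangleleft}$ (resp.\ $A^{\triangleright}\to B^{\triangleright}$) to produce $f^*$ and $f_!$ as morphisms in $\Pr^L_{\St}$, then pass to $(\Pr^L_{\St})^{\oop}\simeq\Pr^R_{\St}$ and match universal properties to identify the right adjoint of $f_!$ with $f^*$. The mapping-space verification you sketch via Lemma~\ref{lem:parametrized_cats_mapping_space} is a reasonable alternative check but is not needed for the argument.
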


Note that the above theorem provides us with functors
\[
\colim : \cS_{/\Pr^{L}_{\St}} \longto \mathrm{Pr}^{L}_{\St} \och \lim : (\cS_{/\Pr^{L}_{\St}})^{\oop} \longto \mathrm{Pr}^{L}_{\St} \,,
\]
where the colimit functor sends the map $f : A \to B$ in $\cS_{/\Pr^{L}_{\St}}$ to the functor $f_!$ and the limit functor sends it to~$f^*$.
For reference, let us state the following results regarding monoidality of the first functors.

\begin{lemma}\label{lem:colim_symm_monoid}
  The functor
  \[
  \colim : \cS_{/\Pr^{L}_{\St}} \longto \mathrm{Pr}^{L}_{\St}
  \]
  is symmetric monoidal.
  Here, the symmetric monoidal structure on the source category is obtained by intertwining the cartesian product on $\cS$ with the Lurie tensor product.
  In particular, if $F : A \to \Pr^{L}_{\St}$ and $G : B \to \Pr^{L}_{\St}$ are two parametrized categories, then their tensor product is the parametrized category
  \[
  F \boxtimes G : A \times B \overset{F \otimes G}\longto \mathrm{Pr}^{L}_{\St} \times \mathrm{Pr}^{L}_{\St} \overset{\otimes}\longto \mathrm{Pr}^{L}_{\St} \,.
  \]
  \begin{proof}
    Follows from the Lurie tensor product preserving small colimits.
  \end{proof}
\end{lemma}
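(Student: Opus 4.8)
The plan is to recognise the source $\cS_{/\mathrm{Pr}^L_{\St}}$, with its symmetric monoidal structure, as a free cocompletion equipped with Day convolution, and then to invoke the universal property of Day convolution. The only genuine external input is the fact --- already used in the statement --- that $\otimes^L$ on $\mathrm{Pr}^L_{\St}$ preserves small colimits separately in each variable.

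First I would rewrite the source. A functor out of a space factors through the core, so an object $(A, F \colon A \to \mathrm{Pr}^L_{\St})$ of $\cS_{/\mathrm{Pr}^L_{\St}}$ is the same datum as a map of spaces $A \to (\mathrm{Pr}^L_{\St})^{\simeq}$, where the Lurie tensor product makes $(\mathrm{Pr}^L_{\St})^{\simeq}$ an $\bE_\infty$-monoid in (large) spaces. Straightening then gives
\[
\cS_{/\mathrm{Pr}^L_{\St}} \;\simeq\; \Fun\!\left((\mathrm{Pr}^L_{\St})^{\simeq}, \cS\right) \;=\; \mathrm{PSh}\!\left((\mathrm{Pr}^L_{\St})^{\simeq}\right) \,,
\]
using $X^{\oop} \simeq X$ for a space $X$, and under this equivalence the Day convolution symmetric monoidal structure on the right matches the structure on the left sending $(A \to (\mathrm{Pr}^L_{\St})^{\simeq})$ and $(B \to (\mathrm{Pr}^L_{\St})^{\simeq})$ to the composite $A \times B \to (\mathrm{Pr}^L_{\St})^{\simeq} \times (\mathrm{Pr}^L_{\St})^{\simeq} \overset{\otimes}{\longto} (\mathrm{Pr}^L_{\St})^{\simeq}$; by inspection this is the symmetric monoidal structure of the statement, since that composite is exactly $F \boxtimes G$. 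A little care with the size of $(\mathrm{Pr}^L_{\St})^{\simeq}$ is needed, handled as elsewhere in the paper by fixing a large regular cardinal $\kappa$, working with $(\mathrm{Pr}^L_{\St, \kappa})^{\simeq}$, and forming presheaves in a larger universe.

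Next I would identify $\colim \colon \cS_{/\mathrm{Pr}^L_{\St}} \to \mathrm{Pr}^L_{\St}$ as the colimit-preserving extension of the core inclusion $(\mathrm{Pr}^L_{\St})^{\simeq} \hookrightarrow \mathrm{Pr}^L_{\St}$ along the Yoneda embedding: it preserves colimits since the forgetful functor to $\cS$ creates them and iterated colimits in $\mathrm{Pr}^L_{\St}$ may be interchanged, and on the representable attached to $(\ast \overset{\cC}{\longto} (\mathrm{Pr}^L_{\St})^{\simeq})$ it returns the value $\cC$, so its restriction along Yoneda is the core inclusion. Since $\mathrm{PSh}((\mathrm{Pr}^L_{\St})^{\simeq})$ is the free cocompletion, these two properties characterise $\colim$. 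Finally I would apply the universal property of Day convolution (as in \cite{HA}*{\S 4.8.1}): the colimit-preserving extension of a symmetric monoidal functor from a symmetric monoidal $\infty$-category to a cocomplete symmetric monoidal $\infty$-category in which the tensor distributes over colimits is again symmetric monoidal --- and the core inclusion $(\mathrm{Pr}^L_{\St})^{\simeq} \hookrightarrow \mathrm{Pr}^L_{\St}$ is plainly symmetric monoidal, while $\mathrm{Pr}^L_{\St}$ is cocomplete with $\otimes^L$ distributing over colimits. I expect the main obstacle to be pure bookkeeping: matching the stated ``fibrewise Lurie tensor'' structure on $\cS_{/\mathrm{Pr}^L_{\St}}$ with Day convolution under straightening, and keeping the universe conventions consistent. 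One could alternatively avoid Day convolution and check directly that the evident equivalence $(\colim_A F) \otimes^L (\colim_B G) \simeq \colim_{A \times B}(F \boxtimes G)$ is compatible with the associativity and symmetry constraints, but that is precisely a re-derivation of the relevant case of Day convolution.
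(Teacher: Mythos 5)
Your proof is correct and supplies precisely the details that the paper's one-line proof (``Follows from the Lurie tensor product preserving small colimits'') omits: the Day-convolution universal property is the standard formal mechanism that converts ``$\otimes^L$ preserves colimits separately in each variable'' into symmetric monoidality of the colimit-preserving extension of the symmetric monoidal core inclusion. Your identification of $\cS_{/\Pr^{L}_{\St}}$ with $\mathrm{PSh}\bigl((\Pr^{L}_{\St,\kappa})^{\simeq}\bigr)$ equipped with Day convolution, and of $\colim$ with the Yoneda extension of the core inclusion, is correct modulo the size bookkeeping you already flag, so this is essentially the paper's argument made explicit.
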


\subsection{Locally constant sheaves and parametrized objects}
Let $\cC$ be a presentable $\infty$-category, which for the sake of specificity, the reader may take to be $\Cat_{R, \kappa}$.
We now invoke a recurring theme in the story, namely that the homotopy theory of \emph{locally constant sheaves} of objects in $\cC$ agrees with the homotopy theory of $B$-parametrized objects of $\cC$.
After recalling the relevant notions, we review this correspondence.

\begin{definition}
    Let $B$ be a topological space and let $\cC$ be a presentable $\infty$-category.
    \begin{itemize}
      \item We say that a sheaf $\mathcal{F} : \Open(B)^{\oop} \to \cC$ is \emph{constant} if it lies in the image of the canonical morphism $\cC \to \Shv_{\cC}(B)$ induced by the terminal geometric morphism $\cS \to \Shv_{\cC}(B)$ of $\infty$-topoi.
      \item We say $\mathcal{F}$ is \emph{locally constant} if there is an jointly surjective open cover $ \amalg \phi_{\alpha}: \amalg_{\alpha} U_{\alpha} \to B$, such that for every $\alpha, \phi_{\alpha}^*( \mathcal{F})$  is  constant in $\Shv_{\cC}(U_\alpha)$.
    \end{itemize}
We denote the full subcategory of $\Shv_{\cC}(B)$ consisting of those sheaves that are locally constant by $\Shv_{\cC}(B)^{\mathrm{lc}}$.
\end{definition}

\begin{theorem} \label{thm:sheaves_vs_local_systems}
Let $B$ be a locally contractible topological space\footnote{All CW-complexes and paracompact manifolds are locally contractible, so this is a very large class of topological spaces, which encompasses any applications that we have in mind.}.
Then there is an equivalence
$$
\shvcat^{\mathrm{lc}} \simeq \Fun(\Sing(B), \Cat_{R, \kappa}) \,.
$$
\end{theorem}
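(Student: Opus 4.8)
The plan is to deduce this from the analogous statement for sheaves of spaces via the tensoring-up argument that has already been used repeatedly in the excerpt. The key input is the classical identification, valid for locally contractible $B$, between locally constant sheaves of spaces on $B$ and local systems, i.e. functors $\Sing(B) \to \cS$; this is essentially the statement that $\Shv(B)$ is an $\infty$-topos which, for locally contractible $B$, is locally of constant shape with shape $\Sing(B)$, so that $\Shv(B)^{\mathrm{lc}} \simeq \Fun(\Sing(B),\cS)$ (see \cite{HTT}*{Section 7.1, A.1} or \cite{SAG}*{Appendix A}). I would state this as the known base case and then bootstrap.

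First I would recall from Remark~\ref{alternatedescriptionofsheaves} that $\Shv_{\Cat_{R,\kappa}}(B) \simeq \mathcal{X} \otimes \Cat_{R,\kappa}$, where $\mathcal{X} = \Shv(B)$ and the tensor is the Lurie tensor product in $\Pr^L$. The point is that this equivalence is compatible with pullback along open covers, hence restricts to an equivalence on locally constant objects: a $\Cat_{R,\kappa}$-valued sheaf is locally constant precisely when, under the tensor decomposition, it comes from a locally constant sheaf of spaces tensored with $\Cat_{R,\kappa}$ — more precisely, one checks that $\Shv_{\Cat_{R,\kappa}}(B)^{\mathrm{lc}} \simeq \Shv(B)^{\mathrm{lc}} \otimes \Cat_{R,\kappa}$ using that $-\otimes \Cat_{R,\kappa}$ preserves colimits (so commutes with the colimit gluing a locally constant sheaf from constant ones on a cover) and that the constant sheaf functor $\cC \to \Shv_{\cC}(B)$ is induced from the constant sheaf functor $\cS \to \Shv(B)$ by tensoring. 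Then I would combine:
\[
\Shv_{\Cat_{R,\kappa}}(B)^{\mathrm{lc}} \simeq \Shv(B)^{\mathrm{lc}} \otimes \Cat_{R,\kappa} \simeq \Fun(\Sing(B),\cS) \otimes \Cat_{R,\kappa} \simeq \Fun(\Sing(B),\Cat_{R,\kappa}),
\]
where the first equivalence is the previous paragraph, the second is the classical base case, and the last uses that for a space (Kan complex) $K$ one has $\Fun(K,\cC) \simeq \Fun(K,\cS)\otimes \cC$ — this is because $\Fun(K,-)$ on $\Pr^L$ is given by tensoring with the presentable category $\Fun(K,\cS) \simeq \cS_{/K}$, i.e. $\Fun(K,\cS)$ is self-dual enough / the cotensor and tensor agree for $K$ a space; concretely $\Fun(K,\cC) \simeq \lim_K \cC \simeq \colim_K \cC$ and one identifies this colimit in $\Pr^L$ with $\Fun(K,\cS)\otimes\cC$.

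The main obstacle is the middle step: making precise that the equivalence of Remark~\ref{alternatedescriptionofsheaves} carries the full subcategory of locally constant objects on the left to $\Shv(B)^{\mathrm{lc}} \otimes \Cat_{R,\kappa}$ on the right. One must check that "locally constant" is detected after tensoring — i.e. that $\mathcal{F} \otimes \Cat_{R,\kappa}$ being locally constant forces $\mathcal{F}$ to be, and that tensoring a cover-wise-constant sheaf stays cover-wise-constant — which requires knowing $-\otimes\Cat_{R,\kappa}$ is suitably conservative/faithful on the relevant diagrams and commutes with the restriction functors $\phi_\alpha^*$. Since $\phi_\alpha^*$ is a morphism in $\Pr^L$ it commutes with $-\otimes\Cat_{R,\kappa}$ automatically, and constancy is the statement of lying in the image of a fixed pullback $\pi^*: \cC \to \Shv_{\cC}(U_\alpha)$ which is again compatible with tensoring; so this should go through, but it is where the care is needed. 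Everything else is formal manipulation of the Lurie tensor product together with the cited classical shape-theoretic fact about locally contractible spaces.
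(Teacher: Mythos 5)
Your proof takes essentially the same route as the paper's: both invoke the classical identification of locally constant sheaves of spaces on a locally contractible space with $\cS_{/B} \simeq \Fun(\Sing(B),\cS)$ (HA~Theorem A.1.15), tensor this equivalence with $\Cat_{R,\kappa}$ in $\Pr^L$, and conclude via the identifications $\Fun(\Sing(B),\cS) \otimes \cC \simeq \Fun(\Sing(B),\cC)$ and $\Shv(B)^{\mathrm{lc}}\otimes \cC \simeq \Shv_{\cC}(B)^{\mathrm{lc}}$. You are in fact a bit more explicit than the paper about why the last of these holds (that locally constant is preserved and reflected under $-\otimes\Cat_{R,\kappa}$), which the paper asserts with the compressed chain $\Shv(B)^{\mathrm{lc}}\otimes^L\cC \simeq \Fun^R(\Shv(B)^{\mathrm{lc},\oop},\cC) \simeq \Shv_{\Cat_{R,\kappa}}(B)^{\mathrm{lc}}$; the substance is the same.
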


\begin{proof}
A proof of this fact in the context of sheaves of spaces may be found in~\cite{HA}*{Theorem A.1.15}.
In particular, it is there shown that there is an equivalence
$$
\cS_{/B} \simeq \Shv(B)^{\mathrm{lc}}
$$
of parametrized spaces over $B$ with locally constant sheaves of spaces on $B$.
We can tensor this equivalence with the presentable $\infty$-category $\cC$ using the symmetric monoidal structure of $\Pr^{L}$.
Using the fact that there are equivalences
$$
\cS_{/B} \otimes^L \cC\simeq \Fun(B, \cC) \quad \Shv(B)^{\mathrm{lc}} \otimes^{L} \cC \simeq \Fun^{R}(\Shv(B)^{\mathrm{lc},\oop}, \cC)\simeq \Shv_{\mathrm{Cat}_{R, \kappa}}(B)^{\mathrm{lc}} \,,
$$
we obtain the desired equivalence by setting $\cC = \Cat_{R, \kappa}$.
\end{proof}

\begin{rmk}
The above theorem holds true even if we replace  $\Cat_{R, \kappa}$ with any $\infty$-category $\cC$ admitting small limits, with a different proof.
\end{rmk}

\begin{rmk}
We remind the reader that $\cC$-valued sheaves are not homotopy invariant.
That is to say that $\Shv_{\cC}(B) \nsimeq  \Shv_{\cC}(A)$ even though $B \simeq A$.
Specializing to our context where $\cC = \Cat_{R, \kappa}$, and noting that $\Fun(B, \Cat_R)$ is manifestly homotopy invariant, we interpret the above as saying that $\infty$-categories of locally constant sheaves of categories are themselves homotopy invariant.
\end{rmk}

\section{Haunts}

In this section, we introduce the notion of a  haunt.
These were first introduced by Douglas~\cite{Dou05}.
Roughly, a haunt over a space $B$ is to be thought of as a bundle of $\infty$-categories over $B$ whose fiber is the $\infty$-category of spectra.
We will consider haunts mainly from two perspectives: 1) sheaves of categories and 2) local systems of categories.
Having the two perspectives on haunts in mind is usually quite useful.
Thinking about haunts in terms of sheaves is well-suited for constructions and geometric intuition, while thinking about haunts in terms of local systems comes in handy for abstract properties and proving theorems.
In the last subsection, we look at explicit examples of haunts over a few different spaces, as well as examples of haunts that naturally appear in the context of various flavors of Floer theory.

\subsection{PoV I: Invertible Sheaves of Categories}

Let us fix a topological space $B$ and an $\bE_\infty$-ring $R$.
Recall that the structure sheaf of $R$-modules parametrized over $B$ is the sheaf of categories
\[
\cO_{B,R} : \Open(B)^{\oop} \longto \Cat_R \,, \quad U \mapsto \Fun(\Sing(U),\Mod_R) \,.
\]
This is also the sheafification of the constant functor with value $\Mod_R$, and is hence the unit of the symmetric monoidal structure on $\Shv_B(\Cat_{R,\kappa})$.

\begin{definition} \label{defn:locallyfreernk1}
Let $\mathcal{F}$ be sheaf of $R$-linear categories over $B$.
We say $\mathcal{F}$ is \emph{locally free of rank 1}, if for every point in $B$ there is an open set $U$ containing the point, for which $\mathcal{F}(U) \simeq \cO_{B, R}(U)$.
\end{definition}

\begin{definition} \label{defn:haunt}
 We will refer to a locally free rank 1 modules over $\cO_{B,R}$ as an $R$-\emph{haunt} over $B$.  Let us set the notation
 $\Haunt_{B,R}$ 
to denote the space of $R$-haunts on $B$.
If $R = \bS$, we will drop the $\bS$ and simply call these \emph{haunts}.
\end{definition}

\begin{rmk}
  We could have more generally defined haunts as invertible sheaves of categories, namely, as objects in the Picard $\infty$-groupoid
  \[
  \Pic\left(\Mod_{\cO_{B,R}}(\Shv_{\mathrm{Cat}_{R, \kappa}}(B))\right) \,.
  \]
  Every locally free rank $1$ module is invertible with respect to the tensor product, since invertibility can be checked on stalks.
  However, it is not always true that invertible objects are locally free of rank $1$.
  We will return to the discussion of this subtle point.
\end{rmk}

To visualize haunts, we can think of them as categorified versions of line bundles.
In particular, one may construct haunts in the same way that line bundles on a space $B$ are constructed, by picking an open cover of $B$ and gluing together trivial bundles with transition maps over intersections.
We will make this precise after introducing parametrized category perspective on haunts.

\subsection{PoV II: Parametrized Categories}

We now switch to the second point of view of haunts, by the following direct analogue of Theorem~\ref{thm:sheaves_vs_local_systems} in this situation.

\begin{theorem} \label{thm:Haunt_equiv_Map_to_BPicS}
  If $B$ is locally contractible, then we have
  \[
  \Haunt_{B,R} \simeq \Map(B,B\Pic(R)) \,.
  \]
  \begin{proof}
    Let $\mathcal{H}$ be a haunt.
    By assumption, we can find a cover $\{U_i\}$ of $B$ by contractible spaces.
    Since $U$ is contractible, we have $\Fun(U,\Sp) \simeq \Sp$.
    Hence, in this case any haunt is a locally constant sheaf of categories, taking the constant value $\Sp$.
    By Theorem ~\ref{thm:sheaves_vs_local_systems}, there is an equivalence between locally constant sheaves on $B$ valued in $\Pr^L_{\St}$ and local systems $\Fun(\Sing(B),\Pr^L_{\St})$.
    As our sheaf is constant with value $\Sp$, it takes values in the connected component of the core $\infty$-groupoid containing $\Sp$, and the local system necessarily factors though $B\Aut(\Sp) \simeq B\Pic(\bS)$.
  \end{proof}
\end{theorem}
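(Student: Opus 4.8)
The plan is to reduce the statement to the already-established equivalence between locally constant sheaves of categories and local systems (Theorem~\ref{thm:sheaves_vs_local_systems}), and then identify the relevant connected component of the target. First I would observe that a haunt $\mathcal{H}$ is by definition locally free of rank $1$: for each point $b \in B$ there is an open $U \ni b$ with $\mathcal{H}(U) \simeq \cO_{B,R}(U)$. Since $B$ is locally contractible, I may refine any such cover to one $\{U_i\}$ by contractible opens; on a contractible $U$ one has $\mathrm{Sing}(U) \simeq \ast$, hence $\cO_{B,R}(U) = \Fun(\mathrm{Sing}(U), \Mod_R) \simeq \Mod_R$. Therefore $\mathcal{H}$ restricted to each $U_i$ is the constant sheaf with value $\Mod_R$, so $\mathcal{H}$ is a locally constant sheaf of $R$-linear categories whose stalks are all $\Mod_R$. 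This places $\mathcal{H}$ in the full subcategory $\Shv_{\Cat_{R,\kappa}}(B)^{\mathrm{lc}}$, and in fact in the subspace of that category consisting of objects whose image in the core groupoid lies in the connected component of $\Mod_R$.

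Next I would apply Theorem~\ref{thm:sheaves_vs_local_systems} to get an equivalence $\Shv_{\Cat_{R,\kappa}}(B)^{\mathrm{lc}} \simeq \Fun(\mathrm{Sing}(B), \Cat_{R,\kappa})$, and pass to maximal subgroupoids on both sides — since $\Haunt_{B,R}$ is by definition a \emph{space} (the subspace of invertible, i.e.\ locally free rank $1$, modules), it corresponds under this equivalence to the subspace of $\Fun(\mathrm{Sing}(B), \Cat_{R,\kappa})^{\simeq} \simeq \Map(\mathrm{Sing}(B), (\Cat_{R,\kappa})^{\simeq})$ consisting of those functors (local systems) that land, at each point of $B$, in the connected component of $\Mod_R$ inside the core groupoid $(\Cat_{R,\kappa})^{\simeq}$. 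A local system all of whose values lie in a single connected component $X$ of the target is the same as a map into the component $X$ itself; here $X$ is the connected component of $\Mod_R$, which is $B\Aut_{\Cat_{R,\kappa}}(\Mod_R)$. The automorphism space of the unit $\Mod_R$ in $\Cat_{R,\kappa}$ is precisely the Picard space $\Pic(R) = \Pic(\Mod_R)$: an $R$-linear autoequivalence of $\Mod_R$ is determined by where it sends the unit, which must go to an invertible object, and conversely tensoring with an invertible module is an autoequivalence. Hence the component is $B\Pic(R)$, and we conclude
\[
\Haunt_{B,R} \simeq \Map\big(\mathrm{Sing}(B), B\Pic(R)\big) \,,
\]
which is the claimed equivalence (identifying $\mathrm{Sing}(B)$ with $B$ as an object of $\cS$, as is our convention).

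The main obstacle I expect is the passage from ``local system with values in a fixed connected component $X$'' to ``map into $X$'': one must check that the inclusion of the component $X \hookrightarrow (\Cat_{R,\kappa})^{\simeq}$ induces, after applying $\Map(\mathrm{Sing}(B), -)$, an equivalence onto the subspace of component-wise-$X$-valued local systems. For $B$ connected this is immediate since a map out of a connected space lands in a single component; in general one uses that $\mathrm{Sing}(B)$ decomposes into its connected components and that $\Map(-,-)$ sends coproducts in the source to products, reducing to the connected case. A secondary point to pin down carefully is the identification $\Aut_{\Cat_{R,\kappa}}(\Mod_R) \simeq \Pic(R)$; this is standard (it is the statement that the Picard space of $\Mod_R$ is the automorphism space of the unit object of $\Cat_{R,\kappa}$, itself a consequence of the universal property of $\Mod_R$ as the free $R$-linear category on one generator), but it is worth a sentence of justification since it is exactly what produces the group $\Pic(R)$ rather than some larger group of self-equivalences.
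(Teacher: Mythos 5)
Your proof is correct and follows essentially the same route as the paper's: use local contractibility to show the haunt is locally constant with fiber $\Mod_R$, invoke Theorem~\ref{thm:sheaves_vs_local_systems} to pass to local systems, and then observe that the local system factors through the component $B\Aut(\Mod_R)\simeq B\Pic(R)$ of the core groupoid. Your version is somewhat more careful than the paper's (which works with $\Sp$ and $B\Pic(\bS)$ despite the statement being for general $R$, and does not spell out the reduction to the connected case or the identification of $\Aut(\Mod_R)$ with $\Pic(R)$), but the underlying argument is the same.
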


\begin{rmk}
  If we want to consider invertible $\cO_{B,R}$-modules, rather than locally free rank 1 ones, the corresponding space will be equivalent to $\Map(B,\Br(R))$ instead.
  Keeping in mind that our main motivation comes from Floer homotopy theory, where the map corresponding to the relevant haunt always seems to factor through the real or complex J-homomorphism
  \[
  B(\bZ \times BO) \longto B\Pic(\bS) \quad \text{or} \quad B(\bZ \times BU) \longto B\Pic(\bS) \,,
  \]
  we will exclusively deal with the locally free rank $1$ case.
  It is possible that applications of twisted spectra to settings where we have to look at invertible modules, rather than locally free modules, exists within the mathematical wilderness, but as of the time of writing, the authors of the paper know of no such examples.
  As a potential application, we mention the introduction of~\cite{Szy17}, where the author muses, with reference to hints in~\cite{Cla12}, that the Brauer space may be the appropriate target for an elliptic J-homomorphism.
  Hence, it is possible that one needs to consider invertible modules in applications to elliptic cohomology.
\end{rmk}

Due to the above theorem, we will sometimes abusively refer to maps $B \to B\Pic(R)$ also as haunts, but we will try to stick to the terminology \emph{twists on the space $B$} if we want to make the distinction clear.
Given a haunt $\cH$, we will refer to its corresponding twist as $\tau_{\cH}$.
Given a twist $\tau : B \to B\Pic(R)$, we will refer to its corresponding haunt as $\cH_{(B,\tau)}$.
In particular, note that the identity map $1 : B\Pic(R) \to B\Pic(R)$ provides us with a haunt over the classifying space of the Picard space.
We  refer to this as the \emph{universal $R$-haunt} and denote it by $\mathcal{U}_R = \cH_{(B\Pic(R),1)}$.

\subsection{Haunts as categorified line bundles}
Let us discuss in detail how one may construct haunts ``by hand'', so that the reader may visualize these.
Let $B$ be a topological space and let $\{U_{i}\}$ denote an open cover of $B$.
We let
\[
p: \coprod_i U_i \to B
\]
denote the jointly surjective map and let $N_{\bullet}(p)$ be the nerve of this epimorphism.
This is a simplicial object in $\cS$ for which there is an equivalence
$$
\colim_{\Delta^{\oop}}N_{\bullet}(p) \simeq B
$$
where the topological space is now regarded as an object in $\cS$.

\begin{rmk} \label{cosimplicial shit}
We remark that $B \Pic(\bS)$ may itself be written as the geometric realization of the colimit diagram $B\Pic_{\bullet}$ which is nerve of the map $* \to B \Pic(\bS)$.
Thus, the map $B \to B \Pic(\bS)$ is equivalent to a map of colimit diagrams $N_{\bullet}(p) \to B \Pic_{\bullet}$.
\end{rmk}

Since the functor $\Map(-, B \Pic)$ sends colimits of spaces to limits, we may express a haunt $\cH_{(B, \tau)}$ corresponding to a twist $\tau : B \to B\Pic(\bS)$ as the totalization of an object in the cosimplicial space
\begin{equation} \label{cosimplicial_category}
   \Map(N_{\bullet}(p), B \Pic): = \Map(N_{0}(p), B \Pic) \rightrightarrows \Map(N_{1}(p), B \Pic) \substack{\rightarrow\\[-1em] \rightarrow \\[-1em] \rightarrow} \Map(N_{2}(p), B \Pic) \cdots \,.
\end{equation}
Composing level-wise with the inclusion $B\Pic(\bS) \to \on{Pr}^{L}_{\St}$, we may express the haunt $\cH_{(B, \tau)}$ itself as an object in the cosimplicial $\infty$-category
\[
\Fun(N_{\bullet}(p), \mathrm{Pr}^L_{\St}): = \left(\Fun(N_{0}(p), \mathrm{Pr}^L_{\St}) \rightrightarrows \Fun(N_{1}(p), \mathrm{Pr}^L_{\St}) \substack{\rightarrow\\[-1em] \rightarrow \\[-1em] \rightarrow} \Fun(N_{2}(p), \mathrm{Pr}^L_{\St}) \substack{\rightarrow \\[-1em] \rightarrow \\[-1em] \rightarrow \\[-1em]\rightarrow} \cdots \right) \,,
\]
Let us write
\[
\cH \simeq \lim_{\Delta} \cH_{(N_i(p),\tau_i)} \,.
\]
Note that since $B$ is assumed to be locally contractible, we can always choose the cover so that it trivializes the twist, so without loss of generality we will assume that we have such a cover.
In this case, each $\tau_i : N_i(p) \to B\Pic$ is nullhomotopic, and therefore gives rise to the trivial haunt on this restriction.
Thus, every $\cH_{N_{i}(p), \tau_i}$ will be equivalent to the trivial haunt $\cO_{(N_{i}(p))}$.
The cosimplicial structure abstractly determines equivalences on the restrictions of these haunts on $n$-fold intersections of the cover.
Unpacking what this means, and using Remark \ref{cosimplicial shit} above, we see that in simplicial degree $1$, we obtain maps
 \[
\coprod_{i,j} U_i \times_{B} U_j \to  \Pic(\bS) \,,
 \]
 encoding coherent choices of parametrized  invertible $\bS$-modules on intersections.
Similarly, in degree $2$, we obtain maps
\[
\coprod_{i,j,k} U_i \times_{B} U_j \times_{B} U_k \to  \Pic(\bS) \times \Pic(\bS) \,;
\]
a moment's reflection reveals that this is exactly the cocycle condition on triple intersections.

\subsection{Examples of haunts} \label{section_examples_haunts}

In this section, we will construct some explicit examples of haunts.
Recall that in order to construct examples of haunts on a space $B$, it is often enough to give gluing data on intersections $U_i \cap U_j$ of a trivializing open cover $\{U_i\}_{i \in I}$ in the form of a parametrized invertible spectrum.

\begin{example}[Haunts over the circle]
  Pick a open cover of the circle consisting of two open sets $U_0$ and $U_1$ with intersection consisting of two contractible connected components $V_0$ and $V_1$.
  The gluing data then amounts to an equivalence $v_i$ of the $\infty$-category of spectra on each connected component $V_i$.
  Let us pick
  \[
  v_0 = \mathrm{Id} : \Sp \longto \Sp \och v_1 = \Sigma^n : \Sp \longto \Sp \,.
  \]
  We refer to this haunt as the \emph{$n$th suspension haunt}.
  Since equivalence classes of haunts over $S^1$ are classified by homotopy classes of maps $S^1 \to B\Pic(\bS)$, that is, by $\pi_1 B\Pic(\bS) \cong \bZ$, there should be essentially one haunt for each integer $n$.
  Indeed, this is the $n$th suspension haunt that we constructed above.
\end{example}

\begin{example}[Haunts over the 2-sphere]
 We cover the 2-sphere by two open disks $U_0$ and $U_1$ containing the northern hemisphere and the southern hemisphere, respectively. The intersection is then an equatorial band which is homotopy equivalent to $S^1$, so we need to give an equivalence
 \[
 r_{01} : \Sp^{S^1} \longto \Sp^{S^1} \,.
 \]
 There is one non-trivial $S^0$-bundle over $S^1$, namely the real Hopf fibration
 \[
 S^0 \longto S^1 \longto S^1 \,.
 \]
 This gives us an invertible $S^1$-spectrum that we denote $\bS^\eta$.
 The equivalence $r_{01}$ is given by taking the smash product with with $\bS^\eta$.
 This is the only non-trivial haunt over $S^2$ there is since
 \[
 \pi_0 \Map(S^2,B\Pic(\bS)) = \pi_2 B\Pic(\bS) \cong \bZ/2 \,.
 \]
\end{example}

\begin{example}[Haunts over the 3-Sphere]
 We cover the 3-sphere by two open 3-disks $U_0$ and $U_1$.
 The intersection $U_0 \cap U_1$ is then homotopy equivalent to a 2-sphere.
 Note that there is one non-trivial $S^2$-bundle over $S^2$, namely the bundle
 \[
 S^2 \longto S^3 \times_{S^1} S^2 \longto S^2
 \]
 where $S^1$ acts on $S^2$ by rotation and on $S^3$ via the Hopf action.
 This bundle gives us an invertible $S^2$-spectrum $T$.
 The gluing equivalence
 \[
 r_{01} : \Sp^{S^2} \longto \Sp^{S^2}
 \]
 is given by taking the smash product with this spectrum.
 This is the only non-trivial haunt over $S^3$ there is since
 \[
 \pi_0 \Map(S^3,B\Pic(\bS)) = \pi_3 B\Pic(\bS) \cong \bZ/2 \,.
 \]
\end{example}

We will now give brief introduction of haunts, and phenomena related to haunts, that arise naturally in the setting of two different flavors of Floer theory: monopole Floer theory and symplectic Floer theory.

\begin{example}[Haunts in Seiberg--Witten Floer theory] \label{ex:SWF}
Let us give a brief sketch on how haunts are constructed from Seiberg--Witten Floer theory.
This is a condensed version of some of the results that will become available in the forthcoming article~\cite{BHK}.
Some of the basic statements on 3-manifolds equipped with complex spin structures can be found in~\cite{KM07}*{Chapter 1}.
One considers pairs $(Y,\mathfrak{s})$ where $Y$ is an oriented riemannian 3-manifold and $\mathfrak{s}$ is a complex spin structure on $Y$.
From this data one may canonically associate a class $[(Y,\mathfrak{s})] \in K^1(P)$ where
\[
P = H^1(Y;\bR)/H^1(Y;\bZ)
\]
is the so-called \emph{Picard torus} of $Y$ and $K^1(P)$ denotes complex topological K-theory of the Picard torus in cohomological degree $1$.
There are a couple of different ways of constructing this class, but to keep it simple let us give a construction of the class completely contained within complex K-theory.
One auxiliary bundle which is convenient to use is the so-called \emph{Poincaré bundle} $\mathcal{L} \to P \times Y$.
This is the bundle whose total space is given as
\[
\mathcal{L} = P \times \widetilde{Y} \times \bC / \pi_1(Y)
\]
where the second term denotes the universal cover of $Y$, and where the fundamental group acts as
\[
\gamma \cdot (a,x,z) = (a,\gamma \cdot x , \exp(ia(\gamma))z) \,.
\]
Here, the notation in the second factor just refers to the ordinary action the fundamental group on the universal cover and the notation $a(\gamma)$ in the third factor comes from remembering that $a \in H^1(Y;\bR)/H^1(Y;\bZ)$ and that $H^1(Y;\bR) \cong \Hom(\pi_1(Y),\bR)$.
We remark that the Poincaré bundle should be viewed as a family of flat line bundles over $Y$ parametrized by $P$. Indeed, the following statements can be used to characterize it:
\begin{itemize}
  \item From the point of view of the Riemann--Hilbert correspondence these line bundles with flat connection are coming from the family of unitary representations
  \[
  \rho_a : \pi_1(Y) \longto U(1) \,, \quad \gamma \mapsto \exp(ia(\gamma)) \,.
  \]
  \item One can check that the first Chern character of the Poincaré bundle is precisely the element in $H^2(P \times Y ;\bZ)$ corresponding to the identity morphism on $\pi_1(P)$ under the isomorphism
  \[
  \Hom(\pi_1(P),\pi_1(P)) \cong H^1(P;\bZ) \otimes H^1(Y;\bZ) \,.
  \]
\end{itemize}
With this bundle in the back of our head, let us construct the wanted K-theory class.
The data of the complex spin structure $\mathfrak{s}$ can alternatively be phrased in terms of a hermitian rank $2$ bundle $S \to Y$, the spinor bundle, which comes equipped with Clifford multiplication.
Note that we can consider the isomorphism class $[S]$ of the spinor bundle as an object of $K^0(Y)$ and that we can consider the isomorphism class $[\mathcal{L}]$ of the Poincaré bundle as an object of $K^0(P \times Y)$.
Let us consider the projections $p : P \times Y \longto P$ and $q : P \times Y \longto Y$. Note that we have a pullback map
\[
q^* : K^0(Y) \longto K^0(P \times Y) \,.
\]
Moreover, since $Y$ is equipped with a complex spin structure, which is equivalent to an orientation with respect to complex topological K-theory by~\cite{ABS64}, we are provided with a umkehr map
\[
p_! : K^0(P \times Y) \longto K^{-3}(P) \cong K^1(P) \,.
\]
We define
\[
[(Y,\mathfrak{s})] = p_!(q^*([S]) \otimes [\mathcal{L}]) \in K^1(P) = [P,U] \,.
\]
We note that the same class in K-theory can be obtained through other methods, like for example applying index theory to the family of Dirac operators associated to $(Y,\mathfrak{s})$.
To get a well-defined haunt on the Picard torus, we use Bott periodicity and the complex J-homomorphism to end up with a twist
\[
\tau_{(Y,\mathfrak{s})} : P \longto U \simeq B(\bZ \times BU) \longto B\Pic(\bS) \,.
\]
In particular, this map is not trivializable when $Y$ is the 3-torus, and one needs to take this into account when constructing stable homotopy types in that situation.
\end{example}

\begin{example}[Haunts in Lagrangian Floer theory]
  Given a lagrangian submanifold $L$ in some symplectic manifold we can construct the so-called \emph{stable Gauss map}
  \[
  \gamma : L \longto U/O \,.
  \]
  This gives rise to a twist on $L$ via real Bott periodicity and the real J-homomorphism
  \[
  \tau : L \longto U/O \simeq B(\bZ \times BO) \longto B\Pic(\bS)\,.
  \]
  We remark that for a commutative ring $k$, the composition
  \[
    \tau_{Hk} : L \longto B\Pic(\bS) \longto B\Pic(Hk)
  \]
  picks up the Maslov class on $\pi_1$ and the second relative Stiefel--Whitney class on $\pi_2$.
  The possible non-triviality of this map explains why certain Floer homology theories are not integrally graded.
  It is known that the map $\tau_{Hk}$ is zero on homotopy groups when dealing with exact lagrangians in cotangent bundles; see for example~\cite{AK16} and~\cite{Jin20} for proofs using two different methods.
\end{example}

\section{Twisted Spectra}

In this section, we introduce the focal notion of study in this paper, namely twisted spectra.
Again, twisted spectra are global sections of haunts, and since we have two main perspectives on haunts, this means that \emph{a priori} we also have two perspectives on twisted spectra.
Moreover, it turns out that there is a third perspective: twisted spectra are simply modules over Thom spectra.
While this is certainly a fruitful perspective, much is known about modules categories and Thom spectra, viewing twisted spectra solely from this perspective tends to obscure some important structural insights on how twisted spectra behave.
In this section, we will discuss such structural results, such as symmetric monoidal structures, functoriality with respect to maps of spaces, and more.

\subsection{Twisted Spectra}

We now define the main objects of this paper.

\begin{definition}
  A global section of an $R$-haunt is called a \emph{twisted $R$-module}. If $R = \bS$, we will simply call these objects \emph{twisted spectra}.
\end{definition}

In the original paper by Douglas, twisted spectra were referred to as ``spectres'', which also explains the terminology ``haunt''.
We will use both terminologies, but stick to twisted spectra for most of the time.
The category of twisted $R$-modules associated to the $R$-haunt $\mathcal{H}$ is the global sections of the haunt.
We will denote this category by
\[
\mathrm{TwMod_R}(\cH) = \Gamma \cH \,.
\]
Recall that taking global sections of a haunt over a space $B$ corresponds to taking the limit of the corresponding parametrized category.
Hence, we may alternatively define the $\infty$-category of twisted spectra corresponding to a haunt $\mathcal{H}$ over $B$ as
$$
\TwSp(B,\tau_{\cH}) := \lim_{B}(B \overset{\tau_{\cH}}\longto B\Pic(\bS) \longto \mathrm{Pr}^L_{\St}) \,.
$$
Similarly, the $\infty$-category of twisted $R$-modules corresponding to an $R$-haunt $\tau : B \to B\Pic(R) $ is the limit
\[
\mathrm{TwMod}(B,\tau) = \lim_{B}(B \overset{\tau}\longto B\Pic(R) \longto \Cat_R) \,.
\]
We will typically adopt this perspective for the remainder of this work.
Note also, and this will be important, that ambidexterity tells us that we might as well have replaced the limit by the colimit in the above formula.
Many results on twisted spectra arises from interpolating between thinking of $\TwSp(B,\tau)$ as a colimit and as a limit.

\begin{rmk}[Visualizing twisted spectra]
Let $B$ be a space with a twist $\tau: B \to B \Pic(\bS)$.
Following the discussion of Section ~\ref{section_examples_haunts}, we pull back the twist $\tau$ along a trivializing open cover $p: \coprod_{i \in I} U_i \to B$ to obtain the following limit diagram in $\mathrm{Pr}^{L}_{\St}$:
\begin{equation} \label{big_kahuna}
\TwSp(B, \tau) \to \left(\Fun(N_{0}(p), \Sp) \rightrightarrows \Fun(N_{1}(p), \Sp) \substack{\rightarrow\\[-1em] \rightarrow \\[-1em] \rightarrow} \Fun(N_{2}(p), \Sp) \cdots \right) \,.
\end{equation}
Thus, to specify an object in $\TwSp(B, \tau)$, it is enough to specify compatible choices of parametrized spectra on the open cover.
Hence, a twisted spectrum $X$ determines (untwisted) parametrized spectra $X_i \in \Fun(U_i, \Sp)$, together with equivalences
\begin{equation}
X_i|_{U_i \cap U_j} \simeq  \Phi_{ij}(X_j|_{U_i \cap U_j})
\end{equation}
in $\Fun(U_i \cap U_j, \Sp)$, where $\Phi_{ij}$ is an auto-equivalence of $\Fun(U_i \cap U_j, \Sp)$.
Furthermore, these auto-equivalences are determined by invertible spectra satisfying a cocycle condition, so that in fact
$$
X_i|_{U_i \cap U_j} \simeq  \mathcal{L}_{ij} \otimes X_j|_{U_i \cap U_j}
$$
for $\mathcal{L}_{ij} \in \Map(U_i \cap U_j, \Pic(\bS))$.
\end{rmk}

\begin{rmk}
We expect that twisted spectra should be related to the notion of twisted vector bundles in a natural way.
Roughly, a twisted vector bundle on a space is obtained by gluing vector bundles $\{V_i\}$ along on open cover, $\{ U_i\}_{i \in I}$ where the standard equivalence on intersections is replaced by an equivalence
\[
V_i|_{U_i \cap U_j} \simeq V_{j}|_{U_i \cap U_j}  \otimes \mathcal{L}_{ij} \,.
\]
Here $\mathcal{L}_{ij}$ is an auxiliary line bundle; the collection of line bundles on intersections should satisfy a cocycle condition. See~\cite[Section 1]{freedhopkinsteleman} for a more precise discussion.
We will see that the descent data needed to glue together a twisted spectrum from parametrized spectra is exactly of this form. This leads us to speculate about the existence of a ``twisted J-homomorphism''.
Roughly, fixing the space $B$, we expect this to be a functor
\[
\on{TwVect}(B) \to \Pic(\TwSp(B)),
\]
where $\on{TwVect}(B)$ is a suitably chosen category of twisted vector bundles on $B$, and $\TwSp(B)$ is a total category of twisted spectra which we introduce in Section~\ref{sec:total_category}.
We will not, however, pursue this line of investigation any further herein.
\end{rmk}

\subsection{Examples of Twisted Spectra}

Before proving structural results about twisted spectra, let us give some explicit examples to guide the reader's intuition.
The reader should keep in mind the various examples of haunts that we gave in Section~\ref{section_examples_haunts}.
We will typically make reference to and also use the same notation used in those examples.

\begin{example}[Twisted Spectra over the Circle]
  By the above discussion, we have that
  \[
  \TwSp(S^1, n) \overset{\simeq}\to \mathrm{eq} \left(\Fun(U_0 , \Sp) \times \Fun(U_1,\Sp) \rightrightarrows \Fun(V_0,\Sp) \times \Fun(V_1,\Sp) \right)
  \]
  for the $\infty$-category of twisted spectra corresponding to the $n$th suspension haunt.
  In particular, we may think of $\TwSp(S^1, n)$ as sitting in a pullback diagram
  \[
  \begin{tikzcd}
    \TwSp(S^1,n) \arrow[r] \arrow[d] & \Sp \arrow[d] \\
    \Sp \arrow[r] & \Sp \times \Sp
  \end{tikzcd}
  \]
  where the bottom horizontal map sends a spectrum $X$ to $(X,X)$ and the right-hand vertical map sends a spectrum $Y$ to $(Y,\Sigma^n Y)$.
  Hence, a twisted spectrum for the $n$th suspension haunt is then essentially a spectrum $X$ together with an equivalence $\phi : \Sigma^n X \simeq X$.
\end{example}

\begin{example}[Twisted Spectra over the 2-Sphere]
  There is only one non-trivial haunt over $S^2$, and by the above, its category of twisted spectra can be described as
  \[
  \TwSp(S^2,\tau) \simeq \mathrm{eq} \left(\Fun(U_0 , \Sp) \times \Fun(U_1,\Sp) \rightrightarrows \Fun(U_0 \cap U_1 ,\Sp) \right) \,.
  \]
  In particular, we may think of $\TwSp(S^2, \tau)$ as sitting in a pullback diagram
\[
\begin{tikzcd}
\TwSp(S^2, \tau) \arrow[r] \arrow[d] & \Sp \arrow[d,"X \mapsto \underline{X} \otimes S^{\eta}"] \\
\Sp \arrow[r, "Y \mapsto \underline{Y}"] & \Sp^{S^1} \,.
\end{tikzcd}
\]
where the bottom map takes a spectrum $Y$ and constructs the trivially parametrized $S^1$-spectrum $\underline{Y}$, and the right vertical map sends $X$ to $\underline{X} \otimes S^\eta$.
Recall now the equivalence
\[
\Sp^{S^1} \simeq \on{RMod}_{\bS[\bZ]} \,;
\]
the latter $\infty$-category can be thought of as a spectrum equipped with an automorphism.
Viewed in this way, we can reinterpret the bottom map as the one assigning the identity $\id_{Y}$ to the spectrum $Y$, and the right vertical map as assigning the automorphism $-\id_{X}$ to $X$.
From this description, we may think of an object of $\TwSp(S^2, \tau)$ as a pair $(X,\iota)$ where $X$ is a spectrum and where $\iota$ is a homotopy $\iota: \id_{X} \simeq -\id_{X}$.
\end{example}

\begin{example}[Twisted Spectra over the 3-Sphere]
  There is only one non-trivial haunt over $S^3$, and by the above, its category of twisted spectra can be described as
  \[
  \TwSp(S^3,\tau) \simeq \mathrm{eq} \left(\Sp \rightrightarrows \Fun(S^2,\Sp) \right) \,.
  \]
  In particular, we can think of $\TwSp(S^3,\tau)$ as sitting in the pullback diagram
  \[
  \begin{tikzcd}
    \TwSp(S^3,\tau) \arrow[r] \arrow[d] & \Sp \arrow[d] \\
    \Sp \arrow[r] & \Sp^{S^2}
  \end{tikzcd}
  \]
  where the bottom map takes a spectrum $X$ and constructs the trivially parametrized $S^2$-spectrum $\underline{X}$ and the second map takes a spectrum $Y$ and sends it to $\underline{Y} \otimes T$. A twisted spectrum over the 3-sphere is then a spectrum~$X$ together with an equivalence $\underline{X} \simeq \underline{X} \otimes T$.
  Let us analyze this a bit further.
  Recall that we have an equivalence
  \[
  \Sp^{S^2} \simeq \on{RMod}_{\bS[\Omega S^2]}\\,,
  \]
  of $\infty$-categories.
  The map $\eta: S^1 \to \Aut(\bS) \simeq \on{GL}_1(\bS)$ classifying $\eta \in \pi_1(\on{GL}_1(\bS))$, factors  as a map
    \[
  S^1 \to \Omega S^2 \to \Aut(\bS) \simeq \on{GL}_1(\bS),
  \]
  where $\Omega S^2 \simeq \Omega \Sigma S^1$ is the free $\bE_1$-algebra on $S^1$, since $\on{GL}_1(\bS)$ is an $E_1$ algebra.
  Now, given a spectrum $X$, we may view it as a $\bS[\Omega S^2]$-module via restriction of scalars along $\bS[\Omega S^2] \to \bS$.
  Since an automorphism of $\bS$ induces an automorphism of $\bS \otimes X \simeq X$,  this induces a map
  \[
 \eta_X: S^1 \to \Omega S^2 \to \Aut(X) \simeq \on{GL}_1(X),
  \]
for a general spectrum $X$.
The equivalence $\underline{X} \simeq \underline{X} \otimes T$ precisely corresponds to a null homotopy of $\eta_X$.
Hence, we can think of an object in $\TwSp(S^3,\tau)$ as a pair $(X,\alpha)$ where $X$ is a spectrum and $\alpha$ is a null homotopy of $\eta_X$.
\end{example}

\begin{example}[Twisted Spectra in Seiberg--Witten Floer theory] \label{ex:SWF_twisted_spectrum}
  Let us discuss briefly how twisted spectra are constructed from Seiberg--Witten Floer data.
  We already explained how the twist was constructed completely within K-theory.
  This is perhaps not the best way to construct the twist if one wants to build a spectrum-like object from geometric data, though.
  In practice, one builds an explicit cocycle description of the class by studying a ``total Dirac operator''  on a certain Hilbert bundle over the corresponding Picard torus:
  \[
  \begin{tikzcd}
    \mathcal{H} \arrow[rr,"\mathbb{D}"] \arrow[rd] & & \mathcal{H} \arrow[ld] \\
    & P \,.&
  \end{tikzcd}
  \]
  One then studies the distribution of eigenvalues of the operator $\mathbb{D}$ and picks an open cover $\{U_i\}$ of $P$ in a suitable way with respect to this eigenvalue distribution.
  Once such an open cover is chosen, one constructs the spectra~$X_i$ on $U_i$ following the constructions of~\cite{Man03},~\cite{KLS}, and~\cite{SS}, using finite-dimensional approximation and Conley indices.
  See also~\cite{Kha13}.
  The resulting twisted spectrum associated to the pair $(Y,\mathfrak{s})$ will be denoted $\mathrm{SWF}(Y,\mathfrak{s})$.
  Again, more details will appear in the forthcoming article~\cite{BHK}.
\end{example}

\subsection{Pullbacks along maps of spaces} \label{sec:pullback}

We now move to some important functoriality of the categories of twisted spectra with respect to maps of spaces.
We first assert that we have the standard pullback, and its left and right adjoints.

\begin{proposition} \label{prop:functoriality_twisted_spectra}
Given a map $f : A \to B$ of spaces we have a pullback functor
\[
f^* : \TwSp(B,\sigma) \longto \TwSp(A,f^* \sigma) \,.
\]
This functor has left and right adjoints that we will denote
\[
f_! : \TwSp(A,f^* \sigma) \longto \TwSp(B,\sigma) \och f_* : \TwSp(A,f^* \sigma) \longto \TwSp(B,\sigma) \,,
\]
respectively.
\end{proposition}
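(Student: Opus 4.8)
The plan is to deduce all three functors from the general machinery of ambidexterity established in Section~\ref{sec:parametrized_cats}, specifically Proposition~\ref{prop:left_right_adjoints_ambidexterity}, applied to the parametrized category $F : B \to \mathrm{Pr}^L_{\St}$ obtained by composing $\sigma : B \to B\Pic(\bS)$ with the canonical inclusion $B\Pic(\bS) \to \mathrm{Pr}^L_{\St}$. By definition $\TwSp(B,\sigma) = \lim_B F$, and since $f^*\sigma$ is the composite $A \to B \to B\Pic(\bS)$, the pulled-back parametrized category $f^*F : A \to \mathrm{Pr}^L_{\St}$ has $\lim_A f^*F = \TwSp(A, f^*\sigma)$.

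First I would invoke Proposition~\ref{prop:left_right_adjoints_ambidexterity}(1) to obtain the functor
\[
f^* : \lim_B F = \TwSp(B,\sigma) \longto \lim_A f^*F = \TwSp(A, f^*\sigma)
\]
together with the assertion that it is a left adjoint in $\mathrm{Pr}^L_{\St}$; denote its right adjoint by $f_*$. Next, for the left adjoint $f_!$, I would use part (2) of the same proposition: it gives a functor $f_! : \colim_A f^*F \to \colim_B F$ which is a left adjoint, and whose right adjoint is identified with $f^*$ under ambidexterity. Here we apply Proposition~\ref{ambidexterity in Prl} to identify $\colim_A f^*F \simeq \lim_A f^*F = \TwSp(A,f^*\sigma)$ and likewise $\colim_B F \simeq \lim_B F = \TwSp(B,\sigma)$, so that $f_!$ indeed has the claimed source and target and is left adjoint to $f^*$. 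Thus $f^*$ sits in the middle of an adjoint triple $f_! \dashv f^* \dashv f_*$.

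The only mild subtlety — and the step I expect to require the most care rather than being fully routine — is checking that the two a priori different incarnations of $f^*$ (the one produced by part (1) as a left adjoint, and the one appearing in part (2) as the right adjoint of $f_!$) genuinely agree, so that $f_!$ and $f_*$ are respectively a left and a right adjoint of \emph{one and the same} functor. This is precisely the content of the last paragraph of the proof of Proposition~\ref{prop:left_right_adjoints_ambidexterity}: both maps $\lim_B F \to \lim_A f^*F$ satisfy the same universal property (as the canonical comparison map induced by restricting a limiting cone along $f$), hence are canonically equivalent. Granting that, the existence of $f_!$ and $f_*$ with the stated domains and codomains is immediate, and the proof is complete.
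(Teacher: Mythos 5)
Your proof is correct and takes essentially the same route as the paper: the paper's proof is a one-line citation of Proposition~\ref{prop:left_right_adjoints_ambidexterity} applied to the composite $B \to B\Pic(\bS) \to \Pr^L_{\St}$, and you have simply spelled out that application, including the coherence issue (agreement of the two incarnations of $f^*$) that is indeed handled inside the proof of that proposition.
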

\begin{proof}
  Follows from applying Proposition~\ref{prop:left_right_adjoints_ambidexterity} to the composite functor $B \to B\Pic(\bS) \to \Pr^{L}_{\St}$.
\end{proof}

Note that the existence of the left adjoint $f_!$ crucially uses ambidexterity, while the right adjoint $f_*$ more or less follows from our construction of $f^*$ as a left adjoint.
Note that the above provide us with functors
\begin{equation*}
  \begin{aligned}
\TwSp_{!} &: \cS_{/B\Pic(\bS)} \longto \mathrm{Pr}^{L}_{\St} \,, \quad &f \mapsto f_!  \\
\TwSp^* &: (\cS_{/B\Pic(\bS)})^{\oop} \longto \mathrm{Pr}^{L,R}_{\St} \,, \quad &f \mapsto f^* \\
\TwSp_* &: \cS_{/B\Pic(\bS)} \longto \mathrm{Pr}^{R}_{\St} \,, \quad &f \mapsto f_*
\end{aligned}
\end{equation*}
all of which have the same assignment on objects and where the assignment on a morphism $f : (A,f^*\sigma) \to (B,\sigma)$ is shown on the right.
Later on, we will want to show that the functors described above have the expected compatibility with the tensor product of twisted spectra, which will be defined in the next subsection.
An important result that will help with this is the Beck--Chevalley condition, which tells us how the functors above behave with respect to change of base.

\begin{proposition}[Beck--Chevalley] \label{prop:Beck_Chevalley_preliminary}
  Suppose that
  \[
  \begin{tikzcd}
    (A,\tau) \arrow[r,"j"] \arrow[d,"i"'] & (B,\sigma) \arrow[d,"f"] \\ (C,\rho) \arrow[r,"g"] & (D,\pi)
  \end{tikzcd}
  \]
  is a pullback square in $\cS/_{B\Pic(\bS)}$. Then the canonical transformation
  \[
  i_! j^* \longto g^* f_!
  \]
  is an equivalence of functors $\TwSp(B,\sigma) \to \TwSp(C,\rho)$.
  \begin{proof}
    In order to prove the result, we temporarily adopt the perspective of twisted spectra as the global sections of a sheaf of categories.
    Let us pick a trivializing open cover for the haunt $\cH_{(D, \pi)}$.
    By pullback along $g$ and $h$, this will trivialize the haunts $\cH_{(C, \rho)}$ and $\cH_{(B, \sigma)}$ respectively.
    Taking the {\v C}ech nerve of the covers, and using the description  of $\TwSp(-,-)$ as the totalization of the cosimplicial object in categories (cf. (~\ref{big_kahuna})), with each term being of the form $\Fun(U_i, \Sp)$, we can re-express the  natural transformation $i_! j^* \to g^* f_!$ as a natural transformation of functors
 $$
 \Fun( U_\bullet ,\Sp) \to \Fun(V_\bullet, \Sp)
 $$
 where $U_\bullet$ is the {\v C}ech nerve of the cover of $B$ and $V_\bullet$ is the {\v C}ech nerve of the cover of $C$.
 Note that this decomposes, in each cosimplicial degree $i$, to the ordinary Beck-Chevalley maps
$$
i_!j^* \to g^* f_!: \Fun(V_i, \Sp) \to \Fun(U_i, \Sp),
$$
which are well known to be equivalences.
Thus, we have displayed the natural transformation in question as  natural transformation of functors of cosimplicial objects in categories, with each component being an equivalence.
It follows that the natural transformation on totalizations will itself be an equivalence, as we wanted to show.
  \end{proof}
\end{proposition}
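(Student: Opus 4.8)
The plan is to trivialize all four haunts over a single well-chosen open cover of $D$, pass to {\v C}ech nerves, and reduce the statement to the classical Beck--Chevalley isomorphism for \emph{untwisted} parametrized spectra over spaces. The structural input I would record first is that, since limits in $\cS_{/B\Pic(\bS)}$ are computed in $\cS$, the underlying square $A \simeq B\times_D C$ is a pullback of spaces and the twists are mutually pulled back: $\sigma = f^*\pi$, $\rho = g^*\pi$, and $\tau = j^*\sigma = i^*\rho$. Using local contractibility of $D$, choose an open cover $p\colon \coprod_\alpha O_\alpha \to D$ trivializing $\cH_{(D,\pi)}$, i.e.\ with each $\pi|_{O_\alpha}$ nullhomotopic; pulling $p$ back along $f$ and $g$, and then once more along $j$ (equivalently $i$), produces compatible trivializing covers of $B$, $C$ and $A$, over every finite intersection of whose members the relevant twist remains null. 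Taking {\v C}ech nerves and invoking Remark~\ref{cosimplicial shit} and~\eqref{big_kahuna}, each of $\TwSp(D,\pi)$, $\TwSp(B,\sigma)$, $\TwSp(C,\rho)$, $\TwSp(A,\tau)$ is then exhibited as the totalization of a cosimplicial object of $\mathrm{Pr}^L_{\St}$ whose value in degree $i$ is the untwisted category $\Fun(N_i(-),\Sp)$ of the relevant {\v C}ech nerve, the twist being carried entirely by the cosimplicial structure maps.

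The geometric heart of the argument is the observation that in each cosimplicial degree $i$ the square of spaces
\[
\begin{tikzcd}
N_i\bigl((fj)^*p\bigr) \arrow[r] \arrow[d] & N_i(f^*p) \arrow[d] \\ N_i(g^*p) \arrow[r] & N_i(p)
\end{tikzcd}
\]
is again a pullback: the $(i+1)$-fold fibre products defining the {\v C}ech nerves interact correctly with the fibre product $A = B\times_D C$, since restricting the $C$-coordinate of $B\times_D C$ to the preimage of an open set of $D$ over which the $B$-coordinate already lies changes nothing. Granting this, in degree $i$ the comparison transformation $i_!j^* \to g^*f_!$ specializes to the ordinary Beck--Chevalley transformation for $\Fun(-,\Sp)$ along a pullback square of spaces, which is an equivalence: this one checks pointwise on $N_i(g^*p)$ from the pointwise formula for left Kan extension and the fact that in a pullback square of anima the fibre of one vertical leg over a point coincides with the fibre of the opposite leg over the image point.

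I expect the main obstacle to be the coherence bookkeeping needed to conclude --- namely, checking that the transformation $i_!j^* \to g^*f_!$ for twisted spectra genuinely \emph{is} the totalization of these levelwise transformations. The restriction functors $j^*$ and $g^*$ are visibly computed degreewise, but $f_!$ and $i_!$ are produced abstractly through ambidexterity (Proposition~\ref{prop:left_right_adjoints_ambidexterity}) rather than by an explicit formula, so their degreewise nature needs justification; the point is that the face and degeneracy maps of a {\v C}ech nerve themselves fit into pullback squares of spaces, so the levelwise left Kan extensions are compatible with the cosimplicial structure (again by Beck--Chevalley over $\cS$) and hence assemble to compute $f_!$ and $i_!$ on totalizations. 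With this in hand, $i_!j^*\to g^*f_!$ is a natural transformation of cosimplicial objects of $\mathrm{Pr}^L_{\St}$ that is an equivalence in every degree, and since totalization is a limit it preserves equivalences, giving the asserted equivalence $\TwSp(B,\sigma)\to\TwSp(C,\rho)$. Alternatively one can argue fibrewise: using the pointwise description of $f_!$ and $i_!$ as relative colimits over fibres, both sides evaluate over a point $c\in C$ to the colimit of an untwisted parametrized spectrum over the common fibre $B\times_D\{g(c)\}$, which is the same pullback-square-of-anima computation used above in each cosimplicial degree.
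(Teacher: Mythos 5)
Your proposal is correct and follows essentially the same strategy as the paper: trivialize $\cH_{(D,\pi)}$ by an open cover, pull it back to covers of $B$, $C$, $A$, pass to {\v C}ech nerves to exhibit each $\TwSp(-,-)$ as a totalization of untwisted functor categories, and observe that the comparison map decomposes degreewise into ordinary Beck--Chevalley equivalences for $\Fun(-,\Sp)$, whence the totalized map is an equivalence. You go further than the paper in two places, and both are genuine improvements: (i) you explicitly verify that the {\v C}ech nerves themselves form a pullback square of spaces in each cosimplicial degree, which the paper leaves implicit, and (ii) you correctly flag the coherence point --- that $f_!$ and $i_!$ are produced abstractly via ambidexterity rather than by a formula, so one must justify that they are computed degreewise across the cosimplicial diagram --- and you sketch how to close it via Beck--Chevalley along the face/degeneracy maps; the paper simply asserts the degreewise description without addressing this.
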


\subsection{Tensor products of twisted spectra} \label{sec:tensor_prod_TwSp}

Let us start by considering some monoidal structures on the $\infty$-category of spaces over $B\Pic(\bS)$.
To simplify terminology, we will refer to the overcategory $\cS_{/B\Pic(\bS)}$ as the $\infty$-category of \emph{twisted spaces}.
Note that this $\infty$-category is symmetric monoidal by intertwining the cartesian product on $\cS$ with the $\bE_\infty$-structure on $B\Pic(\bS)$.
More specifically, the symmetric monoidal structure, which will be denoted $\boxtimes$ and referred to as the \emph{exterior product}, is given as follows: if $(A,\tau)$ and $(B,\sigma)$ are twisted spaces, we have
\[
(A, \tau) \boxtimes (B,\sigma) = (A \times B , \tau \boxtimes \sigma)
\]
where $\tau \boxtimes \sigma$ denotes the composition
\[
\tau \boxtimes \sigma : A \times B \overset{\tau \times \sigma}\longto B\Pic(\bS) \times B\Pic(\bS) \longto B\Pic(\bS) \,.
\]
Compare this to the \emph{interior product} on the mapping space $\Map(A,B\Pic(\bS))$ for a fixed space $A$ which is given by using the diagonal on $A$ together with the $\bE_\infty$-structure on the Picard space:
\[
 +  : \Map(A,B\Pic(\bS)) \times \Map(A,B\Pic(\bS)) \longto \Map(A \times A, B\Pic(\bS) \times B\Pic(\bS)) \longto \Map(A,B\Pic(\bS))
\]
In particular, note that the interior product $\tau + \sigma$ is obtained by pulling back the exterior product $\tau \boxtimes \sigma$ along the diagonal map $\Delta_A : A \to A \times A$.
Similar to the above situation, twisted spectra will also have an exterior and an interior product.
We obtain the first one by considering tensor products of categories of twisted spectra.

\begin{proposition}\label{prop:tensor_twisted_categories}
  The functor $\TwSp_! : \cS_{/B\Pic(\bS)} \to \Pr^{L}_{\St}$ is symmetric monoidal with respect to the exterior product on the source and the Lurie tensor product on the target.
  In particular, we have
  \[
  \TwSp(A,\tau) \otimes \TwSp(B,\sigma) \simeq \TwSp(A \times B , \tau \boxtimes \sigma) \,,
  \]
  and the diagram
  \[
  \begin{tikzcd}
    \TwSp(A, \tau) \otimes \TwSp(B,\sigma) \arrow[d,"f_! \otimes g_!"] \arrow[r,"\simeq"] & \TwSp(A \times B, \tau \boxtimes \sigma) \arrow[d,"(f \times g)_!"] \\
    \TwSp(C,\rho) \otimes \TwSp(D,\pi) \arrow[r,"\simeq"] & \TwSp(C \times D , \rho \boxtimes \pi)
  \end{tikzcd}
  \]
  commutes.
  \begin{proof}
    Note that the functor $\TwSp$ that we are referring to in the proposition is the composition
    \[
    \cS_{/B\Pic(\bS)} \longto \cS_{/\Pr^{L}_{\St}} \overset{\colim}\longto \mathrm{Pr}^{L}_{\St} \,.
    \]
    Note that these are all symmetric monoidal functors.
    That the diagram is commutative follows from the fact that the coherence maps of a symmetric monoidal functor is supposed to be natural.
  \end{proof}
\end{proposition}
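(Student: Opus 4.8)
The plan is to realize $\TwSp_!$ as a composite of two symmetric monoidal functors and then invoke Lemma~\ref{lem:colim_symm_monoid}. First I would record the factorization
\[
\cS_{/B\Pic(\bS)} \longto \cS_{/\Pr^{L}_{\St}} \overset{\colim}{\longto} \Pr^{L}_{\St},
\]
in which the first arrow is postcomposition with the inclusion $\iota \colon B\Pic(\bS) \hookrightarrow \Pr^{L}_{\St}$ of the full subgroupoid on the object $\Sp$, and the second arrow is the colimit functor $\colim \colon \cS_{/\Pr^{L}_{\St}} \to \Pr^{L}_{\St}$ from the discussion following Proposition~\ref{prop:left_right_adjoints_ambidexterity}. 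On objects this composite sends $(A,\tau)$ to $\colim_A(\iota \circ \tau) \simeq \lim_A(\iota \circ \tau) = \TwSp(A,\tau)$ by ambidexterity (Proposition~\ref{ambidexterity in Prl}), and on a morphism it returns the associated functor $f_!$, so it is indeed $\TwSp_!$. By Lemma~\ref{lem:colim_symm_monoid} the second arrow is symmetric monoidal for the structure on its source obtained by intertwining the cartesian product on $\cS$ with the Lurie tensor product, so the whole proposition reduces to showing that the first arrow is symmetric monoidal, after which we are done since a composite of symmetric monoidal functors is symmetric monoidal.

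The heart of the matter is that $\iota$ itself is symmetric monoidal. Here I would use the identification $\Pic(\bS) \simeq \Aut_{\Pr^{L}_{\St}}(\Sp)$, $X \mapsto X \otimes (-)$, which is valid since every colimit-preserving endofunctor of $\Sp$ is of this form and is invertible precisely when $X$ is; this exhibits $B\Pic(\bS)$ as the full subgroupoid of $\Pr^{L}_{\St}$ spanned by $\Sp$. Since $\Sp \otimes \Sp \simeq \Sp$ is the tensor unit and composition of autoequivalences of $\Sp$ corresponds to the tensor product of the associated invertible spectra, this subgroupoid is a symmetric monoidal subcategory, its inclusion is symmetric monoidal, and the resulting $\bE_\infty$-structure on $B\Pic(\bS)$ is the standard one. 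Next, the ``slice-and-intertwine'' assignment $M \mapsto (\cS_{/M}, \boxtimes)$ --- which makes sense because a functor from a space into a symmetric monoidal $\infty$-category factors through its core, and that core is an $\bE_\infty$-space --- is functorial in symmetric monoidal functors: a symmetric monoidal $F \colon \cC \to \calD$ induces a symmetric monoidal functor $\cS_{/\cC} \to \cS_{/\calD}$ whose coherence equivalences $(A \to \calD) \boxtimes (B \to \calD) \overset{\simeq}{\longto} (A \times B \to \calD)$ are assembled from the coherence data of $F$. Applying this with $F = \iota$ gives that the first arrow is symmetric monoidal.

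Finally, the displayed formula $\TwSp(A,\tau) \otimes \TwSp(B,\sigma) \simeq \TwSp(A \times B , \tau \boxtimes \sigma)$ is just the value of the now-established coherence equivalence of $\TwSp_!$ on the pair $\bigl((A,\tau),(B,\sigma)\bigr)$, once one unwinds that $(A,\tau) \boxtimes (B,\sigma) = (A \times B , \tau \boxtimes \sigma)$ by the definition of the exterior product; and the asserted commuting square is precisely the naturality of these coherence equivalences --- which is part of the data of a symmetric monoidal functor --- evaluated on the morphism $(f,g) \colon (A,\tau) \boxtimes (B,\sigma) \to (C,\rho) \boxtimes (D,\pi)$. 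The main obstacle, and the only nonformal ingredient, is pinning down the functoriality of the slice-and-intertwine construction in symmetric monoidal functors together with the identification of the $\bE_\infty$-space $B\Pic(\bS)$ with the subgroupoid of $\Pr^{L}_{\St}$ on $\Sp$, so that the intertwined monoidal structures on the two slice categories genuinely agree along $\iota$; everything downstream of that is formal coherence bookkeeping.
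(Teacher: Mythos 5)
Your proposal matches the paper's proof: both factor $\TwSp_!$ through $\cS_{/\Pr^{L}_{\St}}$ followed by $\colim$, both invoke symmetric monoidality of each factor, and both derive the commuting square from naturality of the coherence maps of a symmetric monoidal functor. The only difference is that you supply the justification the paper leaves tacit — identifying $B\Pic(\bS)$ with the full symmetric monoidal subgroupoid of $\Pr^{L}_{\St}$ on $\Sp$ and checking that postcomposing with a symmetric monoidal inclusion induces a symmetric monoidal functor on intertwined slice categories — which is a correct and welcome expansion rather than a different route.
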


\begin{corollary} \label{cor:natural_pullback_and_right_adjoint}
  Given maps $f : A \to C$ and $g : B \to D$ of spaces. The following holds:
  \begin{enumerate}
    \item The diagram
    \[
    \begin{tikzcd}
      \TwSp(C,\tau) \otimes \TwSp(D,\sigma) \arrow[d,"f^* \otimes g^*"] \arrow[r,"\simeq"] & \TwSp(C \times D , \tau \boxtimes \sigma) \arrow[d,"(f \times g)^*"] \\
      \TwSp(A, f^* \tau) \otimes \TwSp(B,f^* \sigma) \arrow[r,"\simeq"] & \TwSp(A \times B, f^* (\tau \boxtimes \sigma))
    \end{tikzcd}
    \]
    commutes.
    \item the diagram
    \[
    \begin{tikzcd}
      \TwSp(A, f^* \tau) \otimes \TwSp(B,f^* \sigma) \arrow[d,"f_\ast \otimes g_\ast"] \arrow[r,"\simeq"] & \TwSp(A \times B, f^* (\tau \boxtimes \sigma)) \arrow[d,"(f \times g)_\ast"] \\
      \TwSp(C,\tau) \otimes \TwSp(D,\sigma) \arrow[r,"\simeq"] & \TwSp(C \times D , \tau \boxtimes \sigma)
    \end{tikzcd}
    \]
    commutes.
  \end{enumerate}
  \begin{proof}
    The first statement follows from the functors $f^*$, $g^*$, and $(f \times g)^*$ being right adjoints to $f_!$, $g_!$, and $(f \times g)_!$, respectively. The second statement then follows from the first statement since $f_*$, $g_*$, and $(f \times g)_*$ are right adjoints to $f^*$, $g^*$, and $(f \times g)^*$, respectively.
  \end{proof}
\end{corollary}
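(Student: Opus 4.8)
The plan is to deduce both diagrams formally from Proposition~\ref{prop:tensor_twisted_categories} by passing to adjoints, the only non-formal input being that the Lurie tensor product is compatible with the formation of adjoints. Throughout I will use the adjoint triples $h_!\dashv h^*\dashv h_*$ of Proposition~\ref{prop:functoriality_twisted_spectra} for $h\in\{f,g,f\times g\}$; in particular $f^*$ and $g^*$ are \emph{left} adjoints (this is how they are produced in Proposition~\ref{prop:left_right_adjoints_ambidexterity}), hence colimit-preserving, so that $f^*\otimes g^*$ is a well-defined morphism of $\Pr^L_{\St}$.

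First I would establish (1). The square of Proposition~\ref{prop:tensor_twisted_categories} is a commutative square in $\Pr^L_{\St}$ whose horizontal maps are the symmetric monoidal coherence equivalences of $\TwSp_!$ and whose vertical maps are $f_!\otimes g_!$ and $(f\times g)_!$. I would first check that $f^*\otimes g^*$ is right adjoint to $f_!\otimes g_!$: decompose $f_!\otimes g_!\simeq(\id\otimes g_!)\circ(f_!\otimes\id)$ and use that, for a fixed presentable stable category $\cC$, the functor $-\otimes\cC\colon\Pr^L_{\St}\to\Pr^L_{\St}$ preserves adjunctions (equivalently, $\otimes$ is a functor of $(\infty,2)$-categories $\mathbf{Pr}^L_{\St}\times\mathbf{Pr}^L_{\St}\to\mathbf{Pr}^L_{\St}$, and functors of $(\infty,2)$-categories preserve adjunctions); composing the two resulting adjunctions gives $f_!\otimes g_!\dashv f^*\otimes g^*$, and likewise $(f\times g)_!\dashv(f\times g)^*$. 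Then I would apply the passage-to-right-adjoints equivalence $(\Pr^L_{\St})^{\oop}\simeq\Pr^R_{\St}$ (equivalently, the mate construction) to the square of Proposition~\ref{prop:tensor_twisted_categories}: since its horizontal maps are equivalences, reversing all four maps and then re-inverting the (now inverse) horizontal equivalences yields a commutative square with the \emph{same} horizontal coherence equivalences and vertical maps $f^*\otimes g^*$ and $(f\times g)^*$. That square is precisely diagram (1).

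Next I would obtain (2) by repeating the move one level up, applied to the square just produced in (1). That square again lives in $\Pr^L_{\St}$ with invertible horizontal maps, and its vertical maps admit right adjoints: $(f\times g)^*$ has right adjoint $(f\times g)_*$ by Proposition~\ref{prop:functoriality_twisted_spectra}, and $f^*\otimes g^*$, being a morphism of $\Pr^L_{\St}$, has a right adjoint which I would simply \emph{name} $f_*\otimes g_*$ — uniqueness of adjoints together with the commutativity of (1) already identifies it, under the coherence equivalences, with $(f\times g)_*$, so no independent construction of $f_*\otimes g_*$ as a ``tensor of functors'' is needed (which is fortunate, since $f_*$ and $g_*$ need not be colimit-preserving). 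Passing to right adjoints of the verticals of (1) via the mate construction then produces exactly diagram (2).

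The bookkeeping — mates, uniqueness of adjoints, and checking that the coherence equivalences in the two diagrams of the corollary are literally those of the symmetric monoidal functor $\TwSp_!$ of Proposition~\ref{prop:tensor_twisted_categories} — is routine. The one genuinely substantive point, and the place where the $(\infty,2)$-categorical enhancement $\mathbf{Pr}^L_{\St}$ recalled earlier earns its keep, is that the Lurie tensor product preserves adjunctions: this is what allows the right adjoint of $f_!\otimes g_!$ to be written as $f^*\otimes g^*$. I do not expect any obstacle beyond keeping track of the fact that $f^*$, $g^*$ are colimit-preserving (so that $f^*\otimes g^*$ is legitimately a morphism of $\Pr^L_{\St}$); if one instead had only $f^*$ as a right adjoint of $f_!$, the statement of (1) would not even typecheck.
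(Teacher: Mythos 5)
Your proof is correct and takes essentially the same route as the paper: pass to right adjoints of the vertical maps in the commuting square of Proposition~\ref{prop:tensor_twisted_categories} to obtain (1), then repeat the move to obtain (2). The extra care you take — verifying via the $(\infty,2)$-functoriality of the Lurie tensor product that $f^*\otimes g^*$ really is the right adjoint of $f_!\otimes g_!$, and observing that $f_*\otimes g_*$ must be read as the right adjoint of $f^*\otimes g^*$ (since $f_*$, $g_*$ need not preserve colimits, so a literal tensor of functors is not available) — is precisely the bookkeeping the paper's two-sentence proof leaves implicit.
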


We can use this equivalence to define the exterior tensor product of twisted spectra, simply by appealing to the universal property of tensor products of presentable stable $\infty$-categories.

\begin{definition}
  The \emph{exterior product of twisted spectra} is the functor
  \[
  - \boxtimes - : \TwSp(A, \tau) \times \TwSp(B,\sigma) \longto \TwSp(A \times B , \tau \boxtimes \sigma)
  \]
  corresponding to the equivalence of Proposition~\ref{prop:tensor_twisted_categories} under the universal property of tensor products of presentable stable $\infty$-categories.
\end{definition}

Note that the diagonal map on a space $A$ induces a pullback functor
\[
\Delta^*_A : \TwSp(A \times A , \tau \boxtimes \sigma) \longto \TwSp(A,\tau + \sigma)
\]
which can be used to define the interior product of twisted spectra.

\begin{definition} \label{defn:interior_tensor}
  The \emph{interior product of twisted spectra} is the functor
  \[
  - \otimes - : \TwSp(A,\tau) \times \TwSp(A,\sigma) \longto \TwSp(A , \tau + \sigma)
  \]
  obtained by pulling the exterior product back along the diagonal map on the space $A$.
\end{definition}

The following lemma alternatively describes the external product in terms of the internal product.

\begin{lemma} \label{lem:boxtimes_proj_pullbacks}
  Let $X \in \TwSp(A,\tau)$ and let $Y \in \TwSp(B,\sigma)$. Then
  \[
  X \boxtimes Y \simeq p_A^* X \otimes p_B^* Y
  \]
  where $p_A :  A\times B \to A$ and $p_B : A \times B \to B$ denotes the projections. Here, the tensor product is taken in twisted spectra over $A \times B$.
  \begin{proof}
    We have
    \begin{align*}
    p^*_A X \otimes p_B^* Y &= \Delta^{*}_{A \times B}(p_A^* X \boxtimes p_B^*) \\ &\simeq \Delta_{A \times B}^*((p_A \times p_B)^*(X \boxtimes Y)) \\ &\simeq X \boxtimes Y \,,
  \end{align*}
  where we are using Proposition~\ref{prop:boxtimes_pull}; see below.
  \end{proof}
\end{lemma}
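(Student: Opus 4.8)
The plan is to unwind Definition~\ref{defn:interior_tensor} and reduce the claim to the compatibility of the exterior product of twisted spectra with pullback along maps of spaces. Concretely, I would first isolate that compatibility as the separate statement invoked in the proof (Proposition~\ref{prop:boxtimes_pull}): for maps of spaces $f : A' \to A$ and $g : B' \to B$ and objects $X \in \TwSp(A,\tau)$, $Y \in \TwSp(B,\sigma)$, there is a natural equivalence
\[
f^* X \boxtimes g^* Y \;\simeq\; (f \times g)^*(X \boxtimes Y)
\]
in $\TwSp(A' \times B', f^*\tau \boxtimes g^*\sigma)$. To get this, I would take the commuting square of Corollary~\ref{cor:natural_pullback_and_right_adjoint}(1), which relates $f^* \otimes g^*$ to $(f \times g)^*$ via the equivalences of Proposition~\ref{prop:tensor_twisted_categories}, and transport it through the universal property of the Lurie tensor product (Theorem~\ref{thm:uni_prop_tensor_PrLSt}) by which $\boxtimes$ is defined. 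Along the way one also records the purely space-level identity $p_A^*\tau + p_B^*\sigma \simeq \tau \boxtimes \sigma$, which comes down to $(p_A^*\tau \times p_B^*\sigma)\circ \Delta_{A\times B} \simeq \tau \times \sigma$, so that both sides of the asserted equivalence indeed live over the same twist.

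Granting this, the lemma is a short formal manipulation. By Definition~\ref{defn:interior_tensor}, the interior product over $A \times B$ of $p_A^* X \in \TwSp(A\times B, p_A^*\tau)$ and $p_B^* Y \in \TwSp(A\times B, p_B^*\sigma)$ is $\Delta_{A \times B}^*\!\left(p_A^* X \boxtimes p_B^* Y\right)$, where the exterior product is now taken over $(A\times B)\times(A\times B)$. Applying Proposition~\ref{prop:boxtimes_pull} with $f = p_A$ and $g = p_B$ rewrites the bracketed term as $(p_A \times p_B)^*(X \boxtimes Y)$. Since $\TwSp^*$ is a functor on $(\cS_{/B\Pic(\bS)})^{\oop}$ we then have $\Delta_{A\times B}^*(p_A \times p_B)^* \simeq \big((p_A \times p_B)\circ \Delta_{A\times B}\big)^*$, and $(p_A \times p_B)\circ \Delta_{A\times B} = \id_{A\times B}$, so this composite pullback is the identity functor. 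Chaining these equivalences gives $p_A^* X \otimes p_B^* Y \simeq X \boxtimes Y$.

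The one genuinely substantive point is Proposition~\ref{prop:boxtimes_pull} itself, i.e.\ upgrading pullback-compatibility of $\boxtimes$ from a statement about individual objects to a coherent equivalence of bifunctors: one must check that the commuting square of Corollary~\ref{cor:natural_pullback_and_right_adjoint}(1), which lives at the level of presentable stable $\infty$-categories and their Lurie tensor products, transports coherently to the natural equivalence of bifunctors extracted via Theorem~\ref{thm:uni_prop_tensor_PrLSt}. That is where the homotopy-coherence bookkeeping is concentrated; once it is in place, everything else is formal manipulation of iterated pullback functors together with the triangle identity $(p_A \times p_B)\circ \Delta_{A\times B} = \id_{A\times B}$.
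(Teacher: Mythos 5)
Your proof is correct and follows essentially the same route as the paper's: unwind the interior product via Definition~\ref{defn:interior_tensor} to get $\Delta_{A\times B}^*(p_A^*X \boxtimes p_B^*Y)$, apply Proposition~\ref{prop:boxtimes_pull} with $f = p_A$, $g = p_B$, and conclude from $(p_A \times p_B)\circ\Delta_{A\times B} = \id_{A\times B}$ together with functoriality of pullback. The extra discussion you give of how Proposition~\ref{prop:boxtimes_pull} is obtained from Corollary~\ref{cor:natural_pullback_and_right_adjoint} and the universal property of the Lurie tensor product matches the paper's treatment; in the paper this proposition is stated separately, and the lemma simply cites it (as a forward reference).
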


If we want to be explicit about the fact that we work with twisted spectra over $A$, we will sometimes denote the interior product by $\otimes_A$, but in most cases, we will be implicit about this and simply write $\otimes$.
Note that the fact that the twists add up under the interior tensor product suggests that there is a symmetric monoidal structure on some category of twisted spectra over $A$ where we let the twists vary.
We will look at this category is Section~\ref{sec:total_category}.

\subsection{Projection isomorphism}

In this section, we will prove a preliminary version of the projection isomorphism for twisted spectra.
We begin with some compatibilities between pullbacks and the symmetric monoidal structure on twisted spectra, giving us a rudimentary version of pullbacks being symmetric monoidal with respect to the interior tensor product of twisted spectra.

\begin{proposition} \label{prop:boxtimes_pull}
  Let $f : A \to C$ and $g : B \to D$. Let $X \in \TwSp(C,\rho)$ and $Y \in \TwSp(D,\pi)$. Then we have
  \[
  (f \times g)^*(X \boxtimes Y) \simeq f^* X \boxtimes g^* Y \,.
  \]
  \begin{proof}
  The statement of the theorem essentially follows from the commutativity of the diagram
    \[
    \begin{tikzcd}
      \TwSp(C, \rho) \otimes \TwSp(D,\pi) \arrow[d,"f^* \otimes g^*"] \arrow[r,"\simeq"] & \TwSp(C \times D, \rho \boxtimes \pi) \arrow[d,"(f \times g)^*"] \\
      \TwSp(A,f^* \rho) \otimes \TwSp(B,g^* \pi) \arrow[r,"\simeq"] & \TwSp(A \times B , f^* \rho \boxtimes g^* \pi) \,,
    \end{tikzcd}
    \]
    in Proposition~\ref{prop:tensor_twisted_categories}.
  \end{proof}
\end{proposition}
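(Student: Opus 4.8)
The plan is to exhibit both sides of the claimed equivalence as the two ways around a single commutative rectangle. Recall first that, by definition, the exterior product $X \boxtimes Y$ over $C \times D$ is the image of the pair $(X,Y)$ under the composite
\[
\TwSp(C,\rho) \times \TwSp(D,\pi) \longto \TwSp(C,\rho) \otimes \TwSp(D,\pi) \overset{\simeq}\longto \TwSp(C \times D,\rho \boxtimes \pi) \,,
\]
where the first arrow is the canonical bilinear functor into the Lurie tensor product and the second is the equivalence of Proposition~\ref{prop:tensor_twisted_categories}; likewise $f^*X \boxtimes g^*Y$ over $A \times B$ is the image of $(f^*X, g^*Y)$ under the analogous composite for $A$ and $B$. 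So it suffices to produce a commuting square relating these two composites via the pullback functors.

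First I would observe that $f^*$ and $g^*$ are colimit-preserving, since by Proposition~\ref{prop:left_right_adjoints_ambidexterity} they are left adjoints (indeed they lie in $\Pr^{L,R}_{\St}$). Consequently the square
\[
\begin{tikzcd}
\TwSp(C,\rho) \times \TwSp(D,\pi) \arrow[r] \arrow[d, "f^* \times g^*"'] & \TwSp(C,\rho) \otimes \TwSp(D,\pi) \arrow[d, "f^* \otimes g^*"] \\
\TwSp(A,f^*\rho) \times \TwSp(B,g^*\pi) \arrow[r] & \TwSp(A,f^*\rho) \otimes \TwSp(B,g^*\pi)
\end{tikzcd}
\]
commutes: this is exactly the naturality of the universal bilinear functor $\cC \times \cd \to \cC \otimes \cd$ in colimit-preserving functors, which is built into the construction of the symmetric monoidal structure on $\Pr^L_{\St}$ (the functor $F \otimes G$ being characterized by making this square commute).

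Next I would glue this square onto the commuting square
\[
\begin{tikzcd}
\TwSp(C,\rho) \otimes \TwSp(D,\pi) \arrow[r, "\simeq"] \arrow[d, "f^* \otimes g^*"'] & \TwSp(C \times D,\rho \boxtimes \pi) \arrow[d, "(f \times g)^*"] \\
\TwSp(A,f^*\rho) \otimes \TwSp(B,g^*\pi) \arrow[r, "\simeq"] & \TwSp(A \times B, f^*\rho \boxtimes g^*\pi)
\end{tikzcd}
\]
supplied by Corollary~\ref{cor:natural_pullback_and_right_adjoint}(1). Chasing $(X,Y)$ through the resulting rectangle, the top-then-right path yields $(f \times g)^*(X \boxtimes Y)$ and the left-then-bottom path yields $f^*X \boxtimes g^*Y$, so the two agree, which is the assertion.

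The only point requiring care is coherence: each square above commutes up to a specified natural equivalence rather than on the nose, so one should check that the composite equivalence identifying the two objects is the intended one and not some a priori different one. This is harmless here, since all the equivalences in play are the structural data of symmetric monoidal functors — the functor $\TwSp_!$ of Proposition~\ref{prop:tensor_twisted_categories} and the functor $\colim : \cS_{/\Pr^L_{\St}} \to \Pr^L_{\St}$ of Lemma~\ref{lem:colim_symm_monoid} — whose coherence data are natural by construction. So I do not expect a genuine obstacle; the essential content is just assembling the correct two naturality squares and invoking colimit-preservation of $f^*$ and $g^*$.
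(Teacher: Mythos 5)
Your proof is correct and follows essentially the same route as the paper: both hinge on the commuting square of pullbacks against the equivalence $\TwSp(C,\rho)\otimes\TwSp(D,\pi)\simeq\TwSp(C\times D,\rho\boxtimes\pi)$, which the paper states (citing Proposition~\ref{prop:tensor_twisted_categories}, though the square with $f^*$'s is actually the one recorded in Corollary~\ref{cor:natural_pullback_and_right_adjoint}(1)). You additionally make explicit the object-level chase by prepending the naturality square for the universal bilinear functor $\cC\times\cd\to\cC\otimes\cd$, which is the step the paper leaves implicit when passing from the functor-level diagram to the asserted equivalence of objects.
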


\begin{corollary} \label{cor:pullback_symm_mon_prelim}
  Let $f : A \to B$ be a map of spaces, $X \in \TwSp(B,\tau)$ and $Y \in \TwSp(B,\sigma)$. Then we have
  \[
  f^*(X \otimes Y) \simeq f^* X \otimes f^* Y \,.
  \]
  \begin{proof}
    This follows from the computation
    \begin{align*}
      f^*(X \otimes Y) &\simeq f^*((\Delta_B)^*(X \boxtimes Y)) \\ &\simeq (\Delta_A)^*((f \times f)^*(X \boxtimes Y)) \\ &\simeq f^*X \otimes f^* Y \,.
    \end{align*}
  \end{proof}
\end{corollary}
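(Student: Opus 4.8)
The plan is to deduce the statement formally from two inputs already in hand: the definition of the interior product $\otimes$ as the pullback of the exterior product $\boxtimes$ along a diagonal (Definition~\ref{defn:interior_tensor}), and the compatibility of pullback with $\boxtimes$ (Proposition~\ref{prop:boxtimes_pull}), glued together using the functoriality of $f \mapsto f^*$ for twisted spectra.

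First I would record the only coherence input needed, namely that the diagonal is natural: for a map $f : A \to B$ of spaces the square
\[
\begin{tikzcd}
A \arrow[r,"\Delta_A"] \arrow[d,"f"'] & A \times A \arrow[d,"f \times f"] \\
B \arrow[r,"\Delta_B"] & B \times B
\end{tikzcd}
\]
commutes, so that there is a canonical equivalence $(f \times f) \circ \Delta_A \simeq \Delta_B \circ f$ of maps $A \to B \times B$. Postcomposing this equivalence with the twist $\tau \boxtimes \sigma : B \times B \to B\Pic(\bS)$ shows already at the level of twists that $f^*(\tau + \sigma) \simeq f^*\tau + f^*\sigma$, so that both sides of the asserted equivalence genuinely lie in $\TwSp(A, f^*\tau + f^*\sigma)$. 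Applying the functor $\TwSp^*$ (one of the functors assembled after Proposition~\ref{prop:functoriality_twisted_spectra}, whose existence and functoriality rests on the ambidexterity results of Section~\ref{sec:parametrized_cats}) to the square above yields a canonical equivalence of functors $\Delta_A^* \circ (f \times f)^* \simeq f^* \circ \Delta_B^*$.

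Then I would simply chase through the named identifications:
\begin{align*}
f^*(X \otimes Y) &= f^*\bigl(\Delta_B^*(X \boxtimes Y)\bigr) \\
&\simeq \Delta_A^*\bigl((f \times f)^*(X \boxtimes Y)\bigr) \\
&\simeq \Delta_A^*\bigl(f^* X \boxtimes f^* Y\bigr) \\
&= f^* X \otimes f^* Y,
\end{align*}
where the outer equalities are Definition~\ref{defn:interior_tensor}, the first equivalence is the naturality of the diagonal established above, and the second is Proposition~\ref{prop:boxtimes_pull}. This is essentially the computation one wants, repackaged so that each step is a cited input.

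I do not expect a genuine obstacle here; the one point requiring care is that all of the identifications are used functorially rather than merely objectwise — concretely, that $\TwSp^*$ is a well-defined functor on $(\cS_{/B\Pic(\bS)})^{\oop}$, so that $(g \circ h)^* \simeq h^* \circ g^*$ coherently, and that the equivalence of Proposition~\ref{prop:boxtimes_pull} is natural in $f$ and $g$ in the way needed to splice it into the chain. Both are already available: the former from the construction of the pullback functors via ambidexterity, and the latter from the commuting square in Proposition~\ref{prop:tensor_twisted_categories} that underlies Proposition~\ref{prop:boxtimes_pull}. So the argument reduces to the short diagram chase above.
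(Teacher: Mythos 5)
Your computation is the same as the paper's: both deduce the result from Definition~\ref{defn:interior_tensor}, the naturality of the diagonal square $(f\times f)\circ\Delta_A\simeq\Delta_B\circ f$ applied via $\TwSp^*$, and Proposition~\ref{prop:boxtimes_pull}. You merely make explicit one intermediate step ($\Delta_A^*(f^*X\boxtimes f^*Y)$) and spell out the coherence points that the paper leaves implicit; no difference in substance.
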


\begin{proposition} \label{prop:proj_iso_prelim}
  Let $f : A \to B$ be a map of spaces, $Y \in \TwSp(B,\sigma)$, and $X \in \TwSp(A,f^* \rho)$ for some twists $\sigma , \rho : B \to B \Pic(\bS)$. Then we have an equivalence
  \[
  f_!(f^* Y \otimes X) \simeq Y \otimes f_! X
  \]
  of $(A,\sigma + \rho)$-twisted spectra.
  \begin{proof}
    Using the definition of the internal tensor product, the first part of Corollary~\ref{cor:natural_pullback_and_right_adjoint}, and commutativity of some maps we get:
    \begin{align*}
      f_!(f^* \otimes X) &\simeq f_! (\Delta_A^*(f^* Y \boxtimes X)) \\
      &\simeq f_!(\Delta_A^*((f \times 1)^*(Y \boxtimes X))) \\
      &\simeq f_!( ((f \times 1) \circ \Delta_A)^*(Y \boxtimes X)) \\
      &\simeq f_!((f,1)^*(Y \boxtimes X)) \,.
    \end{align*}
    We finish the proof by using Proposition~\ref{prop:Beck_Chevalley_preliminary} on the pullback diagram
    \[
    \begin{tikzcd}
      (A,f^* \sigma + f^* \rho) \arrow[d,"f"'] \arrow[r,"{(f,1)}"] & (B \times A,\sigma \boxtimes f^* \rho) \arrow[d,"1 \times f"] \\
      (B,\sigma + \rho) \arrow[r,"\Delta_B"] & (B \times B, \sigma \boxtimes \rho) \,,
    \end{tikzcd}
    \]
    so that we have equivalences
    \[
    f_!((f,1)^*(Y \boxtimes X)) \simeq \Delta_B^*((1 \times f)_!(Y \boxtimes X)) \simeq \Delta_B^*(Y \boxtimes f_! X) \simeq Y \otimes f_! X \,.
    \]
  \end{proof}
\end{proposition}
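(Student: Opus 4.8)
The plan is to reduce the statement to the Beck--Chevalley isomorphism of Proposition~\ref{prop:Beck_Chevalley_preliminary} by a chain of formal manipulations, in the same spirit as the proof of Lemma~\ref{lem:boxtimes_proj_pullbacks}. First I would unwind the interior tensor product on the left-hand side via Definition~\ref{defn:interior_tensor}, writing $f^* Y \otimes X \simeq \Delta_A^*(f^* Y \boxtimes X)$ with $\Delta_A : A \to A \times A$ the diagonal. Then, using the compatibility of pullback with the exterior product (Proposition~\ref{prop:boxtimes_pull}), I would rewrite $f^* Y \boxtimes X \simeq (f \times 1)^*(Y \boxtimes X)$, viewing $X$ as a $(A, 1^* f^* \rho)$-twisted spectrum. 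Composing pullbacks and using functoriality of $(-)^*$, we get $\Delta_A^* \circ (f \times 1)^* \simeq ((f\times 1)\circ \Delta_A)^* = (f,1)^*$, so that
\[
f^* Y \otimes X \simeq (f,1)^*(Y \boxtimes X)
\]
as $(A, f^*\sigma + f^*\rho)$-twisted spectra.

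Second, I would apply $f_!$ and feed in Beck--Chevalley. The geometric input is that the square
\[
\begin{tikzcd}
(A,f^*\sigma + f^*\rho) \arrow[d,"f"'] \arrow[r,"{(f,1)}"] & (B \times A,\sigma \boxtimes f^*\rho) \arrow[d,"1 \times f"] \\
(B,\sigma + \rho) \arrow[r,"\Delta_B"] & (B \times B, \sigma \boxtimes \rho)
\end{tikzcd}
\]
is a pullback square in $\cS_{/B\Pic(\bS)}$: on underlying spaces the pullback of $\Delta_B$ along $1 \times f$ is $A$, with legs $f$ and $(f,1)$, and on twists both composites $B \times A \to B \times B \to B\Pic(\bS)$ and $A \to B \to B\Pic(\bS)$ restrict to $f^*(\sigma+\rho)$ on $A$. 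Proposition~\ref{prop:Beck_Chevalley_preliminary} then yields $f_!(f,1)^* \simeq \Delta_B^*(1 \times f)_!$, hence $f_!(f^* Y \otimes X) \simeq \Delta_B^*\big((1 \times f)_!(Y \boxtimes X)\big)$.

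Third, I would invoke the naturality of $(-)_!$ with respect to the exterior product, i.e.\ the commuting square of Proposition~\ref{prop:tensor_twisted_categories} applied with the first map replaced by $\mathrm{id}_B$ and the second by $f$, giving $(1 \times f)_!(Y \boxtimes X) \simeq Y \boxtimes f_! X$. Pulling back along $\Delta_B$ and applying Definition~\ref{defn:interior_tensor} once more gives $\Delta_B^*(Y \boxtimes f_! X) \simeq Y \otimes f_! X$, which is the claimed equivalence of twisted spectra over $B$.

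\textbf{Main obstacle.} The only genuine content — as opposed to bookkeeping — is verifying that the displayed square is a pullback \emph{in $\cS_{/B\Pic(\bS)}$}, so that Proposition~\ref{prop:Beck_Chevalley_preliminary} applies, and checking that the Beck--Chevalley transformation it produces is exactly the canonical mate obtained after unwinding the definition of the interior product, rather than merely \emph{some} equivalence. Both amount to tracking the twists through the maps $(f,1)$ and $1 \times f$ carefully; once that is pinned down, every other step is a formal consequence of the symmetric monoidality established in Proposition~\ref{prop:tensor_twisted_categories} and Corollary~\ref{cor:natural_pullback_and_right_adjoint}.
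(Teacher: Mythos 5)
Your proposal is correct and follows essentially the same route as the paper's proof: unwind the interior product via the diagonal, rewrite the exterior pullback using Proposition~\ref{prop:boxtimes_pull} (equivalently, Corollary~\ref{cor:natural_pullback_and_right_adjoint}), collapse the composed pullbacks to $(f,1)^*$, apply Proposition~\ref{prop:Beck_Chevalley_preliminary} to the same pullback square in $\cS_{/B\Pic(\bS)}$, and finish with the symmetric monoidality of $(-)_!$ and the definition of the interior product. Your explicit verification that the displayed square is a pullback in $\cS_{/B\Pic(\bS)}$, including the compatibility of twists, is a useful point that the paper leaves implicit.
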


\subsection{Closed structure} \label{sec:closed_individualtwists}

Twisted spectra also have a closed structure.
Indeed, the exterior and internal tensor product on twisted spectra admit right adjoints, which we will refer to as the exterior and internal hom-twisted spectra.
Let us start with the exterior tensor product.
Let $Y$ be a $(B,\sigma)$-twisted spectrum and consider the functor $- \boxtimes Y : \TwSp(A,\tau) \to \TwSp(A \times B, \tau \boxtimes \sigma)$. We claim that this has a right adjoint that we will denote as
\[
\hom^{\boxtimes}(Y,-) : \TwSp(A \times B , \tau \boxtimes \sigma) \longto \TwSp(A,\tau)
\]
This follows formally from the fact that $- \boxtimes Y$ preserves colimits, and that we are dealing with presentable stable $\infty$-categories.

\begin{definition}
  The \emph{external hom object of twisted spectra} is the functor
  \[
  \hom^{\boxtimes}(-,-) : \TwSp(B,\sigma)^{\oop} \times \TwSp(A \times B , \tau \boxtimes \sigma) \longto \TwSp(A,\tau) \,.
  \]
\end{definition}

Moreover, we have a right adjoint to the functor $- \otimes Y : \TwSp(B,\tau) \to \TwSp(B,\tau + \sigma)$ which we denote by
\[
 \hom(Y,-) : \TwSp(A,\tau + \sigma) \longto \TwSp(A , \tau) \,.
\]
Since the interior product is obtained by pulling back the exterior product along the diagonal map, it follows that the internal hom twisted spectra functor is the composition
\[
\TwSp(A,\tau + \sigma) \overset{(\Delta_A)_*} \longto \TwSp(A \times A , \tau \boxtimes \sigma) \overset{\hom^{\boxtimes}(Y,-)}\longto \TwSp(A,\tau) \,.
\]

\begin{definition}
  The internal hom object of twisted spectra is the functor
  \[
  \hom(-,-) : \TwSp(A,\sigma)^{\oop} \times \TwSp(A , \tau + \sigma) \longto \TwSp(A,\tau)
  \]
\end{definition}

Let us prove a preliminary version of pullbacks being closed monoidal with respect to the internal hom objects.

\begin{proposition}
  Let $f : A \to B$ be a map of spaces, $X \in \TwSp(B,\tau)$, and $Y \in \TwSp(B,\sigma)$. Then we have an equivalence
  \[
  f^* \hom(X,Y) \simeq \hom(f^* X, f^* Y) \,
  \]
  of $(A,-f^* \tau + f^* \sigma)$-twisted spectra.
  \begin{proof}
    Let $Z$ be an arbitrary $(A,-f^* \tau + f^* \sigma)$-twisted spectrum.
    By using Corollary~\ref{cor:pullback_symm_mon_prelim} and Proposition~\ref{prop:proj_iso_prelim} as well as adjunctions, we obtain
    \begin{align*}
      \Map(Z,f^* \hom(X,Y)) &\simeq \Map(f_! Z , \hom(X,Y)) \\&\simeq \Map(f_! Z \otimes X , Y) \\ &\simeq \Map(f_!(Z \otimes f^* X),Y) \\ &\simeq \Map(Z \otimes f^* X , f^* Z) \\ &\simeq \Map(Z, \hom(f^* X , f^* Z))
    \end{align*}
    from which the result follows by a Yoneda lemma argument.
  \end{proof}
\end{proposition}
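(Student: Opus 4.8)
The plan is to establish the equivalence by a Yoneda argument. Both $f^*\hom(X,Y)$ and $\hom(f^*X,f^*Y)$ are objects of $\TwSp(A,-f^*\tau+f^*\sigma)$: indeed $\hom(X,Y)$ lives over $B$ with twist $-\tau+\sigma$, so its pullback carries twist $-f^*\tau+f^*\sigma$; and on the other side $f^*X$ has twist $f^*\tau$ and $f^*Y$ has twist $f^*\sigma$, so their internal hom has twist $-f^*\tau+f^*\sigma$. Hence it suffices to exhibit a natural equivalence $\Map(Z,f^*\hom(X,Y))\simeq\Map(Z,\hom(f^*X,f^*Y))$ for an arbitrary $(A,-f^*\tau+f^*\sigma)$-twisted spectrum $Z$, and then conclude by the Yoneda lemma.

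To build this chain I would first move $f^*$ across via the adjunction $f_!\dashv f^*$ of Proposition~\ref{prop:functoriality_twisted_spectra}, obtaining $\Map(f_!Z,\hom(X,Y))$ computed in $\TwSp(B,-\tau+\sigma)$ (note $f_!Z$ indeed has twist $-\tau+\sigma$). Next, apply the internal-hom adjunction $(-\otimes X)\dashv\hom(X,-)$ from Section~\ref{sec:closed_individualtwists} to rewrite this as $\Map(f_!Z\otimes X,Y)$ in $\TwSp(B,\sigma)$. The crucial step is the projection formula of Proposition~\ref{prop:proj_iso_prelim}, applied with $X$ in the role of its ``$Y$'' and $Z$ in the role of its ``$X$'': it yields $f_!Z\otimes X\simeq f_!(Z\otimes f^*X)$. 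One more pass through $f_!\dashv f^*$ converts $\Map(f_!(Z\otimes f^*X),Y)$ into $\Map(Z\otimes f^*X,f^*Y)$ in $\TwSp(A,f^*\sigma)$, and a final application of $(-\otimes f^*X)\dashv\hom(f^*X,-)$ produces $\Map(Z,\hom(f^*X,f^*Y))$. Composing these gives the desired natural equivalence. (Corollary~\ref{cor:pullback_symm_mon_prelim}, on the compatibility of $f^*$ with the interior tensor product, can be invoked if one prefers to reorganize the middle of the chain, but is not strictly needed for the route above.)

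The only real content here is the twist arithmetic, so the \emph{hard part} is purely bookkeeping: at each stage one must check that the objects involved lie in the category of twisted spectra over the correct space and with the correct twist, so that the relevant adjunction or the projection formula genuinely applies — for instance that $Z\otimes f^*X$ has twist $f^*(-\tau+\sigma)+f^*\tau=f^*\sigma$, matching $f^*Y$, and that $f_!(Z\otimes f^*X)$ has twist $\sigma$, matching $Y$. Each step of the chain is an instance of a genuine $\infty$-categorical adjunction (or of the projection-formula equivalence), hence automatically natural in $Z$, so no extra coherence work is required before invoking the Yoneda lemma to upgrade the equivalence of mapping spaces to an equivalence of objects.
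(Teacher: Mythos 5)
Your proof is correct and is essentially the paper's own argument: the identical five-step chain of adjunctions plus the projection formula, followed by Yoneda. (Your aside that Corollary~\ref{cor:pullback_symm_mon_prelim} is not strictly needed on this route is also accurate; the paper cites it alongside Proposition~\ref{prop:proj_iso_prelim} but the displayed chain only invokes the latter. Incidentally, the paper's displayed chain has a typo, writing $f^*Z$ where $f^*Y$ is meant in the last two lines; your version is the intended one.)
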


\subsection{Thom Spectrum Perspective} \label{sec:TwSp_ModTh}

Before we move on, let us also explain to the reader how to think about the formalism that we have just covered from the perspective of thinking of twisted spectra as modules over Thom spectra.
Doing this, we will follow some of the results in~\cite{CCRY23}, and extend them slightly to account for functoriality in spaces.
Let us consider the adjunction
\[
\Mod_{(-)} : \Alg(R) \leftrightarrows (\Cat_{R})_{\Mod_R/} : \End_R(1_{(-)}) \,.
\]
If $A$ is a connected space, let us pick a base point $a$ and consider the inclusion $a \to A$, which we by abuse of notation will simply denote $a$.
If $\tau : A \to B\Pic$ is a twist, then note that we have an induced functor $a^{\tau}_! : \Sp \to \TwSp(A,\tau)$ which makes $\TwSp(A,\tau)$ an object of $(\Cat_{\bS})_{\Sp/}$.
The counit of the adjunction above provides us with a functor
\[
\epsilon_{\TwSp(A,\tau)} : \Mod_{\End(a_! \bS )} \longto \TwSp(A,\tau)
\]
which is natural in $(\Cat_{\bS})_{\Sp /}$.
In~\cite{CCRY23}*{Theorem 7.13}, it is essentially shown that this counit map is an equivalence.
We want to extend this result slightly to be a natural equivalence, so that we know that the functors $f^*$, $f_!$, and $f_*$ on categories of twisted spectra correspond to the standard restriction, extension, and coextension of scalars on the level of module categories.
In particular, we want to exhibit a \emph{natural} identification of $\End(a_! \bS)$ with $\Th(\Omega \tau)$, so that this holds true.

\begin{rmk}\label{rmk:commutative_diagram}
In particular, given a pointed map $f : (A,a) \to (B,b)$ the diagram
\[
\begin{tikzcd}
  & \arrow[dl,"a_!"'] \Sp \arrow[dr,"b_!"] & \\
  \TwSp(A,f^* \sigma) \arrow[rr,"f_!"] & & \TwSp(B,\sigma)
\end{tikzcd}
\]
provides us with a morphism in $(\Cat_R)_{\Mod_R / }$. It then follows that the diagram
\[
\begin{tikzcd}
\Mod_{\End(a_! \bS)} \arrow[r] \arrow[d,"\End(f_! \bS)_!"] & \TwSp(A,f^* \sigma) \arrow[d,"f_!"] \\
\Mod_{\End(b_! \bS)} \arrow[r] & \TwSp(B,\sigma) \,.
\end{tikzcd}
\]
commutes.
Note that the left hand vertical map is precisely extension of scalars along the $\bE_1$-ring map $\End(f_! \bS) : \End(a_! \bS) \to \End(b_! \bS)$.
\end{rmk}

Let us state and prove this desired natural equivalence.
Note that $\Omega A$ is an $\bE_1$-group and that we can think of the map $\Omega \tau : \Omega A \to \Pic(R)$ as an object of $\Grp_{/\Pic(R)}$.
In particular, the inclusion $a \to A$ is corresponds to the inclusion of the unit in $\Omega A$.
For some convenient notation, let us denote the inclusion of the unit of an arbitrary group $G$ by $\iota_G$.
Another way of writing the endomorphism ring $\End(a_! \bS)$ is then $\End((B\iota_{\Omega A})_! \bS)$.

\begin{definition}
  We let $\bS[G]$ denotes the free presentable $\infty$-category of the grouplike $\bE_n$-space $G$.
  We recall that there is an adjunction
  \[
  \cS[-] : \Grp_{\bE_n} \leftrightarrows \Alg_{\bE_n}(\mathrm{Pr}^{L}) : \Pic \,,
  \]
  which might be found for example in~\cite{andogepner}*{Theorem 7.7}.
  Recall that the underlying $\infty$-category of $\cS[G]$ is $\Fun(G,\cS)$, but with symmetric monoidal structure given by the Day convolution product, rather than the pointwise one.
\end{definition}

\begin{lemma} \label{lem:BPicPrLSt_factorization}
  The inclusion $B\Pic(\bS) \to \Pr^{L}_{\St}$ factors as
  \[
  \begin{tikzcd}
    B\Pic(\bS) \arrow[rr] \arrow[dr] &  & \Pr^{L}_{\St} \\
    & \Mod_{\cS[\Pic(\bS)]}(\Pr^L) \arrow[ur,"{(\epsilon_{\Sp})_!}"'] &
  \end{tikzcd}
  \]
  \begin{proof}
    First, note that there is a natural factorization $\Pic \to \cS[\Pic(\bS)] \to \Sp$ of the inclusion $\Pic(\bS) \to \Sp$, which follows by the universal property of $\cS[\Pic(\bS)]$ as the initial presentably symmetric monoidal $\infty$-category receiving an $\bE_{\infty}$-map from $\Pic(\bS)$.
    It then follows that there is a map $B \Pic(\bS) \to \Mod_{\cS[\Pic(\bS)]}(\on{Pr}^{L}_{\St})$, since there is a group map $\Pic(\bS) \to \Aut(\cS [\Pic(\bS)])$ by functoriality of $\Pic(\bS) \mapsto \cS[\Pic(\bS)]$.
  \end{proof}
\end{lemma}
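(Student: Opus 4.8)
The plan is to construct the diagonal functor $B\Pic(\bS) \to \Mod_{\cS[\Pic(\bS)]}(\Pr^{L})$ explicitly, and then verify that post-composing it with base change $(\epsilon_{\Sp})_! = (-) \otimes_{\cS[\Pic(\bS)]} \Sp$ recovers the inclusion $B\Pic(\bS) \to \Pr^{L}_{\St}$. Since a functor out of the pointed connected space $B\Pic(\bS)$ into an $\infty$-category is determined by the image of the basepoint together with the induced homomorphism on loop spaces (automorphism groups), it is enough to match these two pieces of data for the two functors in question.

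First I would record the relevant unit map. Write $\eta_G : G \to \Pic(\cS[G])$ for the unit of the adjunction $\cS[-] \dashv \Pic$; it is a morphism of grouplike $\bE_\infty$-spaces, hence in particular of $\bE_1$-groups, and so may be delooped. Identifying $\Pic(\cS[\Pic(\bS)])$ with $\Aut_{\Mod_{\cS[\Pic(\bS)]}(\Pr^{L})}(\cS[\Pic(\bS)])$ — automorphisms of the unit module are given by tensoring with invertible objects — one obtains a pointed map
\[
B\Pic(\bS) \xrightarrow{B\eta_{\Pic(\bS)}} B\Aut_{\Mod_{\cS[\Pic(\bS)]}(\Pr^{L})}(\cS[\Pic(\bS)]) \hookrightarrow \Mod_{\cS[\Pic(\bS)]}(\Pr^{L}),
\]
the last arrow being the inclusion of the connected component of the unit module. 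This is the diagonal functor; by construction it sends the basepoint to $\cS[\Pic(\bS)]$ and realizes $\eta_{\Pic(\bS)}$ on loop spaces. It agrees with the group map in the statement: translation $t_g$ on $\Pic(\bS)$ induces, by functoriality of $\cS[-]$ along all maps of spaces, a $\cS[\Pic(\bS)]$-linear autoequivalence $\cS[t_g]$ of $\cS[\Pic(\bS)]$, and unwinding definitions $\cS[t_g] \simeq (-) \otimes \eta_{\Pic(\bS)}(g)$.

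Next I would compute the composite with base change. As $\epsilon_{\Sp} : \cS[\Pic(\bS)] \to \Sp$ is a map in $\Alg_{\bE_\infty}(\Pr^{L})$, base change along it is a symmetric monoidal colimit-preserving functor $(\epsilon_{\Sp})_! : \Mod_{\cS[\Pic(\bS)]}(\Pr^{L}) \to \Mod_{\Sp}(\Pr^{L}) \simeq \Pr^{L}_{\St}$; hence it carries the unit module $\cS[\Pic(\bS)]$ to the unit module $\Sp$, and on automorphism groups of units it induces precisely $\Pic(\epsilon_{\Sp}) : \Pic(\cS[\Pic(\bS)]) \to \Pic(\Sp)$, since the invertible module $(-) \otimes L$ base changes to $(-) \otimes \epsilon_{\Sp}(L)$. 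Therefore the composite $B\Pic(\bS) \to \Pr^{L}_{\St}$ lands in the component of $\Sp$, sends the basepoint to $\Sp$, and on loop spaces is
\[
\Pic(\bS) \xrightarrow{\eta_{\Pic(\bS)}} \Pic(\cS[\Pic(\bS)]) \xrightarrow{\Pic(\epsilon_{\Sp})} \Pic(\Sp) = \Pic(\bS),
\]
which is the identity by the triangle identity for $\cS[-] \dashv \Pic$ at $\Sp$. On the other hand, the inclusion $B\Pic(\bS) \hookrightarrow \Pr^{L}_{\St}$ is by definition $B\Pic(\bS) \xrightarrow{\simeq} B\Aut_{\Pr^{L}_{\St}}(\Sp) \hookrightarrow \Pr^{L}_{\St}$, where $\Pic(\bS) \simeq \Aut_{\Pr^{L}_{\St}}(\Sp)$ is the tautological equivalence coming from $\End_{\Pr^{L}_{\St}}(\Sp) \simeq \Sp$. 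Both functors thus send the basepoint to $\Sp$ and induce the identity on $\Pic(\bS) \simeq \Aut_{\Pr^{L}_{\St}}(\Sp)$ on loops, so they agree — giving the asserted factorization.

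The main obstacle I anticipate is the bookkeeping in the middle step: pinning down the loop map of the diagonal functor as the adjunction unit $\eta_{\Pic(\bS)}$, confirming that $(\epsilon_{\Sp})_!$ acts on automorphisms of unit modules as $\Pic(\epsilon_{\Sp})$, and then reducing the whole identification to the triangle identity — together with the standard but slightly delicate point that a functor from a pointed connected space into an $\infty$-category is determined by its value on the basepoint and the induced homomorphism on automorphism groups.
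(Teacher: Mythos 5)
Your proof is correct and follows essentially the same route as the paper's, but it is more complete: the paper constructs the diagonal functor via the group map $g \mapsto \cS[t_g]$ coming from functoriality of $\cS[-]$ and then simply asserts the factorization, whereas you explicitly pin down the loop map as the adjunction unit $\eta_{\Pic(\bS)}$ (checking it agrees with $\cS[t_g]$), compute the effect of $(\epsilon_{\Sp})_!$ on $\Aut$ of the unit module, and close the argument with the triangle identity for $\cS[-] \dashv \Pic$ — a verification step the paper leaves implicit. The standard reduction to (basepoint value, loop-space homomorphism) that you invoke is exactly the right lever, and the observation that it reduces the whole claim to the triangle identity is a nice, clean way to make the factorization precise.
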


\begin{proposition} \label{prop naturality of shit}
  The functors
  \[
  \Th : \Grp_{/\Pic(\bS)} \longto \Alg \och \End((B\iota_{(-)})_! \bS) : \Grp_{/\Pic(\bS)} \longto \Alg \,.
  \]
  are naturally equivalent.
  \begin{proof}
    We will divide up the proof in three steps.
    \begin{description}
      \item[Step 1] We first show that the two functors both factor through functors of the type
      \[
      \on{Grp}_{/\Pic(\bS)} \to \Alg(\cS[\Pic(\bS)]) \,.
      \]
       The counit of the $(\cS[-] \vdash \Pic)$-adjunction provides us with a natural symmetric monoidal functor
       \[
       \epsilon_{\Sp} : \cS[\Pic(\bS)] \to \Sp \,.
       \]
      For the Thom spectrum functor, note that there is a factorization
      \[
      \Grp_{/ \Pic(\bS)} \xrightarrow {\Th_{\Pic}} \Alg(\cS[\Pic(\bS)]) \overset{\Alg(\epsilon_{\Sp})}\longto \Alg \,,
      \]
       since $\cS[\Pic(\bS)]$ is the universal symmetric monoidal category admitting an $\bE_{\infty}$-map from $\Pic(\bS)$.
       The functor $\Th_{\Pic}$ is then just the functor that sends $\alpha \in \Grp_{/\Pic(\bS)}$ to the colimit
       \[
       \Th_{\Pic}(\alpha) = \colim_G\left(G \overset{\alpha}\to \Pic(\bS) \to \cS[\Pic(\bS)] \right) \,.
       \]
       For the second functor in the statement of the theorem, it is clear, essentially by definition, that we have a factorization
       \[
       \Grp_{/\Pic(\bS)} \longto (\on{Pr}^{L}_{\St})_{\Sp / } \overset{\End(1_{(-)})}\longto \Alg \,,
       \]
       where the first functor sends $\alpha : G \to \Pic(\bS)$ to the left adjoint functor $(B\iota_G)_! : \Sp \to \TwSp(BG,B\alpha)$.
       Moreover, by Lemma~\ref{lem:BPicPrLSt_factorization} the first functor in the composition above factors as
      \[
      \Grp_{/ \Pic(\bS)} \simeq {\cS_*^{\geq 1}}_{/ B \Pic(\bS)} \overset{F_{\Pic}}\longto  (\Mod_{\cS[\Pic(\bS)]}(\on{Pr}^{L}))_{\cS[\Pic(\bS)]/} \overset{(\epsilon_{\Sp})_!}\longto  (\on{Pr}^{L}_{\St})_{\Sp / } \,.
      \]
      Here, the functor $F_{\Pic}$ is the functor
      \[
      F_{\Pic}(\alpha) = \colim_{BG} \left(BG \longto B\Pic(\bS) \longto \Mod_{\cS[\Pic(\bS)]}(\mathrm{Pr}^{L})\right) \,.
      \]
      Next, we claim that after composing with the functor $\End(1_{(-)})$, the above argument implies a factorization of the assignment  $\Grp_{/ \Pic(\bS)} \to \Alg(\cS[\Pic(\bS)]) \to \Alg(\Sp)$. This can be seen from the commutativity of the diagram
      \begin{equation} \label{Benchen}
      \begin{tikzcd}
        \Grp_{/\Pic(\bS)} \arrow[r] \arrow[dr] & (\mathrm{Pr}^{L}_{\St})_{\Sp /} \arrow[r,"\End(1_{(-)})"] & \Alg \\
        & \Mod_{\cS[\Pic(\bS)]}(\mathrm{Pr}^L)_{\cS[\Pic(\bS)]/} \arrow[r,"\End^{\cS[\Pic(\bS)]}(1_{(-)})"] \arrow[u,"(\epsilon_{\Sp})_!"'] & \Alg(\cS[\Pic(\bS)]) \arrow[u,"\Alg(\epsilon_{\Sp})"'] \,.
      \end{tikzcd}
      \end{equation}
      where the right hand square commutes due to the naturality properties of the endomorphism functor established in~\cite{HA}*{Section 4.8.3-4.8.5}.
      \item[Step 2] So far, we concluded that there are morphisms
      \[
      \Th_G : \Grp_{/G} \longto \Alg(\cS[G]) \och \End_G : \Grp_{/G} \longto \Alg(\cS[G])
      \]
      for $G = \Pic(\bS)$ factoring the functors in the statement.
      In fact, such functors exist for any group $G$.
      We now show that these are natural in the group $G$.
      That is, we would like to show that if $f: G \to H$ is a map of grouplike $\bE_{\infty}$-spaces, then
      there will be an induced commutative diagrams
        \[
          \begin{tikzcd}
            \on{Grp}_{/G} \arrow[r,"\Th_{G}"] \arrow[d] & \Alg(\cS[G]) \arrow[d, "f_!"] \\
            \on{Grp}_{/H} \arrow[r,"\Th_{G'}"] & \Alg(\cS[H]),
          \end{tikzcd}
          \och
          \begin{tikzcd}
            \on{Grp}_{/G} \arrow[r,"\End_{G}"] \arrow[d] & \Alg(\cS[G]) \arrow[d, "f_!"] \\
            \on{Grp}_{/H} \arrow[r,"\End_{G'}"] & \Alg(\cS[H]) \,.
          \end{tikzcd}
          \]
      For the first diagram, note that     the horizontal arrows are induced by the straightening equivalences $\cS_{/G} \simeq \cS[G]$ and $\cS_{/H} \simeq \cS[H]$, which are monoidal by ~\cite[Corollary D]{ramzi} and are natural by definition. The commutativity of the second diagram reduces again to commutativity of the diagram (~\ref{Benchen}).
      \item[Step 3] Once, this is understood, we argue as in Proposition 7.8 of ~\cite{CCRY23}. We will show that the identity map $G \to G$ is sent to the terminal object of  $\Alg(\cS[G])$. We already know from ~\cite[Theorem 7.13]{CCRY23}
     that these two functors, $\Th_{G}$ and $\End((B\iota_{(-)})_! \bS)$ agree pointwise.
     Thus, it is enough to show that the  functor $\Th_{G}: \Grp_{/G} \to \Alg(\cS[G])$ sends the identity map $G \to G$ to the terminal object.
     Note that this is true because we can identify $\Th_{G}$ with the straightening equivalence, so terminal objects get sent to terminal objects, under the equivalence.
     We now defer to the lemma below - we set $\cC = \Grp(\cS)$ and $f: \Grp(\cS) \to \Cat_{\infty}$ to be the functor $G \mapsto \Alg(\cS[G])$.
     Since we now know that the two functors $\Th_{G}(-)$ and $\End(B(-)_!(*))$ satisfy the criteria, they are identified.
    \end{description}
\end{proof}

\begin{lemma}
    Let $\cC$ be any category and $f$ any functor $\cC \to \Cat_{\infty}$.
    Then the category of natural transformations
   \[
   \cC_{/(-)} \to f
   \]
   sending $id_x$ to terminal objects of $f(x)$ is contractible. Moreover, any such natural transformation is terminal in this category.
    \end{lemma}

    \begin{proof}
        Natural transformations $\cC_{/(-)} \to f$ can be identified with (non-Cartesian) sections of the cocartesian fibration $F$ classifying $f$.
        Informally, the section $\sigma$ sends some element $y$ to a pair $(y,\beta(y) \in f(y))$, from which the natural transformation $\tau : \cC_{/(-)} \to f$ corresponding to $\sigma$ is such that
        \[
        \tau_x : \cC_{/x} \to f(x) \,, \quad (g : y \to x) \mapsto g_*(\beta(y)) \,,
        \]
        where $g_* : f(y) \to f(x)$ is the functor induced by evaluating $f$ on $g$.
        Since we are assuming that we are dealing with natural transformations that send the identity to terminal objects, we must then have that $\beta(y)$ is terminal in $f(y)$ for every $y$.
        Since terminal objects in the $\infty$-category of sections $\Gamma(\cC, F)$ may be understood pointwise, it follows that the section $\sigma$ itself is terminal in this $\infty$-category.
        Hence it is determined uniquely up to contractible choice.
    \end{proof}
\end{proposition}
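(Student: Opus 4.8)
The strategy is to reduce the comparison of the two functors, in stages, to a single computation on one object. First I would lift both functors so that they take values in $\Alg(\cS[\Pic(\bS)])$ rather than in $\Alg$; then I would exhibit both lifts as the $G=\Pic(\bS)$ instance of a pair of natural transformations defined for every grouplike space $G$; and finally I would invoke a uniqueness principle for such transformations, which reduces the whole identification to computing the two functors on the identity map $\id_G$.

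\emph{Lifting to $\cS[\Pic(\bS)]$.} The counit $\epsilon_{\Sp}\colon \cS[\Pic(\bS)] \to \Sp$ of the adjunction $\cS[-] \dashv \Pic$ is symmetric monoidal and colimit-preserving, hence induces $\Alg(\epsilon_{\Sp})\colon \Alg(\cS[\Pic(\bS)]) \to \Alg$. By the universal property of $\cS[\Pic(\bS)]$ as the initial presentably symmetric monoidal $\infty$-category receiving an $\bE_\infty$-map from $\Pic(\bS)$, the Thom spectrum functor factors as $\Alg(\epsilon_{\Sp})\circ \Th_{\Pic}$ with $\Th_{\Pic}(\alpha)=\colim_G\big(G \xrightarrow{\alpha} \Pic(\bS) \to \cS[\Pic(\bS)]\big)$, regarded with its canonical algebra structure. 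For the endomorphism functor, Lemma~\ref{lem:BPicPrLSt_factorization} factors the inclusion $B\Pic(\bS) \to \mathrm{Pr}^{L}_{\St}$ through $\Mod_{\cS[\Pic(\bS)]}(\mathrm{Pr}^{L})$ along $(\epsilon_{\Sp})_!$; feeding this through the endomorphism-object construction and using its naturality (\cite{HA}*{Section 4.8.3-4.8.5})—which provides a commuting square comparing $\End(1_{(-)})$ formed over $\Sp$ and over $\cS[\Pic(\bS)]$—produces a lift $\End_{\Pic}\colon \Grp_{/\Pic(\bS)} \to \Alg(\cS[\Pic(\bS)])$ with $\Alg(\epsilon_{\Sp})\circ \End_{\Pic} \simeq \End((B\iota_{(-)})_!\bS)$. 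It therefore suffices to build a natural equivalence $\Th_{\Pic} \simeq \End_{\Pic}$.

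\emph{Naturality in $G$ and reduction to $\id_G$.} Both lifts are instances of constructions $\Th_G,\End_G\colon \Grp_{/G} \to \Alg(\cS[G])$ that make sense for every grouplike space $G$ and are natural in $G$: for $\Th_G$ this is immediate from the monoidality and naturality of the straightening equivalence $\cS_{/G} \simeq \cS[G]$, and for $\End_G$ it again rests on the naturality of the endomorphism-object construction used above. Each is thus a natural transformation from the slice functor $G \mapsto \Grp_{/G}$ into $f\colon \Grp(\cS) \to \Cat_\infty$, $G \mapsto \Alg(\cS[G])$. Such a natural transformation is the same datum as a section of the cocartesian fibration classifying $f$, and a section sending every $\id_G$ to a terminal algebra is necessarily the terminal section, hence determined up to contractible choice; so it is enough to check that $\Th_G$ and $\End_G$ both send $\id_G$ to the terminal object of $\Alg(\cS[G])$.

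\emph{Computation on $\id_G$ and conclusion.} On underlying objects $\Th_G$ is precisely the straightening equivalence $\cS_{/G} \xrightarrow{\sim} \Fun(G,\cS)=\cS[G]$, which sends $\alpha\colon H \to G$ to $h\mapsto \fib_h(\alpha)$ and therefore carries the terminal object $\id_G$ of $\cS_{/G}$ to the constant functor at $\ast$—the terminal object of $\cS[G]$; since the colimit defining $\Th_G$ is formed in algebras, $\Th_G(\id_G)$ is the terminal algebra. For $\End_G$, the pointwise comparison with $\Th_G$ established in \cite{CCRY23}*{Theorem 7.13} gives $\End_G(\id_G)\simeq\Th_G(\id_G)$, hence terminal as well. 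The uniqueness principle then yields $\Th_{\Pic}\simeq\End_{\Pic}$, and post-composing with $\Alg(\epsilon_{\Sp})$ gives the desired equivalence $\Th \simeq \End((B\iota_{(-)})_!\bS)$. I expect the main obstacle to be the second step: constructing the $\cS[\Pic(\bS)]$-valued lift of $\End((B\iota_{(-)})_!\bS)$ together with its naturality in $G$, since this functor is presented by a counit rather than by an explicit colimit formula—once that lift and the associated commuting square for the endomorphism-object construction are in place, the remaining steps are formal.
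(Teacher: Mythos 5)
Your proposal is correct and takes essentially the same approach as the paper: the same three-stage reduction (lift both functors to $\Alg(\cS[\Pic(\bS)])$ via the counit $\epsilon_{\Sp}$ and Lemma~\ref{lem:BPicPrLSt_factorization}; establish naturality in $G$ using the monoidal straightening equivalence and the naturality of the endomorphism-object construction from~\cite{HA}*{Section 4.8.3-4.8.5}; reduce to the value on $\id_G$ via the abstract principle that a section of the cocartesian fibration sending each $\id_G$ to a terminal object is itself terminal), with the same pointwise input from~\cite{CCRY23}*{Theorem 7.13}.
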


\begin{theorem} \label{thm:TwSp_Mod_natural}
  Let $A$ be a connected space, and consider a haunt $\tau : A \to B\Pic(\bS)$.
  There is a natural equivalence
  \[
  \TwSp(A,\tau) \simeq \Mod_{\Th(\Omega \tau)} \,.
  \]
\end{theorem}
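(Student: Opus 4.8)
The plan is to bootstrap this from the pointwise identification of \cite{CCRY23}*{Theorem 7.13} together with the naturality established in Proposition~\ref{prop naturality of shit}. Since $A$ is connected, a choice of basepoint $a\in A$ exhibits $A\simeq B\Omega A$ with $\Omega A$ an $\bE_1$-group, and under the delooping equivalence $(\cS_*^{\geq 1})_{/B\Pic(\bS)}\simeq\Grp_{/\Pic(\bS)}$ the twist $\tau$ corresponds to $\Omega\tau:\Omega A\to\Pic(\bS)$. The inclusion $a:\ast\to A$ induces a left adjoint $a_!:\Sp\to\TwSp(A,\tau)$, making $\TwSp(A,\tau)$ into an object of $(\Cat_\bS)_{\Sp/}$, and the counit of the adjunction $\Mod_{(-)}:\Alg(\bS)\leftrightarrows(\Cat_\bS)_{\Sp/}:\End(1_{(-)})$ then supplies the comparison functor
\[
\epsilon_{\TwSp(A,\tau)}:\Mod_{\End(a_!\bS)}\longto\TwSp(A,\tau),
\]
which is an equivalence by \cite{CCRY23}*{Theorem 7.13}. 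What remains is to upgrade both this equivalence and the identification $\End(a_!\bS)\simeq\Th(\Omega\tau)$ to statements natural in $(A,\tau)$.

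First I would promote $\epsilon$ to a natural transformation. The counit is natural in the variable object of $(\Cat_\bS)_{\Sp/}$ by construction, and Remark~\ref{rmk:commutative_diagram} records that a pointed map $f:(A,a)\to(B,b)$ over $B\Pic(\bS)$ induces a morphism $(\TwSp(A,f^*\sigma),a_!\bS)\to(\TwSp(B,\sigma),b_!\bS)$ in $(\Cat_\bS)_{\Sp/}$. Hence $(A,a,\tau)\mapsto(\TwSp(A,\tau),a_!\bS)$ assembles into a functor $(\cS_*^{\geq 1})_{/B\Pic(\bS)}\to(\Cat_\bS)_{\Sp/}$, and precomposing $\End(1_{(-)})$ and then $\Mod_{(-)}$ with it, the counits $\epsilon$ organize into a natural transformation
\[
\Mod_{\End((B\iota_{\Omega(-)})_!\bS)}\Longrightarrow\TwSp(-,-)
\]
of functors $(\cS_*^{\geq 1})_{/B\Pic(\bS)}\to(\Cat_\bS)_{\Sp/}$; being a pointwise equivalence, it is a natural equivalence.

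Next I would rewrite the source. Proposition~\ref{prop naturality of shit} says precisely that $\Th(-)$ and $\End((B\iota_{(-)})_!\bS)$ agree as functors $\Grp_{/\Pic(\bS)}\to\Alg$; transporting along $(\cS_*^{\geq 1})_{/B\Pic(\bS)}\simeq\Grp_{/\Pic(\bS)}$ (under which $\tau\leftrightarrow\Omega\tau$) and applying $\Mod_{(-)}$ yields a natural equivalence $\Mod_{\Th(\Omega\tau)}\simeq\Mod_{\End((B\iota_{\Omega A})_!\bS)}$. Concatenating this with the transformation of the previous paragraph produces the asserted natural equivalence $\TwSp(A,\tau)\simeq\Mod_{\Th(\Omega\tau)}$, together with the naturality in pointed maps of twisted spaces that is invoked elsewhere in the paper (for instance to match $f^*$, $f_!$, $f_*$ with restriction, extension, and coextension of scalars).

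The one place I expect to need genuine care is the dependence on the basepoint: the construction above is literally natural over $(\cS_*^{\geq 1})_{/B\Pic(\bS)}$, whereas $\TwSp(A,\tau)$ is defined with no basepoint. To descend, I would observe that a path $a\rightsquigarrow a'$ yields an equivalence $\Omega_a A\simeq\Omega_{a'}A$ of $\bE_1$-groups over $\Pic(\bS)$ (conjugation), hence compatible equivalences of the Thom spectra and of their module categories matching $\epsilon$, so that $\Mod_{\Th(\Omega\tau)}$ is independent of the chosen basepoint up to coherent equivalence and the identification descends along the forgetful functor. The remaining ingredients — the monoidality and naturality of $(\cS_*^{\geq 1})_{/B\Pic(\bS)}\simeq\Grp_{/\Pic(\bS)}$ and the functoriality of $\Mod_{(-)}$ — are standard, so the mathematical substance is entirely carried by \cite{CCRY23}*{Theorem 7.13} and Proposition~\ref{prop naturality of shit}.
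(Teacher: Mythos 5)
Your proposal follows essentially the same route as the paper: the pointwise equivalence from \cite{CCRY23}*{Theorem 7.13} applied to the counit $\epsilon_{\TwSp(A,\tau)}\colon\Mod_{\End(a_!\bS)}\to\TwSp(A,\tau)$, the naturality of that counit recorded in Remark~\ref{rmk:commutative_diagram}, and the identification $\End(a_!\bS)\simeq\Th(\Omega\tau)$ from Proposition~\ref{prop naturality of shit}. Your closing paragraph on basepoint independence makes explicit a point the paper leaves tacit (its downstream corollaries restrict to pointed maps anyway), but the mathematical substance is the same.
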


\begin{rmk}
  The above result gives a conceptual reason for why the 2-periodic sphere spectrum, and modules over this ring, seem to be central to work in symplectic Floer theory, see for example~\cite{AB21}.
  Indeed, the 2-periodic sphere spectrum is precisely the Thom spectrum of the loop of the map
  \[
  S^1 \simeq U(1) \longto U \simeq B(\bZ \times BU) \longto B\Pic(\bS)
  \]
  where we are using the inclusion of $U(1)$ into $U$, Bott periodicity, and the complex J-homomorphism~\cite{LurRotKthy}.
  Interestingly, the loop of this map is actually $\bE_2$.
  Hence, the $\infty$-category of modules over the 2-periodic sphere spectrum is monoidal in its own right.
  This raises questions about how this monoidal structure interacts with the monoidal structure
  \[
  \otimes : \TwSp(S^1,\tau) \otimes \TwSp(S^1, \sigma) \longto \TwSp(S^1 , \tau + \sigma) \,,
  \]
  on twisted spectra over the circle that we introduced in Section~\ref{sec:tensor_prod_TwSp}.
\end{rmk}

As corollaries, we can conclude that the pullback functors, and their left and right adjoints, on categories of twisted spectra correspond to the standard restriction, extension, and coextension of scalars on module categories.

\begin{corollary}
  Assume that the spaces $A$ and $B$ are pointed and connected and let $\sigma : B \to B\Pic(\bS)$ be a twist.
  Let $f : A \to B$ be a map of based spaces.
  Then the diagram
    \[
    \begin{tikzcd}
      \Mod_{\Th(\Omega(f^* \sigma))} \arrow[r,"\Th(\Omega f)_!"] \arrow[d,"\simeq"] & \Mod_{\Th(\Omega \sigma)} \arrow[d,"\simeq"] \\
      \TwSp(A, f^* \sigma) \arrow[r,"f_!"] & \TwSp(B,\sigma)
    \end{tikzcd}
    \]
    commutes.
    \begin{proof}
      Let $f : A \to B$ be a based map of connected spaces. Then the diagram
      \[
      \begin{tikzcd}
        \Omega A \arrow[rr,"\Omega f"] \arrow[dr,"\Omega(f^* \sigma)"'] & & \Omega B \arrow[dl,"\Omega \sigma"] \\
        & \Pic(\bS) &
      \end{tikzcd}
      \]
      provides us with a morphism in $\Grp_{/\Pic(\bS)}$. Because of the naturality of the functors, the diagram
      \[
      \begin{tikzcd}
        \Th(\Omega(f^* \sigma)) \arrow[d] \arrow[r,"\Th(\Omega f)"] & \Th(\Omega \sigma ) \arrow[d] \\
        \End(a_! \bS) \arrow[r,"\End(f_! \bS)"] & \End(b_! \bS)
      \end{tikzcd}
      \]
      commutes in $\Alg$. The result then follows by using Remark~\ref{rmk:commutative_diagram}.
    \end{proof}
\end{corollary}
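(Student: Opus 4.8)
The plan is to deduce the stated natural equivalence from the pointwise identification of twisted spectra with modules over Thom spectra recorded in \cite{CCRY23}*{Theorem 7.13}, upgraded to a natural statement by means of Proposition~\ref{prop naturality of shit}, which is the real engine of the argument. First I would fix, for a connected space $A$, a basepoint $a \in A$, so that $A \simeq BG$ with $G = \Omega A$ an $\bE_1$-group and the twist $\tau$ becomes $B\alpha$ for $\alpha = \Omega\tau : \Omega A \to \Pic(\bS)$, regarded as an object of $\Grp_{/\Pic(\bS)}$. The pointed inclusion $a : \ast \to A$ induces, by Proposition~\ref{prop:functoriality_twisted_spectra}, a colimit-preserving functor $a_! : \Sp \to \TwSp(A,\tau)$, promoting $\TwSp(A,\tau)$ to an object of $(\mathrm{Pr}^{L}_{\St})_{\Sp/}$, functorially in pointed maps of pointed connected twisted spaces (cf.\ Remark~\ref{rmk:commutative_diagram}). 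Applying the adjunction $\Mod_{(-)} \dashv \End(1_{(-)})$ of Section~\ref{sec:TwSp_ModTh}, the counit yields a functor $\epsilon_{\TwSp(A,\tau)} : \Mod_{\End(a_!\bS)} \longto \TwSp(A,\tau)$ which is automatically natural, and which by \cite{CCRY23}*{Theorem 7.13} is an equivalence for each fixed $(A,\tau)$; hence $\TwSp(A,\tau) \simeq \Mod_{\End(a_!\bS)}$.

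It then remains to identify the $\bE_1$-ring $\End(a_!\bS) = \End((B\iota_{\Omega A})_!\bS)$ with $\Th(\Omega\tau)$ in a way that is natural in $(A,\tau)$, and to apply $\Mod_{(-)}$, which is itself functorial in $\bE_1$-ring maps. This is exactly the content of Proposition~\ref{prop naturality of shit}: the functors $\Th$ and $\End((B\iota_{(-)})_!\bS)$ from $\Grp_{/\Pic(\bS)}$ to $\Alg$ are naturally equivalent. Composing the two identifications then gives $\TwSp(A,\tau) \simeq \Mod_{\Th(\Omega\tau)}$, natural over $(\cS_*^{\geq 1})_{/B\Pic(\bS)} \simeq \Grp_{/\Pic(\bS)}$; independence of the chosen basepoint follows because any path between basepoints in the connected space $A$ induces a compatible equivalence of the loop groups, hence of their Thom spectra and module categories.

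The main obstacle is not the construction but the coherence: one must verify that $\epsilon_{\TwSp(A,\tau)}$ and the identification $\End(a_!\bS) \simeq \Th(\Omega\tau)$ assemble into genuine natural transformations of functors out of $(\cS_*^{\geq 1})_{/B\Pic(\bS)}$ rather than mere pointwise-compatible collections of equivalences. I expect to dispatch this precisely as in the proof of Proposition~\ref{prop naturality of shit}: route everything through the factorization $B\Pic(\bS) \to \Mod_{\cS[\Pic(\bS)]}(\mathrm{Pr}^{L}) \to \mathrm{Pr}^{L}_{\St}$ of Lemma~\ref{lem:BPicPrLSt_factorization}, so that both the Thom-spectrum and the endomorphism-ring constructions factor through $\Alg(\cS[\Pic(\bS)])$ and the naturality of the $\End(1_{(-)})$ functor from \cite{HA}*{Section 4.8.3--4.8.5} can be invoked. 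Granting Proposition~\ref{prop naturality of shit}, this bookkeeping is the only remaining point.
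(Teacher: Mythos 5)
Your proof is correct and follows essentially the same route as the paper: it identifies the two key inputs as Proposition~\ref{prop naturality of shit} (giving the natural identification $\End(a_!\bS) \simeq \Th(\Omega\tau)$ over $\Grp_{/\Pic(\bS)}$) and Remark~\ref{rmk:commutative_diagram} (giving the compatibility of the counit with $f_!$), and composes these. The paper's proof is more terse — it simply records the commuting square of $\bE_1$-rings coming from Proposition~\ref{prop naturality of shit}, applies $\Mod_{(-)}$, and pastes with the square of Remark~\ref{rmk:commutative_diagram} — whereas you spend much of the space re-establishing the naturality of $\epsilon$ and the Thom identification, which at the point this corollary appears in the paper have already been proved (your concerns about coherence in the final paragraph were already handled inside the proof of Proposition~\ref{prop naturality of shit}, so there is nothing left to dispatch); the underlying argument is nonetheless identical.
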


\begin{corollary}
  Assume that the spaces $A$ and $B$ are pointed and connected and let $\sigma : B \to B\Pic(\bS)$ be a twist.
  Let $f : A \to B$ be a map of based spaces. Then:
  \begin{enumerate}
    \item The diagram
      \[
      \begin{tikzcd}
        \Mod_{\Th(\Omega\sigma)} \arrow[r,"\Th(\Omega f)^*"] \arrow[d, "\simeq"] & \Mod_{\Th(\Omega(f^* \sigma))} \arrow[d,"\simeq"] \\
        \TwSp(B,\sigma) \arrow[r,"f^*"] & \TwSp(A,f^* \sigma)
      \end{tikzcd}
      \]
      commutes.
    \item The diagram
      \[
      \begin{tikzcd}
        \Mod_{\Th(\Omega(f^* \sigma))} \arrow[r,"\Th(\Omega f)_*"] \arrow[d,"\simeq"] & \Mod_{\Th(\Omega \sigma)} \arrow[d,"\simeq"] \\
        \TwSp(A, f^* \sigma) \arrow[r,"f_*"] & \TwSp(B,\sigma)
      \end{tikzcd}
      \]
      commutes.
  \end{enumerate}
\end{corollary}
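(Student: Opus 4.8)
The plan is to obtain both squares from the preceding corollary purely formally, by passing to right adjoints. That corollary produces a commuting square whose horizontal arrows are the left adjoints $\Th(\Omega f)_!$ (extension of scalars along the $\bE_1$-ring map $\Th(\Omega f) \colon \Th(\Omega(f^*\sigma)) \to \Th(\Omega\sigma)$) and $f_!$, and whose vertical arrows are the equivalences supplied by Theorem~\ref{thm:TwSp_Mod_natural}. Since the vertical arrows are equivalences, they are their own adjoint inverses, so the \emph{mate} of this square along the adjunctions $\Th(\Omega f)_! \dashv \Th(\Omega f)^*$ and $f_! \dashv f^*$ (the latter from Proposition~\ref{prop:functoriality_twisted_spectra}) is again a commuting square: conjugating the equivalence of left adjoints by the vertical equivalences and taking right adjoints of both sides turns $\Th(\Omega f)_!$ into $\Th(\Omega f)^*$ and $f_!$ into $f^*$ while keeping the (now inverted) vertical equivalences. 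Reading off the directions, this is exactly the square of part (1).

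For part (2) I would simply repeat this maneuver once more. The square of part (1) has horizontal arrows $\Th(\Omega f)^*$ and $f^*$, and these in turn admit right adjoints, namely coextension of scalars $\Th(\Omega f)_*$ and the functor $f_*$ of Proposition~\ref{prop:functoriality_twisted_spectra}. Passing to the mate along $\Th(\Omega f)^* \dashv \Th(\Omega f)_*$ and $f^* \dashv f_*$, and once more using that the vertical maps are equivalences, yields precisely the square of part (2).

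The only point needing a word of justification is that the mate of an invertible natural transformation, formed along two adjunctions whose ``unit'' edges are themselves equivalences, is again invertible — that is, that we really do get commuting squares and not merely squares commuting up to a non-invertible comparison. This is the standard behaviour of mates, and here it is immediate precisely because the two vertical edges are equivalences; so there is no genuine obstacle, all the real content having been absorbed into Theorem~\ref{thm:TwSp_Mod_natural} and the preceding corollary. Alternatively, one could argue directly as in the proof of that corollary: the triangle $\Omega A \to \Omega B$ over $\Pic(\bS)$ gives a morphism in $\Grp_{/\Pic(\bS)}$, Proposition~\ref{prop naturality of shit} then produces a commuting square of $\bE_1$-rings identifying $\Th(\Omega f)$ with $\End(f_!\bS)$, and one feeds this into the evident analogues of Remark~\ref{rmk:commutative_diagram} for restriction and coextension of scalars.
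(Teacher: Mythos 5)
Your proof is correct and is exactly the argument the paper leaves implicit: the previous corollary provides the commuting square of left adjoints (with $\Th(\Omega f)_!$ and $f_!$), the vertical arrows are equivalences from Theorem~\ref{thm:TwSp_Mod_natural}, and passing to mates twice — using that mates of invertible $2$-cells along adjoint equivalences on the vertical edges remain invertible — yields the $f^*$ and $f_*$ squares in turn.
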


\section{Homology and Cohomology Invariants of Twisted Spectra}

It is natural to ask what sorts of invariants can be extracted from a twisted spectrum. As these should often arise from Floer homotopical data, it is would be useful to extract honest abelian groups which can say something about the context at hand. In this section, we investigate what sort of invariants one can attach to a twisted spectrum. As we will see, it will not be so straightforward to define global invariants as it is in the untwisted case.

\subsection{Non-existence of global homotopy invariants}

Let $B$ be a fixed space.
To a parametrized spectrum $X : B \to \Sp$, one may associated its \emph{homology type} and \emph{cohomology type}
\[
p_!(X) \simeq \colim_{B}X \in \Sp \och p_*(X) \simeq \lim_{B}X \in \Sp \,,
\]
respectively.
Now let $\tau: B \to B \Pic$ be the data of a haunt over $B$, and let $X \in \TwSp(B, \tau)$ be a twisted parametrized spectrum.
One is led to ask the following natural question.

\begin{question}
Do globally defined homology and cohomology invariants exist in general for $X$ as they do in the untwisted case?
\end{question}

The answer, as we shall see, is a resounding no.
Namely, there is generally \emph{no canonical way} to associate homology and cohomology invariants to a twisted spectrum, in stark contrast to ordinary parametrized spectra.
In this section, we propose a simple conceptual explanation for this phenomenon: that the $\infty$-category of twisted parametrized spectra is fibered over the $\infty$-category $\cS_{/ B \Pic(\bS)}$ of twisted spaces, whose structure adds constraints to the resulting functorialities that may arise in the total space.

\begin{porism} \label{henven}
 As a consequence of the discussion in Section \ref{prop:functoriality_twisted_spectra}, the assignment $(B, \tau) \mapsto \TwSp(B, \tau)$ may be packaged into a symmetric monoidal functor
\[
\TwSp^*:  \cS_{/ B \Pic(\bS)}^{\oop} \to \on{Pr}^{L, R}_{\St}\,,
\]
where the target denotes the $\infty$-category of stable presentable $\infty$-categories with morphisms which are both left and right adjoints.
\end{porism}

In particular, let $\tau: A \to B \Pic(\bS)$ be a twist, and let us try to define a functor $\TwSp(A, \tau) \to \Sp$. If we try to define a functor $p_!: \TwSp(A, \tau) \to \Sp$ or $p_*: \TwSp(A, \tau) \to \Sp$ via the functoriality of Porism  \ref{henven}, we immediately see that we need the collapse map $p: A \to *$ to sit in the diagram
\[
\begin{tikzcd}
  A \arrow[rr,"p"] \arrow[dr,"\tau"'] && * \arrow[dl,"0"] \\
  & B \Pic(\bS) \,. &
\end{tikzcd}
\]
In particular, the equivalence $0 \simeq p^*(0) \simeq \tau$ is forced upon us.
This many not appear so interesting at first glance.
Something slightly interesting is hidden in the notation though.
Indeed, while the above only make sense when the twist is nullhomotopic, the invariants that we get depend on how we trivialize the twist.
Note that the trivialization corresponds to a particular equivalence $\TwSp(A,\tau) \simeq \Sp^{A}$ with the category of spectra parametrized over $A$.

\subsection{Homology and Cohomology Invariants}

Now, it could of course happen that we are dealing with a twisted spectrum with a non-trivializable twisting $\tau$, in which case our twisted spectrum does not have a globally defined homotopy invariant.
We are not completely out of luck, though.
Indeed, we might still have (co)homology invariants with respect to other generalized cohomology theories.
Let $R$ be an $\bE_\infty$-ring, so that we have an induced map $B\Pic(\bS) \to B\Pic(R)$ of Picard spaces.
Now, if the composition $\tau_R : A \to B\Pic(\bS) \to B\Pic(R)$ is nullhomotopic, then it makes sense to talk about $R$-homology and  $R$-cohomology of $X$, completely analogous to before.
Again the (co)homology will depend on our choice of trivialization of $\tau_R$, though.

\begin{proposition}
  The space of trivializations of $\tau_R : A \to B\Pic(R)$ is a torsor for the group $\Map(A, \Pic(R))$.
\end{proposition}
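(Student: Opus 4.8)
The plan is to identify the space of trivializations with a path space in a mapping space and then read off the torsor structure from the standard based-path fibration of $B\Pic(R)$. First I would unwind the definition: a trivialization of $\tau_R : A \to B\Pic(R)$ is a nullhomotopy, i.e.\ a path in $\Map(A,B\Pic(R))$ from $\tau_R$ to the constant map $c_0 : A \to \ast \to B\Pic(R)$ at the basepoint $0$. Thus the space of trivializations, call it $\mathrm{Triv}(\tau_R)$, is the path space
\[
\mathrm{Triv}(\tau_R) \;\simeq\; \{\tau_R\} \times_{\Map(A,B\Pic(R))} \{c_0\},
\]
equivalently the fiber over $\tau_R$ of the map $\ast = \Map(A,\ast) \to \Map(A,B\Pic(R))$ obtained by applying $\Map(A,-)$ to $\ast \to B\Pic(R)$.

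Next I would invoke the fact that the based path fibration $\ast \to B\Pic(R)$ is a principal bundle with structure group $\Omega B\Pic(R)$, where $\Omega B\Pic(R) \simeq \Pic(R)$ by the proposition recalled earlier, the action and group structure being concatenation of loops. Since $\Map(A,-)$ preserves finite products and pullbacks, it carries the defining equivalence $\ast \times \Omega B\Pic(R) \xrightarrow{\simeq} \ast \times_{B\Pic(R)} \ast$ to the analogous one over $\Map(A,B\Pic(R))$, so $\ast \to \Map(A,B\Pic(R))$ is a principal bundle with structure group $\Map(A,\Pic(R))$. A fiber of a principal $G$-bundle over any point is a $G$-torsor, so $\mathrm{Triv}(\tau_R)$ is a torsor for $\Map(A,\Pic(R))$; concretely the action sends a path from $\tau_R$ to $c_0$ and a loop at $c_0$ to their concatenation, under $\Omega_{c_0}\Map(A,B\Pic(R)) \simeq \Map(A,\Omega_0 B\Pic(R)) \simeq \Map(A,\Pic(R))$.

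I do not expect any real obstacle here; the content is entirely formal. The one point requiring a line of justification is the identification of the two group structures on $\Map(A,\Pic(R))$ — the one coming from loop concatenation in $\Map(A,B\Pic(R))$ versus the pointwise structure induced by the $\bE_\infty$-structure on $\Pic(R)$ — together with the compatibility of the torsor action with the latter; this is the usual Eckmann--Hilton comparison, applied after the limit-preserving functor $\Map(A,-)$. I would also remark that $\mathrm{Triv}(\tau_R)$ is nonempty exactly when $\tau_R$ is nullhomotopic, which is the situation relevant to the surrounding discussion; otherwise it is an empty (pseudo-)torsor.
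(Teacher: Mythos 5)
Your argument is correct. It is the same underlying mathematics as the paper's proof, but packaged differently. The paper works directly with the trivialization $\iota : \tau_R \simeq 0$ as a point, identifies $\Aut_{\Map(A,B\Pic(R))}(0)\simeq\Map(A,\Pic(R))$, lets that group act by post-composition, and then argues transitivity and freeness somewhat informally by comparison with the regular action of the group on itself. You instead recognize the space of trivializations as the (homotopy) fiber of $\ast \simeq \Map(A,\ast)\to\Map(A,B\Pic(R))$ over $\tau_R$, observe that $\ast\to B\Pic(R)$ is the universal principal $\Pic(R)$-bundle, and that $\Map(A,-)$, being limit-preserving, carries the defining pullback square to one exhibiting $\ast\to\Map(A,B\Pic(R))$ as a principal $\Map(A,\Pic(R))$-bundle; the torsor statement is then the general fact that fibers of a principal bundle over any point in the image component are torsors. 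What your packaging buys is that freeness and transitivity come for free from the principal-bundle axiom rather than needing a separate ad hoc argument, and it makes transparent that the result is purely formal. The one subtlety you correctly flag — reconciling the loop-concatenation group structure on $\Omega_0\Map(A,B\Pic(R))$ with the pointwise $\bE_\infty$-structure on $\Map(A,\Pic(R))$ via Eckmann--Hilton — is the same identification the paper uses implicitly when writing $\Aut(0)\simeq\Map(A,\Pic(R))$. Your closing remark that the torsor is empty unless $\tau_R$ is nullhomotopic is a useful clarification that the paper leaves tacit.
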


\begin{proof}
    Let $\tau \in  \pi_0( \Map(A, B \Pic))$ be an equivalence class of haunts, such that there is a null homotopy $\iota: \tau \simeq 0$.
    Note first that every $\alpha: A \to \Pic$ classifies an automorphism of the trivial haunt.
    In particular
    \[\Aut_{\Map(A, B \Pic)}(0) \simeq \Map(A, \Pic(R)) \,.
    \]
    Thus, we may post-compose $\iota$ with any such automorphism to obtain another trivialization $\iota \circ \alpha$.
    We see therefore  that there is a transitive action of $\Map(A, \Pic(R))$ on the space of trivializations.
    Now, since the group $\Map(A, \Pic)$, being the automorphism group of the trivial twist, acts  transitively on itself, we conclude that the action on the space of trivializations is itself transitive.
\end{proof}
\noindent Each one of these trivializations $\alpha$ corresponds to an equivalence
\[
\mathrm{TwMod}_R(A,\tau_R) \overset{\alpha}\simeq \Fun(A,\Mod_R)
\]
that we by abuse of notation will also denote $\alpha$.

\begin{definition}
  Let $X \in \TwSp(A,\tau)$ be a twisted spectra for a haunt satisfying that $\tau_R : A \to B\Pic(R)$ is nullhomotopic.
  Given a choice of trivialization $\alpha$ of $\tau_R$, we write
  \[
  R^{\alpha}_i = \pi_i\left(\colim_A \alpha(X) \right) \och R^{i}_{\alpha} = \pi_i \left(\lim_A \alpha(X) \right)
  \]
  and refer to this as the $(R,\alpha)$-homology and $(R,\alpha)$-cohomology of $X$, respectively.
\end{definition}

Let us look at some examples, where the cohomology theories that we keep in mind are those that appear in Table~\ref{table:homotopy}.

\begin{example}[Invariants of Twisted Spectra over the Circle]
  Twisted spectra over the circle have $\KU$-invariants whenever we are dealing with a haunt corresponding to an $n \in \bZ \cong \pi_1(B\Pic(\bS))$ which is zero mod $2$.
  This follows from the maps
  \[
  \pi_1 B\Pic(\bS) \longto \pi_1 B\Pic(\KU)
  \]
  being reduction mod $2$.
  In the cases that the map $\tau_{\KU} : S^1 \to B\Pic(\KU)$ is trivializable, note that there are essentially two different trivializations, since $\pi_1 \Pic(\KU) \cong \bZ/ 2$.
\end{example}

\begin{example}[Invariants of Twisted Spectra over the 3-Sphere]
  Consider the non-trivial haunt $\tau : S^3 \to B\Pic(\bS)$.
  Since $\pi_3 B\Pic(H\bZ)$, the map $\tau_{H\bZ} : S^3 \to B\Pic(\bS)$ is trivializable, and since $\pi_0 \Map(S^3,\Pic(H\bZ)) \cong \pi_3 \Pic(H\bZ)$, there is an essentially unique way to trivialize the twist. The $H\bZ$-invariants for a twisted spectrum over $S^3$ is hence uniquely determined.
\end{example}

\begin{example}[Invariants of Twisted Spectra from Seiberg--Witten Floer Theory]
  A haunt $\tau : P \to B\Pic(\bS)$ obtained from a complex spin manifold $(Y,\mathfrak{s})$ always admits $\KU$-invariants.
  Note that the induced map
  \[
  \pi_i \tau_{\KU} : \pi_i P \longto \pi_i U \longto \pi_i B\Pic(\bS) \longto \pi_i B\Pic(\KU) \,,
  \]
  is zero for all $i$.
  Indeed, it is clear that the map is zero for even $i \geq 2$ since $\pi_i U \cong 0$ for all even $i$, and that it is zero for odd $i\geq 3$ since $\pi_i B\Pic(\KU) \cong 0$ for all odd $i$ in this range.
  We also claim that the map induced for $i=1$ is zero.
  The rough argument uses that isomorphism classes of complex spin structures on $Y$ is in one-to-one correspondence with complex line bundles over $Y$~\cite{KM07}*{Proposition 1.1.1}.
  Indeed, given one complex spin structure on $Y$ with spinor bundle $S$, all other complex spin structures can be obtained by tensoring $S \otimes L$ with a hermitian line bundle over $Y$.
  The first Chern character of $S \otimes L$ is known to be
  \[
  c_1(S \otimes L) = c_1(S) + 2 c_1(L) \,.
  \]
  Since the tangent bundle of an oriented 3-manifold is always trivial, we can always find a complex spin structure on $Y$, namely the one for which the spinor bundle is trivial.
  Hence, the first Chern character of any complex spin structure is necessarily even, and this will imply that the corresponding haunt is such that the induced map
  \[
  \pi_1 \tau : \pi_1 P \longto \pi_1 B\Pic(\bS) \cong \bZ
  \]
  takes values in $2\bZ$. Since the map $\pi_1 B\Pic(\bS) \to \pi_1 B\Pic(\KU)$ is mod 2, we conclude that $\pi_1 \tau_{\KU}$ is zero.
\end{example}

Moreover, we want to point out the following result of Maegawa which tells us that twisted spectra coming from haunts that factor through the complex J-homomorphism, which are common in Floer homotopy theory, always admit $R$-(co)homology group with respect to any even $\bE_2$-ring $R$.

\begin{proposition}[\cite{maegawa24}*{Theorem 3.1}]
  Let $B$ be a CW-complex and let $\tau : B \to B(\bZ \times \BU) \to B\Pic(\bS)$ be a haunt that factor through the complex J-homomorphism.
  Then $\tau_{R} : B \to B\Pic(R)$ is nullhomotopic for any even periodic $\bE_2$-ring spectrum.
  Moreover, the nullhomotopy is determined by essentially uniquely by the choice of an $\bE_1$-complex orientation on $R$.
\end{proposition}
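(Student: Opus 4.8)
The plan is to reduce to a universal instance and then feed it into the universal property of Thom spectra. Every haunt of the type under consideration factors as a composite $B \xrightarrow{\tau'} B(\bZ\times\BU) \xrightarrow{Bj} B\Pic(\bS)$, where $j\colon \bZ\times\BU \to \Pic(\bS)$ is the complex $J$-homomorphism; so it is enough to produce a nullhomotopy of $j_R \colon B(\bZ\times\BU) \to B\Pic(\bS) \to B\Pic(R)$ and pull it back along $\tau'$, and likewise the essential uniqueness and the dependence on an orientation only need to be checked at this universal level. The hypothesis that $R$ is $\bE_2$ is used here: it makes $\Mod_R$ an $\bE_1$-monoidal $\infty$-category, hence $\Pic(R)$ a grouplike $\bE_1$-space and $B\Pic(R)$ a meaningful target; moreover base change $\Sp \to \Mod_R$ is monoidal, so the induced map $\Pic(\bS)\to\Pic(R)$, and therefore $j_R$, are maps of grouplike $\bE_1$-spaces, and $j_R \simeq B\bar\jmath_R$ for an $\bE_1$-map $\bar\jmath_R \colon \bZ\times\BU \to \Pic(R)$.

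Next I would recall the identification of $\MUP$ with the $\bE_\infty$- (in particular $\bE_1$-) Thom spectrum of the complex $J$-homomorphism, i.e. $\MUP \simeq \colim_{\bZ\times\BU}\bigl(\bZ\times\BU \xrightarrow{j} \Pic(\bS)\to\Sp\bigr)$, the summand indexed by $\{0\}\times\BU$ recovering $\MU$. By the universal property of Thom spectra in the $\bE_1$-setting (Ando--Blumberg--Gepner--Hopkins--Rezk, Antolin-Camarena--Barthel), the space of $\bE_1$-ring maps $\MUP \to R$ is equivalent to the space of $\bE_1$-nullhomotopies of $\bar\jmath_R$; applying $B(-)$ functorially then turns such a nullhomotopy into a nullhomotopy of $j_R$, hence of $\tau_R$. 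Unwound, this is just the Thom isomorphism: an orientation trivializes, coherently in the family, each invertible $R$-module $R^{V_b}$ picked out by $\tau_R$.

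It remains to pass from an $\bE_1$-complex orientation of $R$ --- an $\bE_1$-ring map $\MU \to R$ --- to an $\bE_1$-ring map $\MUP \to R$, and this is exactly where \emph{even} periodicity enters. Going from $\MU$ to $\MUP$ amounts to trivializing the $\bZ$ worth of components of $\Pic(\bS)$, equivalently to the even suspensions $\Sigma^{2}R \simeq R$; this is possible precisely when $R$ is even periodic, and then essentially uniquely once the orientation has been fixed. Stringing the three steps together produces a canonical map from the space of $\bE_1$-complex orientations of $R$ to the space of nullhomotopies of $\tau_R$, which is the asserted uniqueness statement.

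The main obstacle I anticipate is the coherence bookkeeping: checking that the complex $J$-homomorphism is a map of grouplike $\bE_\infty$-spaces so that $\MUP$ carries exactly the structure the Thom-spectrum universal property requires, invoking that universal property in the finite-$\bE_k$ form actually needed (and in the $\Pic$ rather than $\GL_1$ variant), and isolating cleanly the single place --- trivializing the $\bZ$-factor, i.e. the even suspensions --- where "even periodic" is indispensable, as opposed to merely "complex orientable" or merely "$2$-periodic". A secondary point is verifying that $B(-)$ sends $\bE_1$-nullhomotopies downstairs to genuine nullhomotopies upstairs, and that nothing is lost on pulling back along $\tau'$, so that the essential-uniqueness clause survives the reduction.
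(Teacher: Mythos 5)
The paper cites this as \cite{maegawa24}*{Theorem 3.1} and offers no proof of its own, so there is nothing in-text to compare against. Your strategy---reduce to the universal twist on $B(\bZ\times\BU)$, trade nullhomotopies of $j_R$ for $\bE_1$-ring maps $\MUP \to R$ via the universal property of the $\bE_1$-Thom spectrum, and tie these to complex orientations---is the natural one and very plausibly what Maegawa does. You also correctly isolate what the $\bE_2$-hypothesis buys: it makes $\Pic(R)$ a grouplike $\bE_1$-space and $\Pic(\bS)\to\Pic(R)$ an $\bE_1$-map, so the universal property can be invoked in its $\Pic$-form.

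The one step I would not accept as written is the final ``essentially unique'' clause. Taking an $\bE_1$-complex orientation to mean an $\bE_1$-ring map $\MU\to R$, you assert that the extension to $\MUP\to R$ is ``essentially unique once the orientation has been fixed.'' I do not see that this is automatic. Since $\MUP\simeq\MU[\beta^{\pm1}]$ is obtained from $\MU$ by inverting a free degree-$2$ generator, or equivalently $\MUP$ is the Thom spectrum of $\bZ\times\BU$ while $\MU$ is that of the zero component, extending $\MU\to R$ over $\MUP$ requires an additional coherent trivialization of $\bZ\to\Pic(R)$, $n\mapsto[\Sigma^{2n}R]$. On $\pi_0$ this is the choice of a unit $\beta\mapsto u\in\pi_2 R$, and the space of such trivializations is a $\GL_1(R)$-torsor rather than contractible. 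The Thom class supplied by the orientation for the trivial line over a point lives in $\pi_0 R$ and does not single out a unit in $\pi_2 R$. So either ``$\bE_1$-complex orientation'' must be read as already including the periodicity datum---for instance an $\bE_1$-ring map $\MUP\to R$, or a degree-$0$ orientation class $x\in R^0(\bC\bP^\infty)$ whose restriction to $\bC\bP^1$ furnishes the Bott element---or the asserted uniqueness needs to be stated only up to this choice. The existence half of your argument is unaffected; the gap is confined to how tightly the orientation pins down the nullhomotopy, which is exactly the part you flagged as ``coherence bookkeeping'' but which hides a genuine non-contractible ambiguity.
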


\section{The Total Category of Twisted Spectra} \label{sec:total_category}

The discussion on tensor products of twisted spectra suggests that there is a symmetric monoidal structure on some category of twisted spectra on a fixed space, where we let the twists vary.
In this section, we introduce this total category of twisted spectra over a space, show that is has the wanted closed symmetric monoidal structure, and discuss pullbacks and their adjoints on these sorts of categories.
For all intents and purposes, we will essentially show that the total categories admit a 6-functor formalism.

\subsection{The Total Category of Twisted Spectra}

We start by simply introducing the wanted total category of twisted spectra over a fixed space.

\begin{definition}
  The \emph{total category of twisted spectra} over $A$ is defined as
  \[
  \TwSp(A) = \colim\left(\TwSp(A,-) : \Map(A,B\Pic(\bS)) \longto \mathrm{Pr}^{L}_{\St}  \right) \,.
  \]
\end{definition}

Note that by ambidexterity, the total category is also obtained as the limit of the diagram.

\begin{rmk}
  There is another potential definition of a total category for twisted spectra that we can form by looking at the total space of the fibration classified by the global sections functor
  \[
  \colim : \Map(A,B\Pic(\bS)) \longto \Cat_\infty , \quad \tau \mapsto \TwSp(A,\tau) \,.
  \]
  Heuristically, we think of objects in this category as a pair $(\tau,X)$ where $X \in \TwSp(A,\tau)$.
  We will not study this category in this paper, mostly because it does not have the wanted properties that are looking for.
  In particular, note that the lax version of the total category is not stable.
\end{rmk}

Let us look at a possible example of an object in one of these total categories that appear in nature.

\begin{example}[Seiberg--Witten Floer Theory]
  Recall Example~\ref{ex:SWF} and~\ref{ex:SWF_twisted_spectrum}, where we sketched the haunt and twisted spectrum associated to an oriented riemannian 3-manifold equipped with a complex spin structure $(Y,\mathfrak{s})$.
  It is well-understood how the haunt $\tau_{(Y,\mathfrak{s})} : P \to B\Pic(\bS)$ changes when we vary the complex spin structure $\mathfrak{s}$.
  We expect that, given an oriented riemannian 3-manifold $Y$, it should be possible to construct an object $\mathrm{SWF}(Y)$ in the total category $\TwSp(P)$ of twisted spectra over the Picard torus $P = H^1(Y;\bR) /H^1(Y;\bZ)$ in such a way that
  \[
  \SWF(Y)_{\tau} = \begin{cases} \SWF(Y,\mathfrak{s}) & \text{if there is a (necessarily unique) $\mathfrak{s}$ such that $\tau_{(Y,\mathfrak{s})} = \tau$} \\ 0 & \text{otherwise.} \end{cases}
  \]
  That such a ``total twisted spectrum'' might be interesting to study is hinted at by the fact that one in monopole Floer homology tend to consider all possible isomorphism classes of complex spin structures on a 3-manifold~\cite{KM07}*{Chapter 3}.
\end{example}

\subsection{The Total Pullback Functors and Their Adjoints} \label{sec:total_pullback}

In this section, we show that the pullback functors we introduced in Section~\ref{sec:pullback} assemble into a pullback functor on the level of total categories, and that this pullback functor admits a right adjoint, but not necessarily a left adjoint.
Now, there are a few maps that we could denote $f^*$ for a map $f : A \to B$ of spaces, so to avoid overlap we will start this section by introducing the following notation.

\begin{notation}
  Given a map $f : A \to B$ of spaces, the pre-composition map induced by applying the functor $\Map(-,B\Pic(\bS))$ will be denoted
  \[
  B\Pic(f) : \Map(B,B\Pic(\bS)) \longto \Map(A,B\Pic(\bS)) \,.
  \]
\end{notation}

We also introduce the following definition of an auxiliary category that will be of interest.

\begin{definition}
  Let $f : A \to B$ be a map of spaces. We write
  \[
  \TwSp(A,f^*) = \colim\left(\Map(B,B\Pic(\bS)) \overset{B\Pic(f)}\longto \Map(A,\Br(\bS)) \overset{\TwSp(A,-)}\longto \mathrm{Pr}^{L}_{\St} \right)\,.
  \]
  This is essentially the subcategory of the total category $\TwSp(A)$ generated by twists that are in the image of $B\Pic(f)$.
  Note that the inclusion $\TwSp(A,f^*) \to \TwSp(A)$ can be identified with the functor $(B\Pic(f))_!$ of the second part of Proposition~\ref{prop:left_right_adjoints_ambidexterity}.
\end{definition}

Note that we can think of all the pullback functors $f^* : \TwSp(B,\tau) \to \TwSp(A,f^* \tau)$ for various twists $\tau$ as assembling into a natural transformation
\[
\mathrm{Nat}(f^*) : \TwSp(B,-) \longto \TwSp(A,-) \circ B\Pic(f)
\]
of functors $\Map(B,B\Pic(\bS)) \to \Pr^{L}_{\St}$. Let us consider the following construction.

\begin{construction}
Let $f : A \to B$ be a map of spaces. We can construct an induced \emph{total pullback functor} as a composition
\[
f^* : \TwSp(B) \longto  \TwSp(A,f^*) \longto \TwSp(A) \,
\]
as follows:
\begin{itemize}
  \item The first functor $\TwSp(B) \to \TwSp(A,f^*)$ is the one obtained by feeding the natural transformation $\mathrm{Nat}(f^*)$ through the colimit functor
  \[
  \colim : \Fun(\Map(B,B\Pic(\bS)),\mathrm{Pr}^{L}_{\St}) \longto \mathrm{Pr}^{L}_{\St} \,.
  \]
  \item The second functor $\TwSp(A,f^*) \to \TwSp(A)$ is the inclusion of the full subcategory.
\end{itemize}
\end{construction}

We now arrive at the question to the existence of adjoints to this total pullback functors. Similarly to before, note that the different functors $f_!$ on the categories of twisted spectra for various twists assemble into a natural transformation
\[
\mathrm{Nat}(f_!) : \TwSp(A,-) \circ B\Pic(f) \longto \TwSp(B,-)
\]
of functors $\Map(B,B\Pic(\bS)) \to \Pr^{L}_{\St}$. Of course, the different functors $f_*$ on categories of twisted spectra for various twists also assemble into a natural transformation
\[
\mathrm{Nat}(f_*) : \TwSp(A,-) \circ B\Pic(f) \longto \TwSp(B,-)
\]
of functors $\Map(B,B\Pic(\bS),\Pr^{R}_{\St}) \to \Pr^{R}_{\St}$.
Note that we need use $\Pr^{R}_{\St}$ here since the functors $f_*$ are right adjoints.

\begin{proposition}
  The following holds:
  \begin{enumerate}
    \item The functor we get from feeding $\mathrm{Nat}(f^*)$ into the limit functor
    \[
    \lim : \Fun(\Map(B,B\Pic(\bS)),\mathrm{Pr}^{R}_{\St}) \longto \mathrm{Pr}^{R}_{\St}
    \]
    is equivalent to the functor $\TwSp(B) \to \TwSp(A,f^*)$ appearing above.
    \item The functor we get from feeding $\mathrm{Nat}(f_!)$ into the colimit functor
    \[
    \colim : \Fun(\Map(B,B\Pic(\bS)),\mathrm{Pr}^{L}_{\St}) \longto \mathrm{Pr}^{L}_{\St}
    \]
    is left adjoint to $\TwSp(B) \to \TwSp(A,f^*)$
    \item The functor we get from feeding $\on{Nat}(f_*)$ into the limit functor
    \[
    \lim : \Fun(\Map(B,B\Pic(\bS)),\mathrm{Pr}^{R}_{\St}) \longto \mathrm{Pr}^{R}_{\St}
    \]
    is right adjoint to $\TwSp(B) \to \TwSp(A,f^*)$.
  \end{enumerate}
  \begin{proof}
   The first statement is due to ambidexterity in $\Pr^{L}_{\St}$ and $\Pr^{R}_{\St}$. The second and third statement then following from Propositions 1.5.6 and 1.5.7.
  \end{proof}
\end{proposition}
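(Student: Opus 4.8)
The plan is to reduce all three statements to the pointwise adjunctions of Proposition~\ref{prop:functoriality_twisted_spectra}, the two ``$\Gamma$ preserves adjoint pairs'' results (Proposition~\ref{prop:colim_preserves_adjoints} and its $\Pr^{R}_{\St}$-analogue stated just after it), and the naturality of ambidexterity (Proposition~\ref{ambidexterity in Prl}). Throughout, write $X = \Map(B,B\Pic(\bS))$, which is a Kan complex, and set $F = \TwSp(B,-)$ and $G = \TwSp(A,-)\circ B\Pic(f)$, so that by definition $\TwSp(B) = \colim_X F$ and $\TwSp(A,f^*) = \colim_X G$, the colimits being taken in $\Pr^{L}_{\St}$. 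By Proposition~\ref{prop:functoriality_twisted_spectra}, for every twist $\tau$ the functor $(f^*)_\tau : F\tau \to G\tau$ is at once a left adjoint (to $(f_!)_\tau$) and a right adjoint (to $(f_*)_\tau$); hence $\mathrm{Nat}(f^*) : F \to G$ refines to a natural transformation of functors $X \to \Pr^{L,R}_{\St}$ and may be read either as a morphism of $\Fun(X,\Pr^{L}_{\St})$ or as a morphism of $\Fun(X,\Pr^{R}_{\St})$, with the same underlying natural transformation $(f^*)_\bullet$ of $\Cat_\infty$-valued diagrams; meanwhile $\mathrm{Nat}(f_!) : G \to F$ lives in $\Fun(X,\Pr^{L}_{\St})$ and $\mathrm{Nat}(f_*) : G \to F$ in $\Fun(X,\Pr^{R}_{\St})$.

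For part (1), I would note that the two functors $\TwSp(B) \to \TwSp(A,f^*)$ being compared have the same underlying functor of $\infty$-categories. On one side, the functor of the construction above is the colimit in $\Pr^{L}_{\St}$ of $\mathrm{Nat}(f^*)$; by the naturality of the ambidexterity equivalence of Proposition~\ref{ambidexterity in Prl} this agrees with the limit in $\Pr^{L}_{\St}$ of $\mathrm{Nat}(f^*)$, whose underlying functor is $\lim_X (f^*)_\bullet$ (limit computed in $\Cat_\infty$) because $\Pr^{L}_{\St} \to \Cat_\infty$ preserves limits. On the other side, the functor obtained by feeding $\mathrm{Nat}(f^*)$, now regarded as a morphism of $\Pr^{R}_{\St}$-valued diagrams, through $\lim : \Fun(X,\Pr^{R}_{\St}) \to \Pr^{R}_{\St}$ likewise has underlying functor $\lim_X (f^*)_\bullet$, since $\Pr^{R}_{\St} \to \Cat_\infty$ also preserves limits. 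As the ambidexterity equivalences identify the sources $\TwSp(B) \simeq \lim_X F$ and the targets $\TwSp(A,f^*) \simeq \lim_X G$ compatibly, the two functors coincide, which is part (1).

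For part (2), I would apply Proposition~\ref{prop:colim_preserves_adjoints} with the Kan complex $X$, the two functors $G$ and $F$, and the natural transformations $\mathrm{Nat}(f_!) : G \to F$ and $\mathrm{Nat}(f^*) : F \to G$: for each $\tau$ the functor $(f_!)_\tau$ is left adjoint to $(f^*)_\tau$ by Proposition~\ref{prop:functoriality_twisted_spectra}, so the hypotheses are met, and the conclusion is an adjunction whose left adjoint is the colimit in $\Pr^{L}_{\St}$ of $\mathrm{Nat}(f_!)$ and whose right adjoint is the colimit in $\Pr^{L}_{\St}$ of $\mathrm{Nat}(f^*)$, i.e.\ the functor $\TwSp(B) \to \TwSp(A,f^*)$ of the construction above. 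Part (3) is dual: apply the $\Pr^{R}_{\St}$-analogue of Proposition~\ref{prop:colim_preserves_adjoints} with $X$, the functors $F$ and $G$, and the natural transformations $\mathrm{Nat}(f^*) : F \to G$ and $\mathrm{Nat}(f_*) : G \to F$ (now between functors into $\Pr^{R}_{\St}$); since $(f^*)_\tau$ is left adjoint to $(f_*)_\tau$ pointwise, one gets an adjunction whose left adjoint is the limit in $\Pr^{R}_{\St}$ of $\mathrm{Nat}(f^*)$ and whose right adjoint is the limit in $\Pr^{R}_{\St}$ of $\mathrm{Nat}(f_*)$; combining with part (1), which identifies the former with the functor $\TwSp(B) \to \TwSp(A,f^*)$, exhibits the latter as the desired right adjoint.

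The one genuinely delicate point is the naturality invoked in part (1): one must verify that the equivalence $\colim_X \simeq \lim_X$ of Proposition~\ref{ambidexterity in Prl} is produced functorially in the indexed diagram, and not merely objectwise, so that it legitimately intertwines the colimit of $\mathrm{Nat}(f^*)$ with its limit, and hence matches the two descriptions of $\TwSp(B) \to \TwSp(A,f^*)$ used in parts (2) and (3). Granting this, everything else is a direct invocation of Proposition~\ref{prop:functoriality_twisted_spectra} together with the two versions of ``$\Gamma$ preserves adjunctions''.
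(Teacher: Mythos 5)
Your proof is correct and takes essentially the same route as the paper: part (1) via the ambidexterity equivalence of Proposition~\ref{ambidexterity in Prl} together with the fact that both inclusions $\Pr^L_{\St}\hookrightarrow\Cat_\infty$ and $\Pr^R_{\St}\hookrightarrow\Cat_\infty$ preserve limits, and parts (2) and (3) by applying Proposition~\ref{prop:colim_preserves_adjoints} and its $\Pr^R_{\St}$ analogue to the pointwise adjunctions $(f_!)_\tau\dashv(f^*)_\tau\dashv(f_*)_\tau$ of Proposition~\ref{prop:functoriality_twisted_spectra}. The paper's own proof is a two-sentence citation of exactly these ingredients (with what appears to be a misnumbered reference to the two ``$\Gamma$ preserves adjunctions'' propositions), and you have simply unfolded it. The caveat you raise at the end — that one needs the equivalence $\colim_X\simeq\lim_X$ of Proposition~\ref{ambidexterity in Prl} to be natural in the diagram, not merely objectwise, so that it actually intertwines $\colim\,\mathrm{Nat}(f^*)$ with $\lim\,\mathrm{Nat}(f^*)$ — is a legitimate point that the paper glosses over; the naturality is visible in the proof of Proposition~\ref{ambidexterity in Prl} (it comes from the natural equivalence $B\simeq B^{\oop}$ of Kan complexes and the compatibility of the two inclusions into $\Cat_\infty$), but it is fair to flag it.
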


Because of the above result, the existence of a left and right adjoint to the total pullback functor essentially comes down to the existence of adjoints to the inclusion $\TwSp(A,f^*) \to \TwSp(A)$.
It turns out that the wanted right adjoint to the pullback on total categories always exists, but that the left adjoint only exist under some very strict finiteness assumptions.

\begin{proposition} \label{prop:total_pullback_has_right_adjoint}
 The total pullback functor $f^* : \TwSp(B) \to \TwSp(A)$ has a right adjoint.
 \begin{proof}
Recall that the inclusion $\TwSp(A,f^*) \to \TwSp(A)$ is simply the functor $(B\Pic(f))_!$ of the second part of Proposition~\ref{prop:left_right_adjoints_ambidexterity}.
Hence, it necessarily has a right adjoint.
 \end{proof}
\end{proposition}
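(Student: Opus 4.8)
The plan is to use the factorization of the total pullback functor through the auxiliary category $\TwSp(A,f^*)$ supplied by the preceding Construction. By definition, $f^* : \TwSp(B) \to \TwSp(A)$ is the composite
\[
\TwSp(B) \longto \TwSp(A,f^*) \longto \TwSp(A),
\]
in which the first arrow is obtained by feeding the natural transformation $\mathrm{Nat}(f^*) : \TwSp(B,-) \to \TwSp(A,-)\circ B\Pic(f)$ through the colimit functor $\colim : \Fun(\Map(B,B\Pic(\bS)),\mathrm{Pr}^{L}_{\St}) \to \mathrm{Pr}^{L}_{\St}$, and the second arrow is the inclusion of the full subcategory. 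I would show that both arrows are morphisms in $\mathrm{Pr}^{L}_{\St}$, so that their composite is too, whence it is a left adjoint and in particular admits a right adjoint.

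The first arrow is a morphism in $\mathrm{Pr}^{L}_{\St}$ essentially by construction, since the colimit functor into $\mathrm{Pr}^{L}_{\St}$ takes a natural transformation of $\mathrm{Pr}^{L}_{\St}$-valued functors to a colimit-preserving functor between presentable stable $\infty$-categories. The only real input concerns the second arrow: here I would recall the identification, built into the definition of $\TwSp(A,f^*)$, of the inclusion $\TwSp(A,f^*)\hookrightarrow\TwSp(A)$ with the functor $(B\Pic(f))_!$ produced by part (2) of Proposition~\ref{prop:left_right_adjoints_ambidexterity}, applied to the parametrized category $\TwSp(A,-)$ along $B\Pic(f)$. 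That proposition tells us $(B\Pic(f))_!$ is a left adjoint, hence has a right adjoint; combined with the right adjoint to the first arrow constructed in the preceding proposition (obtained by feeding $\mathrm{Nat}(f_*)$ into the limit functor), one even gets an explicit description of the right adjoint to $f^*$ as a composite.

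The argument is entirely formal, so I do not expect a genuine obstacle; the one place where something beyond abstract nonsense enters is the appeal to ambidexterity in $\mathrm{Pr}^{L}_{\St}$. A priori there is no reason for restriction along $B\Pic(f)$ to admit a left adjoint at the level of colimits of $\mathrm{Pr}^{L}_{\St}$-valued diagrams over $\Map(-,B\Pic(\bS))$; it is precisely ambidexterity (Proposition~\ref{prop:left_right_adjoints_ambidexterity}(2)) that yields the functor $(B\Pic(f))_!$ together with the identification of the inclusion $\TwSp(A,f^*)\hookrightarrow\TwSp(A)$ with it. Once that is in hand, existence of the right adjoint is immediate.
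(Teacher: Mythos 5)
Your proposal is correct and takes essentially the same route as the paper: you factor the total pullback through $\TwSp(A,f^*)$, observe that the first factor is a morphism in $\Pr^L_{\St}$ by construction (with its right adjoint supplied by the preceding proposition via $\mathrm{Nat}(f_*)$), and identify the second factor with $(B\Pic(f))_!$, which is colimit-preserving and hence admits a right adjoint. This matches the paper's terse proof, which emphasizes only the identification of the inclusion with $(B\Pic(f))_!$ because the first factor's right adjoint is already handled in part (3) of the immediately preceding (unnumbered) proposition.
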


Note that the same argument cannot be run for the left adjoint; the functor $\TwSp(A,f^*) \to \TwSp(A)$ does not in general admit a left adjoint, unless we have some pretty strict finiteness conditions on the fiber of $f$.

\begin{proposition} \label{prop:total_pullback_no_left_adjoint}
 Let $f : A \to B$ be a map of spaces such that the induced map $B\Pic(f)$ is an equivalence, then we have an induced left adjoint functor
 \[
 f_! : \TwSp(A) \to \TwSp(B) \,.
 \]
 Moreover, the right adjoint to this functor is $f^*$.
 \begin{proof}
   Note that if $B\Pic(f)$ is an equivalence, then the functor $B\Pic(f)_!$ is an equivalence, so it certainly has a left adjoint, namely its inverse.
 \end{proof}
\end{proposition}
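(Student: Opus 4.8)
The plan is to exploit the factorization of the total pullback functor constructed just above, namely $f^* : \TwSp(B) \overset{g}\longto \TwSp(A,f^*) \overset{\iota}\longto \TwSp(A)$, where $g$ is the functor obtained by feeding the natural transformation $\mathrm{Nat}(f^*)$ through $\colim : \Fun(\Map(B,B\Pic(\bS)),\mathrm{Pr}^{L}_{\St}) \to \mathrm{Pr}^{L}_{\St}$, and $\iota$ is the inclusion of the full subcategory, which by construction is identified with the functor $(B\Pic(f))_!$ of Proposition~\ref{prop:left_right_adjoints_ambidexterity}. The preceding proposition already provides a left adjoint $g^L$ to $g$, namely the functor one obtains by feeding $\mathrm{Nat}(f_!)$ through the same colimit functor. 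So the only missing ingredient for a left adjoint to $f^*$ is a left adjoint to $\iota$, and then one composes.

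The key step is to observe that the hypothesis forces $\iota$ to be an equivalence. Since $\TwSp(A,f^*) = \colim\bigl(\TwSp(A,-)\circ B\Pic(f)\bigr)$ and $B\Pic(f)$ is an equivalence of spaces, precomposition along it identifies the diagram $\TwSp(A,-)\circ B\Pic(f)$, indexed by $\Map(B,B\Pic(\bS))$, with $\TwSp(A,-)$, indexed by $\Map(A,B\Pic(\bS))$, as a reindexing along an equivalence of indexing spaces. Colimits are insensitive to such reindexings, so the induced map on colimits — which is precisely $\iota = (B\Pic(f))_!$ — is an equivalence of $\infty$-categories. Its inverse $\iota^{-1}$ then serves simultaneously as a left and a right adjoint to $\iota$.

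One then defines $f_! := g^L \circ \iota^{-1} : \TwSp(A) \longto \TwSp(A,f^*) \longto \TwSp(B)$. Since the left adjoint of a composite is the composite of the left adjoints in the opposite order, and since $\iota^{-1}$ is left adjoint to $\iota$ while $g^L$ is left adjoint to $g$, the functor $f_!$ is left adjoint to $\iota\circ g = f^*$; equivalently, the right adjoint of $f_!$ is $f^*$, which is the remaining assertion. I would close by remarking that, because $\iota^{-1}$ is exactly the reindexing equivalence attached to $B\Pic(f)$ and $g^L$ is the colimit of the natural transformation $\mathrm{Nat}(f_!)$ with components $f_! : \TwSp(A,f^*\tau)\to\TwSp(B,\tau)$, the functor $f_!$ restricts on the summand indexed by a twist $\tau : B\to B\Pic(\bS)$ to the functor $f_!$ of Proposition~\ref{prop:functoriality_twisted_spectra}, so the notation is consistent.

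There is no serious obstacle here: every step is a formal manipulation of adjunctions and of (co)limits in $\mathrm{Pr}^{L}_{\St}$, using ambidexterity to pass freely between $\mathrm{Pr}^{L}_{\St}$ and $\mathrm{Pr}^{R}_{\St}$ when convenient. The one point that deserves a sentence of care is the claim that $(B\Pic(f))_!$ is an equivalence whenever $B\Pic(f)$ is an equivalence of spaces; this is just functoriality of the $\colim$ construction in its indexing space, which is already built into the construction of $f_!$ in Proposition~\ref{prop:left_right_adjoints_ambidexterity}, so it requires no new input.
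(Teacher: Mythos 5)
Your argument is correct and follows the same route as the paper: both proofs reduce the question to producing a left adjoint to the inclusion $\iota = (B\Pic(f))_! : \TwSp(A,f^*) \to \TwSp(A)$, observe that the hypothesis makes $\iota$ an equivalence (hence its inverse is the left adjoint), and then compose with the already-constructed left adjoint to $\TwSp(B) \to \TwSp(A,f^*)$. You simply spell out in more detail what the paper leaves implicit, in particular the reindexing argument for why $(B\Pic(f))_!$ is an equivalence and the bookkeeping of composing the two adjunctions.
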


We also prove Beck--Chevalley in this setting.

\begin{proposition}[Beck--Chevalley] \label{prop:BC_total_cat}
  Suppose that
  \[
  \begin{tikzcd}
  A \arrow[r,"j"] \arrow[d,"i"] & B \arrow[d,"f"] \\
  C \arrow[r,"g"] & D
  \end{tikzcd}
  \]
  is a pullback square of spaces in which $i$ and $f$ are such that $B\Pic(i)$ and $B\Pic(f)$ are equivalences. Then, the canonical natural transformation
  \[
  i_! j^* \longto g^* f_!
  \]
  is an equivalence of functors $\TwSp(B) \to \TwSp(C)$.
  \begin{proof}
    It is enough to check that the functors $\TwSp(B,\sigma) \to \TwSp(C)$ are naturally equivalent for all choices of twist $\sigma$.
    Fixing a twist $\sigma$, the upper composition corresponds to the functor
    \[
    \TwSp(B,\sigma) \overset{j^*}\longto \TwSp(A,j^* \sigma) \simeq \TwSp(A,i^* \rho) \overset{i_!}\longto \TwSp(C,\rho)
    \]
    where the middle equivalence is coming from the fact that $B\Pic(i)$ is an equivalence, so that there is an essentially unique $\rho \in \Map(C,B\Pic(\bS))$ such that $i^* \rho \simeq j^* \sigma$.
    Similarly, the lower composition corresponds to the functor
    \[
    \TwSp(B,\sigma) \simeq \TwSp(B,f^* \pi) \overset{f_!}\longto \TwSp(D,\pi) \overset{g^*}\longto \TwSp(C,g^* \pi)
    \]
    where the equivalence at the start is comping from the fact that $B\Pic(f)$ is an equivalence, so that there is an essentially unique $\pi \in \Map(D,B\Pic(\bS))$ such that $f^*\pi \simeq \sigma$.
    Next, we note that $g^* \pi \simeq \rho$.
    Indeed, since $B\Pic(i)$ is an equivalence, this follows from
    \[
    i^* g^* \pi \simeq j^* f^* \pi \simeq j^* \sigma \simeq i^* \rho \,.
    \]
    The wanted result now follows from applying Proposition~\ref{prop:Beck_Chevalley_preliminary} to the pullback diagram
    \[
    \begin{tikzcd}
    (A,\tau) \arrow[r,"j"] \arrow[d,"i"] & (B,\sigma) \arrow[d,"f"] \\
    (C,\rho) \arrow[r,"g"] & (D,\pi)
    \end{tikzcd}
    \]
    in the category $\cS_{/B\Pic(\bS)}$. Here, $\tau$ is the necessarily unique twist on $A$ such that $\tau \simeq j^* f^* \pi \simeq i^* g^* \pi$.
  \end{proof}
\end{proposition}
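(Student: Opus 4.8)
The plan is to reduce Beck--Chevalley on the total categories to Beck--Chevalley for a fixed twist, i.e.\ to Proposition~\ref{prop:Beck_Chevalley_preliminary}, by decomposing $\TwSp(B)$ into its twist-components. Recall $\TwSp(B) = \colim_{\sigma}\TwSp(B,\sigma)$, the colimit being taken in $\mathrm{Pr}^{L}_{\St}$ over $\sigma \in \Map(B,B\Pic(\bS))$. Each of the four functors in play is colimit-preserving: $j^{*}$ and $g^{*}$ have right adjoints by Proposition~\ref{prop:total_pullback_has_right_adjoint}, while $i_{!}$ and $f_{!}$ exist and are left adjoints by Proposition~\ref{prop:total_pullback_no_left_adjoint} (this is exactly where the hypotheses on $B\Pic(i)$ and $B\Pic(f)$ enter). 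Hence $i_{!}j^{*}$ and $g^{*}f_{!}$ are colimit-preserving, and by the universal property $\Fun^{L}(\TwSp(B),-) \simeq \lim_{\sigma}\Fun^{L}(\TwSp(B,\sigma),-)$ the natural transformation $i_{!}j^{*}\to g^{*}f_{!}$ is an equivalence as soon as its restriction along every $\TwSp(B,\sigma)\to\TwSp(B)$ is one. So first I would fix a twist $\sigma\colon B\to B\Pic(\bS)$.

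Next I would use the equivalences $B\Pic(f)$ and $B\Pic(i)$ to relabel twists. Since $B\Pic(f)$ is an equivalence there is an essentially unique $\pi\in\Map(D,B\Pic(\bS))$ with $f^{*}\pi\simeq\sigma$, and since $B\Pic(i)$ is an equivalence there is an essentially unique $\rho\in\Map(C,B\Pic(\bS))$ with $i^{*}\rho\simeq j^{*}\sigma$. Unwinding the constructions of Section~\ref{sec:total_pullback}, the restriction of the total $g^{*}f_{!}$ to the $\sigma$-summand is the composite
\[
\TwSp(B,\sigma)\simeq\TwSp(B,f^{*}\pi)\overset{f_{!}}{\longto}\TwSp(D,\pi)\overset{g^{*}}{\longto}\TwSp(C,g^{*}\pi)
\]
of the individual functors of Proposition~\ref{prop:functoriality_twisted_spectra}, and the restriction of $i_{!}j^{*}$ is the composite
\[
\TwSp(B,\sigma)\overset{j^{*}}{\longto}\TwSp(A,j^{*}\sigma)\simeq\TwSp(A,i^{*}\rho)\overset{i_{!}}{\longto}\TwSp(C,\rho).
\]
The remaining bookkeeping is to identify the targets: $g^{*}\pi\simeq\rho$, which follows from $i^{*}g^{*}\pi\simeq j^{*}f^{*}\pi\simeq j^{*}\sigma\simeq i^{*}\rho$ together with $B\Pic(i)$ being an equivalence.

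With these identifications in hand, the claim becomes precisely that the Beck--Chevalley transformation of the square
\[
\begin{tikzcd}
(A,\tau) \arrow[r,"j"] \arrow[d,"i"] & (B,\sigma) \arrow[d,"f"] \\
(C,\rho) \arrow[r,"g"] & (D,\pi)
\end{tikzcd}
\]
in $\cS_{/B\Pic(\bS)}$ is an equivalence, where $\tau=(fj)^{*}\pi=(gi)^{*}\pi$. This square is a pullback in $\cS_{/B\Pic(\bS)}$ because the forgetful functor $\cS_{/B\Pic(\bS)}\to\cS$ creates pullbacks and the underlying square of spaces is a pullback by hypothesis. I would then conclude by invoking Proposition~\ref{prop:Beck_Chevalley_preliminary}.

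The step I expect to be the main obstacle is the middle paragraph: making precise that restricting the total functors to a single twist-component recovers the corresponding individual functors, coherently and compatibly with the relabellings along $B\Pic(f)$ and $B\Pic(i)$. Concretely this means tracking how the natural transformations $\mathrm{Nat}(f_{!})$ and $\mathrm{Nat}(f^{*})$ of Section~\ref{sec:total_pullback} behave after passing to colimits over $\Map(-,B\Pic(\bS))$, and using that an equivalence $B\Pic(f)$ lets one reindex the colimit defining $\TwSp(B)$ along the one defining $\TwSp(D)$; once this is set up the rest is formal.
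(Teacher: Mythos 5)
Your proposal is correct and follows essentially the same route as the paper: reduce to a fixed twist $\sigma$, use the equivalences $B\Pic(f)$ and $B\Pic(i)$ to produce $\pi$ and $\rho$, verify $g^*\pi\simeq\rho$, and invoke Proposition~\ref{prop:Beck_Chevalley_preliminary} for the resulting pullback square in $\cS_{/B\Pic(\bS)}$. You spell out a couple of steps the paper leaves implicit (the colimit-preservation argument justifying the twistwise reduction, and that the square remains a pullback in the slice), but these are elaborations, not a different argument.
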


\begin{rmk}
  We note that if $f : A \to B$ is a map of spaces such that the induced map $B\Pic(f)$ is an equivalence, then the map $f$ is itself very close to being an equivalence itself.
  Indeed, such a map must necessarily induce isomorphisms of homotopy groups in degrees $\geq 2$, since $\Omega \Pic(\bS) \simeq \Omega^\infty \bS$.
  In fact, the authors of this paper has not yet come up with an example of a map $f : A \to B$ for which $B\Pic(f)$ is an equivalence, but which is itself \emph{not} an equivalence.
\end{rmk}

\subsection{Symmetric Monoidal Structure on the Total Category}

We want to assemble the interior product to a symmetric monoidal structure on the total category of twisted spectra over the space $A$.
 Let us recall some important facts.
 For every space $A$, the mapping space $\Map(A,B\Pic(\bS))$ becomes an $\bE_\infty$-space by using the $\bE_\infty$-structure on $\Pic(\bS)$ and the cartesian symmetric monoidal structure on $\cS$, which makes every space into a co-$\bE_\infty$-spaces via its diagonal map.
 Note that the map $B\Pic(f)$ is always an $\bE_\infty$-map.

\begin{proposition} \label{prop:total_cat_symm_mon}
  The category $\TwSp(A)$ is endowed with a symmetric monoidal structure that we will denote as $\otimes_A$.
  This symmetric monoidal structure preserves small colimits in each variable separately.
  Moreover, this symmetric monoidal structure can be identified with the interior product described in Definition~\ref{defn:interior_tensor} in the sense that the diagram
  \[
  \begin{tikzcd}
    \TwSp(A,\tau) \otimes \TwSp(A,\sigma) \arrow[r,"\otimes"] \arrow[d] & \TwSp(A , \tau + \sigma) \arrow[d] \\
    \TwSp(A) \otimes \TwSp(A) \arrow[r,"\otimes_A"] & \TwSp(A)
  \end{tikzcd}
  \]
  commutes.
  \begin{proof}
    Note that $\Map(A,B\Pic(\bS))$ is an $\bE_\infty$-space by the interior product and that the functor
    \[
    \TwSp(A,-) : \Map(A,B\Pic(\bS)) \longto \Fun(A,\mathrm{Pr}^{L}_{\St}) \longto \mathrm{Pr}^{L}_{\St}
    \]
    is lax symmetric monoidal. In other words, it is an $\bE_\infty$-algebra in the category $\Fun(\Map(A,B\Pic(\bS)),\mathrm{Pr}^{L}_{\St})$.
    Since the functor
    \[
    \colim : \Fun(\Map(A,B\Pic(\bS),\mathrm{Pr}^{L}_{\St}) \longto \mathrm{Pr}^{L}_{\St}
    \]
    is symmetric monoidal, this tells us that the colimit $\TwSp(A)$ is an $\bE_\infty$-algebra in $\Pr^L_{\St}$, that is, a presentably symmetric monoidal $\infty$-category.
    All that is left now is identifying this with the interior product on twisted spectra already defined in Definition~\ref{defn:interior_tensor}. Recall that the upper map is defined as the composition
    \[
    \TwSp(A,\tau) \times \TwSp(A,\sigma) \longto \TwSp(A,\tau) \otimes \TwSp(A,\sigma)  \simeq \TwSp(A \times A , \tau \boxtimes \sigma) \overset{\Delta^*}\longto \TwSp(A, \tau + \sigma)
    \]
    and the identifications done in the proof of Proposition~\ref{prop:tensor_twisted_categories} and the fact that commuting colimits is natural.
  \end{proof}
\end{proposition}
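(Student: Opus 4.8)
The plan is to realize $\TwSp(A)$ as the image of an $\bE_\infty$-algebra object under a symmetric monoidal colimit functor. First I would record that the interior product $+$ makes $\Map(A,B\Pic(\bS))$ into an $\bE_\infty$-space — equivalently, a symmetric monoidal $\infty$-groupoid — built from the $\bE_\infty$-structure on $\Pic(\bS)$ and the diagonal of $A$. It then suffices to promote the diagram $\TwSp(A,-)\colon \Map(A,B\Pic(\bS)) \to \Pr^{L}_{\St}$ to a lax symmetric monoidal functor, i.e.\ to an $\bE_\infty$-algebra in $\Fun(\Map(A,B\Pic(\bS)),\Pr^{L}_{\St})$ for the Day convolution structure, and then to push it forward along $\colim$.

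For the lax structure I would use the functor $\delta_A\colon \Map(A,B\Pic(\bS)) \to \cS_{/B\Pic(\bS)}$, $\tau \mapsto (A,\tau)$. The natural equivalence $\tau + \sigma \simeq \Delta_A^*(\tau\boxtimes\sigma)$ supplies diagonal morphisms $(A,\tau+\sigma) \to (A\times A,\tau\boxtimes\sigma)$ in $\cS_{/B\Pic(\bS)}$, exhibiting $\delta_A$ as an \emph{oplax} symmetric monoidal functor from $(\Map(A,B\Pic(\bS)),+)$ to $(\cS_{/B\Pic(\bS)},\boxtimes)$. Composing with the \emph{contravariant} symmetric monoidal functor $\TwSp^*\colon \cS_{/B\Pic(\bS)}^{\oop} \to \Pr^{L,R}_{\St} \to \Pr^{L}_{\St}$ of Porism~\ref{henven} converts this oplax structure into a lax one, so $\TwSp(A,-) \simeq \TwSp^*\circ\delta_A^{\oop}$ becomes lax symmetric monoidal. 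Unwinding, its structure map on $(\tau,\sigma)$ is the composite $\TwSp(A,\tau)\otimes\TwSp(A,\sigma)\xrightarrow{\ \simeq\ }\TwSp(A\times A,\tau\boxtimes\sigma)\xrightarrow{\ \Delta_A^*\ }\TwSp(A,\tau+\sigma)$, whose first arrow is the coherence equivalence of Proposition~\ref{prop:tensor_twisted_categories}; this is precisely the interior product of Definition~\ref{defn:interior_tensor}.

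Now the colimit functor $\colim\colon \Fun(\Map(A,B\Pic(\bS)),\Pr^{L}_{\St}) \to \Pr^{L}_{\St}$ is symmetric monoidal — it is left Kan extension along the symmetric monoidal map $\Map(A,B\Pic(\bS))\to\ast$, which is symmetric monoidal for the Day convolution structures, the same mechanism underlying Lemma~\ref{lem:colim_symm_monoid}. Hence it carries $\bE_\infty$-algebras to $\bE_\infty$-algebras, and $\TwSp(A) = \colim\TwSp(A,-)$ acquires a presentably symmetric monoidal structure $\otimes_A$; it preserves small colimits in each variable separately because each $\TwSp(A,\tau)$ is presentable stable and the Lurie tensor is cocontinuous in each variable. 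Finally, the canonical functors $\TwSp(A,\tau)\to\TwSp(A)$ are the legs of the colimit cocone of a symmetric monoidal diagram, hence they intertwine the lax structure maps above with $\otimes_A$; this is exactly the commutativity of the displayed square, which I would check by the same explicit unwinding used in the proof of Proposition~\ref{prop:tensor_twisted_categories}, invoking the naturality of the interchange of iterated colimits.

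The main obstacle is entirely in the coherence bookkeeping: constructing $\delta_A$ and $\TwSp^*$ as (op)lax and strong symmetric monoidal functors with all higher coherences, and checking that Day convolution is compatible with the colimit in the way needed to transport the $\bE_\infty$-structure. There is no geometric content beyond the three inputs already available — the equivalence $\tau+\sigma\simeq\Delta_A^*(\tau\boxtimes\sigma)$, the symmetric monoidality of $\TwSp^*$ from Porism~\ref{henven}, and the symmetric monoidality of $\colim$ — so once the diagrams are in place the argument is formal.
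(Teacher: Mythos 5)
Your proof is correct and, at the level of the overall strategy (exhibit $\TwSp(A,-)$ as an $\bE_\infty$-algebra in $\Fun(\Map(A,B\Pic(\bS)),\Pr^L_{\St})$ for Day convolution, push forward along the symmetric monoidal $\colim$, then identify the induced structure with the interior product), it matches the paper. The genuine difference is in how you produce the lax symmetric monoidal structure on $\TwSp(A,-)$. The paper's (rather terse) justification factors the diagram through $\Fun(A,\Pr^L_{\St})$ with its pointwise Lurie tensor: the first stage $\Map(A,B\Pic(\bS)) \to \Fun(A,\Pr^L_{\St})$ is strong symmetric monoidal because $B\Pic(\bS)\hookrightarrow \Pr^L_{\St}$ is, and the second stage $\Gamma = \lim_A$ is lax symmetric monoidal as a limit functor. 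You instead factor through the exterior tensor on $\cS_{/B\Pic(\bS)}$: the diagonal of $A$ gives the oplax structure on $\delta_A$, and the contravariant strong symmetric monoidal $\TwSp^*$ of Porism~\ref{henven} converts oplax into lax. Both are valid, and both produce the same lax structure map $\TwSp(A,\tau)\otimes\TwSp(A,\sigma)\simeq\TwSp(A\times A,\tau\boxtimes\sigma)\xrightarrow{\Delta_A^*}\TwSp(A,\tau+\sigma)$ up to the canonical identifications. What your route buys is that it reduces all the coherence to two inputs the paper has already established — the diagonal on $A$ as a cocommutative comonoid and the monoidality of $\TwSp^*$ — whereas the paper's route leans on the lax monoidality of the limit over a Kan complex, which it never spells out. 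Your route is arguably more self-contained given the lemmas available; the paper's is shorter but leaves more implicit.
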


For reference, we also include the following lemma.

\begin{lemma} \label{lem:inclusion_symm_monoidal}
  Let $f : A \to B$ be a map of spaces. The category $\TwSp(A,f^*)$ is symmetric monoidal and the inclusion $\TwSp(A,f^*) \to \TwSp(A)$ is symmetric monoidal.
  \begin{proof}
    The map $B\Pic(f)$ is a map of $\bE_\infty$-spaces, so the symmetric monoidality of $\TwSp(A,f^*)$ follows from the same line of argument as the proof of Proposition~\ref{prop:total_cat_symm_mon}. That the inclusion is symmetric monoidal is clear since $\TwSp(A,f^*)$ is a subcategory of $\TwSp(A)$ which is closed under the tensor product.
  \end{proof}
\end{lemma}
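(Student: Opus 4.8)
The plan is to reprise the argument of Proposition~\ref{prop:total_cat_symm_mon} one level lower, relative to the $\bE_\infty$-map $B\Pic(f)$. Recall from the proof of that proposition that $\Map(A,B\Pic(\bS))$ is an $\bE_\infty$-space under the interior product, that $\TwSp(A,-)$ is an $\bE_\infty$-algebra in $\Fun(\Map(A,B\Pic(\bS)),\mathrm{Pr}^{L}_{\St})$ for the Day convolution symmetric monoidal structure, and that $\colim$ out of this functor category is symmetric monoidal. Since $B\Pic(f)$ is an $\bE_\infty$-map, restriction along it is lax symmetric monoidal for the Day convolution structures --- it is the right adjoint of the symmetric monoidal functor given by left Kan extension along $B\Pic(f)$ --- and $\colim$ out of $\Fun(\Map(B,B\Pic(\bS)),\mathrm{Pr}^{L}_{\St})$ is symmetric monoidal by the same reasoning used in Proposition~\ref{prop:total_cat_symm_mon}. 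Hence $\TwSp(A,-)\circ B\Pic(f)$, being the restriction of an $\bE_\infty$-algebra along an $\bE_\infty$-map, is an $\bE_\infty$-algebra in $\Fun(\Map(B,B\Pic(\bS)),\mathrm{Pr}^{L}_{\St})$; applying the symmetric monoidal functor $\colim$ exhibits $\TwSp(A,f^{*})$ as a presentably symmetric monoidal $\infty$-category, whose product restricts on each $\TwSp(A,\tau)\otimes\TwSp(A,\sigma)$ to the interior product into $\TwSp(A,\tau+\sigma)$, exactly as in Proposition~\ref{prop:total_cat_symm_mon}. This gives the first assertion.

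For the second assertion, I would use that, as recorded after the definition of $\TwSp(A,f^{*})$, the inclusion $\TwSp(A,f^{*})\to\TwSp(A)$ is the functor supplied by the second part of Proposition~\ref{prop:left_right_adjoints_ambidexterity} applied to the parametrized category $\TwSp(A,-)$; concretely it is the canonical comparison $\colim\bigl((B\Pic(f))^{*}\TwSp(A,-)\bigr)\to\colim\TwSp(A,-)$ between the colimit over $\Map(B,B\Pic(\bS))$ and the colimit over $\Map(A,B\Pic(\bS))$. This comparison is the image under the symmetric monoidal functor $\colim$ over $\Map(A,B\Pic(\bS))$ of the counit $(B\Pic(f))_{!}(B\Pic(f))^{*}\TwSp(A,-)\to\TwSp(A,-)$ of the Day convolution adjunction $(B\Pic(f))_{!}\dashv(B\Pic(f))^{*}$. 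Since a symmetric monoidal functor together with its lax symmetric monoidal right adjoint induces an adjunction on $\bE_\infty$-algebra objects, that counit is a morphism of $\bE_\infty$-algebras, and therefore so is its image under $\colim$. Thus the inclusion is a symmetric monoidal functor, compatibly with the structure constructed in the previous paragraph.

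The step I expect to be the main obstacle is the bookkeeping in the last paragraph: one must check that the functor $(B\Pic(f))_{!}$ coming out of the ambidexterity arguments of Proposition~\ref{prop:left_right_adjoints_ambidexterity} is, on underlying $\infty$-categories, exactly the comparison map computed by the Day convolution counit, so that the symmetric monoidal enhancement of the latter does enhance the inclusion. On the level of plain functors this identification is essentially the content of the second part of Proposition~\ref{prop:left_right_adjoints_ambidexterity}, so the only genuinely new work is transporting the symmetric monoidal structure through it. (I would avoid the shortcut of treating $\TwSp(A,f^{*})$ as a literal full monoidal subcategory of $\TwSp(A)$ closed under the tensor product, since $(B\Pic(f))_{!}$ need not be fully faithful for a general map $f$, so the argument really does have to go through the algebra objects.)
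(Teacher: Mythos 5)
Your first paragraph reproduces the argument for the first assertion essentially as the paper does it, just with the Day convolution identifications spelled out more explicitly. For the second assertion, you have caught a genuine gap. The paper's justification --- ``the inclusion is symmetric monoidal \dots since $\TwSp(A,f^*)$ is a subcategory of $\TwSp(A)$ which is closed under the tensor product'' --- tacitly treats $(B\Pic(f))_!$ as a fully faithful embedding, so that $\TwSp(A,f^*)$ sits inside $\TwSp(A)$ as a full monoidal subcategory that then inherits the tensor structure. That fails when $B\Pic(f)$ is not a monomorphism of spaces, which is the generic situation. For instance, with $A=\ast$ and $f\colon\ast\to B=S^1$ a basepoint, $B\Pic(f)\colon\Map(S^1,B\Pic(\bS))\to B\Pic(\bS)$ is an evaluation map whose fiber is $\Pic(\bS)$, and over the trivial component the resulting comparison functor is the free-module functor $\Sp\to\Fun(S^1,\Sp)\simeq\Mod_{\bS[\bZ]}$, which is not fully faithful. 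So the ``closed subcategory'' heuristic does not by itself establish the claim.

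Your replacement argument supplies the missing rigor: you identify the comparison $\colim_{\Map(B,B\Pic(\bS))}(B\Pic(f))^*\TwSp(A,-)\to\colim_{\Map(A,B\Pic(\bS))}\TwSp(A,-)$ as the image under the symmetric monoidal colimit functor of the Day convolution counit $(B\Pic(f))_!(B\Pic(f))^*\TwSp(A,-)\to\TwSp(A,-)$, and observe that this counit is a map of $\bE_\infty$-algebras because the adjunction $(B\Pic(f))_!\dashv(B\Pic(f))^*$, being a symmetric monoidal left adjoint paired with a lax symmetric monoidal right adjoint, lifts to $\bE_\infty$-algebra objects. That is exactly right. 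The step you flag as the potential obstacle --- matching the plain-functor comparison map from Proposition~\ref{prop:left_right_adjoints_ambidexterity} with the image of this counit --- is the standard symmetric monoidal identification $\colim_{\Map(A,B\Pic(\bS))}\circ(B\Pic(f))_!\simeq\colim_{\Map(B,B\Pic(\bS))}$ (colimit of a left Kan extension is the colimit of the original diagram), so it is not a genuine difficulty. Your proof repairs the one in the paper and is to be preferred.
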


\begin{definition}
  If $X \in \TwSp(A,\tau)$ is dualizable in the symmetric monoidal $\infty$-category $\TwSp(A)$, we will say that $X$ is \emph{Spanier--Whitehead dualizable}. We will refer to its dual $D^{SW} X$, which is necessarily a $\TwSp(A,-\tau)$-twisted spectrum, as the \emph{Spanier--Whitehead dual} of $X$.
\end{definition}

We note that, just as Spanier--Whitehead duality for parametrized spectra is not the ``correct'' notion of dualizablity in that setting, see~\cite{MS06}*{Part IV}, there is another form of dualizablity of twisted spectra that are more appropriate.
We will look at this in Section~\ref{sec:dualizability_twisted_spectra}.

\begin{proposition} \label{prop:pullback_strong_monoidal}
  The functor $f^* : \TwSp(B) \to \TwSp(A)$ is symmetric monoidal with respect to the interior product $\otimes$. That is, if $X$ and $Y$ are twisted spectra over $B$, then
  \[
  f^*(X \otimes Y) \simeq f^* X \otimes f^* B \,.
  \]

  \end{proposition}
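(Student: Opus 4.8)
The plan is to reduce the statement to the two already-established facts that $f^*$ on categories of twisted spectra for a fixed twist commutes with the exterior product, and that pullback along the diagonal is natural. First I would observe that the total symmetric monoidal structure $\otimes_A$ is, by Proposition~\ref{prop:total_cat_symm_mon}, obtained by feeding the lax symmetric monoidal functor $\TwSp(A,-) : \Map(A,B\Pic(\bS)) \to \mathrm{Pr}^L_{\St}$ through the symmetric monoidal colimit functor of Lemma~\ref{lem:colim_symm_monoid}. Likewise, the total pullback $f^*$ is assembled from the natural transformation $\mathrm{Nat}(f^*) : \TwSp(B,-) \to \TwSp(A,-) \circ B\Pic(f)$ via the colimit functor. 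So the claim amounts to checking that $\mathrm{Nat}(f^*)$ is a \emph{lax symmetric monoidal} natural transformation, i.e.\ a morphism of $\bE_\infty$-algebras in the relevant functor category; then applying the symmetric monoidal functor $\colim$ gives a symmetric monoidal functor $f^* : \TwSp(B) \to \TwSp(A)$, which is exactly the statement, with the identity $f^*(X \otimes Y) \simeq f^* X \otimes f^* Y$ being the monoidality constraint.

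The key input for the monoidality of $\mathrm{Nat}(f^*)$ is Corollary~\ref{cor:pullback_symm_mon_prelim}, which says precisely that for each pair of twists $\tau, \sigma$ on $B$ we have a natural equivalence $f^*(X \otimes Y) \simeq f^* X \otimes f^* Y$ for the fixed-twist interior products, together with compatibility of $f^*$ with the unit. In more structural terms, I would package this using Corollary~\ref{cor:natural_pullback_and_right_adjoint}(1), which records that $f^*$ intertwines the exterior products over $B \times B$ and $A \times A$, and then use naturality of pullback along the diagonals $\Delta_B$ and $\Delta_A$ — the square relating $(f \times f)^*$ and $f^*$ composed with the respective diagonal pullbacks commutes, since $\Delta_A \circ f = (f \times f) \circ \Delta_B$ as maps of spaces, and pulling this back through $B\Pic(-)$ and then $\TwSp$ is functorial. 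Assembling these coherences as the twists vary along $\Map(B,B\Pic(\bS))$ exhibits $\mathrm{Nat}(f^*)$ as a lax symmetric monoidal transformation.

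Alternatively, and perhaps more cleanly, I would argue at the level of the source category: the map $f : A \to B$ induces a symmetric monoidal functor $\cS_{/B\Pic(\bS)} \to \cS_{/B\Pic(\bS)}$ ... rather, $B\Pic(f) : \Map(B,B\Pic(\bS)) \to \Map(A,B\Pic(\bS))$ is an $\bE_\infty$-map (as noted just before Proposition~\ref{prop:total_cat_symm_mon}), and the diagram of $\bE_\infty$-functors
\[
\begin{tikzcd}
\Map(B,B\Pic(\bS)) \arrow[r,"B\Pic(f)"] \arrow[d,"\TwSp(B,-)"'] & \Map(A,B\Pic(\bS)) \arrow[d,"\TwSp(A,-)"] \\
\mathrm{Pr}^L_{\St} & \mathrm{Pr}^L_{\St}
\end{tikzcd}
\]
commutes up to the lax transformation $\mathrm{Nat}(f^*)$; applying the symmetric monoidal $\colim$ functor and using that $B\Pic(f)$ is an $\bE_\infty$-map that is a colimit-cofinal-type comparison gives the result. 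The main obstacle is bookkeeping the homotopy coherence: one must be careful that $\mathrm{Nat}(f^*)$ is genuinely a transformation of $\bE_\infty$-algebra objects in $\Fun(\Map(B,B\Pic(\bS)),\mathrm{Pr}^L_{\St})$ and not merely a pointwise-monoidal family of functors, and this is exactly where the naturality of Corollary~\ref{cor:natural_pullback_and_right_adjoint} (which itself comes from $f^*$ being the right adjoint of the symmetric monoidal $f_!$ of Proposition~\ref{prop:tensor_twisted_categories}) does the work — a right adjoint of a symmetric monoidal functor between presentably symmetric monoidal categories is automatically lax symmetric monoidal, and here it is in fact strong monoidal because $f_!$ satisfies the projection formula of Proposition~\ref{prop:proj_iso_prelim}.
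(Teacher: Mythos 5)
Your overall plan matches the paper's argument: reduce to the fact that $\mathrm{Nat}(f^*)$ is a lax symmetric monoidal transformation, feed this through the symmetric monoidal colimit functor, and lean on Corollary~\ref{cor:pullback_symm_mon_prelim} for the pointwise strong monoidality. However, there are two issues worth flagging.

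First, you blur a step that the paper separates out explicitly. Taking the colimit of $\mathrm{Nat}(f^*)$ over $\Map(B,B\Pic(\bS))$ does \emph{not} land you directly in $\TwSp(A)$; it lands you in
\[
\TwSp(A,f^*) \;=\; \colim\bigl(\TwSp(A,-)\circ B\Pic(f)\bigr),
\]
and you then need the separate input (Lemma~\ref{lem:inclusion_symm_monoidal}) that the inclusion $\TwSp(A,f^*) \hookrightarrow \TwSp(A)$ is symmetric monoidal. The paper's proof proceeds precisely by factoring $f^*$ as $\TwSp(B) \to \TwSp(A,f^*) \to \TwSp(A)$ and checking each factor. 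Your third paragraph gestures at the right ingredient --- that $B\Pic(f)$ is an $\bE_\infty$-map and the colimit functor is symmetric monoidal --- which is exactly the content of that lemma, but as written the argument never isolates the inclusion as a step to verify. Stating the factorization explicitly closes this.

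Second, your closing remark is off. The claim that $f^*$ is strong (not merely lax) symmetric monoidal ``because $f_!$ satisfies the projection formula of Proposition~\ref{prop:proj_iso_prelim}'' does not hold up: the projection formula $f_!(f^*Y \otimes X) \simeq Y \otimes f_!X$ compares $f_!$ and $\otimes$, and it is not a standard consequence of the projection formula that the right adjoint $f^*$ of $f_!$ is strong monoidal. Indeed the paper derives strong monoidality of $f^*$ in the opposite direction: Corollary~\ref{cor:pullback_symm_mon_prelim} is established from the compatibility of $f^*$ with the exterior product (Proposition~\ref{prop:boxtimes_pull}) and the fact that the interior product is the diagonal pullback of the exterior product, and the projection formula is then a \emph{further consequence}, not the cause, of these compatibilities. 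You do not need that final sentence; Corollary~\ref{cor:pullback_symm_mon_prelim} already gives you the pointwise monoidality, and the assembly-into-a-lax-transformation argument is where the content is.
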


  \begin{proof}
    Recall that the functor $f^* : \TwSp(B) \to \TwSp(A)$ factors as
    \[
    \TwSp(B) \longto \TwSp(A,f^*) \longto \TwSp(A) \,,
    \]
    so it is enough to show that both functors in the composition are symmetric monoidal.
    That the first functor is symmetric monoidal follows from the fact that the natural transformation $\mathrm{Nat}(f^*)$ is a symmetric monoidal transformation, which follows from Corollary~\ref{cor:pullback_symm_mon_prelim}, and that the colimit functor is symmetric monoidal. That the second functor is symmetric monoidal follows from Lemma~\ref{lem:inclusion_symm_monoidal}.
    \end{proof}

    \subsection{Projection Isomorphism on Total Category}

    In this section, we prove the projection isomorphism in the total category of twisted spectra over a fixed space.
    This essentially follows directly from the Beck--Chevalley result that we proved in Section~\ref{sec:total_pullback}.

\begin{proposition}[Projection isomorphism] \label{prop:proj_iso_total}
  Let $f : A \to B$ be a map of spaces such that $B\Pic(f)$ is an equivalence and let $X \in \TwSp(A)$ and $Y \in \TwSp(B)$. Then we have an equivalence
  \[
  f_!(f^* Y \otimes X) \simeq Y \otimes f_! X
  \]
  of twisted spectra over $B$.
  \begin{proof}
    Formally, we have that
    \begin{align*}
      f_! (f^* Y \otimes X) &\simeq f_! (\Delta_A^*(f^* Y \boxtimes X)) \\ &\simeq f_!(\Delta_A^*((f \times 1)^*(Y \boxtimes X))) \\ &\simeq f_! (((f \times 1) \circ \Delta_A)^*)(Y \boxtimes X) \\ &\simeq f_!((f,1)^*(Y \boxtimes X))
    \end{align*}
    while
    \begin{align*}
    Y \otimes f_! X &\simeq \Delta_B^* (Y \boxtimes f_! X) \\ &\simeq \Delta_B^*((1 \times f)_!(Y \boxtimes X)) \,.
    \end{align*}
    To finish the proof we need to use Proposition~\ref{prop:BC_total_cat} for the diagram
    \[
    \begin{tikzcd}
      A \arrow[d,"f"'] \arrow[r,"{(f,1)}"] & B \times A \arrow[d,"1 \times f"] \\
      B \arrow[r,"\Delta_B"] & B \times B
    \end{tikzcd}
    \]
    so that we get an equivalence
    \[
    f_! \circ (f,1)^* \simeq \Delta_B^* \circ (1 \times f)_! \,.
    \]
    Note that since $f$ satisfies the necessary assumption for applying Proposition~\ref{prop:BC_total_cat}, so does $1 \times f$ by observing that $\cofib(f) \simeq \cofib(1 \times f)$ and using the fiber sequence
    \[
    \Map(\cofib(1 \times f),B\Pic(\bS)) \longto \Map(B \times B,B\Pic(\bS)) \overset{B\Pic(1 \times f)}\longto \Map(B \times A , B\Pic(\bS)) \,.
    \]
  \end{proof}
\end{proposition}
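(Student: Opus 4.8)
The plan is to transport the proof of the preliminary projection isomorphism, Proposition~\ref{prop:proj_iso_prelim}, to the setting of total categories, replacing the appeal to Proposition~\ref{prop:Beck_Chevalley_preliminary} there by its total-category analogue, Proposition~\ref{prop:BC_total_cat}. The one new structural ingredient needed is an exterior product $-\boxtimes-\colon\TwSp(A)\times\TwSp(B)\to\TwSp(A\times B)$ on total categories, preserving colimits in each variable. I would obtain it by feeding the levelwise equivalences $\TwSp(A,\tau)\otimes\TwSp(B,\sigma)\simeq\TwSp(A\times B,\tau\boxtimes\sigma)$ of Proposition~\ref{prop:tensor_twisted_categories} through the colimit over $\Map(A,B\Pic(\bS))\times\Map(B,B\Pic(\bS))$, using that $(\tau,\sigma)\mapsto\tau\boxtimes\sigma$ is an $\bE_\infty$-map to $\Map(A\times B,B\Pic(\bS))$ and that forming colimits into $\mathrm{Pr}^{L}_{\St}$ is symmetric monoidal. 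Its composite with $\Delta_A^*$ recovers the interior product $\otimes_A$ of Proposition~\ref{prop:total_cat_symm_mon}, and pushing the identities of Propositions~\ref{prop:boxtimes_pull} and~\ref{prop:tensor_twisted_categories} through the defining colimits yields, on total categories, $(f\times g)^*(U\boxtimes V)\simeq f^*U\boxtimes g^*V$ and $(1\times g)_!(U\boxtimes V)\simeq U\boxtimes g_!V$.

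With this in hand the computation is formal and parallel to Proposition~\ref{prop:proj_iso_prelim}. Using $f^*Y\otimes X\simeq\Delta_A^*(f^*Y\boxtimes X)$, the identity $(f\times 1)\circ\Delta_A=(f,1)$, and the compatibilities above,
\[
f_!(f^*Y\otimes X)\ \simeq\ f_!\,\Delta_A^*\bigl((f\times 1)^*(Y\boxtimes X)\bigr)\ \simeq\ f_!\,(f,1)^*(Y\boxtimes X),
\]
while on the other side
\[
Y\otimes f_!X\ \simeq\ \Delta_B^*(Y\boxtimes f_!X)\ \simeq\ \Delta_B^*\bigl((1\times f)_!(Y\boxtimes X)\bigr).
\]
It therefore remains to produce a natural equivalence $f_!\circ(f,1)^*\simeq\Delta_B^*\circ(1\times f)_!$. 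This is precisely Beck--Chevalley, Proposition~\ref{prop:BC_total_cat}, applied to the pullback square of spaces whose horizontal edges are $(f,1)\colon A\to B\times A$ and $\Delta_B\colon B\to B\times B$ and whose vertical edges are $f\colon A\to B$ and $1\times f\colon B\times A\to B\times B$.

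The only point requiring real care is verifying the hypothesis of Proposition~\ref{prop:BC_total_cat}, namely that $B\Pic(f)$ and $B\Pic(1\times f)$ are equivalences. The first is the standing assumption. For the second I would identify $B\Pic(1\times f)$, via the exponential law $\Map(B\times(-),B\Pic(\bS))\simeq\Map(B,\Map(-,B\Pic(\bS)))$, with $\Map(B,B\Pic(f))$, which is an equivalence since $B\Pic(f)$ is; alternatively one argues with the fiber sequence $\Map(\cofib(1\times f),B\Pic(\bS))\to\Map(B\times B,B\Pic(\bS))\to\Map(B\times A,B\Pic(\bS))$ as in Section~\ref{sec:total_pullback}. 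I expect the bookkeeping around the total exterior product and its functoriality, rather than any conceptual difficulty, to be the main obstacle; once it is in place the argument is a line-by-line echo of Proposition~\ref{prop:proj_iso_prelim}. As an alternative route that avoids the exterior-product setup entirely, one can observe that, since $B\Pic(f)$ is an equivalence, every object of $\TwSp(A)$ is a colimit of objects of the subcategories $\TwSp(A,f^*\rho)$, that the functors $f_!$, $f^*$ and $-\otimes-$ preserve colimits in each variable, and that on these subcategories the desired equivalence is exactly Proposition~\ref{prop:proj_iso_prelim}; the two colimit-preserving functors $\TwSp(B)\times\TwSp(A)\to\TwSp(B)$ in question therefore agree.
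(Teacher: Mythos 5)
Your main proof is essentially the same as the paper's: you rewrite $f_!(f^*Y\otimes X)$ and $Y\otimes f_!X$ via $\boxtimes$ and the diagonal, reduce to the identity $f_!\circ(f,1)^*\simeq\Delta_B^*\circ(1\times f)_!$, and establish it by applying Proposition~\ref{prop:BC_total_cat} to the same pullback square $(f,1), 1\times f, f, \Delta_B$.

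Two places where your write-up adds something beyond the paper's. First, you correctly flag that a total exterior product $\boxtimes\colon\TwSp(A)\times\TwSp(B)\to\TwSp(A\times B)$ must actually be constructed before the computation can be run; the paper uses it implicitly, and your sketch (feed the levelwise equivalences of Proposition~\ref{prop:tensor_twisted_categories} through the colimit over twists, using that $\colim$ into $\mathrm{Pr}^L_{\St}$ is symmetric monoidal, then check the $\boxtimes$-compatibilities with $(f\times g)^*$ and $(1\times g)_!$) is the right way to supply it. Second, for the hypothesis of Beck--Chevalley you offer, besides the paper's cofiber argument, the identification $B\Pic(1\times f)\simeq\Map(B,B\Pic(f))$ via the exponential law; this is in fact cleaner. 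The paper's literal claim $\cofib(f)\simeq\cofib(1\times f)$ is not quite right as stated (one really has $\cofib((1\times f)_+)\simeq B_+\wedge\cofib(f_+)$ after adding basepoints), and while the paper's fiber-sequence conclusion is still correct, the exponential-law route sidesteps the issue entirely.

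Your closing alternative, reducing to the preliminary Proposition~\ref{prop:proj_iso_prelim} by writing everything as a colimit of objects in the $\TwSp(A,f^*\rho)$, is a reasonable idea, but as written it only compares the two functors pointwise. To conclude the functors agree one needs the equivalences of Proposition~\ref{prop:proj_iso_prelim} to be compatible with the transition maps in the colimit, i.e.\ natural in $\rho$; this is not automatic and would require an argument essentially equivalent to the naturality baked into the Beck--Chevalley route. So this variant would need more care to be a complete proof, whereas your primary argument is sound.
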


\subsection{Closed Structure on the Total Category}

In this section, we show that the symmetric monoidal structure on $\TwSp(A)$ is closed, so that there exist functorial internal hom twisted spectra, which are right adjoint to the tensor product.
Moreover, we identify this with the internal hom twisted spectra functors that we considered in Section~\ref{sec:closed_individualtwists}.
First we record the following lemma, which is immediate from the adjoint functor theorem.

\begin{proposition}
    Let $\cC^{\otimes}$ be a presentably symmetric monoidal presentable $\infty$-category.
    Then $\cC^{\otimes}$ is closed in the sense that for each object $Y \in \cC$, the functor $- \otimes Y: \cC \to \cC $ admits a right adjoint.
\end{proposition}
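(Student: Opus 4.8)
The plan is to deduce this directly from the adjoint functor theorem for presentable $\infty$-categories. First I would unwind the hypothesis: to say that $\cC^{\otimes}$ is a presentably symmetric monoidal $\infty$-category is precisely to say that $\cC$ is presentable, that it is equipped with a symmetric monoidal structure, and that the tensor bifunctor $\otimes \colon \cC \times \cC \to \cC$ preserves small colimits separately in each variable (equivalently, that $\cC^{\otimes}$ is a commutative algebra object in $\mathrm{Pr}^{L}$). Fixing $Y \in \cC$, this colimit-preservation property applied in the first variable says exactly that $F_Y := - \otimes Y \colon \cC \to \cC$ preserves all small colimits.

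Next I would observe that $F_Y$ is therefore a colimit-preserving functor between presentable $\infty$-categories, and invoke \cite{HTT}*{Corollary 5.5.2.9}: any such functor admits a right adjoint. Here presentability of the source and target is what allows us to skip separately checking accessibility, since the adjoint functor theorem in this setting only requires colimit-preservation. Writing $\hom(Y,-)$ for this right adjoint, we obtain the natural equivalence $\Map_{\cC}(X \otimes Y, Z) \simeq \Map_{\cC}(X, \hom(Y,Z))$, which is the asserted closure.

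There is no real obstacle here; the entire content is in quoting the correct form of the adjoint functor theorem. The one point worth noting is that the collection of right adjoints $\hom(Y,-)$ is automatically functorial in $Y$ as well, assembling into an internal hom functor $\hom \colon \cC^{\oop} \times \cC \to \cC$: this can be seen either by passing to the two-variable adjunction associated to $\otimes$, or by regarding $- \otimes - \colon \cC \times \cC \to \cC$ as a functor that is a morphism in $\mathrm{Pr}^{L}$ in each variable and applying the parametrized version of the adjoint functor theorem. This is exactly what is needed to obtain functorial internal hom twisted spectra in the application that follows.
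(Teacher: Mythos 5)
Your proof is correct and matches the paper's approach exactly: the paper simply remarks that the result is immediate from the adjoint functor theorem, which is precisely the argument you spell out via \cite{HTT}*{Corollary 5.5.2.9}. Your additional remark on functoriality in $Y$ is a reasonable elaboration but not needed for the statement as given.
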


Turning to the case of twisted spectra at hand, we set $Y \in \TwSp(A)$.
Then the functor $- \otimes Y$ has right adjoint, which we denote by $\hom_{\TwSp(A)}(Y, -)$, or simply $\hom(Y,-)$ when the context is clear.
Being an adjoint, we have the natural equivalence
\begin{equation}
    \map(X \otimes Y, Z) \simeq \map(X , \hom(Y,Z)) \,.
\end{equation}
We would like to show that this constructed internal hom is compatible with internal homs that we constructed in Section~\ref{sec:closed_individualtwists}.
Morally this should just follow by uniqueness of adjoints and the fact that the symmetric monoidal structure on the total category $\TwSp(A)$ is determined by the various pairings
\[
\TwSp(A, \tau)\otimes \TwSp(A, \sigma) \to \TwSp(A, \tau + \sigma) \,,
\]
as per Proposition~\ref{prop:total_cat_symm_mon}.
However, let us be a bit more precise about this.

\begin{proposition} \label{prop:closed_total_cat}
The symmetric monoidal structure of $\TwSp(A)$ is closed. Moreover, this closed structure can be identified by the definition of the interior homs of Section~\ref{sec:closed_individualtwists}. Indeed, let $Y \in \TwSp(A, \sigma)$ be a twisted spectrum viewed as an object in $\TwSp(A)$ via the canonical inclusion $i_{\sigma}: \TwSp(A, \sigma) \to \TwSp(A)$.
Then there exists a commutative diagram
\[
      \begin{tikzcd}
     \TwSp(A)  \arrow[r,"{\hom(X,-)}"] \arrow[d] & \TwSp(A) \arrow[d] \\
    \TwSp(A, , \tau + \sigma ) \arrow[r,"{\hom(X,-)}"] & \TwSp(A, \tau) \,,
  \end{tikzcd}
  \]
  for all twists $\tau$.
\end{proposition}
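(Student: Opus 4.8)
The plan is to reduce everything to Proposition~\ref{prop:total_cat_symm_mon} together with uniqueness of adjoints. The first assertion is immediate: by Proposition~\ref{prop:total_cat_symm_mon} the $\infty$-category $\TwSp(A)$ is presentably symmetric monoidal, hence a commutative algebra in $\Pr^{L}_{\St}$, so the adjoint functor theorem (the proposition recorded just above) shows that for every $Z \in \TwSp(A)$ the functor $- \otimes_A Z$ admits a right adjoint $\hom(Z,-)$; this establishes closedness. What remains is to identify this right adjoint, when $Z = i_\sigma Y$ for some $Y \in \TwSp(A,\sigma)$, with the internal hom twisted spectra of Section~\ref{sec:closed_individualtwists}; the idea is to realise the square of $\hom$-functors as the square of right adjoints of a square of left adjoints that we already control.

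First I would record the relevant adjunctions. Applying Proposition~\ref{prop:left_right_adjoints_ambidexterity}(2) to the inclusion of a single twist $\{\tau'\} \hookrightarrow \Map(A,B\Pic(\bS))$, each cocone leg $i_{\tau'} : \TwSp(A,\tau') \to \TwSp(A)$ of the colimit defining $\TwSp(A)$ is a left adjoint in $\Pr^{L}_{\St}$, and its right adjoint is the projection $p_{\tau'} : \TwSp(A) \to \TwSp(A,\tau')$ coming, under ambidexterity, from the limit presentation of $\TwSp(A)$; these projections are the unlabelled vertical maps in the target diagram. Moreover $- \otimes Y : \TwSp(A,\tau) \to \TwSp(A,\tau+\sigma)$ is a left adjoint whose right adjoint is $\hom(Y,-)$ by the very construction of the internal hom in Section~\ref{sec:closed_individualtwists}, and $- \otimes_A i_\sigma Y : \TwSp(A) \to \TwSp(A)$ is a left adjoint because $\otimes_A$ preserves small colimits in each variable.

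The main step is to specialise the commuting square of Proposition~\ref{prop:total_cat_symm_mon} by inserting $Y \in \TwSp(A,\sigma)$ into its second variable; by naturality of that identification one obtains a commuting square of left adjoints
\[
\begin{tikzcd}
\TwSp(A,\tau) \arrow[r,"- \otimes Y"] \arrow[d,"i_\tau"'] & \TwSp(A,\tau+\sigma) \arrow[d,"i_{\tau+\sigma}"] \\ \TwSp(A) \arrow[r,"- \otimes_A i_\sigma Y"'] & \TwSp(A) \,,
\end{tikzcd}
\]
all four of whose arrows admit right adjoints by the previous paragraph. Passing to right adjoints — using that the right adjoint of a composite is the composite of the right adjoints in the opposite order, and that equivalent functors have equivalent right adjoints — converts this into the commuting square
\[
\begin{tikzcd}
\TwSp(A) \arrow[r,"\hom(i_\sigma Y,-)"] \arrow[d,"p_{\tau+\sigma}"'] & \TwSp(A) \arrow[d,"p_\tau"] \\ \TwSp(A,\tau+\sigma) \arrow[r,"\hom(Y,-)"'] & \TwSp(A,\tau) \,,
\end{tikzcd}
\]
which is precisely the asserted diagram — the top horizontal is $\hom_{\TwSp(A)}(Y,-) = \hom(i_\sigma Y,-)$ and the bottom one is $\hom(Y,-)$ in the sense of Section~\ref{sec:closed_individualtwists} — and it is valid for every twist $\tau$.

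The step requiring the most care is bookkeeping rather than conceptual: one must check that each of the four functors in the left-adjoint square really does have a right adjoint, so that passing to right adjoints is legitimate, and then track orientations so that the vertical arrows come out as the projections $p_{\tau'}$ and not the inclusions $i_{\tau'}$. One should also verify that Proposition~\ref{prop:total_cat_symm_mon}, which a priori concerns the bilinear multiplication functors, may be restricted to the one-variable square above — this is fine because the equivalence there is natural in both variables. As the surrounding text indicates, there is no genuine obstacle: the content is essentially uniqueness of adjoints, made precise.
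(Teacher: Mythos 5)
Your proof is correct and follows essentially the same route as the paper: identify the unlabelled vertical maps as the right adjoints of the canonical inclusions $i_{\tau'}$ (via ambidexterity), restrict the commuting square of Proposition~\ref{prop:total_cat_symm_mon} to its second variable, and pass to right adjoints. The only cosmetic difference is that you realise $i_{\tau'}$ via the inclusion of the point $\{\tau'\}$ into $\Map(A,B\Pic(\bS))$, whereas the paper uses the inclusion of the connected component $B\tau$; your choice is arguably a touch cleaner, but both feed into Proposition~\ref{prop:left_right_adjoints_ambidexterity}(2) in the same way.
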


\begin{proof}
Let $B \tau$ denote the connected component of $\tau$ in the space $\Map(A, B \Pic(\bS))$.
Note that the inclusion $\iota_{\tau} : B\tau \hookrightarrow \Map(A,B\Pic(\bS))$ induces a map $(\iota_{\tau})_{!}: \TwSp(A, \tau) \to \TwSp(A)$; we can identify this with the  ``inclusion map" mentioned in the statement.
Moreover, we can identify the (internal) right adjoint of this functor, $(\iota_{\tau})^{*}$, with the projection map $\pi_{\alpha}: \TwSp(A) \to \TwSp(A, \alpha)$, obtained appealing to ambidexterity and the universal property of $\TwSp(A)$ as a limit.
We now turn our attention to the commutative diagram of functors
 \[
  \begin{tikzcd}
    \TwSp(A,\tau) \arrow[r,"-\otimes Y"] \arrow[d, " (\iota_{\tau})_! "] & \TwSp(A , \tau + \sigma) \arrow[d, " (\iota_{\tau + \sigma})_! "] \\
  \TwSp(A) \arrow[r,"-\otimes_{A} Y"] & \TwSp(A)
  \end{tikzcd}
  \]
obtained by restricting the diagram from Proposition \ref{prop:total_cat_symm_mon}.
Taking right adjoints, we obtain the desired diagram.
\end{proof}

\section{The $(\infty, 2)$-Category of Twisted Spectra}

In this section, we will construct an $(\infty,2)$-category of twisted spectra, much like how May--Sigurdsson construct a bicategory of parametrized spectra~\cite{MS06}*{Chapter 17}.
We will use the $(\infty,2)$-categorical structure to definite an corresponding notion of Costenoble--Waner duality for twisted spectra.
Lastly, we return to the perspective on twisted spectra as modules over Thom spectra from Section~\ref{sec:TwSp_ModTh}, and tie the $(\infty,2)$-category of twisted spectra in with categories of bimodules and the $(\infty,2)$-categorical structure of these.

  \subsection{The $(\infty,2)$-category of twisted parametrized spectra}

Twisted spectra, over all spaces and twists, can be viewed as an $(\infty,2)$-category, along the lines of Section 2 of~\cite{Cno23}.

\begin{definition}
  The \emph{$(\infty,2)$-category of twisted spectra} $\TTwSp$ is the full subcategory of $\PPr^L_{\St}$ spanned by those stable presentable $\infty$-categories of the form $\TwSp(A,\tau)$.
\end{definition}

\begin{notation}
    Fix a twist $\tau : A \to B\Pic(\bS)$ and consider the diagonal map $\Delta : A \to A \times A$.
    We have an induced functor
    \[
    \Delta_! : \Sp^{A} \simeq \TwSp(A , -\tau + \tau) \longto \TwSp(A, -\tau \boxtimes \tau) \,.
    \]
    Evaluating this functor at the trivially parametrized sphere spectrum over $\bS$, we obtain
    \[
    \bS[\Omega A]^\tau := \Delta_! \bS \in \TwSp(A \times A , -\tau \boxtimes \tau) \,,
    \]
    which will refer to as the \emph{$\tau$-twisted spherical group ring}.
\end{notation}
Indeed, this works because of the following result.

\begin{proposition} \label{prop:FunL_exterior}
  There is an equivalence of categories
  \[
  \TwSp(A \times B, -\tau \boxtimes \sigma) \simeq \Fun^L(\TwSp(A,\tau),\TwSp(B,\sigma)) \,.
  \]
  Explicitly, it should work as follows:
  \begin{itemize}
    \item It sends $X \in \TwSp(A \times B , -\tau \boxtimes \sigma)$ to the colimit--preserving functor
    \[
    \Phi_{X} = (p_B)_!((p_A)^*(-) \otimes X) : \TwSp(A,\tau) \longto \TwSp(B,\sigma) \,,
    \]
    where $p_A : A \times B \to A$ and $p_B : A \times B \to B$ denotes the projections.
    \item It sends the left adjoint functor $F: \TwSp(A,\tau) \to \TwSp(B,\sigma)$ to the evaluation
    \[
    (1 \otimes F)(\bS[\Omega A]^{\tau}) \in \TwSp(B,\sigma)
    \]
    where we are implicitly using Proposition~\ref{prop:tensor_twisted_categories}.
  \end{itemize}
  \begin{proof}
    First, let us note the following: if $\cC$ is an invertible presentable stable $\infty$-category, that is if $\cC \in B\Pic(\bS)$, then its dual $\cC^{\vee} = \Fun^L(\cC,\Sp)$, which is also equivalent to $\cC^{\oop}$, simply corresponds to the inverse in the group structure in $B\Pic(\bS)$.
    From this, we can conclude that
    \begin{align*}
    \TwSp(A, \tau)^{\vee} &= \Fun^L(\TwSp(A,\tau),\Sp) \simeq \Fun^L(\colim_{ a \in A} \tau(a) , \Sp) \\ &\simeq \lim_{a \in A} \Fun^L(\tau(a),\Sp) \simeq \lim_{a \in A}-\tau(a)  \\ &= \TwSp(A,-\tau)
    \end{align*}
    using that $\Fun^L(-,\Sp)$ turns colimits into limits and that $\tau(a) \in B\Pic(\bS)$ for all $a \in A$.

Now we use the fact that $\TwSp(A, \tau)$ is dualizable with dual $\TwSp(A, -\tau) \simeq \Fun^{L}(\TwSp(A, \tau), \Sp )$ to conclude the general case. We have

\begin{align*}
    \TwSp(A \times B, - \tau \boxtimes \sigma) & \simeq \TwSp(A, -\tau) \otimes^{L} \TwSp(B, \sigma) \\  & \simeq
\Fun^{L}(\TwSp(A, \tau, \Sp) \otimes^{L} \TwSp(B, \sigma) \\ & \simeq \Fun^{L}(\TwSp(A, \tau), \TwSp(B ,\sigma)),
\end{align*}
where the first equivalence is Proposition \ref{prop:tensor_twisted_categories}, and the last equivalence follows by dualizability
of $\TwSp(A, \tau)$. \\

    The functor $\Phi : \TwSp(A \times B , -\tau \boxtimes \sigma) \to \Fun^L(\TwSp(A,\tau),\TwSp(B,\sigma))$ sends a twisted spectrum $X$ to its Fourier Mukai transform $\Phi_X$. The inverse is given by the composition
    \[
    \Fun^L(\TwSp(A,\tau),\TwSp(B,\sigma)) \longto \Fun^L(\TwSp(A \times A ,-\tau \boxtimes \tau),\TwSp(A \times B,-\tau \boxtimes \sigma)) \overset{\mathrm{ev}_{u_{A,\tau}}}\longto \TwSp(A \times B,-\tau \boxtimes \sigma) \,.
    \]
  \end{proof}
\end{proposition}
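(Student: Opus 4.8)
The plan is to reduce the statement to two inputs: the symmetric monoidality of $\TwSp_{!}$ (Proposition~\ref{prop:tensor_twisted_categories}), which identifies a category of twisted spectra over a product with a Lurie tensor product, and the dualizability of $\TwSp(A,\tau)$ in $\mathrm{Pr}^{L}_{\St}$ with dual $\TwSp(A,-\tau)$. Granting both, the equivalence follows from the chain
\[
\TwSp(A\times B,-\tau\boxtimes\sigma)\;\simeq\;\TwSp(A,-\tau)\otimes^{L}\TwSp(B,\sigma)\;\simeq\;\TwSp(A,\tau)^{\vee}\otimes^{L}\TwSp(B,\sigma)\;\simeq\;\Fun^{L}(\TwSp(A,\tau),\TwSp(B,\sigma)),
\]
the first step being Proposition~\ref{prop:tensor_twisted_categories} applied to the twist $-\tau$, the second the identification of the dual, and the third the defining property of a dual object in the closed symmetric monoidal $\infty$-category $\mathrm{Pr}^{L}_{\St}$.

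For the dualizability input, I would argue that $\TwSp(A,\tau)\simeq\colim_{a\in A}\tau(a)$ is a colimit in $\mathrm{Pr}^{L}_{\St}$ of copies of $\Sp$ along equivalences, hence compactly generated, hence dualizable (alternatively one can invoke Theorem~\ref{thm:TwSp_Mod_natural} on each connected component of $A$, realizing $\TwSp(A,\tau)$ as a product of module categories). To pin down the dual, one uses that each $\tau(a)\in B\Pic(\bS)$ is invertible with inverse $-\tau(a)$, that the internal hom sends colimits in the first variable to limits, and ambidexterity (Proposition~\ref{ambidexterity in Prl}):
\[
\TwSp(A,\tau)^{\vee}\;=\;\Fun^{L}\!\bigl(\colim_{a\in A}\tau(a),\Sp\bigr)\;\simeq\;\lim_{a\in A}\Fun^{L}(\tau(a),\Sp)\;\simeq\;\lim_{a\in A}\bigl(-\tau(a)\bigr)\;\simeq\;\colim_{a\in A}\bigl(-\tau(a)\bigr)\;\simeq\;\TwSp(A,-\tau).
\]

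It then remains to identify the resulting equivalence with the integral transform $X\mapsto\Phi_{X}=(p_{B})_{!}\bigl((p_{A})^{*}(-)\otimes X\bigr)$. First, $X\mapsto\Phi_{X}$ genuinely lands in $\Fun^{L}$: each $\Phi_{X}$ is a composite of the left adjoints $(p_{A})^{*}$, $-\otimes X$ and $(p_{B})_{!}$ (Proposition~\ref{prop:left_right_adjoints_ambidexterity}), and the assignment preserves colimits in $X$. Under the standard kernel-to-functor recipe for a dualizable object, the functor attached to $X\in\TwSp(A,\tau)^{\vee}\otimes\TwSp(B,\sigma)$ is $c\mapsto(\mathrm{ev}\otimes\mathrm{id})(c\otimes X)$, so the content is to compute the evaluation pairing $\mathrm{ev}\colon\TwSp(A,\tau)\otimes\TwSp(A,-\tau)\to\Sp$: unwinding the duality datum, $\mathrm{ev}$ is $p_{!}\circ\Delta^{*}$ (with $\Delta\colon A\to A\times A$ and $p\colon A\to\ast$), while the coevaluation datum is $\Delta_{!}\bS\in\TwSp(A\times A,-\tau\boxtimes\tau)$, which is precisely the $\tau$-twisted spherical group ring $\bS[\Omega A]^{\tau}$ and is also the integral kernel of the identity functor (a short projection-formula computation gives $\Phi_{\Delta_{!}\bS}\simeq\mathrm{id}$). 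Plugging this in and repeatedly applying the projection formula (Proposition~\ref{prop:proj_iso_prelim}) and Beck--Chevalley (Proposition~\ref{prop:Beck_Chevalley_preliminary}) rewrites $(\mathrm{ev}\otimes\mathrm{id})(c\otimes X)$ as $(p_{B})_{!}\bigl((p_{A})^{*}c\otimes X\bigr)$; the same computation applied to the identity functor shows that the inverse equivalence sends a colimit-preserving $F$ to $(1\otimes F)(\bS[\Omega A]^{\tau})$. I expect this last identification --- bookkeeping the abstract duality and composition-of-kernels into the explicit $p_{!}$/$p^{*}$/$\otimes$ formula while tracking which $B\Pic(\bS)$-twist lies over which factor --- to be the main technical obstacle; everything preceding it is formal given the results already established.
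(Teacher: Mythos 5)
Your proof follows the paper's route exactly: the chain $\TwSp(A\times B,-\tau\boxtimes\sigma)\simeq\TwSp(A,-\tau)\otimes^{L}\TwSp(B,\sigma)\simeq\Fun^{L}(\TwSp(A,\tau),\TwSp(B,\sigma))$ via Proposition~\ref{prop:tensor_twisted_categories} and dualizability, with the dual identified by the same $\Fun^{L}(-,\Sp)$ colimit-to-limit argument. You supply two points the paper leaves implicit --- a justification of dualizability via compact generation, and the projection-formula/Beck--Chevalley bookkeeping that identifies the abstract equivalence with the kernel transform $\Phi_{X}=(p_B)_{!}((p_A)^{*}(-)\otimes X)$ --- both of which are correct and genuinely needed to make the paper's sketch airtight.
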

\begin{definition}
  Given a $(A \times B , -\tau \boxtimes \sigma)$-twisted spectrum $X$, we write
  \[
  \Phi_X = (p_B)_! \circ (- \otimes X) \circ (p_A)^* : \TwSp(A,\tau) \longto \TwSp(B,\sigma) \,.
  \]
  and call this the \emph{Fourier--Mukai transform} of $X$.
\end{definition}

\begin{rmk}
 Let us be a bit more explicit about how Proposition \ref{prop:FunL_exterior} implies the above expression. Indeed first recall we have the standard evaluation pairing
\[
\mathrm{ev}: \Fun^{L}(\TwSp(A, \tau), \TwSp(B, \sigma)) \otimes  \TwSp(A, \tau) \to \TwSp(B, \sigma)
\]
given by
\[
(X, F) \mapsto F(X) \in \TwSp(B, \sigma)
\]
The proof of Proposition \ref{prop:FunL_exterior}  shows that there is a natural equivalence of pairings

    \[
    \begin{tikzcd}[column sep = huge]
    \Fun^{L}(\TwSp(A, \tau), \TwSp(B, \sigma)) \otimes \TwSp(A, \tau) \arrow[r,"\mathrm{ev}"] \arrow[d,"\simeq"] & \TwSp(B, \sigma) \arrow[d,"\simeq"] \\
    \TwSp(A, - \tau) \otimes \TwSp(B, \sigma)  \otimes  \TwSp(A, - \tau) \arrow[r,"\mathrm{ev}"] \arrow[d,"\simeq"]  & \TwSp(B, \sigma) \arrow[d,"\simeq"] \\
    \TwSp(A \times B, - \tau \boxtimes \sigma)  \otimes \TwSp(B, \sigma)   \arrow[r,"\mathrm{ev}"]  & \TwSp(B, \sigma),
    \end{tikzcd}
    \]
where the bottom paring is given by
\[
(X, M) \mapsto (p_B)_!(p_A^*(M)\otimes X) \in   \TwSp(B, \sigma).
\]
Note that the middle pairing  above is none other than evaluation pairing $\TwSp(A, -\tau) \otimes \TwSp(A, \tau) \to \Sp $ corresponding to the duality datum on the dualizable category $\TwSp(A, -\tau)$, tensored with $\TwSp(B, \sigma)$.
\end{rmk}

The above means means that we can heuristically think of the $(\infty,2)$-category $\mathbf{TwSp}$, as shown in Table~\ref{table:infty2TwSp}.

\begin{table}[h!]
\centering
  \begin{tabular}{ | m{3.5cm} | m{6cm} | m{6cm} | }
    \hline
    & \centering $\TTwSp$ & \centering $\PPr_{\St}^L$  \tabularnewline
    \hline
    \centering objects & \centering twist $\tau : A \to B\Pic(\bS)$ & \centering presentable stable $\infty$-category \newline $\TwSp(A,\tau)$ \tabularnewline
    \hline
    \centering 1-morphism & \centering $X \in \TwSp(A \times B , -\tau \boxtimes \sigma)$ & \centering colimit preserving functor \newline $\Phi_X : \TwSp(A,\tau) \to \TwSp(B,\sigma)$ \tabularnewline
    \hline
    \centering 2-morphism & \centering map $X \to Y$ of twisted spectra & \centering natural transformation \newline $\eta : \Phi_X \to \Phi_Y$   \tabularnewline
    \hline
    \centering identity 1-morphism & \centering $\tau$-twisted spherical group ring \newline $\Delta_!^{\tau}(\bS) \in \TwSp(A \times A,-\tau \boxtimes \tau)$ & \centering identity functor \newline $1 : \TwSp(A,\tau) \to \TwSp(A,\tau)$   \tabularnewline
    \hline
    \centering vertical composition  & \centering composition of maps of twisted spectra & \centering vertical composition of natural transformations \tabularnewline
    \hline
    \centering horizontal composition of 1-morphisms &  \centering $Y \odot X \in \TwSp(A \times C , -\tau \boxtimes \rho)$ & \centering composition of functors \tabularnewline
    \hline
  \end{tabular}
  \caption{How to think of $\TTwSp$ as a full subcategory of $\PPr^L_{\St}$.}
  \label{table:infty2TwSp}
\end{table}

\begin{definition}
  Let $X \in \TwSp(A \times B, -\tau \boxtimes \sigma)$ and $Y \in \TwSp(B \times C , -\sigma \boxtimes \rho)$. The \emph{horizontal composition} of $X$ and $Y$ is defined as
  \[
  Y \odot X = (p_{A \times C})_!(p^*_{B \times C} Y \otimes p_{A \times B}^* X) \in \TwSp(A \times C, -\tau \boxtimes \rho)
  \]
  where $p_{A \times B} : A \times B \times C \to A \times B$ and $p_{B \times C} : A \times B \times C \to B \times C$ are the projections. Here, the tensor product is taken in twisted spectra over the space $A \times B \times C$.
\end{definition}

Let us prove that this actually corresponds to the horizontal composition in $\PPr^{L}_{\St}$, which is simply composition of functors.

\begin{proposition}
  Let $X \in \TwSp(A \times B , -\tau \boxtimes \sigma)$ and let $Y \in \TwSp(A \times B, -\sigma \boxtimes \rho)$. Then
  \[
  \Phi_{Y \odot X} \simeq \Phi_{Y} \circ \Phi_{X} \och \Phi_{\Delta^{\tau}_! \bS} \simeq 1_{\TwSp(A,\tau)} \,.
  \]
\end{proposition}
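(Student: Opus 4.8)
The plan is to prove both equivalences by unwinding the definitions of $\Phi_{(-)}$ and of $\odot$ and then chaining together the base-change and projection-formula identities established for fixed twists in Sections~\ref{sec:pullback}--\ref{sec:closed_individualtwists}. Since $(-)^*$, $(-)_!$ and the interior tensor product are all functorial and every identity invoked is a \emph{natural} equivalence, the resulting chain will upgrade automatically to an equivalence of colimit-preserving functors; the only genuinely delicate point is this coherence, which I address at the end.

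For the identity $1$-morphism, I would first unwind $\Phi_{\Delta^\tau_!\bS}(M)\simeq(p_2)_!\bigl((p_1)^*M\otimes\Delta_!\bS\bigr)$, where $p_1,p_2\colon A\times A\to A$ are the projections and $\Delta\colon A\to A\times A$ the diagonal (note $\Delta^*(-\tau\boxtimes\tau)\simeq 0$, so $\Delta_!$ is defined on $\Sp^A=\TwSp(A,0)$, where $\bS$ is the monoidal unit). Then I would apply the projection isomorphism of Proposition~\ref{prop:proj_iso_prelim} along $\Delta$ to rewrite $(p_1)^*M\otimes\Delta_!\bS\simeq\Delta_!\bigl(\Delta^*(p_1)^*M\otimes\bS\bigr)$; since $p_1\circ\Delta=\id_A$ and tensoring with the unit is the identity, this is $\Delta_!M$, whence $(p_2)_!\Delta_!M\simeq(p_2\circ\Delta)_!M=(\id_A)_!M\simeq M$ by functoriality of $f_!$.

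For horizontal composition, I would work over $A\times B\times C$, writing $p_A,p_{AB},p_{BC},\dots$ for the projections out of the triple product and $p_A^{AC},p_B^{AB},\dots$ for those out of the pairwise products. Starting from
\[
\Phi_Y(\Phi_X(M))\simeq(p_C^{BC})_!\Bigl((p_B^{BC})^*\bigl[(p_B^{AB})_!\bigl((p_A^{AB})^*M\otimes X\bigr)\bigr]\otimes Y\Bigr),
\]
the steps would be: (i) apply Beck--Chevalley (Proposition~\ref{prop:Beck_Chevalley_preliminary}) to the pullback square with corner maps $p_B^{AB},p_B^{BC}$ to replace $(p_B^{BC})^*(p_B^{AB})_!$ by $(p_{BC})_!(p_{AB})^*$; (ii) distribute $(p_{AB})^*$ over the interior tensor using Corollary~\ref{cor:pullback_symm_mon_prelim} and functoriality of $(-)^*$, obtaining $(p_{BC})_!\bigl(p_A^*M\otimes p_{AB}^*X\bigr)$; (iii) tensor with $Y$ and absorb it via the projection isomorphism along $p_{BC}$, landing at $(p_{BC})_!\bigl(p_A^*M\otimes p_{AB}^*X\otimes p_{BC}^*Y\bigr)$; (iv) apply $(p_C^{BC})_!$ and collapse $(p_C^{BC})_!(p_{BC})_!\simeq(p_C)_!$. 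On the other hand, expanding $\Phi_{Y\odot X}(M)\simeq(p_C^{AC})_!\bigl((p_A^{AC})^*M\otimes(p_{AC})_!(p_{BC}^*Y\otimes p_{AB}^*X)\bigr)$, I would use the projection isomorphism along $p_{AC}$ to pull $(p_A^{AC})^*M$ inside and then $(p_C^{AC})_!(p_{AC})_!\simeq(p_C)_!$, arriving at the same expression up to reordering the interior tensor factors. Throughout, each twist in sight is a pullback of a fixed twist on one of $A,B,C,A\times B,B\times C$, and the twists on the two sides match after the cancellations $\sigma+(-\sigma)\simeq 0$; in particular Propositions~\ref{prop:proj_iso_prelim} and~\ref{prop:Beck_Chevalley_preliminary}, which carry no finiteness hypotheses, apply verbatim.

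The hard part will not be any single step --- each is an instance of a lemma already in hand --- but checking that the whole chain is natural in $M$, and in $X$ and $Y$, so that it produces an equivalence in the functor $\infty$-category $\Fun^L(\TwSp(A,\tau),\TwSp(C,\rho))$ rather than a mere objectwise equivalence, and moreover that it is compatible with vertical composition of $2$-morphisms, so that $\odot$ genuinely models horizontal composition in $\PPr^L_{\St}$. My plan for this is to phrase each step as a natural transformation of composites of the functorially-constructed operations $(-)^*$, $(-)_!$, $-\otimes-$ and compose these; alternatively, one can deduce it formally from the equivalence $\TwSp(A\times B,-\tau\boxtimes\sigma)\simeq\Fun^L(\TwSp(A,\tau),\TwSp(B,\sigma))$ of Proposition~\ref{prop:FunL_exterior}, once $\odot$ is identified with the composite pairing induced by evaluation --- essentially the same computation, organized as an equivalence of bifunctors.
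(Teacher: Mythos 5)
Your proposal is correct and follows essentially the same route as the paper's proof: both reduce the claims to Beck--Chevalley for the square $(A\times B\times C;\, A\times B,\, B\times C;\, B)$ and the projection formula along the projections $p_{B\times C}$, $p_{A\times C}$, and $\Delta$, then cancel the extra $(-)_!$ and $(-)^*$ using functoriality. Your closing remark about organizing the chain as a composite of natural transformations (or appealing to Proposition~\ref{prop:FunL_exterior}) to upgrade the objectwise equivalence is a reasonable observation about a coherence point the paper leaves implicit, but it does not change the substance of the argument.
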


\begin{proof}
  The assertions essentially follows from using the projection isomorphism and Beck--Chevalley a couple of times for various maps.
  To keep track of maps let us use the notation
  \begin{equation*}
  \begin{aligned}
  p_A &: A \times B \to A \quad \quad  &q_B: B \times C \to B \quad \quad r_{A} &: A \times C \to C \\
  p_B &: A \times B \to B  \quad \quad &q_C: B\times C \to B \quad \quad  r_C &: B \times C \to C
  \end{aligned}
  \end{equation*}
  for the various projections. We will also write
  \[
  p_{A \times B} : A \times B \times C \to A \times C \quad p_{B \times C} : A \times B \times C \to B \times C \quad p_{A \times C} : A \times B \times C \to A \times C
  \]
  and
  \[
  \pi_{A} : A \times B \times C \to A \quad \pi_{B} : A \times B \times C \to B \quad \pi_{C} : A \times B \times C \to A \,.
  \]
  Per definition, we have that
  \[
  \Phi_Y \circ \Phi_X = (q_C)_! \circ (-\otimes Y) \circ (q_B)^* \circ (p_B)_! \circ (- \otimes X) \circ (p_A)^*  \,,
  \]
  and that

  \begin{align*}
  \Phi_{Y \odot X}& = (r_{C})_! \circ (- \otimes (Y \odot X)) \circ (r_A)^* \\
  &= (r_{C})_! \circ (- \otimes ((p_{A \times C})_!(p^*_{B \times C} Y \otimes p_{A \times B}^* X))) \circ (r_A)^* \,.
\end{align*}

  These are the compositions that we are trying to identify.
  We start by using Beck--Chevalley on the pullback diagram
  \[
  \begin{tikzcd}
    A \times B \times C \arrow[r,"p_{A \times B}"] \arrow[d,"p_{B \times C}"'] & A \times B \arrow[d,"p_B"] \\
    B \times C \arrow[r,"q_B"] & B
  \end{tikzcd}
  \]
  so that we have that $(q_B)^* \circ (p_B)_! \simeq (p_{B \times C})_! \circ (p_{A \times B})^*$. By combining this with the fact that the pullback $(p_{A \times B})^*$ is symmetric monoidal we obtain
  \[
  \Phi_Y \circ \Phi_X \simeq (q_C)_! \circ (-\otimes Y) \circ (p_{B \times C})_! \circ (- \otimes (p_{A \times B})^*(X)) \circ (\pi_A)^* \,.
  \]
  By using the projection isomorphism for the map $p_{B \times C}$ we can further write
  \[
  \Phi_Y \circ \Phi_X \simeq (\pi_C)_! \circ (-\otimes (p_{B \times C})^*(Y)) \circ (- \otimes (p_{A \times B})^*(X)) \circ (\pi_A)^* \,.
  \]
  Let us now look closer at the functor $\Phi_{Y \odot X}$. By using the projection isomorphism for the map $p_{A \times C}$ to make the identification
  \[
  \Phi_{Y \odot X} \simeq (\pi_{C})_! \circ (- \otimes ((p_{B \times C})^* Y \otimes (p_{A \times B})^* X)) \circ (\pi_A)^*
  \]
  which by inspection agrees with the identification we made for $\Phi_Y \circ \Phi_X$. This finishes the proof of the first assertion.
  For the second assertion, we will use the notation $p_1 , p_2 : A \times A \to A$  for the projection onto the first and second factor, respectively.
  Using the projection isomorphism for the diagonal map $\Delta : A \to A \times A$ we obtain
  \begin{align*}
    (p_2)_!\circ (- \otimes \Delta_! \bS)\circ (p_1)^* &\simeq (p_2)_! \circ \Delta_! \circ (- \otimes \bS) \circ \Delta^* (p_1)^* \\ \simeq (-)
  \end{align*}
  where we have used that $p_i \circ \Delta \simeq 1$ for $i=1,2$ and that tensoring with $\bS$ is the identity.
\end{proof}

\begin{rmk}
  Note that the above describes the composition of Fourier--Mukai functor in terms of some operation on twisted spectra. There should also be a horizontal composition on 2-morphisms, that interpreted in the language of twisted spectra should allow us to take a map $f : X \to Y$ in $\TwSp(A \times B , -\tau \boxtimes \sigma)$ and a map $g : Z \to W$ in $\TwSp(B \times C, -\sigma \boxtimes \rho)$ and construct a map
  \[
  g \odot f : Z \odot X \longto W \odot Y
  \]
  in $\TwSp(A \times C, -\tau \boxtimes \rho)$.
  Due to the funtoriality, we this should be the map
  \[
  g \odot f = (p_{A \times C})_!(p^*_{B \times C} g \otimes p_{A \times B}^* f) : Z \odot X \longto W \odot Y \,.
  \]
\end{rmk}

We will now consider the functors
\[
f_! : \TwSp(A,f^* \sigma) \longto \TwSp(B,\sigma) \och f^* : \TwSp(B,\sigma) \longto \TwSp(A,f^* \sigma)
\]
associated to a map $f : A \to B$ of spaces.
Since both of these functors are colimit-preserving, Proposition~\ref{prop:FunL_exterior} tells us there has to be $(A \times B , -f^* \sigma \boxtimes \sigma)$-twisted spectrum and $(B \times A , -\sigma \boxtimes f^* \sigma)$-twisted spectra corresponding to the functor $f_!$ and $f^*$, respectively. Let us figure out what these are.

\begin{proposition} \label{prop:FM_adjoints}
  Let $f : A \to B$ be a map of spaces and $\sigma : B \to B\Pic(\bS)$ classify a haunt over $B$. Then the following hold:
  \begin{enumerate}
    \item The pullback of $-f^* \sigma \boxtimes \sigma : A \times B \to B\Pic(\bS)$ along the map $(1,f) : A \to A \times B$ is trivial. Hence it induces a functor
    \[
    (1,f)_! : \Sp^A \longto \TwSp(A \times B , -\tau \boxtimes \sigma) \,.
    \]
    Similarly, the pullback of $-\sigma \boxtimes f^* \sigma : B \times A \to B\Pic(\bS)$ along the map $(f,1) : A \to B \times A$ is trivial and hence induces a functor
    \[
    (f,1)_! : \Sp^{A} \longto \TwSp(B \times A , -\sigma \boxtimes f^* \sigma) \,.
    \]
    \item The functor $f_! : \TwSp(A,f^* \sigma) \to \TwSp(B,\sigma)$ corresponds to the $(A\times B,-\tau \boxtimes \sigma)$-twisted spectrum $(1,f)_! \bS$ under the Fourier--Mukai transform. That is:
    \[
    \Phi_{(1,f)_! \bS} \simeq f_! \,.
    \]
    \item The functor $f^* : \TwSp(B,\sigma) \to \TwSp(A,f^* \sigma)$ corresponds to the $(B \times A , -\sigma \boxtimes f^* \sigma)$-twisted spectrum $(f,1)_! \bS$. That is:
    \[
    \Phi_{(f,1)_! \bS } \simeq f^* \,.
    \]
  \end{enumerate}
  \begin{proof}
    The first part of the theorem is straight-forward. The second and third statement are proved in the same way, so we only prove the second statement, leaving the third for the reader. By using the projection isomorphism for the map $(1,f)$, we obtain
    \begin{align*}
    \Phi_{(1,f)_! \bS} &= (p_B)_! \circ (- \otimes (1,f)_! \bS)\circ (p_A)^* \\
    & \simeq (p_B)_! \circ (1,f)_! \circ ( - \otimes \bS) \circ (1,f)^* \circ (p_A)^*
    \end{align*}
    from which the result follows by observing that $p_B \circ (1,f)$ is just $f$ and that $p_A \circ (1,f)$ is the identity, and that $\bS$ is the unit for the symmetric monoidal structure.
  \end{proof}
\end{proposition}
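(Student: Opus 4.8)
The plan is to reduce all three assertions to the projection formula (Proposition~\ref{prop:proj_iso_prelim}) together with the fact that the assignment $f \mapsto f_!$ is functorial in maps of spaces, i.e. compatible with composition. The existence of twisted spectra representing $f_!$ and $f^*$ under the Fourier--Mukai equivalence is already guaranteed by Proposition~\ref{prop:FunL_exterior}; the content of Proposition~\ref{prop:FM_adjoints} is to identify those representing objects explicitly.

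First I would dispatch part (1), which is a bookkeeping computation with twists. Writing $\tau = f^*\sigma$ and recalling that the exterior twist $(-\tau)\boxtimes\sigma$ on $A\times B$ is the interior sum $-p_A^*\tau + p_B^*\sigma$, I would precompose with $(1,f) : A \to A\times B$; since $p_A\circ(1,f) = \mathrm{id}_A$ and $p_B\circ(1,f) = f$, this pulls back to $-f^*\sigma + f^*\sigma$, which is canonically nullhomotopic via the inversion map of the $\bE_\infty$-group $B\Pic(\bS)$. Hence $\TwSp(A,(1,f)^*(-\tau\boxtimes\sigma)) \simeq \TwSp(A,0) = \Sp^A$, and pushforward along $(1,f)$ yields the claimed functor $(1,f)_! : \Sp^A \to \TwSp(A\times B,-\tau\boxtimes\sigma)$. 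The statement for $(f,1)$ is the mirror image, using $q_B\circ(f,1) = f$ and $q_A\circ(f,1) = \mathrm{id}_A$.

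For part (2) I would unwind $\Phi_{(1,f)_!\bS} = (p_B)_!\circ(-\otimes (1,f)_!\bS)\circ(p_A)^*$ and apply the projection formula to the map $(1,f)$: for $M \in \TwSp(A,\tau)$ one obtains $(p_A)^*M \otimes (1,f)_!\bS \simeq (1,f)_!((1,f)^*(p_A)^*M \otimes \bS)$, and this collapses to $(1,f)_!M$ because $(p_A\circ(1,f))^* = \mathrm{id}$ and $\bS$ is the monoidal unit. Postcomposing with $(p_B)_!$ and using $(p_B)_!\circ(1,f)_! \simeq (p_B\circ(1,f))_! = f_!$ (functoriality of $(-)_!$, i.e. that $\TwSp_!$ is a functor on $\cS_{/B\Pic(\bS)}$) finishes part (2). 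Part (3) runs identically with $(f,1)$ in place of $(1,f)$: the projection formula turns $(q_B)^*M \otimes (f,1)_!\bS$ into $(f,1)_!(f^*M)$ since $q_B\circ(f,1) = f$, and then $(q_A)_!\circ(f,1)_! \simeq (q_A\circ(f,1))_! = (\mathrm{id}_A)_! \simeq \mathrm{id}$ identifies $\Phi_{(f,1)_!\bS}$ with $f^*$.

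The part that I expect to require the most care is not any single calculation but the twist bookkeeping that makes these manipulations legitimate: one must verify that $(1,f)$ and $(f,1)$ genuinely underlie morphisms in $\cS_{/B\Pic(\bS)}$ with precisely the twist identifications used above, that the canonical trivialization coming from inversion in $B\Pic(\bS)$ is consistently the one fed into both the projection formula and the composition-compatibility of $(-)_!$, and that the exterior/interior-sum identities for twists are applied coherently throughout. Once those coherences are pinned down, everything is formal.
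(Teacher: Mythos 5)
Your proof is correct and follows essentially the same route as the paper's: both parts (2) and (3) are handled by applying the projection formula for the maps $(1,f)$ and $(f,1)$, then collapsing the resulting compositions using $p_A\circ(1,f)=\mathrm{id}_A$, $p_B\circ(1,f)=f$ (and similarly for $(f,1)$), together with functoriality of $(-)_!$ and $\bS$ being the monoidal unit. The only difference in emphasis is that you spell out the twist bookkeeping of part (1) explicitly (the paper dismisses it as straightforward) and you write out part (3) in full rather than leaving it as a mirror-image exercise.
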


\subsection{Dualizability of Twisted Spectra} \label{sec:dualizability_twisted_spectra}

Note that, per construction, the Fourier--Mukai transform of a twisted spectrum $X \in \TwSp(A \times B,-\tau \boxtimes \sigma)$ is colimit-preserving, so it must admit a right adjoint, namely the composition
\[
\TwSp(B,\sigma) \overset{(p_B)^*}\longto \TwSp(A \times B , (p_B)^* \sigma) \overset{\hom(X,-)}\longto \TwSp(A \times B , (p_B)^* \sigma + \tau \boxtimes \sigma) \overset{(p_A)_*}\longto \TwSp(A,\tau) \,.
\]
Dualizability of a twisted spectrum is going to be phrased in terms of whether this functor is itself colimit-preserving.

\begin{definition} \label{def:dualizable}
  We say that the $(A \times B , -\tau \boxtimes \sigma)$-twisted spectrum $X$ is \emph{dualizable} if the functor $\Phi_X : \TwSp(A,\tau) \to \TwSp(B,\sigma)$ is an internal left adjoint in $\Pr_{\St}^L$. In other words, it is dualizable if the right adjoint to its Fourier--Mukai transform is itself colimit-preserving.
\end{definition}

In particular, notice that if $X$ is dualizable, then by Proposition~\ref{prop:FunL_exterior} the right adjoint must be on the form $\Phi_{X^{\vee}}$ for some essentially unique $(B \times A , - \sigma \boxtimes \tau)$-twisted spectrum $X^\vee$. We refer to this $X^{\vee}$ as the \emph{dual} of $X$.

\begin{example}
  Since the right adjoint of $f_!$ is $f^*$, which itself admits a left adjoint, we can use Proposition~\ref{prop:FM_adjoints} to conclude that the dual of the $\TwSp(A \times B , -f^* \sigma \boxtimes \sigma)$-twisted spectrum $X = (1,f)_! \bS$ is the $\TwSp(B \times A, -\sigma \boxtimes f^* \sigma)$-twisted spectrum $X^\vee = (f,1)_! \bS$.
\end{example}

We will now recast some of the above discussion in the extreme cases that one of the spaces $A$ or $B$ is a point.
We will abusively denote the projections $A \to \ast$ and $B \to \ast$ simply by $A$ and $B$, respectively.
We first note the following:
\begin{itemize}
  \item If $A$ is a point, then the $(B,\sigma)$-twisted spectrum $X$ corresponds to the colimit-preserving functor
  \[
  \Phi_X : \Sp \longto \TwSp(B,\sigma) \,, \quad Y \mapsto B^*(Y) \otimes X \,.
  \]
  \item If $B$ is a point, then the $(A , -\tau)$-twisted spectrum $X$ corresponds to the colimit-preserving functor
  \[
  \Phi_X : \TwSp(A,\tau) \longto \Sp \,, \quad Y \mapsto A_! (Y \otimes X) \,.
  \]
\end{itemize}

Per the above, a twisted spectrum can thus be viewed as a colimit-preserving functor in essentially two ways: as a functor with either target or source $\Sp$.
 This gives rise to a lot of interesting structures.
 To be able to be slightly more explicit in our writing, let us set the following notation in this section.

\begin{notation}
  Let $X \in \TwSp(A,\tau)$. We retain the notation
  \[
  \Phi_X : \Sp \to \TwSp(A,\tau) \,, \quad Y \mapsto A^*(Y) \otimes X
  \]
  for the Fourier--Mukai transform thought of as a functor $\Sp \to \TwSp(A,\tau)$, but write
  \[
  \Psi_X : \TwSp(A,-\tau) \to \Sp \,, \quad Y \mapsto A_!(Y \otimes X)
  \]
  for the Fourier--Mukai transform thought of as a functor $\TwSp(A,-\tau) \to \Sp$.
\end{notation}

In particular, note that the $\Phi$- and $\Psi$-notation are exchanged under duality: if we think of $X \in \TwSp(A,\tau)$ as the functor $\Phi_X$, then we must think of its dual $X^{\vee} \in \TwSp(A,-\tau)$ as the functor $\Psi_{X^{\vee}}$, and vice versa.
The following preliminary lemma might be useful.

\begin{lemma}
  Let $X \in \TwSp(A,-\tau)$ and let $Y \in \TwSp(A,\tau)$. Then
  \[
  \Psi_X \circ \Phi_{Y} \simeq \Phi_{A_!(X \otimes Y)} \och \Phi_Y \circ \Psi_{X} \simeq \Phi_{X \boxtimes Y}
  \]
  where the right hand sides refer to the Fourier--Mukai transforms associated to $A_!(Y \otimes X) \in \Sp$ and $X \boxtimes Y \in \TwSp(A \times A, -\tau \boxtimes \tau)$, respectively.
  \begin{proof}
    The first statement follows from using the projection isomorphism for the map $A \to \ast$ to obtain
    \begin{align*}
      \Psi_X \circ \Phi_Y &\simeq A_! \circ (- \otimes Y) \circ (- \otimes X) \circ A^* \\ &\simeq (- \otimes A_!(X \otimes Y)) \circ A^* \\ &\simeq \Phi_{A_!(X \otimes Y)}
    \end{align*}
    For the second statement, let us try to try to understand $\Phi_{X \boxtimes Y}$.
    We will use the notation $p_1 , p_2 : A \times A \to A$ refers to the first and second projection, respectively. Then we have that
    \begin{align*}
    \Phi_{X \boxtimes Y} &= (p_2)_! \circ (- \otimes (X \boxtimes Y)) \circ (p_1)^* \\ &\simeq (p_2)_! \circ (- \otimes ((p_1)^* X \otimes (p_2)^* Y)) \circ (p_1)^*
    \end{align*}
    by using Lemma~\ref{lem:boxtimes_proj_pullbacks}. By using that $(p_1)^*$ is symmetric monoidal and the projection isomorphism for the map $p_2$ we obtain
    \begin{align*}
    \Phi_{X \boxtimes Y} &\simeq (- \otimes Y)\circ (p_2)_!\circ (p_1)^* \circ  (- \otimes X) \,.
    \end{align*}
    We finish the proof by using Beck--Chevalley for the pullback diagram
    \[
    \begin{tikzcd}
      A \times A \arrow[d,"p_2"'] \arrow[r,"p_1"] & A \arrow[d] \\
      A \arrow[r] & \ast \,.
    \end{tikzcd}
    \]
    so that we can write
    \[
    \Phi_{X \boxtimes Y} \simeq (- \otimes Y) \circ A^* \circ A_! \circ (- \otimes X) = \Phi_Y \circ \Psi_X \,.
    \]
  \end{proof}
\end{lemma}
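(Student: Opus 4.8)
The plan is to prove both equivalences by the same recipe: unwind $\Phi$ and $\Psi$ into composites of $A^*$, $A_!$, $p_i^*$, $(p_i)_!$ and tensor-with-a-fixed-object functors, and then move the fixed tensor factors past the push/pull functors using the projection isomorphism (Proposition~\ref{prop:proj_iso_prelim}) together with Beck--Chevalley (Proposition~\ref{prop:Beck_Chevalley_preliminary}) for the relevant squares of spaces. Everything in sight is colimit-preserving, so it suffices to exhibit the asserted equivalences as equivalences of functors, with naturality coming for free from the naturality of the projection and base-change transformations.

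For the first identity, write $\Phi_Y = (-\otimes Y)\circ A^*$ and $\Psi_X = A_!\circ(-\otimes X)$, where $A \colon A \to \ast$ is the collapse map; then
\[
\Psi_X\circ\Phi_Y \simeq A_!\circ\big(-\otimes(Y\otimes X)\big)\circ A^* \,,
\]
and since $Y\otimes X \in \TwSp(A,0)=\Sp^A$, the projection isomorphism for $A\to\ast$ rewrites this as $(-)\otimes A_!(X\otimes Y)$, i.e.\ the Fourier--Mukai transform $\Phi_{A_!(X\otimes Y)}$ of the spectrum $A_!(X\otimes Y)$.

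For the second identity I would instead compute $\Phi_{X\boxtimes Y}$ directly. With $p_1,p_2 \colon A\times A\to A$ the two projections, $\Phi_{X\boxtimes Y} = (p_2)_!\circ(-\otimes(X\boxtimes Y))\circ p_1^*$; rewriting $X\boxtimes Y \simeq p_1^*X\otimes p_2^*Y$ via Lemma~\ref{lem:boxtimes_proj_pullbacks} and using that $p_1^*$ is symmetric monoidal (Corollary~\ref{cor:pullback_symm_mon_prelim}) turns this into $(p_2)_!\big(p_2^*Y\otimes p_1^*(-\otimes X)\big)$. The projection isomorphism for $p_2$ pulls the $p_2^*Y$ factor out to give $(-\otimes Y)\circ(p_2)_!\circ p_1^*\circ(-\otimes X)$, and then Beck--Chevalley applied to the pullback square with corners $A\times A$, $A$, $A$, $\ast$ identifies $(p_2)_!\circ p_1^*$ with $A^*\circ A_!$, so that $\Phi_{X\boxtimes Y} \simeq (-\otimes Y)\circ A^*\circ A_!\circ(-\otimes X) = \Phi_Y\circ\Psi_X$.

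I do not anticipate a genuine obstacle: the argument is pure bookkeeping with the monoidal functoriality already established. The only points needing attention are (i) keeping the twist decorations straight, so that $Y\otimes X$ and $W\otimes X$ genuinely land in the trivially twisted category $\Sp^A$ and $X\boxtimes Y$ in $\TwSp(A\times A,-\tau\boxtimes\tau)$, and (ii) checking that the relevant squares of spaces, equipped with the zero twist everywhere, really are pullback squares in $\cS_{/B\Pic(\bS)}$, so that Propositions~\ref{prop:proj_iso_prelim} and~\ref{prop:Beck_Chevalley_preliminary} apply verbatim.
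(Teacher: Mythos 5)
Your proposal is correct and follows essentially the same route as the paper: unwind $\Phi$ and $\Psi$ into composites of push/pull and tensor functors, collapse via the projection isomorphism for $A\to\ast$ in the first identity, and for the second identity expand $\Phi_{X\boxtimes Y}$ using $X\boxtimes Y \simeq p_1^*X\otimes p_2^*Y$, the projection isomorphism along $p_2$, and Beck--Chevalley for the square $A\times A\rightrightarrows A\to\ast$. Your added remark (ii), that one should verify the relevant squares with zero twists are genuinely pullbacks in $\cS_{/B\Pic(\bS)}$, is a sensible bit of bookkeeping that the paper leaves implicit.
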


The reason for the auxiliary notation $\Psi_X$ is that duality of $X$ in the sense of the functor $\Psi_X$ being an internal left adjoint is not super interesting from a structural point of view; it just reduces to Spanier--Whitehead dualizablity.

\begin{proposition}
  Consider a twisted spectrum $X \in \TwSp(A,-\tau)$, which we view as a colimit-preserving functor
  \[
  \Psi_X : \TwSp(A,\tau) \longto \Sp \,.
  \]
  The dualizability of $X$ in the sense of Definition~\ref{def:dualizable} then reduces to dualizability of $X$ in the symmetric monoidal $\infty$-category $\TwSp(A)$.
\end{proposition}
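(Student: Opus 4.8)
The plan is to unwind what it means for $\Psi_X: \TwSp(A,\tau) \to \Sp$ to be an internal left adjoint in $\Pr^L_{\St}$ and match it against the condition that $X$ be dualizable in the symmetric monoidal $\infty$-category $\TwSp(A)$. First I would observe that, by Proposition~\ref{prop:FunL_exterior} in the case $B = \ast$, colimit-preserving functors $\TwSp(A,\tau) \to \Sp$ are exactly the objects of $\TwSp(A,-\tau) \simeq \Fun^L(\TwSp(A,\tau),\Sp) = \TwSp(A,\tau)^\vee$, so that $\TwSp(A,\tau)$ is a dualizable object of $\Pr^L_{\St}$ with dual $\TwSp(A,-\tau)$. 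Under this identification, $\Psi_X$ corresponds to $X \in \TwSp(A,-\tau)$, and the evaluation pairing $\TwSp(A,-\tau) \otimes \TwSp(A,\tau) \to \Sp$ is precisely $(X,Y)\mapsto A_!(X\otimes Y)$, i.e.\ it is the interior tensor product $\otimes_A$ followed by the global sections functor $A_!: \TwSp(A,0) = \Sp^A \to \Sp$.

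The key step is then the following general principle: since $\TwSp(A,\tau)$ is a dualizable object of $\Pr^L_{\St}$, a colimit-preserving functor $G: \TwSp(A,\tau) \to \Sp$ is an internal left adjoint (that is, its right adjoint again preserves colimits) if and only if the corresponding object of the dual $\TwSp(A,-\tau)$ is itself a dualizable object, now with respect to the symmetric monoidal structure coming from composition — but here, because the source is just $\Sp$ (the unit of $\Pr^L_{\St}$), that ambient monoidal structure is exactly the total interior product $\otimes_A$ on $\TwSp(A)$ of Section~\ref{sec:total_category}. Concretely: $\Psi_X$ internal left adjoint $\iff$ $X \in \TwSp(A,-\tau)$ is dualizable in $\TwSp(A)$, with the dual functor being $\Phi_{X^\vee}$ where $X^\vee \in \TwSp(A,\tau) \subset \TwSp(A)$ is the $\otimes_A$-dual of $X$. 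I would verify this by checking that the evaluation and coevaluation data exhibiting $X$ as dualizable in $\TwSp(A)$ are interchanged, under the equivalence $\TwSp(A,\pm\tau) \simeq \Fun^L(\TwSp(A,\mp\tau),\Sp)$, with unit and counit of the adjunction $\Psi_X \dashv (\text{right adjoint})$; this is a diagram chase using Proposition~\ref{prop:total_cat_symm_mon}, the projection formula (Proposition~\ref{prop:proj_iso_total} / Proposition~\ref{prop:proj_iso_prelim}), and the preliminary lemma computing $\Psi_X \circ \Phi_Y \simeq \Phi_{A_!(X\otimes Y)}$ and $\Phi_Y \circ \Psi_X \simeq \Phi_{X \boxtimes Y}$.

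The main obstacle is organizing the coherences so that the duality data for $X$ in $\TwSp(A)$ — which lives over $A$ and involves the diagonal — is correctly identified with the $\Pr^L_{\St}$-level duality data for the functor $\Psi_X$, whose (co)evaluation lands in $\Sp = \TwSp(\ast)$. The translation passes through the collapse map $A \to \ast$, and one must check that $A_!$ of the interior coevaluation in $\TwSp(A)$ produces the coevaluation $\Sp \to \TwSp(A,\tau)\otimes\TwSp(A,-\tau)$, which is where the projection formula does the real work. Once that single compatibility is in place, the equivalence of the two notions of dualizability is formal, since dualizable objects and their duals are unique up to contractible choice. I expect the cleanest writeup to simply cite the fact that for a dualizable object $\cC \in \Pr^L_{\St}$, a functor $\cC \to \Sp$ is an internal left adjoint iff the corresponding object of $\cC^\vee$ is dualizable in $\cC^\vee$ equipped with its canonical symmetric monoidal structure, and then identify that structure on $\TwSp(A,-\tau)$ with the restriction of $\otimes_A$.
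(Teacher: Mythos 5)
Your concrete plan — identify the right adjoint of $\Psi_X$ as some $\Phi_{DX}$ via Proposition~\ref{prop:FunL_exterior}, unwind its unit and counit through the lemma computing $\Psi_X \circ \Phi_Y \simeq \Phi_{A_!(X\otimes Y)}$ and $\Phi_Y \circ \Psi_X \simeq \Phi_{X\boxtimes Y}$, push the (co)evaluation data through the adjunctions $(\Delta_! \dashv \Delta^*)$ and $(A_! \dashv A^*)$, and check that the triangle identities translate to the triangle identities for dualizability in $\TwSp(A)$ — is exactly the paper's proof, including the role of the projection formula and Beck--Chevalley in making the diagonal/collapse maps talk to each other.

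Where you go astray is in the ``general principle'' you invoke to bookend the argument. There is no canonical symmetric monoidal structure on the dual $\cC^{\vee}$ of an arbitrary dualizable $\cC \in \Pr^{L}_{\St}$, so the statement ``a functor $\cC \to \Sp$ is an internal left adjoint iff the corresponding object of $\cC^{\vee}$ is dualizable in $\cC^{\vee}$ equipped with its canonical symmetric monoidal structure'' is not a well-posed claim, and citing it would not shorten the writeup. (The phrase ``symmetric monoidal structure coming from composition'' is likewise off: composition on $\Fun^L(\TwSp(A,\tau),\TwSp(A,\tau)) \simeq \TwSp(A\times A, -\tau\boxtimes\tau)$ is the $\bE_1$ horizontal composition $\odot$, not the interior product $\otimes_A$ on $\TwSp(A)$, and nothing about the target being $\Sp$ turns one into the other.) What actually makes the statement true is the special structure at hand — namely that $\TwSp(A,\pm\tau)$ sit inside the symmetric monoidal $\TwSp(A)$ and that $A_!$ and $\Delta_!$ interact with $\otimes_A$ via the projection formula — and that is precisely the content of the diagram chase you describe in your middle paragraph and which the paper carries out. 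So: keep the middle of your proposal, drop the framing around the ``general principle,'' and you have the paper's proof.
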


\begin{proof}
  Let us write $D X$ for the supposed dual of $X$.
  Dualizability in the sense of Definition~\ref{def:dualizable}, then just says that $\Phi_{DX} : \Sp \to \TwSp(A,\tau)$ is right adjoint to $\Psi_X$.
  In particular, this means that we have a unit and counit
  \[
  \eta : 1 \to \Phi_{DX} \circ \Psi_{X} \och \epsilon : \Psi_X \circ \Phi_{DX} \to 1 \,,
  \]
  which are natural transformations of functors $\TwSp(A,\tau) \to \TwSp(A,\tau)$ and $\Sp \to \Sp$ respectively, satisfying the triangle equalities. We recall that these triangle equality simply says that the compositions
  \[
  \Psi_X \overset{\Psi_X \eta}\to \Psi_X \Phi_{DX} \Psi_X \overset{\epsilon \Psi_{X}}\to \Psi_X \och \Phi_{DX} \overset{\eta \Phi_{DX}}\to \Phi_{DX} \Psi_X \Phi_{DX} \overset{\Phi_{DX} \epsilon}\to \Phi_{DX} \,,
  \]
  are the identity natural transformations on $\Psi_X$ and $\Phi_{DX}$, respectively.
  In terms of $(A \times A , -\tau \boxtimes \tau)$-twisted spectra and ordinary spectra, the unit and counit just corresponds to maps
  \[
  \eta : \Delta_{!}^\tau \bS \to DX \boxtimes X \och \epsilon : A_!(X \otimes DX) \to \bS
  \]
  which under the $(\Delta_! \vdash \Delta^{*})$-adjunction and the $(A_! \vdash A^*)$-adjunction corresponds to maps
  \[
  \eta : \bS \to DX \otimes X \och \epsilon : X \otimes DX \to \bS \,,
  \]
  of parametrized spectra over $A$.
  Under these identifications, the triangle equalities translate to the condition that the  compositions
  \[
  X \overset{1 \otimes \eta}\to X \otimes DX \otimes X \overset{\epsilon \otimes 1}\to X \och DX \overset{\eta \otimes 1}\to DX \otimes X \otimes DX \overset{1 \otimes \epsilon}\to DX
  \]
  are the identity maps on $X$ and $DX$, respectively. This is precisely the condition for dualizability in the symmetric monoidal category $\TwSp(A)$.
\end{proof}

The other kind of dualizability does give us something new, though.

\begin{definition}
  Suppose that $A$ is a point and consider a twisted spectrum $X \in \TwSp(B,\sigma)$, which we view as a colimit-preserving functor
  \[
  \Phi_X : \Sp \to \TwSp(B,\sigma)
  \]
  via the Fourier--Mukai transform.
  The dualizablity of $X$ in the sense of Definition~\ref{def:dualizable} will then be referred as \emph{Costenoble--Waner dualizability} and the dual $D^{CW} X \in \TwSp(B,-\sigma)$ will be referred as the \emph{Costenoble--Waner dual}.
\end{definition}

\begin{example}
  Suppose that $A$ is connected and consider the inclusion of a point $a \in A$. By abuse of notation, we will also denote the inclusion $a$. For any twist $\tau : A \to B\Pic(\bS)$ we have an induced functor $a^{\tau}_! : \Sp \longto \TwSp(A,\tau)$, where we have decorated the superscript of the functor to remember what twist we are dealing with. Consider the object $X = a^{\tau}_! \bS$. We claim that this is Costenoble--Waner dual to $a_!^{-\tau} \bS$. Indeed, one can easily check that
  \[
  \Phi_{a^{\tau}_!(\bS)} \simeq a^{\tau}_! : \Sp \longto \TwSp(A,\tau)
  \]
  by using the projection isomorphism. The right adjoint to $\Phi_{a^{\tau}_! \bS}$ is hence the functor $a^{*,\tau} : \TwSp(A,\tau) \to \Sp$.
\end{example}

\subsection{Bimodule perspective on the $(\infty,2)$-category of twisted spectra}

Let us restrict ourselves to spaces that are connected. We have already seen that we have natural equivalences
\[
\TwSp(A,\tau) \simeq \Mod_{\Th(\Omega A)}
\]
of $\infty$-categories whenever $A$ is pointed and connected.
In this section, we will identify the subcategory of the $(\infty,2)$-category of twisted spectra with a subcategory of the Morita $(\infty,2)$-category.
That is, we will recast some of the discussion on $(\infty,2)$-categories in terms of bimodules of Thom spectra.
We start with the following definition.

\begin{definition}
  Let $\mathbf{ThMod}$ denote the full subcategory of $\mathbf{Pr}^L_{\St}$ spanned by those stable presentable $\infty$-categories of the form $\Mod_{\Th(\alpha)}$ where $\alpha : G \to \Pic(\bS)$ is some object in $\Grp_{/\Pic(\bS)}$.
\end{definition}

This definition makes sense since for every pair of $\bE_1$-rings $R$ and $S$, we have an equivalence
\[
{}_{R} \mathrm{BiMod}_{S} \simeq \Fun^L(\Mod_R , \Mod_S)
\]
of $\infty$-categories; see for example~\cite{HA}*{Section 4.8}. In particular, the colimit-preserving functor associated to the $R-S$-bimodule $M$ is
\[
F_M = - \otimes_R M : \Mod_R \longto \Mod_S \,.
\]
It is well-known that composition of such functors corresponds to tensoring bimodules over their joint action: if $M$ is an $R-S$-bimodule and $N$ is an $S-T$-bimodule, then the composition $F_N \circ F_M$ corresponds to the $R-T$-bimodule $M \otimes_S N$.
This means that we can heuristically think of the $(\infty,2)$-category $\mathbf{ThMod}$  as shown in Table~\ref{table:ThMod}.

\begin{table}[h!]
\centering
  \begin{tabular}{ | m{3.5cm} | m{6cm} | m{6cm} | }
    \hline
    & \centering $\mathbf{ThMod}$ & \centering $\PPr_{\St}^L$ \tabularnewline
    \hline
    \centering objects & \centering $\bE_1$-algebra $\Th(\alpha)$ & \centering presentable stable $\infty$-category \newline $\Mod_{\Th(\alpha)}$   \tabularnewline
    \hline
    \centering 1-morphism  & \centering $\Th(\alpha)-\Th(\beta)$-bimodule $M$ & \centering colimit preserving functor \newline $F_M : \Mod_{\Th(\alpha)} \to \Mod_{\Th(\beta)}$ \tabularnewline
    \hline
    \centering 2-morphism & \centering bimodule map $M \to N$ & \centering natural transformation \newline $\eta : F_M \to F_N$  \tabularnewline
    \hline
    \centering identity 1-morphism & \centering diagonal bimodule $\Th(\alpha)$ & \centering identity functor $1 : \Mod_{\Th(\alpha)} \to \Mod_{\Th(\alpha)}$ \tabularnewline
    \hline
    \centering vertical composition & \centering vertical composition of \newline natural transformations & \centering composition of bimodule maps \tabularnewline
    \hline
    \centering horizontal composition on 1-morphism & \centering composition of functors & \centering tensoring over the joint common action \tabularnewline
    \hline
  \end{tabular}
  \caption{How to think of $\mathbf{ThMod}$ as a full subcategory of $\PPr^L_{\St}$. }
  \label{table:ThMod}
\end{table}

Essentially, we would like to know that, under the equivalence of twisted spectra and modules over Thom spectra, that the Fourier--Mukai transform precisely corresponds to the functor associated to a bimodule.
This is the content of the following result.

\begin{proposition}
  Let $\alpha : G \to \Pic(\bS)$ and $\beta : H \to \Pic(\bS)$ and suppose that we are given a $\Th(\alpha) - \Th(\beta)$-module $M$. Suppose that  $X$ is the twisted spectra corresponding to $M$ under the equivalence
  \[
  \TwSp(BG \times BH , -B\alpha \boxtimes B\beta) \simeq {}_{\Th(\alpha)} \mathrm{BiMod}_{\Th(\beta)} \,.
  \]
  Then the diagram
  \[
  \begin{tikzcd}
  \TwSp(BG, B\alpha) \arrow[r,"\Phi_X"] \arrow[d,"\simeq"] & \TwSp(BH,B\beta) \arrow[d,"\simeq"] \\
  \Mod_{\Th(\alpha)}  \arrow[r,"F_M"] & \Mod_{\Th(\beta)}
\end{tikzcd}
  \]
  commutes.
  \begin{proof}
    Let us study the adjunctions
    \[
    (- \otimes \TwSp(BG,B\alpha)) \vdash \Fun^L(\TwSp(BG,B\alpha),-) \och (- \otimes \Mod_{\Th(\alpha)} \vdash \Fun^L(\Mod_{\Th(\alpha)},-))
    \]
    of functors $\Pr^{L}_{\St} \to \Pr^{L}_{\St}$.
    Due to Theorem~\ref{thm:TwSp_Mod_natural}, we know that these adjunctions are equivalent to one another, so for ease of notation let us not make a distinction between them, and simply write $- \otimes \cC \vdash \Fun^L(\cC,-)$.
    Consider the counit of the adjunction, the evaluation
    \[
    \mathrm{ev}_{\mathcal{D}} :\Fun^L(\cC,\calD) \otimes  \cC \to \calD \,.
    \]
    Since this is a natural transformation we know that the diagram
    \[
    \begin{tikzcd}[column sep = huge]
    \Fun^L(\cC,\TwSp(BH,B\beta)) \otimes \cC \arrow[r,"\mathrm{ev}_{\TwSp(BH,B\beta)}"] \arrow[d,"\simeq"] & \TwSp(BH,B\beta) \arrow[d,"\simeq"] \\
    \Fun^L(\cC,\Mod_{\Th(\beta)}) \otimes \cC  \arrow[r,"\mathrm{ev}_{\Mod_{\Th(\beta)}}"] & \Mod_{\Th(\beta)}
    \end{tikzcd}
    \]
    commutes.
    The result follows from noting that the functors $\Phi_X$ and $F_M$ are precisely $ev_{\TwSp(BH,B\beta)}(X \otimes -)$ and $ev_{\Mod_{\Th(\beta)}}(M \otimes -)$ under the equivalences
    \[
    \Fun^{L}(\TwSp(BG,B\alpha),\TwSp(BH,B\beta)) \simeq \TwSp(BG \times BH , -B\alpha \boxtimes B \beta) \,,
    \]
    and
    \[
    \Fun^L(\Mod_{\Th(\alpha)},\Mod_{\Th(\beta)}) \simeq {}_{\Th(\alpha)} \mathrm{BiMod}_{\Th(\beta)} \,,
    \]
    respectively.
  \end{proof}
\end{proposition}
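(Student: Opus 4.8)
The plan is to recognize both $\Phi_X$ and $F_M$ as the evaluation of a universal pairing, and then to match the two pairings along the natural equivalences of Theorem~\ref{thm:TwSp_Mod_natural}. Write $\cC=\TwSp(BG,B\alpha)$, $\calD=\TwSp(BH,B\beta)$, $\cC'=\Mod_{\Th(\alpha)}$, $\calD'=\Mod_{\Th(\beta)}$, and recall the tensor--hom adjunctions $(-\otimes\cC)\dashv\Fun^L(\cC,-)$ and $(-\otimes\cC')\dashv\Fun^L(\cC',-)$ on $\Pr^L_{\St}$, whose counits at $\calD$, $\calD'$ are the evaluation functors $\mathrm{ev}^{\cC}_{\calD}\colon\Fun^L(\cC,\calD)\otimes\cC\to\calD$ and $\mathrm{ev}^{\cC'}_{\calD'}\colon\Fun^L(\cC',\calD')\otimes\cC'\to\calD'$. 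By Proposition~\ref{prop:FunL_exterior} the equivalence $\Fun^L(\cC,\calD)\simeq\TwSp(BG\times BH,-B\alpha\boxtimes B\beta)$ carries $X$ to $\Phi_X$, and by the description of the Fourier--Mukai transform in the remark following that proposition one has $\Phi_X=\mathrm{ev}^{\cC}_{\calD}(X\otimes-)$. Dually, the equivalence ${}_{\Th(\alpha)}\mathrm{BiMod}_{\Th(\beta)}\simeq\Fun^L(\cC',\calD')$ of \cite{HA}*{Section 4.8} carries $M$ to $F_M=-\otimes_{\Th(\alpha)}M=\mathrm{ev}^{\cC'}_{\calD'}(M\otimes-)$.

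By Theorem~\ref{thm:TwSp_Mod_natural} there is an equivalence $\cC\simeq\cC'$ in $\Pr^L_{\St}$, and any such equivalence identifies the two tensor--hom adjunctions together with their counits; combining this with the equivalence $\calD\simeq\calD'$ produces a commuting square
\[
\begin{tikzcd}
\Fun^L(\cC,\calD)\otimes\cC \arrow[r,"\mathrm{ev}^{\cC}_{\calD}"] \arrow[d,"\simeq"'] & \calD \arrow[d,"\simeq"] \\
\Fun^L(\cC',\calD')\otimes\cC' \arrow[r,"\mathrm{ev}^{\cC'}_{\calD'}"] & \calD'
\end{tikzcd}
\]
in which the left-hand equivalence is conjugation of functor categories by $\cC\simeq\cC'$ and $\calD\simeq\calD'$, tensored with $\cC\simeq\cC'$. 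The equivalence $\TwSp(BG\times BH,-B\alpha\boxtimes B\beta)\simeq{}_{\Th(\alpha)}\mathrm{BiMod}_{\Th(\beta)}$ appearing in the statement is, after unwinding, the composite of the two functor-category identifications above with this conjugation, so the hypothesis that $X$ corresponds to $M$ says precisely that $\Phi_X$ and $F_M$ are matched by the left-hand equivalence. Plugging $X$ (equivalently $M$) into the first variable of the evaluation pairings in the square then yields the asserted commutativity $\Theta_H\circ\Phi_X\simeq F_M\circ\Theta_G$, where $\Theta_G\colon\TwSp(BG,B\alpha)\xrightarrow{\simeq}\Mod_{\Th(\alpha)}$ and $\Theta_H\colon\TwSp(BH,B\beta)\xrightarrow{\simeq}\Mod_{\Th(\beta)}$ are the vertical equivalences of the diagram.

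The part that requires care — and that I expect to be the main obstacle — is the identification of the equivalence ${}_{\Th(\alpha)}\mathrm{BiMod}_{\Th(\beta)}\simeq\TwSp(BG\times BH,-B\alpha\boxtimes B\beta)$ of the hypothesis with the composite used above. This reduces to checking that Theorem~\ref{thm:TwSp_Mod_natural} is compatible with the Lurie tensor product in the sense of Proposition~\ref{prop:tensor_twisted_categories} and with dualization, so that one obtains a chain of equivalences $\TwSp(BG\times BH,-B\alpha\boxtimes B\beta)\simeq\TwSp(BG,-B\alpha)\otimes\TwSp(BH,B\beta)\simeq\Mod_{\Th(\alpha)}^{\vee}\otimes\Mod_{\Th(\beta)}\simeq{}_{\Th(\alpha)}\mathrm{BiMod}_{\Th(\beta)}$, where $\TwSp(BG,-B\alpha)\simeq\TwSp(BG,B\alpha)^{\vee}$ is the identification in the proof of Proposition~\ref{prop:FunL_exterior} and $\Mod_{\Th(\alpha)}^{\vee}\simeq{}_{\Th(\alpha)}\mathrm{Mod}$ is standard. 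Once these compatibilities are assembled, all the remaining homotopy coherence is carried by the naturality of the adjunction counit, so no additional coherence argument is required.
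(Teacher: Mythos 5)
Your proof is correct and takes essentially the same approach as the paper's: identify $\Phi_X$ and $F_M$ as evaluations of the counit of the tensor--hom adjunction $(- \otimes \cC) \dashv \Fun^L(\cC,-)$, then use Theorem~\ref{thm:TwSp_Mod_natural} together with naturality of the counit to commute the square. You are somewhat more explicit than the paper about checking that the hypothesized equivalence $\TwSp(BG \times BH, -B\alpha \boxtimes B\beta) \simeq {}_{\Th(\alpha)}\mathrm{BiMod}_{\Th(\beta)}$ matches the composite built from Proposition~\ref{prop:tensor_twisted_categories}, dualization, and Theorem~\ref{thm:TwSp_Mod_natural}, which is a point the paper glosses over; this is welcome care rather than a deviation in method.
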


\noindent Putting this all together, we deduce the following statement:
\begin{theorem}
  Let $\mathbf{TwSp}^{\mathrm{conn}}$ denote the full subcategory of $\PPr_{\St}^L$ spanned by those stable $\infty$-categories of the form $\TwSp(BG,B\alpha)$ for some object $\alpha \in \Grp_{/\Pic(\bS)}$.
  Then there is an equivalence
  \[
  \TTwSp^{\mathrm{conn}} \simeq \mathbf{ThMod}
  \]
  of $(\infty,2)$-categories.
\end{theorem}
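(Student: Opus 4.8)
The plan is to exploit that, by their very definitions, both $\TTwSp^{\mathrm{conn}}$ and $\mathbf{ThMod}$ are full sub-$(\infty,2)$-categories of one and the same ambient $(\infty,2)$-category $\PPr^L_{\St}$. Hence it suffices to show that they span the same class of objects up to equivalence in $\PPr^L_{\St}$: two full sub-$(\infty,2)$-categories of a fixed $(\infty,2)$-category with the same essential image are canonically equivalent, each being equivalent to the full sub-$(\infty,2)$-category on that common class of objects, and along such an equivalence the mapping $\infty$-categories — here the internal-hom categories $\Fun^L(-,-)$ of $\PPr^L_{\St}$ — are automatically matched, being inherited from the ambient $(\infty,2)$-category. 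So no separate check at the level of $1$- and $2$-morphisms is required; the commuting square relating the Fourier--Mukai transform $\Phi_X$ to $F_M = -\otimes_{\Th(\alpha)} M$ from the preceding proposition simply records the explicit form of this inherited identification (and, in particular, that horizontal composition in $\TTwSp$ corresponds to tensoring bimodules over their common action in $\mathbf{ThMod}$, matching the two summarizing tables).

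First I would record the object-level bijection. The objects of $\mathbf{ThMod}$ are the $\Mod_{\Th(\alpha)}$ for $\alpha \in \Grp_{/\Pic(\bS)}$, and the objects of $\TTwSp^{\mathrm{conn}}$ are the $\TwSp(BG,B\alpha)$; here I use that $B : \Grp_{\bE_1} \to \cS_*^{\geq 1}$ is an equivalence with inverse $\Omega$, restricting to an equivalence $\Grp_{/\Pic(\bS)} \simeq (\cS_*^{\geq 1})_{/B\Pic(\bS)}$ that sends $\alpha : G \to \Pic(\bS)$ to the delooped twist $B\alpha : BG \to B\Pic(\bS)$, so that the two families are indexed by the same data. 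Applying Theorem~\ref{thm:TwSp_Mod_natural} to the connected space $BG$ equipped with the haunt $B\alpha$, and using $\Omega(B\alpha) \simeq \alpha$ as objects of $\Grp_{/\Pic(\bS)}$, gives a natural equivalence
\[
\TwSp(BG, B\alpha) \;\simeq\; \Mod_{\Th(\Omega B\alpha)} \;\simeq\; \Mod_{\Th(\alpha)}
\]
in $\PPr^L_{\St}$ for every $\alpha \in \Grp_{/\Pic(\bS)}$. In particular every object of $\TTwSp^{\mathrm{conn}}$ is equivalent to one of $\mathbf{ThMod}$ and conversely, so the two full sub-$(\infty,2)$-categories have the same essential image in $\PPr^L_{\St}$, and the equivalence $\TTwSp^{\mathrm{conn}} \simeq \mathbf{ThMod}$ follows from the first paragraph.

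The only step requiring genuine care — and what I would regard as the (mild) crux — is the coherence underlying the previous sentence: that looping the delooped twist returns $\alpha$ \emph{together with} its structure map to $\Pic(\bS)$, viewed as an object of $\Grp_{/\Pic(\bS)}$, and that the equivalence of Theorem~\ref{thm:TwSp_Mod_natural} is natural enough that the object-wise equivalences above assemble compatibly, so that the identification genuinely takes place inside $\PPr^L_{\St}$ rather than merely abstractly. Both are supplied by the naturality built into Theorem~\ref{thm:TwSp_Mod_natural} and Proposition~\ref{prop naturality of shit}. One may optionally conclude by spelling out, via fullness together with the preceding proposition, that the identity $1$-morphism $\Delta^\tau_!\bS$ of $\TwSp(BG,B\alpha)$ goes to the diagonal bimodule $\Th(\alpha)$ and that $\Phi_X$ goes to $F_M$, recovering the translation between the two tables — but these are consequences of what has already been said rather than further things to prove.
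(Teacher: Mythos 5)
Your proof is correct and takes essentially the same route as the paper: the theorem is stated there as a direct consequence (``putting this all together'') of the object-level equivalence $\TwSp(BG,B\alpha)\simeq\Mod_{\Th(\alpha)}$ from Theorem~\ref{thm:TwSp_Mod_natural} and the preceding identification of $\Phi_X$ with $F_M$, and your observation that two full sub-$(\infty,2)$-categories of $\PPr^L_{\St}$ with the same essential image are automatically equivalent is exactly the mechanism the paper is implicitly invoking.
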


\section{Derived Algebraic Geometry Perspective} \label{derivedstory}

The notion of a twisted spectrum on a topological space $B$ has a natural interpretation within the setting of spectral algebraic geometry.
Roughly, this is a setting of (derived) algebraic geometry where one takes the basic affine objects to be connective $\bE_\infty$-ring spectra.
Gluing these together in the appropriate homotopical context, one obtains spectral schemes, and more generally, spectral stacks.
The goal of this section is to identify a twist on a space $B$ with a \emph{Brauer class}, in the sense of derived algebraic geometry, on the associated constant stack $B$.

\subsection{Derived stacks}

Let $\CAlg^{\on{cn}}$ denote the $\infty$-category of connective $\bE_\infty$-rings. One endows its opposite category with the $\infty$-categorical variant of the \'{e}tale topology, cf. ~\cite[Chapter 2.4]{HAG}. Our treatment here will follow the functor of points approach to defining stacks.

\begin{definition}
The $\infty$-category of \emph{spectral stacks}  is defined to be the full subcategory $\on{dStk}:= \Fun^{\tau}(\CAlg^{\on{cn}}, \cS) \subset \Fun(\CAlg^{\on{cn}}, \cS)$  of space valued functors which satisfy descent with respect to the \'{e}tale topology.
\end{definition}

\begin{rmk}
The definition of a derived stack is manifestly very general; one typically focuses on stacks which have certain properties, for example spectral schemes or spectral Deligne-Mumford stacks.
\end{rmk}

Since $\on{dStk}$ is a geometric localization of a presheaf category, it naturally forms an $\infty$-topos. As such, there is a geometric morphism adjunction
$$
c^* :\cS \leftrightarrows \on{dStk} : \Gamma
$$
where $c^*$ denotes the \emph{constant}, or \emph{Betti} stack, and $\Gamma$ denotes the global sections morphism.

\begin{example}
    For the purposes of this work, an important class of examples of spectral stacks will be the constant stacks, i.e. the essential image of the functor $c^*(-)$ above.
\end{example}

As in classical algebraic geometry, one probes the geometry of a spectral stack by studying its category of quasi-coherent sheaves. Given a fixed spectral stack $F$, the symmetric monoidal $\infty$-category of quasi-coherent sheaves as a limit
$$
\QCoh(F) \simeq \lim_{\on{Spec} A \to F} \Mod(A)
$$
of module categories. If $F$ is a constant stack associated to a topological space one has the following identification.

\begin{proposition}
    There is an equivalence
    $$
    \QCoh(c^*(B))\simeq \Fun(B, \Sp)
    $$
between quasi-coherent sheaves on $c^*(B)$ and parametrized categories on $B$.
\end{proposition}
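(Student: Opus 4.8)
The plan is to combine three standard inputs: that $c^*$ preserves colimits, that $\QCoh$ converts colimits of stacks into limits of categories, and that a limit of a constant diagram indexed by a space is a functor category.

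First I would express $B$ as a colimit of points. In $\cS$ one has the canonical equivalence $B \simeq \colim_{b \in B} \ast$, realizing every space as the colimit of the constant diagram $B \to \cS$ with value $\ast$. Since $c^*$ is a left adjoint it preserves this colimit, and since it is also left exact (it is the inverse image of a geometric morphism) it sends the terminal space $\ast$ to the terminal spectral stack; the terminal stack is $\Spec \bS$, since $\bS$ is the initial object of $\CAlg^{\on{cn}}$. Hence
\[
c^*(B) \;\simeq\; \colim_{b \in B} c^*(\ast) \;\simeq\; \colim_{b \in B} \Spec \bS .
\]

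Next I would invoke the fact that $\QCoh \colon \on{dStk}^{\oop} \to \Pr^{L}_{\St}$ carries colimits of stacks to limits of presentable stable $\infty$-categories; this follows from the defining formula $\QCoh(F) \simeq \lim_{\Spec A \to F}\Mod_A$ together with \'{e}tale descent, and is recorded in~\cite{SAG}. Applying it to the presentation of $c^*(B)$ above, and using $\QCoh(\Spec \bS) \simeq \Mod_{\bS} \simeq \Sp$, gives
\[
\QCoh(c^*(B)) \;\simeq\; \lim_{b \in B} \QCoh(\Spec \bS) \;\simeq\; \lim_{b \in B} \Sp .
\]
Finally, for any $\infty$-category $\cC$ admitting $B$-indexed limits, the limit of the constant diagram $B \to \Cat_\infty$ with value $\cC$ is the cotensor (power) of $\cC$ by the anima $B$, namely $\Fun(B,\cC)$; this is the defining adjunction for the $\cS$-enrichment of $\Cat_\infty$. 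Taking $\cC = \Sp$ yields the claimed equivalence $\QCoh(c^*(B)) \simeq \Fun(B,\Sp)$. The same chain of equivalences works verbatim after base change to any $\bE_\infty$-ring $R$, producing the expected $R$-linear refinement with $\Sp$ replaced by $\Mod_R$.

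The only genuinely non-formal point is the middle step: one must know that $\QCoh$ converts the colimit $\colim_{b\in B}\Spec\bS$, which is formed inside the $\infty$-topos $\on{dStk}$ and hence a priori involves a sheafification, into an honest limit in $\Pr^{L}_{\St}$. This is precisely where \'{e}tale descent for $\QCoh$ is used: it guarantees that $\QCoh$ does not distinguish the stacky colimit from the sheafification of the corresponding presheaf-level colimit of representables, so no sheafification-related correction appears. Everything else is manipulation of adjunctions and (co)tensors.
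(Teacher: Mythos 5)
The paper states this proposition without proof, so there is no argument in the paper to compare against directly. Your proof is correct and supplies precisely what a careful reader would want to see. The three ingredients you marshal---writing $B \simeq \colim_{b \in B}\ast$, the left-exact-left-adjoint property of $c^*$ together with the identification of the terminal stack as $\Spec\bS$, and the fact that $\QCoh$ converts colimits of (pre)stacks to limits of presentable stable $\infty$-categories---are all standard and correctly applied, and the last step $\lim_{b\in B}\Sp \simeq \Fun(B,\Sp)$ is just the cotensoring of $\Cat_\infty$ over $\cS$. You are also right to flag the sheafification as the only non-formal point: the constant presheaf on $B$ is genuinely not an \'etale sheaf (it fails descent for the empty cover of $\Spec 0$), so the colimit $\colim_{b\in B}\Spec\bS$ computed in $\on{dStk}$ does involve a localization, and \'etale descent for $\QCoh$ is exactly what makes this harmless. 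One cosmetic remark: you could shorten the argument by observing that $\QCoh\circ c^*:\cS^{\oop}\to \Pr^L_{\St}$ is limit-preserving with value $\Sp$ at the point, and that any such functor out of $\cS^{\oop}$ is forced to be $B\mapsto \Fun(B,\Sp)$ since $\cS$ is freely generated under colimits by $\ast$; this is essentially your argument packaged as a uniqueness statement, but it hides the colimit decomposition in the universal property.
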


\subsection{The Brauer stack and derived Azumaya algebras}

As we saw in Section 1, the Brauer space of a $\bE_\infty$ ring $R$ captures the invertible objects with respect to the Lurie tensor product on compactly generated $R$-linear categories.
Such objects in this affine case correspond precisely to derived Azumaya algebras over $R$, whose definition we now recall.

\begin{definition}
   An $R$-algebra $A$ is an \emph{Azumaya $R$-algebra} if $A$ is a compact generator of $\Mod_R$ and if the natural map
   $$
   A \otimes_R A^{\on{op}} \to \End_{R}(A)
   $$
   is an equivalence of $R$-algebras.
\end{definition}

The assignment $R \mapsto \Br(R)$ satisfies \'{e}tale descent and hence can be extended to the $\infty$-category of spectral stacks.
In fact, there is a spectral stack $\mathbf{Br}$ \cite{AG14}*{Section 6.1} such that for an arbitrary $F \in \on{dStk}$, one has an equivalence
$$
\Map_{\on{dStk}}(F, \mathbf{Br}) \simeq \Br(F) \,.
$$
The notion of an Azumaya algebra globalizes, as well. Namely, one obtains a spectral stack $\mathbf{Az}$ whose value on a connective $\bE_\infty$ ring $R$ is exactly the space of Azumaya algebras. A derived Azumaya algebra on a stack $F$ will then just be a map $F \to \mathbf{Az}$. There is a natural map
$$
\mathbf{Az} \to \mathbf{Br}
$$
of functors which on $R$-points is simply sending sending $A \mapsto \Mod_A$.
A motivating question, going back to Grothendieck's study of Azumaya algebras and \'{e}tale cohomology, is whether or not every Brauer class of a stack $F$ arises from a derived Azumaya algebra over $F$.
We may phrase this in this language as whether or not the map $F \to \mathbf{Br}$ classifying a Brauer class lifts across the map $\mathbf{Az} \to \mathbf{Br}$ to a map $F \to \mathbf{Az}$.

Now, by a result of To\"{e}n in the simplicial case (cf. ~\cite[Theorem 3.7]{derivedaz}) and Antieau-Gepner in the spectral setting at hand (cf. ~\cite[Theorem 6.1]{AG14}), for a qcqs spectral scheme the above is true. We remark that this was extended by Lurie  in ~\cite{SAG} to the case of spectral algebraic spaces using his theory of scallop decompositions.

\subsection{Betti Stacks and Twisted Spectra}
We now tie the ensuing discussion with our study of twisted spectra. Let $B$ be a fixed topological space whose associated Betti stack we denote by  $c^*(B)$

\begin{proposition}
    There exists an equivalence
    \[
    \Map_{\cS}(B, \Br(\bS)) \simeq \mathbf{Br}(c^*(B)) \,.
    \]
\end{proposition}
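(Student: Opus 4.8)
The plan is to chain together the adjunction $c^* \dashv \Gamma$ with the representability of the Brauer stack. By definition of $\mathbf{Br}$, for any $F \in \on{dStk}$ one has $\Map_{\on{dStk}}(F,\mathbf{Br}) \simeq \Br(F)$, so applying this to $F = c^*(B)$ gives $\mathbf{Br}(c^*(B)) \simeq \Map_{\on{dStk}}(c^*(B),\mathbf{Br})$. Now use the geometric morphism adjunction $c^* : \cS \leftrightarrows \on{dStk} : \Gamma$ to rewrite this as $\Map_{\cS}(B, \Gamma(\mathbf{Br}))$. Thus the statement reduces to identifying the global sections $\Gamma(\mathbf{Br})$ of the Brauer stack with the space $\Br(\bS)$ — equivalently (by the remark in the excerpt that $B\Pic(\bS) \to \Br(\bS)$ is an equivalence for $R = \bS$) with $B\Pic(\bS)$.

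First I would record that $\Gamma(\mathbf{Br}) \simeq \Map_{\on{dStk}}(c^*(\ast),\mathbf{Br})$, since the terminal object of $\on{dStk}$ is $c^*(\ast) = \Spec\bS$; here I am using that $c^*$ preserves the terminal object (it is a left adjoint to $\Gamma$ and $\Gamma$ preserves limits, but more directly $c^*(\ast)$ is the constant stack on a point, which is the final stack). Then $\Map_{\on{dStk}}(\Spec\bS,\mathbf{Br}) \simeq \Br(\Spec\bS) = \Br(\bS)$ by the defining property of $\mathbf{Br}$ again, now unwinding that $\Br(\Spec R) = \Br(R)$ for an affine. Combining: $\mathbf{Br}(c^*(B)) \simeq \Map_{\cS}(B,\Gamma(\mathbf{Br})) \simeq \Map_{\cS}(B,\Br(\bS))$, which is the claim.

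Alternatively, and perhaps more in the spirit of the preceding sections, one can argue directly using the limit description $\QCoh(c^*(B)) \simeq \Fun(B,\Sp) = \lim_B \mathrm{const}_{\Sp}$ together with the fact that $\Br(-)$ of a stack is computed as a limit of $\Br$ of the affines mapping in, and that $c^*$ sends the colimit presentation $B \simeq \colim_B \ast$ in $\cS$ to a colimit presentation of $c^*(B)$ in $\on{dStk}$; then $\Br(c^*(B)) \simeq \lim_B \Br(c^*(\ast)) \simeq \lim_B \Br(\bS) \simeq \Map_{\cS}(B,\Br(\bS))$, where the last step uses that for a constant diagram indexed by a space, the limit is the mapping space. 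Either route is essentially formal once the representability of $\mathbf{Br}$ and the behaviour of $c^*$ on colimits (or on the terminal object) are in hand.

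The main obstacle is purely a matter of care rather than depth: one must be precise that $c^*$ does commute with the colimit $B \simeq \colim_{b \in B}\ast$ (true since $c^*$ is a left adjoint), and that the Brauer stack $\mathbf{Br}$, being an \'etale sheaf, satisfies enough descent that $\Br(c^*(B))$ is genuinely the limit $\lim_{b\in B}\Br(\bS)$ and not merely an approximation — this is exactly the content of $\mathbf{Br}$ being representable in $\on{dStk}$, which the excerpt grants us from \cite{AG14}*{Section 6.1}. Modulo that, the proof is a two-line manipulation of adjunctions, and I would present it as such.
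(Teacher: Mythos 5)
Your first route is exactly the paper's argument: pass through the $c^* \dashv \Gamma$ adjunction, then identify $\Gamma(\mathbf{Br})$ with $\Br(\bS)$ by evaluating on the terminal object $\Spec\bS$. Correct, and essentially the same proof.
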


\begin{proof}
    This is simply a consequence of the constant stack/global sections adjunction described above. In particular
    $$
    \mathbf{Br}(c^*(B)) \simeq \Map_{\on{dStk}}(c^*(F), \mathbf{Br}) \simeq \Map_{\on{dStk}}(F, \Gamma(\mathbf{Br})).
    $$
We now identify $\Gamma(\mathbf{Br})$, the global sections of the stack $\mathbf{Br}$. This is computed by its value on the final object, $\on{Spec} \bS$; thus  $\Gamma(\mathbf{Br}) \simeq \Br(\bS)$.
\end{proof}

Thus we may identify a twist on a topological space $B$ with a Brauer class on the associated Betti stack. We now rephrase the realizability question of Grothendieck in the following relevant form.

\begin{question}
Let $c^*(B)$ be the Betti stack associated to a topological space, and let $\alpha : B \to B \Pic(\bS)$ be a twist corresponding to a Brauer class on $c^*(B)$. Does there exist an algebra object $\cA$ in $\on{QCoh}(c^*(B))$ for which $\TwSp(B, \alpha) \simeq \Mod_{\cA}$ as $\on{QCoh}(c^*(B))$-linear categories?
\end{question}

In fact this question is easily seen to have a positive answer when the space $B$ is pointed connected. Recall from Section \ref{sec:TwSp_ModTh} that there is a natural equivalence
\[
\TwSp(B, \alpha) \simeq \on{RMod}_{\Th(\Omega \alpha)}
\]
Via the equivalence $\Sp^{B} \simeq \on{RMod}_{\bS[\Omega B]}$, (eg by ~\cite[Lemma 4.46]{CCRY23}) where the symmetric monoidal structure on the right hand side is transported over from the pointwise monoidal structure on $\Sp^{B}$, we may  view the $E_1$-algebra  $\Th(\Omega \alpha)$ as corresponding to an $E_1$-algebra object in $ \Sp^{B}$. In particular, this lies in the essential image of the functor
\[
\Theta: \on{Alg}_{\Sp^{B}} \to \Mod_{\Sp^{B}}(\on{Pr}^{L}_{\St}),
\]
sending
$A \mapsto \on{RMod}_A$, cf. ~\cite[Remark 4.8.5.17]{HA}.
Thus, we may conclude that Brauer classes on constant stacks arise from algebra objects over constant stacks  $c^*B$ for $B$ pointed  and connected. Hence we may think of $\Th(\Omega \alpha)$ as a type of (derived) Azumaya algebra over $c^*B$.
\begin{bibdiv}
\begin{biblist}

  \bib{AB21}{article}{
    author = {Abouzaid, M.},
    author = {Blumberg, A. J.},
    title = {Arnold Conjecture and Morava K-theory},
    month = {March},
    year = {2021},
    note = {\url{https://arxiv.org/pdf/2103.01507.pdf}},
  }

  \bib{AK16}{article}{
   author={Abouzaid, Mohammed},
   author={Kragh, Thomas},
   title={On the immersion classes of nearby Lagrangians},
   journal={J. Topol.},
   volume={9},
   date={2016},
   number={1},
   pages={232--244},
   issn={1753-8416},
   review={\MR{3465849}},
   doi={10.1112/jtopol/jtv041},
}

  \bib{andogepner}{article}{
     author={Ando, Matthew},
     author={Blumberg, Andrew J.},
     author={Gepner, David},
     title={Parametrized spectra, multiplicative Thom spectra and the twisted
     Umkehr map},
     journal={Geom. Topol.},
     volume={22},
     date={2018},
     number={7},
     pages={3761--3825},
     issn={1465-3060},
     review={\MR{3890766}},
     doi={10.2140/gt.2018.22.3761},
  }

  \bib{AG14}{article}{
   author={Antieau, Benjamin},
   author={Gepner, David},
   title={Brauer groups and \'{e}tale cohomology in derived algebraic
   geometry},
   journal={Geom. Topol.},
   volume={18},
   date={2014},
   number={2},
   pages={1149--1244},
   issn={1465-3060},
   review={\MR{3190610}},
   doi={10.2140/gt.2014.18.1149},
}

\bib{ABS64}{article}{
   author={Atiyah, M. F.},
   author={Bott, R.},
   author={Shapiro, A.},
   title={Clifford modules},
   journal={Topology},
   volume={3},
   date={1964},
   number={suppl},
   pages={3--38},
   issn={0040-9383},
   review={\MR{0167985}},
   doi={10.1016/0040-9383(64)90003-5},
}

  \bib{ABM22}{article}{
    author = {Ausoni, C.},
    author = {Bayindir, H. Ö.},
    author = {Moulinos, T.},
    title = {Adjunction of roots, algebraic K-theory and chromatic redshift},
    month = {November},
    year = {2022},
    note = {\url{https://arxiv.org/pdf/2211.16929.pdf}},
  }

  \bib{BHK}{article}{
  author = {Behrens, S.},
  author = {Hedenlund, A.},
  author = {Kragh, T.},
  title = {Seiberg--Witten Floer theory as twisted parametrized equivariant spectra},
  }

  \bib{CCRY23}{article}{
  author = {Carmeli, S.},
  author = {Cnossen, B.},
  author = {Ramzi, M.},
  author = {Yanovski, L.},
  title = {Characters and transfer maps via categorified traces},
  month = {March},
  year = {2023},
  note = {\url{https://arxiv.org/pdf/2210.17364.pdf}},
  }

  \bib{Cla12}{article}{
    author = {Clausen, Dustin},
    title = {p-adic {J}-homomorphisms and a product formula},
    month = {January},
    year = {2012},
    note = {\url{https://arxiv.org/pdf/1110.5851.pdf}},
  }

  \bib{CJS95}{article}{
    author={Cohen, R. L.},
    author={Jones, J. D. S.},
    author={Segal, G. B.},
    title={Floer's infinite-dimensional Morse theory and homotopy theory},
    conference={
      title={The Floer memorial volume},
    },
    book={
      series={Progr. Math.},
      volume={133},
      publisher={Birkh\"{a}user, Basel},
    },
    date={1995},
    pages={297--325},
    review={\MR{1362832}},
  }

  \bib{Cno23}{article}{
  author = {Cnossen, B.},
  title = {Twisted ambidexterity in equivariant homotopy theory},
  month = {March},
  year = {2023},
  note = {\url{https://arxiv.org/pdf/2303.00736.pdf}},
  }

  \bib{DK70}{article}{
   author={Donovan, P.},
   author={Karoubi, M.},
   title={Graded Brauer groups and $K$-theory with local coefficients},
   journal={Inst. Hautes \'{E}tudes Sci. Publ. Math.},
   number={38},
   date={1970},
   pages={5--25},
   issn={0073-8301},
   review={\MR{0282363}},
}

  \bib{Dou05}{article}{
    author = {Douglas, C. L.},
    title = {Twisted Parametrized Stable Homotopy Theory},
    month = {August},
    year = {2005},
    note = {\url{https://arxiv.org/pdf/math/0508070.pdf}},
  }

  \bib{freedhopkinsteleman}{article}{
     author={Freed, Daniel S.},
     author={Hopkins, Michael J.},
     author={Teleman, Constantin},
     title={Loop groups and twisted $K$-theory I},
     journal={J. Topol.},
     volume={4},
     date={2011},
     number={4},
     pages={737--798},
     issn={1753-8416},
     review={\MR{2860342}},
     doi={10.1112/jtopol/jtr019},
  }

  \bib{FOOO20}{book}{
   author={Fukaya, Kenji},
   author={Oh, Yong-Geun},
   author={Ohta, Hiroshi},
   author={Ono, Kaoru},
   title={Kuranishi structures and virtual fundamental chains},
   series={Springer Monographs in Mathematics},
   publisher={Springer, Singapore},
   date={2020},
   pages={xv+638},
   isbn={978-981-15-5562-6},
   isbn={978-981-15-5561-9},
   review={\MR{4179586}},
   doi={10.1007/978-981-15-5562-6},
}

  \bib{furuta-preprint}{misc}{
    author = {Furuta, M.},
    title = {Prespectrum with Parametrized Universe},
    month = {December},
    year = {2002},
    note = {Preprint/Draft},
  }

  \bib{GR17}{book}{
   author={Gaitsgory, D.},
   author={Rozenblyum, N.},
   title={A study in derived algebraic geometry. Vol. I. Correspondences and
   duality},
   series={Mathematical Surveys and Monographs},
   volume={221},
   publisher={American Mathematical Society, Providence, RI},
   date={2017},
   pages={xl+533pp},
   isbn={978-1-4704-3569-1},
   review={\MR{3701352}},
   doi={10.1090/surv/221.1},
}

\bib{GH15}{article}{
   author={Gepner, D.},
   author={Haugseng, R.},
   title={Enriched $\infty$-categories via non-symmetric $\infty$-operads},
   journal={Adv. Math.},
   volume={279},
   date={2015},
   pages={575--716},
   issn={0001-8708},
   review={\MR{3345192}},
   doi={10.1016/j.aim.2015.02.007},
}

\bib{GL21}{article}{
  title={Brauer groups and Galois cohomology of commutative ring spectra},
  author={Gepner, David},
  author = {Lawson, Tyler},
  journal={Compositio Mathematica},
  volume={157},
  number={6},
  pages={1211--1264},
  year={2021},
  publisher={London Mathematical Society},
}

\bib{HWZ21}{book}{
   author={Hofer, Helmut},
   author={Wysocki, Krzysztof},
   author={Zehnder, Eduard},
   title={Polyfold and Fredholm theory},
   series={Ergebnisse der Mathematik und ihrer Grenzgebiete. 3. Folge. A
   Series of Modern Surveys in Mathematics [Results in Mathematics and
   Related Areas. 3rd Series. A Series of Modern Surveys in Mathematics]},
   volume={72},
   publisher={Springer, Cham},
   date={2021},
   pages={xxii+1001},
   isbn={978-3-030-78006-7},
   isbn={978-3-030-78007-4},
   review={\MR{4298268}},
   doi={10.1007/978-3-030-78007-4},
}

\bib{HL13}{article}{
  title={Ambidexterity in $K(n)$-local stable homotopy theory},
  author={Hopkins, M.},
  author = {Lurie, J.},
  journal={preprint},
  year={2013},
}

\bib{HSS17}{article}{
    AUTHOR = {Hoyois, M.},
    AUTHOR = {Scherotzke, S.},
    AUTHOR = {Sibilla, N},
     TITLE = {Higher traces, noncommutative motives, and the categorified
              {C}hern character},
   JOURNAL = {Adv. Math.},
  FJOURNAL = {Advances in Mathematics},
    VOLUME = {309},
      YEAR = {2017},
     PAGES = {97--154},
      ISSN = {0001-8708},
   MRCLASS = {14F05 (14F42 18D05 19D55)},
  MRNUMBER = {3607274},
MRREVIEWER = {Satoshi Mochizuki},
       DOI = {10.1016/j.aim.2017.01.008},
       URL = {https://doi.org/10.1016/j.aim.2017.01.008},
}

\bib{Jin20}{article}{
author = {Jin, X.},
title= {Microlocal sheaf categories and the J-homomorphism},
month = {September},
year = {2020},
note = {\url{https://arxiv.org/pdf/2004.14270.pdf}},
}

\bib{Kha13}{book}{
   author={Khandhawit, Tirasan},
   title={Twisted Manolescu-Floer Spectra for Seiberg-Witten Monopoles},
   note={Thesis (Ph.D.)--Massachusetts Institute of Technology},
   publisher={ProQuest LLC, Ann Arbor, MI},
   date={2013},
   pages={(no paging)},
   review={\MR{3211454}},
}

\bib{KLS}{article}{
   author={Khandhawit, Tirasan},
   author={Lin, Jianfeng},
   author={Sasahira, Hirofumi},
   title={Unfolded Seiberg-Witten Floer spectra, I: Definition and
   invariance},
   journal={Geom. Topol.},
   volume={22},
   date={2018},
   number={4},
   pages={2027--2114},
   issn={1465-3060},
   review={\MR{3784516}},
   doi={10.2140/gt.2018.22.2027},
}

\bib{KM07}{book}{
   author={Kronheimer, Peter},
   author={Mrowka, Tomasz},
   title={Monopoles and three-manifolds},
   series={New Mathematical Monographs},
   volume={10},
   publisher={Cambridge University Press, Cambridge},
   date={2007},
   pages={xii+796},
   isbn={978-0-521-88022-0},
   review={\MR{2388043}},
   doi={10.1017/CBO9780511543111},
}

\bib{HTT}{book}{
   author={Lurie, J.},
   title={Higher topos theory},
   series={Annals of Mathematics Studies},
   volume={170},
   publisher={Princeton University Press, Princeton, NJ},
   date={2009},
   pages={xviii+925},
   isbn={978-0-691-14049-0},
   isbn={0-691-14049-9},
   review={\MR{2522659}},
   doi={10.1515/9781400830558},
}

\bib{HA}{book}{
	author={Lurie, J.},
	title={Higher Algebra},
	month={September},
	year={2017},
	note = {\url{http://www.math.harvard.edu/~lurie/papers/HA.pdf}},
}

\bib{SAG}{book}{
  title={Spectral algebraic geometry},
  author={Lurie, Jacob},
  journal={preprint},
  year={2018}
}

\bib{LurRotKthy}{article}{
  author = {Lurie, J.},
  title = {Rotation invariance in algebraic {K}-theory},
}

\bib{maegawa24}{article}{
      author={Maegawa, Takumi},
      title={Even-periodic cohomology theories for twisted parametrized spectra},
      year={2024},
      month={June},
      note={\url{https://arxiv.org/abs/2307.12258}},
}

\bib{Man03}{article}{
   author={Manolescu, Ciprian},
   title={Seiberg-Witten-Floer stable homotopy type of three-manifolds with
   $b_1=0$},
   journal={Geom. Topol.},
   volume={7},
   date={2003},
   pages={889--932},
   issn={1465-3060},
   review={\MR{2026550}},
   doi={10.2140/gt.2003.7.889},
}

  \bib{MS06}{book}{
      AUTHOR = {May, J. P.},
      AUTHOR = {Sigurdsson, J.},
       TITLE = {Parametrized homotopy theory},
      SERIES = {Mathematical Surveys and Monographs},
      VOLUME = {132},
   PUBLISHER = {American Mathematical Society, Providence, RI},
        YEAR = {2006},
       PAGES = {x+441},
        ISBN = {978-0-8218-3922-5; 0-8218-3922-5},
     MRCLASS = {55P42 (19L99 55N20 55N22 55P91)},
    MRNUMBER = {2271789},
  MRREVIEWER = {A. A. Ranicki},
         DOI = {10.1090/surv/132},
         URL = {https://doi.org/10.1090/surv/132},
  }

  \bib{MS17}{book}{
   author={McDuff, Dusa},
   author={Salamon, Dietmar},
   title={Introduction to symplectic topology},
   series={Oxford Graduate Texts in Mathematics},
   edition={3},
   publisher={Oxford University Press, Oxford},
   date={2017},
   pages={xi+623},
   isbn={978-0-19-879490-5},
   isbn={978-0-19-879489-9},
   review={\MR{3674984}},
   doi={10.1093/oso/9780198794899.001.0001},
}

  \bib{PS86}{book}{
   author={Pressley, Andrew},
   author={Segal, Graeme},
   title={Loop groups},
   series={Oxford Mathematical Monographs},
   note={Oxford Science Publications},
   publisher={The Clarendon Press, Oxford University Press, New York},
   date={1986},
   pages={viii+318},
   isbn={0-19-853535-X},
   review={\MR{0900587}},
}

\bib{SS}{article}{
	author={Sasahira, H.},
  author = {Stoffregen, M.},
	title={Seiberg--Witten Floer spectra for $b_1 > 0$},
	month={March},
	year={2021},
	note = {\url{https://arxiv.org/pdf/2103.16536.pdf}},
}

\bib{Szy17}{article}{
  title={Brauer spaces for commutative rings and structured ring spectra},
  author={Szymik, Markus},
  journal={Manifolds and $ K $-Theory},
  volume={682},
  pages={189},
  year={2017},
  publisher={American Mathematical Soc.},
}

\bib{HAG}{article}{
   author={To\"{e}n, Bertrand},
   author={Vezzosi, Gabriele},
   title={Homotopical algebraic geometry. II. Geometric stacks and
   applications},
   journal={Mem. Amer. Math. Soc.},
   volume={193},
   date={2008},
   number={902},
   pages={x+224},
   issn={0065-9266},
   review={\MR{2394633}},
   doi={10.1090/memo/0902},
}

\bib{derivedaz}{article}{
   author={To\"{e}n, Bertrand},
   title={Derived Azumaya algebras and generators for twisted derived
   categories},
   journal={Invent. Math.},
   volume={189},
   date={2012},
   number={3},
   pages={581--652},
   issn={0020-9910},
   review={\MR{2957304}},
   doi={10.1007/s00222-011-0372-1},
}

\bib{ramzi}{article}{
  author={Ramzi, Maxime},
  title={A monoidal Grothendieck construction for $\infty$-categories},
  journal={arXiv preprint arXiv:2209.12569},
  year={2022}
}
\end{biblist}
\end{bibdiv}

\end{document}